\documentclass[11pt]{amsart}
\usepackage{fullpage}

\makeatletter
\def\do@rerunfilecheck{} 
\makeatother
\usepackage[centertags]{amsmath}
\usepackage{mathtools}
\usepackage{amssymb}
\usepackage{amsthm}
\usepackage{pstricks}
\usepackage{epsfig}
\usepackage{pst-grad} 
\usepackage{pst-plot} 
\usepackage{wrapfig}
\usepackage{pgfplots}
 \usepgfplotslibrary{fillbetween}
 \pgfplotsset{compat=1.12}
\usepackage{amsfonts}
\usepackage{graphicx}
\usepackage[margin=0.6in]{geometry}
\usepackage{tikz-layers}
\usepackage[normalem]{ulem}
\usepackage[utf8]{inputenc}
\usepackage{amsmath}
\usepackage{amssymb}
\usepackage{bm}
\usepackage{float}
\usepackage{comment}
\usepackage{tikz}
\usepackage{caption}
\usepackage{subcaption}
\usetikzlibrary{positioning}
\usetikzlibrary{decorations.markings}
\usepackage{pgfplots}
\usepackage{pst-node}       
\usepackage{tikz-cd} 
\usepackage{hyperref}
\usepackage{soul}
\usepackage[capitalise,noabbrev]{cleveref}

\pgfplotsset{compat=newest}
\usepgfplotslibrary{fillbetween}

\newcommand\phantomarrow[2]{%
  \setbox0=\hbox{$\displaystyle #1\to$}%
  \hbox to \wd0{%
    $#2\mapstochar
     \cleaders\hbox{$\mkern-1mu\relbar\mkern-3mu$}\hfill
     \mkern-7mu\rightarrow$}%
  \,}

\newtheorem{theo}{Theorem}[section]
\newtheorem{coro}[theo]{Corollary}
\newtheorem{lemma}[theo]{Lemma}

\newtheorem{rem}[theo]{Remark}

\newtheorem{propo}[theo]{Proposition}
\newtheorem{defi}[theo]{Definition}

\Crefname{propo}{Proposition}{Propositions}
\Crefname{theo}{Theorem}{Theorems}
\Crefname{coro}{Corollary}{Corollaries}
\Crefname{rem}{Remark}{Remarks}

\newtheorem{theoreml}{Theorem}

\newcommand{\R}{{\mathbb R}}
\newcommand{\C}{{\mathbb C}}

\newcommand{\Z}{{\mathbb Z}}

\newcommand{\N}{{\mathbb N}}

\newcommand{\T}{{\mathbb T}}
\newcommand{\Pic}{\mathrm{Pic}}
\newcommand{\CP}{\C{\mathbb P}}
\newcommand{\mcI}{\mathcal{I}}
\newcommand{\mcC}{\mathcal{C}}

\newcommand{\al}{\alpha}
\newcommand{\be}{\beta}

\newcommand{\ga}{\gamma}

\newcommand{\la}{\lambda}

\newcommand{\de}{\delta}

\newcommand{\om}{\omega}

\newcommand{\ze}{\zeta}

\newcommand{\rX}{\mathrm{X}}
\newcommand{\rV}{\mathrm{V}}
\newcommand{\rW}{\mathrm{W}}

\newcommand{\mupq}{\mu_{p,q}}
\newcommand{\mupo}{\mu_{p,1}}
\newcommand{\suml}{\mathop{\sum}\limits}
\newcommand{\Kh}{K^\textit{h}}

\newcommand{\ri}{\mathrm{i}}
\newcommand\restr[2]{{
  \left.\kern-\nulldelimiterspace 
  #1 
  \vphantom{\big|} 
  \right|_{#2} 
  }}
\title{Complex dynamics perspective for birational maps of the plane arising from cluster algebra mutations}
\author{Andrei Grigorev}
\address{Andrei Grigorev: Department of Mathematics\\ IU Indianapolis\\ 402 North Blackford Street room LD270\\Indianapolis, IN 46202-3267, USA;\newline  National Research University Higher School of Economics\\ Department of Mathematics\\ Russian Federation, 6 Usacheva st., Moscow 119048}
\email{aagrigor@iu.edu, andrei.al.grigorev@gmail.com}

\author{Krishna Chaitanya Kalidindi}
\address{Krishna Chaitanya Kalidindi: Department of Mathematics\\ IU Indianapolis\\ 402 North Blackford Street room LD270\\Indianapolis, IN 46202-3267, USA}
\email{kkalidin@iu.edu}

\author{Andres Quintero Santander}
\address{Andres Quintero Santander: Department of Mathematics\\ IU Indianapolis\\ 402 North Blackford Street room LD270\\Indianapolis, IN 46202-3267, USA}
\email{aequinte@iu.edu}

\author{Roland Roeder}
\address{Roland Roeder: Department of Mathematics\\ IU Indianapolis\\ 402 North Blackford Street room LD270\\Indianapolis, IN 46202-3267, USA}
\email{roederr@iu.edu}

\date{\today}

\begin{document}

\begin{abstract}
Using the methods of holomorphic dynamics we investigate planar birational mappings that arise from the theory of cluster algebras and integrable systems.   Computing dynamical degrees of these mappings, many of which are greater than one, allows us to show that many of the mappings do not have a conserved quantity (nor an invariant fibration).   In most of the examples, invariant fibrations 
can also be ruled out by finding superattracting periodic points.  This answers a question posted by  Machacek and Ovenhouse \cite{machacek2024discrete} and by Chen and Li \cite{CHEN202425}.  Moreover, having found a good algebraically stable model for the mappings and having computed the dynamical degree, we can then apply results from the ergodic theory of birational maps to produce invariant measures with positive entropy and positive Lyapunov exponents.
\end{abstract}
\maketitle
\section{Introduction.}

We study the dynamics of a family of rational mappings $\mu_{p,q}:\C^2\to \C^2$ arising from the theory of cluster algebras.   Here $p$ and $q$ are non-zero integers and $\mu_{p,q}$ is defined as $\mu_{p,q}= \mu_{p,q}^{(2)}\circ \mu_{p,q}^{(1)}$, where
\begin{equation}\label{eq:mutations}
    \mu_{p,q}^{(1)}(x,y) = \left(\frac{1+y^q}{x},y\right),\;\;\mu_{p,q}^{(2)}(x,y) = \left(x,\frac{1+x^p}{y}\right).
\end{equation}
Note that both $\mu_{p,q}^{(1)}(x,y)$ and $\mu_{p,q}^{(2)}(x,y)$ are involutions and therefore $\mu_{p,q}$ is a birational mapping of the complex plane with inverse $\mu_{p,q}^{-1}$ birationally conjugate (but not equal) to $\mu_{q,p}$ (See $\cref{rem:pgeqq}$).

Cluster algebras are certain commutative algebras introduced in the work of Fomin and Zelevinsky \cite{FOMIN1}.
 One of the main notions in the theory of cluster algebras is the notion of a {\em cluster seed} and {\em mutation}. 
A seed is a pair consisting of a skew-symmetrizable $n\times n$ matrix $B = (b_{ij})_{i,j=1,\dots, n}$ with integer elements and a tuple of variables $\boldsymbol{A} = ({A}_1,\dots, {A}_n)$.
A mutation $\mu_i$ for each $i = 1,\dots,n$ is a transformation between seeds that maps the seed $(B,\boldsymbol{A})$ to a seed $(\mu_i(B), \mu_i(\boldsymbol{A}))=(\tilde{B},\tilde{\boldsymbol{A}})$ defined by
\begin{align}\label{EQN:BJK}
\tilde{b}_{jk} = \begin{cases}
-b_{jk},\; i\in\{j,k\}\\
b_{jk}+[ b_{ij}]_{+}[ b_{ki}]_{+}-[ -b_{ij}]_{+}[ -b_{ki}]_{+} 
\end{cases},\;\;
\tilde{A}_j = \begin{cases} A_j,\;\hfill j\neq i\\
A_{i}^{-1}\left(\displaystyle\prod_{j=1}^{n}A_j^{[ b_{ij}]_{+}} +\displaystyle\prod_{j=1}^{n}A_j^{[ -b_{ij}]_{+}}\right), \qquad j=i.\end{cases}
\end{align}
Here for any $b\in \Z$ we denote $[b]_{+} = \max(b,0)$.

Mutation transformations are involutions. Taking some initial seed and considering the result of application of a composition of multiple different mutations that preserve the matrix $B$ gives a non-trivial birational dynamics. The examples of such dynamics include certain periodic maps for the cluster seeds associated with root systems, see \cite{FOMIN2} and discrete Painlev\'e equations for cluster seeds associated with Newton polygons with one internal point, see \cite{bershtein2018cluster}. 

In our work we study in particular the dynamics appearing as the composition of the two mutations $\mu=\mu_2\circ \mu_1$ for $B = \begin{pmatrix}
    0 & p\\ -q & 0
\end{pmatrix}$
for $p,q\in \Z_{> 0}$.  Setting $A_1 = x, A_2 = y$ the mappings $\mu_1$ and $\mu_2$ given by \eqref{EQN:BJK} correspond to the mappings 
$\mu_{p,q}^{(1)}$ and $\mu_{p,q}^{(2)}$ from \eqref{eq:mutations}, respectively.
The dynamics of these mappings and search for invariant quantities were recently studied by Machacek and Ovenhouse \cite{machacek2024discrete} and by Chen and Li \cite{CHEN202425}.  While these two papers have somewhat different perspectives, each of them asks about whether the mappings $\mu_{p,q}$ have a conserved quantity.     

The cases $pq<4$ are related to the finite root systems $A_2, B_2,G_2$ and in this case the dynamics $\mu_{p,q}$ is periodic.  The cases $pq = 4$ are integrable, in the sense that the have conserved quantities. The conserved quantities for them were found in the work \cite{CHEN202425}. We show that the dynamics on a generic fiber is conjugated to a linear fractional transformation. (For a further discussion of quantitites conserved under cluster algebra mutations see \cite{LAMPE,KAUFMAN,HoneKouloukas} and the references therein.)  
Further we will show that in the cases $pq>4$ the dynamics $\mu_{p,q}$ is not integrable, at least in the context of holomorphic dynamics (Theorems \ref{THM:POSPQ_NO_FIBR} and~\ref{THM:NEGPQ_NO_FIBR}, below.)

An interesting feature of the mappings $\mu_{p,q}$ is that they preserve the meromorphic two-form 
\begin{align}
    \eta = \frac{dx \wedge dy}{xy}
\end{align}
under pullback.   In other words, $\mu_{p,q}^* \eta = \eta$.   A more general study of rational self-maps of the complex plane that preserve the meromorphic form $\eta$ (up to a factor) under pullback was done in the two recent papers of Diller and Roeder \cite{DR1,DR2}, building on works of Bell-Diller-Jonsson \cite{BDJ} and Diller-Lin \cite{diller_lin_2016}.   

The goal of this paper is to investigate the mappings $\mu_{p,q}$ from the perspective of holomorphic dynamics. 
Indeed, the holomorphic dynamics of birational mappings of the complex projective plane is quite well-developed (see for example \cite{DillerDynamicsMeromorphicMaps2009b,DUJARDIN_LAMINAR,diller_favre_2001,dillerInvariantMeasureLyapunov2001,MR1422105} and the references therein) and it will allow us to say some interesting things about these mappings, including the aformentioned claim about maps with $pq > 4$ not being integrable.

We will need to develop some terminology in order to state our main results.  We will typically compactify $\mathbb{C}^2$ by the complex projective plane $\mathbb{CP}^2$ and sometimes by more complicated rational complex surfaces.   Any rational self-mapping $f:\mathbb{CP}^2 \dashrightarrow \mathbb{CP}^2$ can be expressed in homogeneous coordinates $[X:Y:Z]$ as a triple 
\begin{align*}
    f([X:Y:Z]) = [f_1(X,Y,Z):f_2(X,Y,Z):f_3(X,Y,Z)],
\end{align*}
where $f_1, f_2$, and $f_3$ are homogeneous polynomials of the same degree that are chosen so that there is no common factor of positive degree.   With such a choice the common degree of these polynomials is called the {\em algebraic degree} of $f$ and denoted $d_{\rm alg}(f)$.    One can also associate to any such $f$ the {\em topological degree} $d_{\rm top}(f)$ which is the number of preimages of a sufficiently generic point in $\mathbb{CP}^2$ under $f$.

It is important to note that $d_{\rm alg}(f)$ is not invariant under birational conjugacy and also that it does not always behave well under iteration; i.e. one may have that $d_{\rm alg}(f^n) < (d_{\rm alg}(f))^n$ for some natural number $n \geq 2$.   (Here $f^n$ denotes the $n$-th iterate of $f$.)  For this reason, one introduces the {\em dynamical degree} of $f$ denoted by $\lambda_1(f)$ and defined as
\begin{align*}
    \lambda_1(f) := \lim \left( d_{\rm alg}(f^n)\right)^{1/n}.
\end{align*}
This limit always exists and it is unchanged under birational conjugacies.   The main dynamical properties of a dominant rational self-map of a K\"ahler surface (and in particular of $\mathbb{CP}^2$) are conjectured to be determined by the two invariants $\lambda_1(f)$ and $d_{\rm top}(f)$; see, e.g. \cite{GUEDJ}.  For this reason, the first thing one does when encountering a new rational self-map of $\mathbb{CP}^2$ is to ask ``what are $\lambda_1(f)$ and $d_{\rm top}(f)$?''

The mutation mappings $\mu_{p,q}$ are birational, so $d_{\rm top}(\mu_{p,q}) = 1$.   It is much more subtle to compute the dynamical degree $\lambda_1(\mu_{p,q})$ and this computation (and its corollaries) is one of the main achievements of our paper.

\subsection{Ruling out conserved quantities, invariant fibrations, and invariant foliations.}
Given a birational mapping $f:X\dashrightarrow X$ of a projective algebraic surface, an invariant fibration is a triple $(Y, \rho, g)$, where $Y$ is a projective curve, $\rho : X \dashrightarrow Y$ is a dominant rational map and $g: Y \to Y$ is a regular map such that the diagram
\[\begin{tikzcd}
	X & X \\
	Y & Y
	\arrow["f", dashed, from=1-1, to=1-2]
	\arrow["\rho"', dashed, from=1-1, to=2-1]
	\arrow["\rho", dashed, from=1-2, to=2-2]
	\arrow["g", from=2-1, to=2-2]
\end{tikzcd}\]
commutes at each point where the composition is defined. (Note that some authors require the generic fibers of $\rho$ to be irreducible, but we do not in this paper.) A particular case of an invariant fibration is a conserved quantity (in that case $\rho \in \C(X),\; Y = \CP^1$ and $g=\mathrm{Id}_{\CP^1}$).
Note also that if $f: X \dashrightarrow X$ has an invariant fibration and if $\tilde{f}: \tilde{X} \dashrightarrow \tilde{X}$ is birationally conjugate to $f$, then $\tilde{f}$ also has an invariant fibration.   Many interesting examples of mappings that preserve a fibration are presented in \cite{DGL}, along with various methods that can be used to prove that a mapping
has an invariant fibration.

We will use the following two criteria for (non-)existence of an invariant fibration.   

\begin{propo}\label{PROP:INV_FIBR_VS_DYN_DEG}Let $f:X\dashrightarrow X$ be a birational self-map of a smooth projective surface. Then $f$ has an invariant fibration if and only if $\lambda_1(f) = 1$.
\end{propo}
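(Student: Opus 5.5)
Both directions rest on the Diller--Favre classification of birational surface maps \cite{diller_favre_2001}, combined with the standard fact that $\lambda_1$ is a birational conjugacy invariant. We may therefore replace $f$ by any birationally conjugate map on any smooth projective surface.

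\emph{($\Rightarrow$)} Suppose $(Y,\rho,g)$ is an invariant fibration. Resolving the indeterminacy of $\rho$ by blow-ups and conjugating $f$ accordingly, we may assume $\rho$ is a morphism. Let $F$ be the class of a fiber of $\rho$. Because $\rho\circ f=g\circ\rho$ and $f$ is birational, $g$ is dominant. The key point is that $g$ has degree one. Indeed, a generic fiber $\rho^{-1}(y)$ is mapped by $f$ into the single fiber $\rho^{-1}(g(y))$. If $\deg g=k$, the $k$ fibers over $g^{-1}(g(y))$ all map into one fiber, and birationality of $f$ then forces $k=1$. So $g$ is an automorphism of $Y$ and $f^*F=F$ in $H^{1,1}$, modulo classes supported on exceptional curves.

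Now use the Diller--Favre trichotomy. If $\lambda_1(f)>1$, then after an algebraically stable model there is a nef class $\theta$ with $\theta^2=0$ and $f^*\theta=\lambda_1\theta$, and every invariant nef class with vanishing self-intersection is proportional to $\theta$. Alternatively one can use the Cantat/Diller--Favre fact that a map with $\lambda_1>1$ preserves no pencil, because the degree of $f^n$ grows exponentially while the invariance of $F$ makes the degree growth at most quadratic. Quantitatively, fix an ample $H$. Then $F\cdot (f^n)^*H$ grows at most linearly, since $f$ acts on the fibration through the automorphism $g$. By the Hodge index theorem applied with the isotropic class $F$, this forces $\deg(f^n)=O(n^2)$, hence $\lambda_1=1$. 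This Hodge-index step is where care is needed: one must check that $(f^n)^*H\cdot F$ stays bounded or grows linearly. We would get this from $(f^n)^*H\cdot F=H\cdot (f^n)_*F$ together with $(f^n)_*F=F$ up to curves contracted by $f^n$, which have nonnegative intersection with $H$ and total degree controlled.

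\emph{($\Leftarrow$)} If $\lambda_1(f)=1$, the Diller--Favre theorem says that, after birational conjugacy, exactly one of the following holds: (a) $\deg f^n$ is bounded and $f$ is conjugate to an automorphism some iterate of which lies in the identity component of $\mathrm{Aut}$; (b) $\deg f^n\sim cn$ and $f$ preserves a rational fibration; (c) $\deg f^n\sim cn^2$ and $f$ is conjugate to an automorphism preserving an elliptic fibration. Cases (b) and (c) directly give $(Y,\rho,g)$. Case (a) is the main obstacle. Here $f$ is conjugate to an automorphism $h$ with $h^m$ in a connected algebraic group $G\subset \mathrm{Aut}(X)$, and we must produce an invariant fibration for $h$ itself, not only for $h^m$. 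The plan is to take the Zariski closure of $\langle h\rangle$, an algebraic group whose identity component $G_0$ is commutative and of positive dimension unless $h$ has finite order. If $\dim G_0=1$, its orbits give a $G_0$-invariant fibration, and $h$ permutes these orbits because $h$ normalizes $G_0$. If $\dim G_0=2$ or $h$ is periodic, we use the classification of surfaces with such actions. For a rational surface, we conjugate into a toric or ruled model on which $h$ preserves a pencil of lines. For the periodic case, we take the quotient $X/\langle h\rangle$ composed with any pencil on it. Finally, a non-irreducible fibration is explicitly allowed in our definition, so Stein factorization issues do not arise.
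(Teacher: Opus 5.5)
\textbf{Overall.} Your $(\Leftarrow)$ direction follows the paper, which simply invokes the Diller--Favre trichotomy for $\lambda_1=1$ (plus Blanc--D\'eserti). Your $(\Rightarrow)$ direction takes a different route. The paper quotes the Dinh--Nguyen and Dinh--Nguyen--Truong results on dynamical degrees of fibration-preserving maps, which force $\lambda_1(f)=1$ when $d_{\rm top}(f)=1$. An intersection-theoretic proof of $(\Rightarrow)$ is possible and more self-contained, but the step you actually rely on fails.

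\textbf{The gap in $(\Rightarrow)$.} It is the ``quantitative'' Hodge-index step. A bound on $F\cdot (f^n)^*H$ gives no upper bound on $H\cdot (f^n)^*H$. For example, the nef classes $tF+H$ satisfy $(tF+H)\cdot F=H\cdot F$ for every $t$, while $(tF+H)\cdot H\to\infty$. Hodge index with an isotropic $F$ only gives $(\alpha-sH)^2\le 0$ for $s=\alpha\cdot F/H\cdot F$, which is a \emph{lower} bound on $\alpha\cdot H$. In fact $F\cdot(f^n)^*H$ is exactly constant here: $\rho$ is a morphism and $\rho\circ f^{-n}=g^{-n}\circ\rho$, so $(f^n)_*F=(f^{-n})^*F=F$ on the nose. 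Even so, this says nothing about $\deg f^n$ without dynamical input.

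Your other two alternatives do not fill the gap either. The first misquotes Diller--Favre: their uniqueness statement concerns nef eigenclasses with eigenvalue $\lambda_1$, and for an automorphism $\theta^-$ is a second nef isotropic eigenclass of $f^*$. The second alternative is circular.

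\textbf{A correct version.} Blow up further to make $f$ algebraically stable. This keeps $\rho$ a morphism, so $f^*F=F=f_*F$ still holds. If $\lambda_1>1$, Diller--Favre (or Perron--Frobenius) applied to $f^{-1}$ gives a nef class $\theta^-\neq 0$ with $f_*\theta^-=\lambda_1\theta^-$. Then
\[F\cdot\theta^-=f^*F\cdot\theta^-=F\cdot f_*\theta^-=\lambda_1\,F\cdot\theta^-,\]
so $F\cdot\theta^-=0$. By Hodge index, two nef classes with zero product, one of which satisfies $F^2=0$, are proportional. Hence $\theta^-=cF$, so $f_*\theta^-=\theta^-$, a contradiction. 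Equivalently, $f^*$ acts isometrically on the Picard--Manin space, and an isometry fixing an isotropic vector with eigenvalue $1$ is not loxodromic.

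\textbf{Case (a) in $(\Leftarrow)$.} Your argument works when $\mathrm{Aut}^0(X)$ is linear, e.g.\ $X$ rational, which is the only case the paper uses. The subcase $\dim G_0\ge 2$ needs no classification. The closure of $\langle h\rangle$ is commutative, so any one-dimensional closed connected subgroup $G_1\subset G_0$ (a $\mathbb{G}_m$ or $\mathbb{G}_a$) commutes with $h$, and its Rosenlicht quotient is an $h$-invariant fibration.

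For general $X$, however, no argument can close case (a). Take a translation $t_a$ by a non-torsion point $a$ of a simple abelian surface; it has $\lambda_1=1$. On such a surface every nonzero effective divisor is ample. Hence a fibration over a curve of positive genus (which would be a morphism with isotropic fibres) is impossible. Over $\mathbb{P}^1$, all fibres are linearly equivalent to one ample $L$, and invariance forces $t_a^*L\cong L$, i.e.\ $a\in K(L)$, which is a finite group. So $t_a$ has no invariant fibration. The converse therefore needs a hypothesis such as rationality of $X$. The paper's one-line appeal to Diller--Favre, whose bounded case only yields an automorphism with an iterate isotopic to the identity, passes over the same point.
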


\begin{propo}\label{PROP:INV_FIBR_VS_SUPER_ATTR_FP}
Let $f:X\dashrightarrow X$ be a birational self-map of a smooth projective surface. If there exists a superattracting periodic point $P \in X$ of $f$, then $f$ does not admit an invariant fibration.
\end{propo}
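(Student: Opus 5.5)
We argue by contradiction: suppose $f$ has a superattracting periodic point $P$ and also an invariant fibration $(Y,\rho,g)$. Replacing $f$ by $f^n$, where $n$ is the period of $P$, we may assume $P$ is a superattracting fixed point; the triple $(Y,\rho,g^n)$ is still an invariant fibration. Superattracting means that $f$ is holomorphic near $P$, $f(P)=P$, and $Df(P)$ is nilpotent; in particular $\det Df(P)=0$. The plan is to show that a birational map preserving a fibration cannot contract at a fixed point in this way, by comparing what happens along the fiber through $P$ with what happens transversally.

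\textbf{Step 1: $\lambda_1(f)=1$ and $g$ is an automorphism.} By \cref{PROP:INV_FIBR_VS_DYN_DEG}, $\lambda_1(f)=1$, but we do not really need this. Since $f$ is birational and $\rho\circ f=g\circ\rho$ with $\rho$ dominant, the map $g$ is surjective. If $\deg g\ge 2$, then over a generic point $y\in Y$ the preimage $\rho^{-1}(g^{-1}(y))$ would consist of several distinct fibers, all mapped by $f$ into the single fiber $\rho^{-1}(y)$. Because $f$ is birational, this forces $f$ to be non-injective on a dense set, a contradiction. (One could also use a degree-counting argument: $f^*\rho^*$ of an ample class on $Y$ equals $\rho^*g^*$.) Hence $g\in\mathrm{Aut}(Y)$.

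\textbf{Step 2: local analysis at $P$.} First we need $\rho$ to be defined and regular near $P$. Blowing up the indeterminacy points of $\rho$ and lifting $f$ gives a birationally conjugate map. Since $f$ is a local biholomorphism-like contraction at $P$, the orbit of nearby points converges to $P$. If $P$ were an indeterminacy point of $\rho$, we would instead work on the blow-up and use the fact that $f$ contracts a neighborhood of $P$ into itself, so $f$ induces a self-map of the exceptional curves over $P$. I expect this to be the main obstacle: handling the case where $P$ is an indeterminacy point of $\rho$, or lies on a singular fiber. My first attempt would be to pick a point $Q$ near $P$ off the indeterminacy locus and use the fact that $f^k(Q)\to P$.

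Assume now that $\rho$ is regular at $P$. Put $y_0=\rho(P)$, so that $g(y_0)=y_0$. Since $g$ is an automorphism of $Y$, it is locally biholomorphic at $y_0$ with multiplier $g'(y_0)\neq 0$. Choose local coordinates in which $\rho(x,y)=h(x,y)$ with $h(P)=0$, and write the fiber germ $\{h=0\}$. We have $h\circ f=\tilde g\circ h$, where $\tilde g$ is a local biholomorphism.

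\textbf{Step 3: the contradiction.} Take a generic point $Q$ near $P$. Because $P$ is superattracting, $f^k(Q)\to P$ at superexponential speed, $|f^k(Q)-P|\le C\epsilon^{2^k}$. Therefore $|h(f^k(Q))|=|\tilde g^k(h(Q))|\le C'\epsilon^{2^k}$. But $\tilde g$ is a germ of biholomorphism fixing $0$, so $|\tilde g^k(t)|\ge c\,|g'(y_0)|^k|t|$ up to a constant in the regime where the orbit of $t$ stays small. This lower bound is at most geometric decay, which contradicts the superexponential decay unless $h(Q)=0$. Since $Q$ was generic, $h(Q)\neq 0$, so the bound must fail. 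Hence no such fibration exists. If $h$ vanishes to higher order, the same argument goes through with $|h|$ replaced by the appropriate power.
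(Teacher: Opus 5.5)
Your argument for the case where $\rho$ is holomorphic at $P$ is correct. It is essentially the paper's Case 1 in analytic form. The paper compares orders of vanishing at $P$: that of $\rho\circ f$ is at least $2d$, while that of $g\circ\rho$ is exactly $d$ because $g'(\rho(P))\neq 0$. You instead compare the superexponential decay of $h(f^k(Q))$ with the at most geometric decay of $\tilde g^k(h(Q))$. Two small fixes are needed:
\begin{itemize}
\item The paper defines superattracting as $Df(P)=0$; nilpotent is harmless after passing to $f^2$.
\item The lower bound should read $|\tilde g^k(t)|\ge c^k|t|$ for a fixed $0<c<|g'(y_0)|$. This is valid because $\tilde g^j(h(Q))=h(f^j(Q))$ stays in a small disk.
\end{itemize}
Step 1 is acceptable, provided that with reducible fibers you run the pigeonhole argument on fiber components. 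The paper simply asserts that $g$ is an automorphism.

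The genuine gap is the case $P\in\mathcal{I}(\rho)$, which you flag but do not resolve. It cannot be avoided; think of a pencil of curves with base point $P$. Your suggested first attempt does not work: $\rho$ has no limit at $P$, so $f^k(Q)\to P$ says nothing about $\rho(f^k(Q))=g^k(\rho(Q))$. Nor can you blow up to make $\rho$ regular and rerun Step 3. After blowing up there is no superattracting point left, because $P$ has been replaced by the exceptional divisor $E=\pi^{-1}(P)$.

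The missing idea, which is how the paper handles this case, is to convert the superattracting property into ramification along an exceptional curve:
\begin{itemize}
\item Resolve $\rho$ at $P$ so that $\hat\rho=\rho\circ\pi$ is regular near $E$ and non-constant on some component $E_0$. If it were constant on every component, $\rho$ would extend across $P$.
\item Since $f$ is holomorphic at $P$ and fixes it, $\hat f$ maps $E$ into $E$.
\item The relation $\hat\rho\circ\hat f=g\circ\hat\rho$, with $g$ an automorphism, forbids any iterate of $\hat f$ from contracting $E_0$. So after passing to an iterate, $\hat f(E_0)\subset E_0$.
\item In coordinates with $E_0=\{w_1=0\}$, write $\pi=(w_1^aP_1,w_1^bP_2)$ and $\hat f^n=(w_1^{d_n}Q_{1,n},Q_{2,n})$. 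Compare orders of vanishing along $E_0$ in $\pi\circ\hat f^n=f^n\circ\pi$. This gives $a\,d_n\ge\min(a,b)\,k_n$, where $k_n\ge 2^n$ is the lowest degree in the Taylor expansion of $f^n$ in coordinates centered at $P$.
\item Hence $d_n\ge 2$ for large $n$, so $\det D\hat f^n$ vanishes identically on $E_0$. By \cref{lemma:jac_crit}, the birational map $\hat f^n$ then contracts $E_0$, a contradiction.
\end{itemize}
Without an argument of this kind, your proof only covers $P\notin\mathcal{I}(\rho)$.
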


Proposition \ref{PROP:INV_FIBR_VS_DYN_DEG} is a combination of well-known results from Diller-Favre \cite{diller_favre_2001} and Dinh-Nguyen and Dinh-Nguyen-Troung \cite{Dinh-Nguyen1,Dinh-Nguyen2}.   Computations of dynamical degrees are often challenging, so in many cases Proposition \ref{PROP:INV_FIBR_VS_SUPER_ATTR_FP} is easier to apply than Proposition \ref{PROP:INV_FIBR_VS_DYN_DEG}.   More details about both statements will be given in Section \ref{SEC:GEN_WAYS_RULE_OUT_FIB}, including a full proof of Proposition \ref{PROP:INV_FIBR_VS_SUPER_ATTR_FP}.

\subsection{Results when $p,q \geq 1$.}
This is the primary focus of our paper because then connection of $\mu_{p,q}$ with cluster algebras only holds when $p,q \geq 1$.

\begin{theoreml}\label{thm:A} For any integers $p, q \geq 1$ we have 
\begin{itemize}
\item[(i)] If $pq \leq 4$ then $\lambda_1(\mu_{p,q}) = 1$.
\item[(ii)] If $pq > 4$ then 
\begin{align*}
   \lambda_1(\mu_{p,q}) = \frac{pq-2 + \sqrt{(pq-2)^2-4}}{2} > 1.
\end{align*}
\end{itemize}
\end{theoreml}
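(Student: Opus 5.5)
The plan is to compute $\lambda_1(\mu_{p,q})$ from an algebraically stable model adapted to the invariant form $\eta=\frac{dx\wedge dy}{xy}$, in the spirit of Diller--Lin and Diller--Roeder \cite{diller_lin_2016,DR1,DR2}. Since $\mu_{p,q}^*\eta=\eta$, the map is essentially torus-like at the toric boundary. Its behaviour there is governed by its tropicalization, the piecewise linear map on $\R^2$ given by
\begin{align*}
T(a,b) = (a',b'), \qquad a' = \max(0,qb)-a, \quad b' = \max(0,pa')-b .
\end{align*}
This is obtained by recording the orders of vanishing of $x,y$ along a toric valuation. On the cone where both maxima are attained by the nonzero term, $T$ is linear with matrix
\begin{align*}
M=\begin{pmatrix} -1 & q \\ -p & pq-1\end{pmatrix}, \qquad \operatorname{tr} M = pq-2,\quad \det M = 1 .
\end{align*}
Its characteristic polynomial is $t^2-(pq-2)t+1$, whose largest root is exactly the number in part (ii). So the key step is to show that $\lambda_1(\mu_{p,q})$ equals the spectral radius of this linear piece when $pq>4$, and equals $1$ when $pq\le 4$.

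For (i), I would argue directly from the structure of the tropical map. When $pq<4$ the map $\mu_{p,q}$ is periodic (the finite type cases $A_2,B_2,G_2$); I would check this by explicit iteration, with orders $5,6,8$ for the composite map. Hence $d_{\rm alg}(\mu_{p,q}^n)$ is bounded and $\lambda_1=1$. When $pq=4$, that is $(p,q)\in\{(1,4),(2,2),(4,1)\}$, the conserved quantities of \cite{CHEN202425} give an invariant fibration, and Proposition~\ref{PROP:INV_FIBR_VS_DYN_DEG} yields $\lambda_1=1$. As an independent check, $M$ is then parabolic with trace $2$, consistent with at most quadratic degree growth.

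For (ii), I would build a good model as follows.
\begin{itemize}
\item[1.] Choose a smooth projective toric surface $X_\Sigma$. Its fan $\Sigma$ should contain the rays spanned by the finitely many images under $T$ of the coordinate rays and of the rays where $T$ changes linear piece. It should also contain rays close to the two eigendirections of $M$, so that $T$ maps rays of $\Sigma$ to rays of $\Sigma$ on the relevant cones.
\item[2.] On $X_\Sigma$, identify the indeterminacy points of $\mu_{p,q}$ and $\mu_{p,q}^{-1}$. These are the non-torus-fixed boundary points, such as the points $x=0$, $y^q=-1$ and $y=0$, $x^p=-1$, together with their counterparts at infinity.
\item[3.] Follow the forward orbits of the curves contracted by $\mu_{p,q}$, namely $\{1+y^q=0\}$ and $\{1+x^p=0\}$ and the relevant boundary divisors. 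Some of these land after finitely many steps on indeterminacy points of the inverse. Blow those points up, so that the contracted curves are replaced by curves mapped onto exceptional divisors, as in the Bedford--Kim style constructions.
\item[4.] The remaining contracted boundary curves are pushed by $T$ into the expanding cone of $M$. I would show that their orbits never meet indeterminacy, using the fact that $T$ is linear and hyperbolic on an invariant cone containing the unstable eigendirection. Once that cone is reached, the orbit of the corresponding toric divisor stays in toric strata away from the finitely many indeterminacy points.
\end{itemize}
This gives algebraic stability of the lift $\hat\mu$ on the blown-up surface $\hat X$ in the sense of Diller--Favre \cite{diller_favre_2001}. Then $\lambda_1$ is the spectral radius of $\hat\mu^*$ on $H^{1,1}(\hat X)$, which I would compute in the basis of boundary divisors and exceptional curves. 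The exceptional-curve and finite-orbit blocks contribute only roots of unity, so the characteristic polynomial factors as $(t^2-(pq-2)t+1)$ times cyclotomic factors. This gives (ii).

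The main obstacle is step 3: certifying algebraic stability uniformly in $(p,q)$. This means showing that every curve contracted by the lifted map has a forward orbit avoiding indeterminacy, and that the number of blow-ups needed is finite and independent of the orbit length. If a uniform toric model proves unwieldy, I would fall back on the valuative/tropical approach. There, for maps preserving $\eta$, Diller--Roeder relate $\lambda_1$ to the growth rate of the tropical map on the circle of toric valuations, and I would show that this growth rate is $\rho(M)$ when $pq>4$. This requires checking that $T$ has an attracting invariant cone on which it is linear, which follows from hyperbolicity of $M$ once $pq-2>2$.
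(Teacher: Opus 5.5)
\textbf{Verdict: genuine gap in part (ii).} Your outline has the same skeleton as the paper's proof: an algebraically stable model, then the spectral radius of the pullback on $\Pic$. But the step that actually produces the formula is never carried out. The model is not specified, stability is not verified, and $\hat\mu^*$ is not computed. Instead, the factorization of its characteristic polynomial is read off from the linear piece $M$ of the tropicalization.

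The principle behind that, ``$\lambda_1$ equals the growth rate of the tropical map'', is false for $\eta$-preserving maps without further hypotheses. For $p,q\le -2$, the cube of the tropicalization of $\mu_{p,q}$ is linear, with trace $pq-2$ and determinant $1$, on the forward-invariant cone of $P_{\sigma_5}$. So $f=\mu_{p,q}^3$ preserves $\eta$ and meets your fallback criterion with multiplier $\lambda_+$. Yet $\lambda_1(f)=\lambda_1(\mu_{p,q})^3$, with $\lambda_1(\mu_{p,q})$ given by Theorem~\ref{thm:C}, and this is far larger: for $p=q=-3$ it is about $10.7^3$, against $\lambda_+\approx 6.85$. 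The paper does point to Diller--Roeder for the equality when $p,q\ge 1$, but whatever comparison you invoke must use more than the tropical map.

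What actually makes the tropical prediction correct for $p,q\ge1$ is two facts:
\begin{itemize}
\item[(a)] On the stable model, no internal curve is mapped into the boundary. The contracted internal curves $K^h_\omega=\{x=\omega^{-1}(1+y^q)\}$ and $K^v_\zeta=\{y=\zeta^{-1}\}$ go onto the exceptional curves over $S_p\cup T_q$. Hence the pullback of each boundary curve is a combination of boundary curves whose coefficients are orders of vanishing, i.e.\ tropical data.
\item[(b)] The induced action on the quotient by the span of the boundary classes has spectral radius $1$.
\end{itemize}
Your step~3 gestures at (a) but never verifies it or connects it to the claimed factorization, and (b) is simply asserted. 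In the negative case it is exactly (a) that fails: $K^v_\zeta$ and $H^h_\omega$ are contracted to boundary points and then appear in pullbacks of boundary divisors. Note also that $\{1+x^p=0\}$ is contracted by $\mu^{(2)}_{p,q}$, not by $\mu_{p,q}$.

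The paper supplies these ingredients concretely.
\begin{itemize}
\item For $p,q\ge 2$ one blows up only $S_p\cup T_q$, with no toric blow-ups. The three coordinate lines go to the superattracting fixed point $[0:1:0]$, whose cone already contains the attracting eigendirection. Your step~1 demand that $T$ map rays of $\Sigma$ to rays near the eigendirections cannot be met, since those directions are irrational, and it is not what stability requires.
\item When $\min(p,q)=1$, the point $[0:1:0]$ is indeterminate and two infinitely near blow-ups over it are needed.
\item One then computes $\mu_{p,q}^*(E_Q)=C-E_{\iota(Q)}$, $\mu_{p,q}^*(E_P)=qC-B-E_{\iota(P)}$ and $\mu_{p,q}^*(C)=p(qC-B)-A$, where $A=\sum_{P\in S_p}E_P$ and $B=\sum_{Q\in T_q}E_Q$.
\item The factor $t^2-(pq-2)t+1$ lives on $\langle A,B,C\rangle$, which is the span of the boundary classes $C-B$, $C-A$, $C$. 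The rest of the spectrum is $\pm1$ and $0$, with a size-two Jordan block at $0$ when $\min(p,q)=1$. So your claim of ``only roots of unity'' is inaccurate, though harmless for the spectral radius.
\end{itemize}

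Part (i) is fine. The Chen--Li invariants together with Proposition~\ref{PROP:INV_FIBR_VS_DYN_DEG} are a legitimate shortcut past the paper's explicit conjugation to linear fractional maps on the fibres. One correction: the composite map has order $5$, $3$, $4$ in the $A_2$, $B_2$, $G_2$ cases; the numbers $5,6,8$ are the periods of the sequence of single mutations.
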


Our proof of Theorem \ref{thm:A} is based on finding an algebraically stable dynamics on a smooth projective surface that is birationally conjugate to $\mu_{p,q}$. This dynamics naturally acts by pullback on the Picard group of the surface. Motivated by the type of the corresponding linear pullback operator, we call the corresponding pair $(p,q)$ semi-simple or non-semi-simple. This boils down to a pair $(p,q)$ to be defined as semi-simple if and only if $p,q > 1$. 
\begin{rem}
The statement of Theorem A for the cases $p=q\geq 2$ is known, see  \cite[Example 5.3]{IK}.
\end{rem}

\begin{theoreml}\label{THM:POSPQ_NO_FIBR}
If $p,q \geq 1$ and $pq > 4$ then $\mu_{p,q}$ does not preserve an invariant fibration.   In particular, $\mu_{p,q}$ has no rational conserved quantity.
\end{theoreml}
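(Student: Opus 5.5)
Theorem \ref{THM:POSPQ_NO_FIBR} follows directly from Theorem \ref{thm:A}(ii) together with Proposition \ref{PROP:INV_FIBR_VS_DYN_DEG}. The dynamical degree is a birational conjugacy invariant, and so is the existence of an invariant fibration. We may therefore replace $\mu_{p,q}:\mathbb{CP}^2 \dashrightarrow \mathbb{CP}^2$ by the birationally conjugate map on whatever smooth projective surface is convenient; $\mathbb{CP}^2$ itself already suffices, since it is smooth and projective. By Theorem \ref{thm:A}(ii), for $p,q\geq 1$ with $pq>4$ we have
\begin{align*}
\lambda_1(\mu_{p,q}) = \frac{pq-2+\sqrt{(pq-2)^2-4}}{2}.
\end{align*}
This exceeds $1$: since $pq\geq 5$ we have $pq-2\geq 3$, so $(pq-2)^2-4>0$ and the numerator exceeds $3>2$. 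Proposition \ref{PROP:INV_FIBR_VS_DYN_DEG} then says that $\mu_{p,q}$ admits no invariant fibration.

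For the ``in particular'' clause, suppose $\rho\in\mathbb{C}(x,y)$ is a non-constant rational conserved quantity, meaning $\rho\circ\mu_{p,q}=\rho$. Then $\rho$ defines a dominant rational map $\mathbb{CP}^2\dashrightarrow\mathbb{CP}^1$. Since the paper does not require the fibers to be irreducible, the triple $(\mathbb{CP}^1,\rho,\mathrm{Id})$ is an invariant fibration, which contradicts the previous paragraph.

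All the real difficulty is in Theorem \ref{thm:A}(ii): one has to build an algebraically stable model by blowing up the indeterminacy orbits on the boundary and compute the spectral radius of the pullback on the Picard group. That step is assumed here. The one point to check in the present argument is that Proposition \ref{PROP:INV_FIBR_VS_DYN_DEG} applies to the model on which $\lambda_1$ was computed, and this holds because both sides of the equivalence are birational invariants.

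As an independent check in specific cases, one could instead locate a superattracting periodic point, for example on the exceptional curves of the stable model, and apply Proposition \ref{PROP:INV_FIBR_VS_SUPER_ATTR_FP}. This is not needed for the general statement.
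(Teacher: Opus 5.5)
Your proposal is correct and is the paper's primary argument: Theorem \ref{THM:POSPQ_NO_FIBR} is deduced directly from Theorem \ref{thm:A}(ii) and Proposition \ref{PROP:INV_FIBR_VS_DYN_DEG}, and a rational conserved quantity is treated as the special case $Y=\mathbb{CP}^1$, $g=\mathrm{Id}$ of an invariant fibration. The paper also gives the alternative you mention, via Lemmas \ref{Lemma: Superattracting semi-simple} and \ref{Lemma: superattracting non-semisimple} together with Proposition \ref{PROP:INV_FIBR_VS_SUPER_ATTR_FP}; note that in the semi-simple case the superattracting fixed point is $[0:1:0]\in\mathbb{CP}^2$ itself rather than a point on an exceptional curve, and only when $\min(p,q)=1$ does it lie on the second exceptional curve over $[0:1:0]$.
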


The reader may compare Theorem \ref{THM:POSPQ_NO_FIBR} with Section 2.4 of \cite{CHEN202425} in which Chen and Li prove that for such $p$ and $q$ the mapping $\mu_{p,q}$ does not have a conserved quantity that is in the form of a Laurent polynomial in $x$ and $y$.  Because of Proposition \ref{PROP:INV_FIBR_VS_DYN_DEG}, Theorem \ref{THM:POSPQ_NO_FIBR} follows directly from Theorem \ref{thm:A}.  However, one can also check that if $p,q \geq 1$ and $pq > 4$ then $\mu_{p,q}$ has a superattracting fixed point (either as a self-map of $\mathbb{CP}^2$
or as a self-map of the surface $Y$ obtained by blowing up $\mathbb{CP}^2$ twice); see Lemmas \ref{Lemma: Superattracting semi-simple} and  \ref{Lemma: superattracting non-semisimple}.  Therefore, readers who do not want to compute $\lambda_1(\mu_{p,q})$ can obtain Theorem \ref{THM:POSPQ_NO_FIBR} more directly
by using Proposition \ref{PROP:INV_FIBR_VS_SUPER_ATTR_FP}.

Note that the mappings $\mu_{p,q}$ with $pq \leq 4$ all preserve fibrations, but they are interesting in their own right. We analyze them in \cref{sec:affine_cases} below.

\subsection{Results when $p,q \leq -1$}
In this case the mappings $\mu_{p,q}$ do not arise from cluster algebras, however
these mappings are interesting in their own right, especially because
they still preserve the meromorphic two form $(dx \wedge dy)/(xy)$, thus providing valuable examples of such maps.  Some aspects of these maps are more difficult than those with $p,q \geq 1$, but they can be well-handled using the method of {\em tropicalization} that was developed in \cite{diller_lin_2016} and further discussed in \cite{DR1,DR2}. 

\begin{theoreml} \label{thm:C}
If $p,q\leq -2$  then $\lambda_1(\mu_{p,q})$ is the largest real-root of polynomial equation $$x^4+(-p q -2 )x^3+3 x^2-2 x+1 =0.$$
Moreover, $\lambda_1(\mu_{p,q}) > 1$.  
\end{theoreml}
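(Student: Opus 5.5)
The plan is to follow the tropicalization method of Diller--Lin \cite{diller_lin_2016}, as refined in \cite{DR1,DR2}, which applies because $\mu_{p,q}$ preserves $\eta = dx\wedge dy/(xy)$. Throughout we write $p=-a$, $q=-b$ with $a,b\ge 2$, so that
\[
\mu^{(1)}(x,y)=\Big(\tfrac{y^b+1}{x y^b},\,y\Big),\qquad \mu^{(2)}(x,y)=\Big(x,\,\tfrac{x^a+1}{x^a y}\Big).
\]

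\emph{Step 1 (tropical map and toric model).} We will compute the tropicalization of $\mu_{p,q}$, which is a piecewise-linear map $T:\R^2\to\R^2$. Since $\mu_{p,q}$ is $\eta$-preserving, $T$ is a homeomorphism that is linear on finitely many rational cones. Its break rays come from the tropicalizations of $1+y^{-b}$ and $1+x'^{-a}$, namely the rays along which $\max(0,-bY)$ and $\max(0,-aX')$ switch branches.

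We then choose a complete fan $\Sigma$ with the following two properties:
\begin{itemize}
\item it contains all rays of $T$ and of $T^{-1}$;
\item it is closed under the eventual dynamics of $T$ on rays.
\end{itemize}
For $a,b\ge2$ we expect every ray except finitely many to be attracted to a finite invariant set of rays, and the hope is that this set is small. This gives a smooth toric surface $X_\Sigma$ on which $\mu_{p,q}$ acts on boundary divisors in the way $T$ acts on rays.

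\emph{Step 2 (internal blowups and algebraic stability).} By \cite{diller_lin_2016}, the only remaining obstructions to algebraic stability on $X_\Sigma$ are internal exceptional curves. For $\mu_{p,q}$ these are the curves $\{y^b=-1\}$ and $\{x^a=-1\}$, together with the coordinate axes. Each is contracted to a point in the interior of some toric boundary divisor $D_\tau$, for example a point $\{y=\zeta\}\cap D_\tau$ with $\zeta^b=-1$.

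We will follow the forward orbit of each such point along the boundary, where the action of $\mu_{p,q}$ is given explicitly by monomial restrictions. The goal is to show that after finitely many steps the orbit lands on an indeterminacy point of $\mu_{p,q}^{-1}$, namely the image of the corresponding curve $\{x^a=-1\}$ under the inverse. We then blow up each point in these finite orbit segments (the same number $N_\zeta$ of points for each root $\zeta$). The resulting surface $Y$ should carry an algebraically stable lift $\hat\mu$.

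I expect this to be the main obstacle. It requires two things: that the fan really is closed (there must be no rays with unbounded $T$-orbit that force infinitely many refinements), and that the orbit lengths are finite and independent of $a,b$. I would first compute the cases $(a,b)=(2,2)$ and $(2,3)$ explicitly to identify the pattern, then prove it in general by tracking the orbits of the rays.

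\emph{Step 3 (pullback on $\mathrm{Pic}$).} On $\mathrm{Pic}(Y)$ we take the basis consisting of the boundary divisor classes and the exceptional classes. We write $\hat\mu^*$ as a matrix: on toric classes it is the action induced by $T$, and on the blowup classes it is a permutation, with correction terms where curves are contracted. By Diller--Favre \cite{diller_favre_2001}, $\lambda_1(\mu_{p,q})$ is the spectral radius of this matrix. We then factor its characteristic polynomial as cyclotomic factors times
\[
x^4+(-pq-2)x^3+3x^2-2x+1,
\]
using symmetry between the blocks indexed by the roots $\zeta$ to reduce the determinant computation.

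\emph{Step 4 ($\lambda_1>1$).} Let $P(x)$ denote the quartic above. Then $P(1)=1-(pq+2)+3-2+1=1-pq<0$, since $pq\ge4$, and $P(x)\to+\infty$ as $x\to\infty$. Hence $P$ has a real root greater than $1$. The largest real root of $P$ is therefore greater than $1$, and it dominates the cyclotomic roots, which all have modulus $1$. So it equals the spectral radius, which completes the argument.
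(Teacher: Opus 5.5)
Your overall plan is the paper's: use the tropicalization to choose toric blowups, reach an algebraically stable model, compute the pullback on $\Pic$, and read off the quartic. Your Step 4 is fine, and $P(1)=1-pq<0$ is a clean way to get $\lambda_1>1$. However, Steps 1--3 carry all the content, they rest on predictions that are false, and the quartic is only asserted.

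\textbf{Step 1.} The dynamics of $T$ on rays is not attracted to a finite set of rational rays, so no finite fan is ``closed under the eventual dynamics of $T$ on rays.'' What actually happens, with your $a=-p$, $b=-q$, is a cycle of cones. $T$ maps the cone spanned by $(0,1),(-1,1)$ into the cone spanned by $(-1,0),(-1,-1)$. That cone goes into the cone spanned by $(0,-1),(1,-1)$, which maps onto the first. On the first cone
\[
T^3=\begin{pmatrix}-1&-b\\ a&ab-1\end{pmatrix},
\]
which has determinant $1$ and trace $pq-2$. So it is hyperbolic with irrational eigenrays when $pq>4$, and parabolic when $p=q=-2$. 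What you need is an invariant cycle of two-dimensional cones, not of rays. The three corresponding torus-fixed points then form a cycle of regular points (superattracting when $pq>4$), and every boundary curve is eventually collapsed into it. The paper obtains this with the fan whose rays are $(\pm1,0),(0,\pm1),(-1,-1),(-1,1),(1,-1)$. That fan comes from just four toric blowups: $[1:0:0]$ and $[0:1:0]$, then one torus-fixed point on each of the two new exceptional curves.

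\textbf{Step 2.} The mechanism you expect does not occur, because on that surface $Y$ the internal exceptional curves are not destabilizing:
\begin{itemize}
\item The components $K^v_\zeta$ of $\{y^b=-1\}$ collapse to the torus-fixed point of the cone spanned by $(0,-1),(1,-1)$.
\item The curves $H^h_\omega=(\mu^{(1)})^{-1}\{x=\omega\}$, with $\omega^a=-1$, collapse to $[\omega:0:1]$ on the proper transform of $\{y=0\}$. That curve in turn collapses to the torus-fixed point of the cone spanned by $(-1,0),(-1,-1)$.
\end{itemize}
Both land in the regular $3$-cycle. So trying to show that these orbits reach indeterminacy would fail. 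Also, the terminal point of a destabilizing orbit lies in $\mathcal I(\mu)$, not $\mathcal I(\mu^{-1})$. No internal blowups are needed, and $\Pic(Y)$ has rank $5$, with basis $H,E_3,E_4,E_5,E_6$.

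\textbf{Step 3.} The missing work is the explicit pullback on that basis. For example, the strict transform of $\{x=0\}$ pulls back to $0$, and that of $\{y=0\}$ pulls back to $\sum_\omega[\widehat{H^h_\omega}]+E_3$. The resulting characteristic polynomial is $x\,P(x)$, not cyclotomic factors times $P$. This also shows that toric classes do not pull back ``by the action of $T$.'' The internal curves $\widehat{H^h_\omega}$ are what drive the growth: $\|T^n\|$ grows only like $\rho^{n/3}$, where $\rho$ is the larger root of $x^2-(pq-2)x+1$, which is far below $\lambda_1$.

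\textbf{Step 4.} You must also exclude complex roots of $P$ of larger modulus than its largest real root. This follows because $\lambda_1$ is itself an eigenvalue of $\hat\mu^*$ with a nef eigenvector (Diller--Favre). Hence $\lambda_1$ is a real root of $P$ greater than $1$, and therefore the largest real root.
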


\begin{theoreml}\label{THM:NEGPQ_NO_FIBR} Suppose $p,q \leq -1$ and $pq > 4$ or $p=q=-2$.   Then the mapping $\mu_{p,q}$ does not preserve an invariant fibration.
\end{theoreml}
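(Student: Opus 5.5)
The plan is to treat the two regimes in the hypothesis differently. For $p,q\le -2$ we have Theorem~\ref{thm:C}, which gives $\lambda_1(\mu_{p,q})>1$; by Proposition~\ref{PROP:INV_FIBR_VS_DYN_DEG}, $\mu_{p,q}$ then admits no invariant fibration. This already covers $p=q=-2$, and also every pair with $p,q\le -2$ and $pq>4$. What remains are the pairs with $pq>4$ where one of $p,q$ equals $-1$, i.e. $(p,q)=(-1,q)$ with $q\le -5$, or $(p,-1)$ with $p\le -5$.

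First I would reduce the remaining pairs to a single family. By \cref{rem:pgeqq}, $\mu_{p,q}^{-1}$ is birationally conjugate to $\mu_{q,p}$. A map and its inverse have the same invariant fibrations (with $g$ replaced by $g^{-1}$, and $g$ is an automorphism of $Y$ because $f$ is birational), and invariant fibrations pass through birational conjugacy. So it suffices to treat $p=-1$, $q\le -5$. Writing $m=-q\ge 5$, the map is
\[
\mu_{-1,q}(x,y)=\left(\frac{y^m}{x(1+y^m)},\ \frac{x^2(1+y^m)}{y\,\bigl(x(1+y^m)+y^m\bigr)}\right).
\]

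For this family I would follow the tropicalization method of \cite{diller_lin_2016,DR1,DR2}, as used for Theorem~\ref{thm:C}. The steps are as follows. Compute the tropical map of $\mu_{-1,q}$, a piecewise-linear map on $\R^2$. Build a toric surface whose fan refines the domains of linearity, blowing up further where needed. Verify algebraic stability by checking that no exceptional curve lands in the indeterminacy locus. Then read off $\lambda_1$ as the spectral radius of the induced action on the Picard group, showing it is a root larger than $1$ of an explicit polynomial depending on $m$. Proposition~\ref{PROP:INV_FIBR_VS_DYN_DEG} would then finish the proof.

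As a backup, and probably a shortcut, I would look for a superattracting periodic point and apply Proposition~\ref{PROP:INV_FIBR_VS_SUPER_ATTR_FP}. This is analogous to Lemmas~\ref{Lemma: Superattracting semi-simple} and~\ref{Lemma: superattracting non-semisimple}: the large exponent $m$ suggests that near a point at infinity, or near a coordinate axis in a suitable chart of a toric blowup, the map contracts like $t\mapsto t^{m-k}$. Concretely, I would compute the tropical map and look for a ray direction $v$ with $\mathrm{trop}(\mu)(v)=v$ whose associated torus-invariant point is fixed with a nilpotent (or zero) derivative in local coordinates. Positivity of $m-k$ should be exactly where the inequality $pq>4$ enters.

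The main obstacle is the edge case $p=-1$. There the tropicalization degenerates, and the algebraic-stability verification (or the search for a genuine superattracting point, as opposed to merely an attracting one) may need extra blowups at non-toric points where exceptional curves land. I would try first to find the superattracting point directly in the toric chart corresponding to the expanding eigendirection of the linearized tropical map.
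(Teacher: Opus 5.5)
Your treatment of $p,q\le -2$, including $(-2,-2)$, is exactly the paper's: Theorem~\ref{thm:C} combined with Proposition~\ref{PROP:INV_FIBR_VS_DYN_DEG}. Your reduction of the remaining cases to $p=-1$, $q\le -5$ is also sound. Your displayed formula is wrong, however: with $q=-m$ one gets $\mu_{-1,q}(x,y)=\bigl(\frac{1+y^m}{xy^m},\frac{1+y^m+xy^m}{y(1+y^m)}\bigr)$, and this matters as soon as you tropicalize.

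For $p=-1$ the paper takes your \emph{backup} route, not your main one. It applies Proposition~\ref{PROP:INV_FIBR_VS_SUPER_ATTR_FP} to a superattracting periodic point that appears after seven torus-invariant blowups (Remark~\ref{rem: p=-1, q<-4}). It deliberately avoids computing $\lambda_1$: for $\tilde p=1$ the model $Y$ is not stable, because the curves $\widehat{K^v_\zeta}$ fall into destabilizing orbits (see \eqref{eq: 46}). So your main plan would need non-toric blowups and a Picard computation you have not done. The claim that $\lambda_1$ is ``a root larger than $1$'' is precisely what is missing.

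\textbf{The actual gap.} For this case, exhibiting the superattracting point is the whole proof, and your concrete recipe for finding it cannot succeed. First, rays correspond to torus-invariant curves, while points correspond to two-dimensional cones (Lemma~\ref{lemma:Diller_trop}). More importantly, the tropical map $A$ has no invariant ray at all. With $\tilde p=-p$ and $\tilde q=-q$, $A$ is linear on four sectors:
\begin{itemize}
\item on $\{b\ge 0,\ a\le -\tilde q b\}$ the matrix is $\bigl(\begin{smallmatrix}-1&-\tilde q\\ \tilde p&\tilde p\tilde q-1\end{smallmatrix}\bigr)$;
\item on $\{b\ge 0,\ a\ge -\tilde q b\}$ it is $\bigl(\begin{smallmatrix}-1&-\tilde q\\ 0&-1\end{smallmatrix}\bigr)$;
\item on $\{a\le 0,\ b\le 0\}$ it is $\bigl(\begin{smallmatrix}-1&0\\ \tilde p&-1\end{smallmatrix}\bigr)$;
\item on $\{a\ge 0\ge b\}$ it is $-I$.
\end{itemize}
The last three pieces have only the eigenvalue $-1$. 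The positive eigenvectors of the first piece satisfy $a=-\tilde q b/(1+t)>-\tilde q b$ for $b>0$, so they lie outside its sector. Hence there is no fixed ray, and no torus-invariant fixed point of the kind you propose to look for.

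What $A$ does have is an attracting period-three orbit of rays of irrational slope. On the sector $\{a\le 0,\ (\tilde p-1/\tilde q)a\le b\le 0\}$ the itinerary is always third, fourth, then second quadrant. There $A^3$ is linear with matrix $\bigl(\begin{smallmatrix}\tilde p\tilde q-1&-\tilde q\\ \tilde p&-1\end{smallmatrix}\bigr)$, whose trace is $\tilde p\tilde q-2>2$; this is where $pq>4$ genuinely enters.

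So the search must be for a cycle of cones, carried out as follows:
\begin{enumerate}
\item Blow up torus-invariant points until some two-dimensional cone $\sigma$ around this ray satisfies $A^3(\sigma)\subset\sigma$, with both boundary rays sent into its interior.
\item Then $\mu^3$ collapses both transverse boundary curves through $P_\sigma$ to $P_\sigma$, which forces $D\mu^3_{P_\sigma}=0$. This is the argument of Lemma~\ref{Lemma: Superattracting negative pair}, where the cycle is $\sigma_{10}\to\sigma_5\to\sigma_8$.
\item You must still check, via Remark~\ref{REM:FINDING_FIXEDPTS}, that $P_\sigma\notin\mcI(\mu^3)$. This is a real issue for $\tilde p=1$, because exceptional curves such as $K^v_\zeta$ meet the torus; cf.\ \eqref{eq: 46}.
\end{enumerate}
According to the paper, reaching such a cone when $\tilde p=1$ takes seven blowups, compared with four when $\tilde p,\tilde q\ge 2$.
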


When $p,q \leq -2$, Theorem \ref{THM:NEGPQ_NO_FIBR} follows directly from Theorem \ref{thm:C}, using Proposition \ref{PROP:INV_FIBR_VS_DYN_DEG}.
When $p$ or $q$ equals~$-1$ computation of the dynamical degree seems rather involved, however if $pq > 4$
one can find a blow-up of $\mathbb{CP}^2$ on which $\mu_{p,q}$  has a superattracting periodic point; see \cref{Lemma: Superattracting negative pair} and Remark \ref{rem: p=-1, q<-4}.  Therefore, in that case, Proposition~\ref{PROP:INV_FIBR_VS_SUPER_ATTR_FP} gives the result of Theorem \ref{THM:NEGPQ_NO_FIBR}.  (In fact, Theorem \ref{THM:NEGPQ_NO_FIBR} can be obtained from Proposition \ref{PROP:INV_FIBR_VS_SUPER_ATTR_FP} in all cases except for $(p,q) = (-2,-2)$).

\begin{rem} We have computed $\lambda_1(\mu_{-1,-2})$ to equal the leading root of the polynomial
$x^6-4x^5+3x^4-2x^3+3x^2-4x+1$, which is larger than one.  The computation is quite cumbersome, so we do not include it in  this paper, however, we have also done numerical experiments in an attempt to verify it.  The cases
$(p,q) = (-1,-3)$ and $(-1,-4)$ seem to be even harder to determine the dynamical degree, but the same
numerical experiments indicate that we probably have $\lambda_1(\mu_{p,q}) > 1$ in those cases as well.
Therefore, we expect that $\mu_{-1,q}$ also does not preserve a fibration when $q=-2,-3,$ or $-4$.  We describe the numerical experiment in Section \ref{subsec:NUM_APPROX}.
\end{rem}

\begin{rem}
One can also consider the case when $p$ and $q$ have opposite signs ($pq < 0$).   We do not explore this
cases systematically in our paper.   However, we expect that the methods used here can be used to prove results similar
to Theorems A, B, C, and D in the cases $pq < 0$.
\end{rem}

\subsection{Ergodic Theory and Lyaponov Exponents of the maps.}
    
Our final result concerns the ergodic theory of the mappings $\mu_{p,q}$ and it is relevant for $p,q \geq 1$ as well as for $p,q \leq -1$.

\begin{theoreml}\label{thm:Ergodic_properties}
    Suppose  $p,q\leq -2$ or $p,q>0$ s.t. $pq > 4$  and let $\lambda_1 \equiv \lambda_1(\mu_{p,q}) > 1$ denote the dynamical degree of $\mu_{p,q}$,
    as expressed in Theorem \ref{thm:A}.   The mapping $\mu_{p,q}: \mathbb{CP}^2 \dashrightarrow \mathbb{CP}^2$ has
    \begin{itemize}
        \item[(i)] Topological entropy equal to $\log(\lambda_1) > 0$.
        \item[(ii)] An invariant measure $\nu$ of maximal entropy; i.e. whose measure-theoretic entropy equals $\log(\lambda_1)$.
        \item[(iii)] The Lyapunov exponents $\chi_+(\nu,\mu_{p,q})$ and $\chi_-(\nu,\mu_{p,q})$ of $\nu$ under $\mu_{p,q}$ satisfy
        \begin{align*}
           \chi_+(\nu,\mu_{p,q}) \geq \log(\lambda_1) / 2 > 0 > - \log(\lambda_1) / 2 \geq \chi_+(\nu,\mu_{p,q}).
        \end{align*}
        \item[(iv)] Saddle-type periodic orbits for $\mu_{p,q}$ are asymptotically equidistributed with respect to $\nu$.
        \item[(v)] $\nu$ assigns zero mass to any pluripolar set, in particular $\nu(C) = 0$ for any algebraic curve $C \subset \mathbb{CP}^2$.
        
    \end{itemize}
\end{theoreml}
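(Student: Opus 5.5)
The plan is to deduce Theorem~\ref{thm:Ergodic_properties} from the general ergodic theory of birational surface maps. The key result is the theorem of Diller--Dujardin--Guedj (building on Bedford--Lyubich--Smillie, Dujardin, and Diller--Favre). Suppose $f:X\dashrightarrow X$ is an algebraically stable birational map of a smooth rational surface with $\lambda_1(f)>1$, and suppose the \emph{energy condition} holds. That condition is the finiteness of the sum $\sum_n \lambda_1^{-n}\log\operatorname{dist}(f^n(p),I(f))$ over points $p\in I(f^{-1})$, or equivalently finite energy of the Green currents $T^\pm$. Under these hypotheses the wedge product $\nu=T^+\wedge T^-$ is a well-defined invariant probability measure. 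This $\nu$ is mixing, has entropy $\log\lambda_1$, and has Lyapunov exponents bounded as in (iii). Saddle periodic points equidistribute to $\nu$, and $\nu$ charges no pluripolar set. Together with the upper bound $h_{\rm top}\le\log\lambda_1$ of Dinh--Sibony, this gives (i). Each of (i)--(v) is invariant under birational conjugacy once we work on the model away from the indeterminacy and exceptional curves, which have $\nu$-measure zero by (v). So it suffices to verify the hypotheses on a birational model of $\mu_{p,q}$.

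First, I take the algebraically stable model $\hat\mu_{p,q}:X\dashrightarrow X$ already constructed in the proofs of Theorem~\ref{thm:A} (for $p,q\ge1$, $pq>4$) and Theorem~\ref{thm:C} (for $p,q\le-2$). On $X$ we have $\lambda_1>1$, so the Green currents $T^\pm$ exist as positive closed currents. This existence is due to Diller--Favre.

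Second, I verify the energy condition. The sufficient condition I will aim for is the Bedford--Diller criterion: for each $p\in I(\hat\mu^{-1})$, the forward orbit $\hat\mu^n(p)$ must not approach $I(\hat\mu)$ too fast, meaning $\operatorname{dist}(\hat\mu^n(p),I(\hat\mu))\ge C e^{-c n}$. The plan is to track the finitely many orbits of points of $I(\hat\mu^{-1})$ explicitly. In the blown-up models these orbits typically land on invariant toric boundary curves ($x=0$, $y=0$, or the line at infinity and the exceptional divisors). There I expect them either to be eventually fixed or periodic, or to converge to an attracting or superattracting point that is a positive distance from $I(\hat\mu)$. Lemmas~\ref{Lemma: Superattracting semi-simple} and~\ref{Lemma: superattracting non-semisimple} describe exactly such superattracting points. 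If an orbit stays a positive distance from $I(\hat\mu)$, the sum is trivially finite.

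Third, I pass back to $\mathbb{CP}^2$. Because $\nu$ puts no mass on curves, $\nu$ pushes forward under the birational morphism $X\to\mathbb{CP}^2$ to an invariant measure with the same entropy, exponents, and equidistribution of saddle orbits. Topological entropy is likewise computed on the complement of a curve.

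The main obstacle is the second step. I must check, case by case (semisimple, non-semisimple, and negative $p,q$), that the orbits of the indeterminacy points of $\hat\mu^{-1}$ avoid or only slowly approach $I(\hat\mu)$. The preserved form $\eta=dx\wedge dy/(xy)$ should help here: since $\eta$ is preserved, $\hat\mu$ maps the polar divisor $\{xy=0\}\cup\{\text{boundary}\}$ to itself, so the relevant orbits stay in an explicit one-dimensional boundary configuration. On that configuration the dynamics reduces to monomial or linear maps of $\mathbb{P}^1$, whose orbits can be computed directly. Alternatively, for the toric model with $p,q\le-2$, I would use the tropicalization framework of \cite{diller_lin_2016,DR1}, which I expect already includes the energy condition (or an analogue) for maps preserving $\eta$.
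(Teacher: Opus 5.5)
Your proposal is correct and follows essentially the same route as the paper. Both arguments pass to the algebraically stable models ($X_{p,q}$, resp.\ $Y$) and observe that the orbits of $I(\hat\mu^{-1})$ are finite: the fixed point over $[0:1:0]$ when $p,q>0$, and the points $[\omega:0:1]'$ feeding into the superattracting $3$-cycle $P_{\sigma_5},P_{\sigma_8},P_{\sigma_{10}}$ when $p,q\le -2$. These orbits therefore stay a positive distance from $I(\hat\mu)$, and both arguments then invoke Diller and Diller--Dujardin--Guedj.

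The differences are minor. The paper checks the stronger \emph{separating} condition (forward orbit closure of $I(f^{-1})$ disjoint from the backward orbit closure of $I(f)$) and uses Diller--Dujardin--Guedj's corollary that separating maps have finite energy. You instead verify the weaker Bedford--Diller summability condition directly. Your remarks on descending from the blown-up model to $\mathbb{CP}^2$ address a point the paper leaves implicit.
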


The meanings of the terms used in  \cref{thm:Ergodic_properties} will be explained in \cref{subsec:entropy_of_the_mappings}.   Using the vast number of powerful results
on the ergodic properties of birational mappings mentioned earlier in the introduction, \cref{thm:Ergodic_properties} will follow easily from the fact that $\lambda_1(\mu_{p,q}) > 1$ after checking a few additional technical details.   We present it here as an illustration of what the methods of holomorphic dynamics in several variables can achieve when applied to a rational mapping whose motivations came from another area.

\subsection{Further discussion on ruling out conserved quantities}  
Theorem \ref{thm:Ergodic_properties} can also be used to rule out a rational
conserved quantity $\rho: \mathbb{CP}^2 \dashrightarrow \mathbb{CP}^1$.  More specifically, for $\nu$-almost any initial condition $(x_0,y_0)$ it follows from Birkhoff's Ergodic Theorem that 
\begin{align*}
\overline{\mathcal{O}^+(x_0,y_0)} = \overline{\left\{\mu_{p,q}^n(x_0,y_0) \, : \, n \geq 0\right\}}
\end{align*}
has full $\nu$-measure.  On the other hand, as long as $(x_0,y_0)$ was not chosen as one of the finitely many indeterminate points for $\rho$, then $\overline{\mathcal{O}^+(x_0,y_0)}$ would be a subset of the algebraic curve $C = \rho^{-1}(\rho(x_0,y_0)) \subset \mathbb{CP}^2$, which has $\nu$-measure $0$ by Part (v) of Theorem  \ref{thm:Ergodic_properties}.
Note that the discussion above also rules out the possibility of an algebraic function serving as a conserved quantity since the level sets would also be unions of finitely many algebraic curves.   

Using Part (iii) of Theorem \ref{thm:Ergodic_properties}, one can actually conclude even stronger
statements about the absence of conserved quantities.  Because the Lyaponov exponents are bounded away from zero, Pesin's Theorem (see, e.g., \cite[Ch. 21]{Katok_Hasselblatt}) implies that a set of arbitrarily large $\nu$ measure will have stable and unstable manifolds of a definite size.   Any reasonable notion of conserved quantity would need to be constant on the union of all of these stable and unstable manifolds, which would place serious restrictions on the regularity of the conserved quantity.

\begin{rem}
Machacek and Ovenhouse  \cite[Problem 4.1]{machacek2024discrete} also ask if $\mu_{p,q}$ has a conserved quanitity for real values of the exponents $p$ and $q$.  When $p$ and $q$ are not integers this is beyond the scope of the methods in this paper and it remains an interesting question.
\end{rem}

\subsection{Structure of the paper}\hfill

In Section \ref{sec:preliminaries}, we fix the notations and recall the basic notions and facts regarding rational maps of smooth projective surfaces, their algebraic stability and entropy. We also recall the notions of divisor, Picard group and blowup.

In Section \ref{SEC:GEN_WAYS_RULE_OUT_FIB}, we give proofs of Propositions \ref{PROP:INV_FIBR_VS_DYN_DEG} and \ref{PROP:INV_FIBR_VS_SUPER_ATTR_FP}. Each of these propositions gives a sufficient condition on a birational map to have no invariant fibration. We use these results further in the text.

In Section \ref{sec:affine_cases} we discuss two pairs $(p,q) = (2,2), (4,1)$ for which $\mu_{p,q}$ admits a rational conserved quantity. We explicitly integrate the dynamics $\mu_{p,q}$ in these cases and realize this dynamics as a finite-time evolution along the Hamiltonian flow generated by the conserved quantity.

In \cref{sec:stable_model}, we study the critical dynamics of the map $\mu_{p,q}:\CP^2\dashrightarrow \CP^2$. Moreover, by blowing up $\CP^2$ a finite number of times  we construct a surface $X_{p,q}$ (for $p,q\in \Z_{>0}$ with $pq>4$) such that the lift map $\mu_{p,q}:X_{p,q}\dashrightarrow X_{p,q}$ is algebraically stable. We use this stable model in \cref{sec:picard}. We also show that the map $\mu_{p,q}$ (for $p,q\in \Z_{>0}$ with $pq>4$) has a superattracting fixed point on $X_{p,q}$ (see Lemma \ref{Lemma: Superattracting semi-simple} and Lemma \ref{Lemma: superattracting non-semisimple}). Together with Proposition \ref{PROP:INV_FIBR_VS_SUPER_ATTR_FP} this proves Theorem \ref{THM:POSPQ_NO_FIBR}. 

In \cref{sec:picard}, we compute the map $\mu^*_{p,q}\in \mathrm{End}_{\Z}(\Pic(X_{p,q}))$ induced by the dynamics $\mu_{p,q}$ on the Picard group $\Pic(X_{p,q})$ for the pairs $(p,q)\in \Z_{>0}$ such that $pq>4$. We also compute the spectral radius of (the complexification of) $\mu^*_{p,q}$, which yields the dynamical degree of $\mu_{p,q}$. In particular, this proves the second part of the Theorem \ref{thm:A} which together with Proposition \ref{PROP:INV_FIBR_VS_DYN_DEG} implies Theorem \ref{THM:POSPQ_NO_FIBR}.

In \cref{sec:tropicalization}, we compute the tropicalizations for the map $\mu_{p,q}$ to better understand the stable model for the mapping obtained in \cref{sec:stable_model}.

In \cref{sec:negative_pairs}, we study the mapping $\mu_{p,q}$ when $p,q<0$ using the results proved in the previous sections. We prove Theorems \ref{thm:C} and \ref{THM:NEGPQ_NO_FIBR} using approaches similar to the ones used in Sections \ref{sec:stable_model} and \ref{sec:picard}.

In \cref{sec:entropy_and_futher_dynamical_consequences}, using the results from \cite{dillerInvariantMeasureLyapunov2001} and \cite{DillerDynamicsMeromorphicMaps2009b}, we give the proof of \cref{thm:Ergodic_properties}.

In \cref{sec:computer_experiment}, we give a computer simulation of the orbits of $\mu_{-1,-5}$, showing a phenomenon which is resembling KAM phenomenon.

Section \ref{sec:preliminaries} may be better consulted upon need than read completely.
Sections \ref{sec:affine_cases}, \ref{sec:tropicalization}, \ref{sec:entropy_and_futher_dynamical_consequences} and \ref{sec:computer_experiment} are rather additional to the Sections \ref{SEC:GEN_WAYS_RULE_OUT_FIB}, \ref{sec:stable_model}, \ref{sec:picard}, \ref{sec:negative_pairs} which constitute the kernel of the paper.

\subsection*{Acknowledgments} This work originated as a result of Michael Shapiro telling the fourth author about the paper of Machacek and Ovenhouse \cite{machacek2024discrete}. We are very grateful to him for this. 
We are grateful to Richard Birkett and Jeffrey Diller for many valuable conversations.  We are also grateful for Tien-Cuong Dinh for clarifying issues about maps
that preserve a fibration.
The first author thanks Mikhail Beshtein and Anton Dzhamay for useful discussions. The first author thanks SISSA and BIMSA for hospitality.  The work of Kalidindi, Quintero, and Roeder was partially supported by NSF grant DMS-2154414.

\section{Preliminaries}\label{sec:preliminaries}

In this section we recall standard definitions such as a rational map, Picard group, blowup of a surface and algebraic stability of a birational transformation of a surface. The reader can refer to \cite{shafarevich1994basic}, \cite{diller_favre_2001} for the details. We also recall some relevant statements for the process of computing dynamical degrees.  The last subsection contains details about the ergodic theory of rational maps.

Further, if not clarified, we assume that topological terms refer to the Zariski topology. Everywhere in this paper, the varieties/manifolds are defined over $\C$/are complex-analytic. 
Let $X$ be a quasiprojective variety. For a set of regular functions (or homogeneous polynomials in homogeneous coordinates on projective space) $\{\al_1,\dots, \al_N\}\subset \C[X]$ we denote the common vanishing locus of $\al_1,\dots, \al_N$ by $V(\al_1,\dots, \al_N) = \{x\in X |\ \al_1(x) = \dots = \al_N(x) = 0\}$. For any subset $Y \subset X$ we denote by $\mathbb{I}(Y)$ the ideal of regular functions (or forms in case of projective $X$) vanishing on $Y$.

\subsection{Rational maps.}

Let $X, Y$ be two irreducible quasiprojective varieties. A rational map $f :X \dashrightarrow Y$ is a class of regular maps $f:U\to Y$ (here $U\subset X$ is a non-empty open subset) by the relation given by
$$f \sim \tilde{f} \Leftrightarrow f \textit{ and } \tilde{f} \textit{ coincide on the intersection of their 
domains}.$$ There exists a representative $f:U_f \to Y$ such that its domain contains the domains of all the other representatives in the given class. The set $U_f$ is called domain of $f$ and the complement $\mcI(f) = X\backslash U_f$ is called indeterminacy locus of $f$.

Assume that $X\subset \CP^n,\; Y\subset \CP^m$. For a rational function $f:X \dashrightarrow Y$ any representative can be locally written as
\begin{equation}\label{eqn:rat_rep}
    f([x_0:\ldots :x_n]) = [f_{0}(x_0,\dots, x_n):\ldots :f_m(x_0,\dots, x_n)].    
\end{equation}
Here the $x_i$'s are homogeneous coordinates on $\CP^n$ and $f_j$'s are homogeneous polynomials of the same degree. A map $g: [x_0:\dots:x_n] \mapsto [g_{0}(x_0,\dots, x_n):\dots :g_m(x_0,\dots, x_n)]$ is another local representative of $f$ if and only if $f_j g_i = f_i g_j$ on $X$ for all $i,j$. 
In the case $X = \CP^n$ there is a unique representative of the form \eqref{eqn:rat_rep} such that $f_1,\dots, f_m$ are coprime. In that case we have $\mcI(f)=V(f_0,\dots, f_m)$. The degree of $f_j$ (for any $j$) is called the algebraic degree of $f$ we denote it by $\deg(f)$.
The image of a rational map is defined as the closure of $f(U_f)$ in $Y$ and we denote it by $f(X)$. A rational map $f:X\dashrightarrow Y$ is called dominant if $f(X) = Y$. If a rational map $f:X\dashrightarrow Y$ is dominant, then for any rational map $g:Y \dashrightarrow Z$ the composition $g\circ f$ is well-defined.
A dominant rational map $f:X\dashrightarrow Y$ is called birational if there is a dominant rational map $g:Y \dashrightarrow X$ which is a two-sided inverse of $f$.
The critical set of a rational map is given by $\mcC(f) = \overline{\{x\in U_f\;|\; d_{x}f \textit{ is not surjective}\}}$. 
 For an irreducible curve $C\subset X$, the map $f|_{C}:C \dashrightarrow X$ is rational. If the image of this map is a point, $f|_{C}(C) = f(C\backslash \mcI(f)) = \{P\}$, the curve $C$ is said to be collapsed by $f$ to the point $P$.
 
    The following statement is very well-known (see \cite[Proposition 3.3]{MR1422105} for dimension $2$ or more recently \cite[Proposition 3.2]{alonso2023}). 
\begin{lemma}\label{lemma:jac_crit}
    If a map $f:X\dashrightarrow Y$ is birational and $X$ and $Y$ are smooth projective surfaces, then the critical set of $f$ coincides with the union of all irreducible curves in $X$ collapsed by $f$ to points in $Y$.
\end{lemma}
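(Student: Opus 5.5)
We have to prove Lemma \ref{lemma:jac_crit}: for a birational map $f:X\dashrightarrow Y$ between smooth projective surfaces, the critical set $\mcC(f)$ is the union of the irreducible curves collapsed by $f$. The plan is to prove the two inclusions separately, working on the domain $U_f$. Since $X$ is a smooth surface, $\mcI(f)$ is a finite set of points, so taking closures costs nothing in codimension one. First, on $U_f$ the map $f$ is a morphism between smooth surfaces, and locally the critical set is the zero locus of the Jacobian determinant $\det d_xf$. Hence $\mcC(f)$ is either empty or a curve, and it is a union of irreducible curves $C$ with $C\cap U_f\neq\emptyset$.

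\emph{Collapsed curves are critical.} Let $C$ be collapsed to a point $P$. At a smooth point $x\in C\cap U_f$, the tangent line $T_xC$ is sent by $d_xf$ to the tangent space of the constant map $f|_C$, so $d_xf(T_xC)=0$. Thus $d_xf$ has a kernel and is not surjective, so $x$ lies in the critical locus. Taking the closure gives $C\subset\mcC(f)$.

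\emph{Critical curves are collapsed.} Let $C\subset\mcC(f)$ be an irreducible curve and suppose $C$ is not collapsed. Let $g=f^{-1}$. The set $E$ of points of $Y$ that are images of curves collapsed by $f$ is finite, because such curves lie in the zero locus of the Jacobian, which has finitely many components. The indeterminacy set $\mcI(g)$ is also finite. Since $f(C)$ is a curve, a generic point $x\in C\cap U_f$ satisfies two conditions:
\begin{itemize}
\item[(a)] $y=f(x)\notin \mcI(g)\cup E$;
\item[(b)] $g\circ f=\mathrm{id}$ holds on a neighborhood of $x$.
\end{itemize}
Condition (b) is justified as follows. The identity $g\circ f=\mathrm{id}$ holds as rational maps, and both sides are regular near $x$, since $g$ is regular near $y$. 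The locus where two morphisms agree is closed, so by continuity they agree on a neighborhood of $x$. The chain rule then gives $d_yg\circ d_xf=\mathrm{id}$, so $d_xf$ is invertible. This contradicts $x\in\mcC(f)$ when we choose $x$ generic in $C$ rather than in the closure boundary: $\mcC(f)\cap U_f$ is the Jacobian zero set, so every point of $C\cap U_f$ is critical. Hence $C$ is collapsed.

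The main obstacle is the regularity of $g$ at $f(x)$. It uses that $\mcI(g)$ is finite, which relies on the smoothness (normality) of $Y$ in dimension two, and that a non-collapsed curve has image meeting $\mathrm{dom}(g)$ in a dense set. Both facts are standard, and one can also cite \cite[Proposition 3.3]{MR1422105} for them.
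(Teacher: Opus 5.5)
Your proof is correct. The paper itself does not prove this lemma: it quotes it as well known and refers to \cite[Proposition 3.3]{MR1422105} and \cite[Proposition 3.2]{alonso2023}, so your argument is a self-contained substitute for the citation. Its two halves are:
\begin{itemize}
\item a collapsed curve is critical, because $d_xf$ annihilates $T_xC$;
\item conversely, at any $x\in U_f$ with $f(x)\notin\mcI(f^{-1})$, the identity $f^{-1}\circ f=\mathrm{id}$ holds as morphisms near $x$ (the agreement locus is closed and dense), so the chain rule makes $d_xf$ invertible. A non-collapsed $C$ has infinitely many image points while $\mcI(f^{-1})$ is finite, so some point of $C\cap U_f$ would fail to be critical.
\end{itemize}

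The citation is shorter. Your route is elementary and also yields the inclusion $\mcC(f)\cap U_f\subset (f|_{U_f})^{-1}(\mcI(f^{-1}))$. In other words, every critical curve is collapsed onto a point of $\mcI(f^{-1})$, which is one direction of the fact stated without proof in Section~\ref{subsec:alg_stab}.

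Two small remarks. First, you can drop the auxiliary finite set $E$ and the word ``generic'', since condition (b) holds at every $x\in U_f$ with $f(x)\notin\mcI(f^{-1})$. Second, your first step tacitly needs $\det df\not\equiv 0$, so that $\mcC(f)$ is a curve and not all of $X$. This follows from the same chain-rule argument applied on the nonempty open set $U_f\cap f^{-1}(\mathrm{dom}(f^{-1}))$.
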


\subsection{Superattracting periodic points.}\label{SEC:SUPERATTR}

\begin{defi}\label{def:aff_superattrfp}
    Let $V$ be an open subset of $\mathbb{C}^n$ containing $0$ and let $F:V \to \mathbb{C}^n$ be a holomorphic map with $F(0)=0$. We say that the point $0$ fixed by $F$ is superattracting if 
   \begin{equation*}
       DF_0=0 \in \mathrm{Mat}_{n\times n}(\C).
   \end{equation*}
\end{defi}

\begin{defi}
    Let $f: X \dashrightarrow X$ be a rational map over a smooth projective variety and let $P \in X\setminus\mathcal{I}(f)$ be a fixed point of $f$. We say that $P$ is a superattracting fixed point of $f$ if there exists an analytic chart $(U,\varphi)$ centered at $P$ such that the induced holomorphic map
    \[
    F = \varphi \circ f \circ \varphi^{-1} : \phi(U) \rightarrow \mathbb{C}^n,
    \]
    has $0$ as a superattracting fixed point in the sense of \cref{def:aff_superattrfp}. We say that $P$ is a superattracting periodic point if for some $n\in \mathbb{Z}_{>0}$, $P$ is a superattracting fixed point for $f^n$.
\end{defi}

\subsection{Picard group.}\label{sec:pic_gp_def}

Let $X$ be a smooth quasiprojective surface. A divisor $D$ on $X$ is a formal linear combination of irreducible curves 
$C_i \subset X$,
$$D = \sum_{i=1}^{k}m_iC_i.$$
Here $m_i \in \Z$ for all $i$. If all $m_i\neq 0$, the union $\bigcup\limits_{i} C_i$ is called support of $D$ and is denoted $\mathrm{Supp}(D)$. A divisor $D$ is called prime if $D = C$ for some irreducible curve $C$.

The divisor $\left<\varphi\right>$ of a rational function $\varphi\in \C(X)^{\times}$ is defined as follows. Given an irreducible curve $C\subset X$, there is an open affine subset $U\subset X$ such that $\mathbb{I}(C\cap U) = (\al)$ for some $\al \in \C[U]^{\times}$. Then writing $\displaystyle\varphi|_{U} = \frac{g}{h},\;\;g,h\in \C[U]^{\times}$ there exist unique $m_g, m_h\in\Z_{\geq 0}$ such that $g\in (\al^{m_{g}})\backslash(\al^{m_{g}+1})$ and $h\in (\al^{m_{h}})\backslash(\al^{m_{h}+1})$. Then the coefficient of $C$ in the divisor of $\varphi$ is defined to be $m_g-m_h$ and is denoted by $\nu_C(\varphi)$.

For a cover $\{U_i\}$ of $X$ by affine open sets and a set of non-zero rational functions $\{\varphi_i\},\;\; \varphi_i\in \C(U_i)$ such that $\displaystyle\frac{\varphi_i}{\varphi_j}$ is regular and does not vanish on $U_i\cap U_j$ one constructs a divisor as follows,
$$
D = \suml_{C}\nu_C(\varphi_i)C,
$$
where $C$ runs over all irreducible curves and for each $C$ the index $i$ is chosen in a way so that $C \cap U_i\neq \varnothing$. Any divisor can be obtained in this way.

Denote the group of divisors on $X$ by $\mathrm{Div}(X)$. Divisors of non-zero rational functions on $X$ form a subgroup $\mathrm{Div}_{p}(X)$ of the group $\mathrm{Div}(X)$.  Two divisors differing by a divisor of a rational function are called linearly equivalent. The quotient group
\begin{equation}
    \Pic(X) = \mathrm{Div}(X)/\mathrm{Div}_{p}(X),
\end{equation}
is called the Picard group.

Let $f:X \to Y$ be a regular dominant map between two smooth quasiprojective surfaces. A map $f^*:\Pic(Y) \to \Pic(X)$ is defined as follows.
Let $D$ be a divisor defined by a cover $\{U_i\}_{i\in I}$ and a system of functions $\{\varphi_i\}_{i\in I}$,  $\varphi_i \in \C(U_i)^{\times}$ for all $i$. Then the pullback $f^*(D)$ is the divisor defined by the cover $\{f^{-1}(U_i)\}_{i\in I}$ and the system of functions $\{\varphi_i\circ f\}_{i\in I}$. 

Now assume that $f:X \dashrightarrow Y$ is a dominant rational map between two smooth projective surfaces. The set $\mathcal{I}(f)$ of irregular points of $f$ has codimension $2$ and therefore is finite. Then one can define the pullback of $f$ as composition of the pullback of the regular map $f|_{X\backslash \mathcal{I}(f)}$ and the isomorphism $\Pic(X\backslash \mathcal{I}(f)) \cong \Pic(X)$.
The following lemma from \cite{kaschner2017complex} provides a more precise interpretation of the operator $f^*:Pic(Y)\to Pic(X)$ in the rational setting.
\begin{lemma}\cite[Lemma 2.1]{kaschner2017complex}
    Let $f:X \dashrightarrow Y$ be a dominant rational map between two smooth projective surfaces. Suppose $C\subset Y$ is an irreducible algebraic curve. Then,
    \begin{equation}\label{eqn:curve_pullback}
 f^{*}(C) = \suml_{D}m_DD,   
\end{equation}
where $D$ runs over irreducible components of $f^{-1}(C)=\overline{\left(f|_{X\backslash \mathcal{I}(f)}\right)^{-1}C}$ and the multiplicity $m_D$ is the order of vanishing of $\psi\circ f$ at any smooth point $P\in D\setminus\mathcal{I}(f)$ with $\psi$ being a local defining equation for $C$ at $f(P)$ (chosen to vanish to order 1 at smooth points of $C$).
\end{lemma}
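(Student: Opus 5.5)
The plan is to unwind the definition of $f^*$ given just before the statement. There $f^*$ is defined as the composition of two maps: the pullback under the regular map $f|_{X\setminus\mathcal{I}(f)}$, followed by the identification $\Pic(X\setminus \mathcal{I}(f))\cong \Pic(X)$. The identification works because $\mathcal{I}(f)$ is finite: every irreducible curve in $X\setminus\mathcal{I}(f)$ has a unique irreducible closure in $X$, and removing finitely many points changes neither prime divisors nor principal divisors. So it suffices to compute the pulled-back divisor on $X\setminus\mathcal{I}(f)$ and then take closures of its components. This is exactly why $f^{-1}(C)$ is defined in the statement as a closure.

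\textbf{Step 1: local equations for $C$.} Represent the prime divisor $C$ by local equations. Choose an affine cover $\{U_i\}$ of $Y$ and functions $\psi_i\in\C(U_i)^\times$ as follows:
\begin{itemize}
\item if $U_i\cap C=\varnothing$, take $\psi_i=1$;
\item otherwise, take $\psi_i$ to be a local generator of the ideal $\mathbb{I}(C\cap U_i)$, shrinking $U_i$ if needed.
\end{itemize}
This is possible because $Y$ is smooth, so its local rings are UFDs and $C$ is locally principal. Each $\psi_i$ is then regular and vanishes to order exactly $1$ along $C$. On overlaps, $\psi_i/\psi_j$ is a unit, so the data define the divisor $C$.

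\textbf{Step 2: pull back on $X\setminus\mathcal{I}(f)$.} By definition, $f^*C$ restricted to $X\setminus \mathcal{I}(f)$ is given by the functions $\psi_i\circ f$ on the open sets $f^{-1}(U_i)$. These functions are regular, so the resulting divisor is effective. Its support is the zero set of the $\psi_i\circ f$, namely $(f|_{X\setminus\mathcal{I}(f)})^{-1}(C)$. Hence the prime divisors appearing are exactly the components $D$ of $f^{-1}(C)$ (after closure), and the coefficient of $D$ is the valuation $\nu_D(\psi_i\circ f)$ for any $i$ with $D\cap f^{-1}(U_i)\neq\varnothing$.

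\textbf{Step 3: identify $\nu_D$ with the pointwise order of vanishing.} Let $P\in D\setminus\mathcal{I}(f)$ be a smooth point of $D$, and let $\delta$ be a local defining equation of $D$ at $P$. In the UFD $\mathcal{O}_{X,P}$ one can write $\psi\circ f=\delta^{m}u$ with $\delta\nmid u$, and this $m$ equals $\nu_D(\psi\circ f)$, since the valuation along $D$ may be computed in any local ring at a point of $D$. Independence of the choice of $\psi$ holds because any two local generators of $\mathbb{I}(C)$ at $f(P)$ differ by a unit, and composing a unit with $f$ gives a unit near $P$. This remains true when $f(P)$ is a singular point of $C$, for instance when $D$ is collapsed by $f$ to such a point: the normalization in the statement should be read as "$\psi$ generates the (reduced) ideal of $C$ at $f(P)$", which is equivalent to vanishing to order $1$ along $C$ at its smooth points.

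\textbf{Main obstacle.} The only delicate point is Step 3. One must interpret "order of vanishing at $P$" as the exponent of $\delta$ in $\psi\circ f$ locally, and not as the multiplicity of the function at the point $P$. The latter can be larger when other components of $f^{-1}(C)$ pass through $P$. With the exponent interpretation, the coefficient is independent of both $P$ and $\psi$, and assembling Steps 1–3 gives \eqref{eqn:curve_pullback}.
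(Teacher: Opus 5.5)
The paper gives no proof of this lemma; it imports it verbatim from \cite[Lemma 2.1]{kaschner2017complex}, so there is no in-paper argument to compare yours against. Your proposal is correct. It is also the natural justification given the paper's own definition of $f^*$ in Section~\ref{sec:pic_gp_def}, which you unwind directly:
\begin{itemize}
\item pull back local equations of the Cartier divisor $C$ under the regular map $f|_{X\setminus\mathcal{I}(f)}$;
\item observe that the result is effective, with support $(f|_{X\setminus\mathcal{I}(f)})^{-1}(C)$;
\item compute each coefficient $\nu_D(\psi\circ f)$ in the UFD $\mathcal{O}_{X,P}$;
\item transport the result back via $\Pic(X\setminus\mathcal{I}(f))\cong\Pic(X)$.
\end{itemize}

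A few points deserve a sentence each, though none is a gap.
\begin{itemize}
\item Dominance is what guarantees that $f^{-1}(U_i)$ is a nonempty open set on which $\psi_i\circ f\not\equiv 0$. This is the only place the hypothesis enters, so say so.
\item By Krull's principal ideal theorem, $(f|_{X\setminus\mathcal{I}(f)})^{-1}(C)$ is empty or of pure dimension one. Hence taking the closure in $X$ adds only points of the finite set $\mathcal{I}(f)$ and does not change the list of components $D$.
\item Your reading of ``order of vanishing at $P$'' as the exponent of a local equation $\delta$ of $D$ in $\psi\circ f$ is genuinely needed for the ``any smooth point'' clause. At a smooth point of $D$ that also lies on another component $D'$ of $f^{-1}(C)$, the pointwise multiplicity of $\psi\circ f$ at $P$ picks up a contribution from $D'$.
\end{itemize}
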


\subsubsection{Blow-up}
Next, we recall a classical construction in algebraic geometry, known as the “blow-up,” which we will use frequently throughout this work. For further details, the reader may consult \cite{shafarevich1994basic} or the description of the blow-up construction in \cite{Hu1997}.

\begin{defi}\label{def:blowup}
     Let $X$ be a smooth projective variety and $Y\subset X$ be a closed subset. The blow-up of $X$ along $Y$ is a pair $(X_{Y}, \pi)$, where $X_Y$ is a projective variety and $\pi:X_Y\to X$ is a regular map such that $\pi^{-1}(Y)$ is a divisor (called the exceptional divisor) 
     and $(X_Y, \pi)$ has the following universal property. For any regular map $g:Z\to X$ such that $g^{-1}(Y)$ is a divisor in $Z$ there exists a unique regular map $\tilde{g}:Z\to X_Y$ such that the following diagram commutes.

     \[\begin{tikzcd}
	&{X_Y} \\
	Z & X
	\arrow["\pi", from=1-2, to=2-2]
	\arrow["{\tilde{g}}", dashed, from=2-1, to=1-2]
	\arrow["g"', from=2-1, to=2-2]
\end{tikzcd}\]
The map $\pi$ is called the \textbf{canonical projection}.
 \end{defi}
In most of this paper we only blow up at single points, i.e., when $Y=\{P\}$ for a point $P\in X$. Nevertheless, the construction naturally extends to arbitrary closed subvarieties $Y\subset X$. This more general viewpoint allows us to perform simultaneous blow-ups at multiple points, as in \cite{Hu1997}, while the universal property of the blow-up ensures that the final surface is independent of the order in which the blowups are performed. The reader may also go to sections \ref{subsec:stable_model_ss} and \ref{subsec:stable_model_nss} for detailed, step-by-step examples of the construction of blowup spaces in the context we are interested.
\subsubsection{Picard group of the Blow-up}
Let $y\in X$ and let $\pi :X_{\{y\}} \to X$ be the blowup of $X$ at the point $y$.  Then,
\begin{equation}\label{eqn:pic_blowup}
    \Pic(X_{\{y\}}) \cong \Pic(X) \oplus \Z E_y.
\end{equation}
The inclusion of $\Pic(X) \hookrightarrow \Pic(X_{\{y\}})$ is given by the pullback $\pi^*$ and $E_y$ is the class of the preimage $\pi^{-1}(\{y\})$. We will often refer the preimage $\pi^{-1}(\{y\})$ as the {\em exceptional line} corresponding to blowup $\pi$.

For any irreducible curves $C_1,C_2$ on $X$ we have
\begin{equation}\label{eqn:int_blowup}
\begin{aligned}
\pi^{*}([C_i]) &= [\pi^{\prime}(C_i)]  + k_i E_y.
\end{aligned}
\end{equation}
Here $k_i$ is the multiplicity of the point $y$ on $C_i$ and $\pi^{\prime}(C_i) = \overline{\pi^{-1}(C_i\backslash\{y\} )}$ is the proper transform of $C_i$.

We conclude with recalling that $\Pic(\CP^2) \cong \Z$ and a generator $[C]$ is class of a line.

\subsection{Algebraic Stability}\label{subsec:alg_stab}

Let $X$ be a surface and let $f:X\dashrightarrow X$ be a birational map. For a point $x\in U_f$ the orbit of $x$ is the set $\{p_j\}_{j=0}^{n}$, where $x=p_0\overset{f}{\mapsto}p_1\overset{f}{\mapsto}\dots\overset{f}{\mapsto}p_n$, where we assume that for all $j$ we have $p_j\in U_f$ and $n \in \Z_{\geq 0} \cup \{\infty\}$ is the largest possible.

\begin{defi}
 An orbit $p_0\overset{f}{\mapsto}p_1\overset{f}{\mapsto}\dots\overset{f}{\mapsto}p_n$ is called \textit{destabilizing} if $p_0\in \mathcal{I}(f^{-1})$ and $p_n\in \mathcal{I}(f)$. This orbit is called \textit{minimal} if it does not contain any shorter destabilizing orbits.

\end{defi}

It is well known that a point $p$ is in the indeterminacy locus $\mcI(f^{-1})$ if and only if there exists a component of $\mcC(f)$ collapsed to $p$ by the map $f$.

\begin{defi}
    A birational map $f:X\dashrightarrow X$ is algebraically stable if there is no destabilizing orbit of $f$.
\end{defi}
A pair $(X, \pi)$ of a smooth projective surface $X$ and a map $\pi : X \to \CP^2$ is called proper modification of $\CP^2$ if $\pi$ is a composition of a finite number of blowups.
The next result follows from work \cite{diller_favre_2001}.
\begin{theo}\label{theo: Diller-Stabilization algorithm}
Let $f:\mathbb{P}^2\dashrightarrow \mathbb{P}^2$ be birational, then there exists a proper modification
$\pi : X \to\mathbb{P}^2$, such that $f$ is lifted to an algebraically stable map $\hat{f}: X \dashrightarrow X$.
\end{theo}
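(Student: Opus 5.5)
The statement is the Diller--Favre stabilization theorem, so I will follow their argument: repeatedly blow up the points of minimal destabilizing orbits and show that this terminates. The bookkeeping quantity is the number of irreducible curves contracted by the lift of $f$. For a birational surface map this number is finite. For $f:\mathbb{P}^2\dashrightarrow\mathbb{P}^2$ of degree $d$, the contracted curves are the components of the Jacobian curve, which has degree $3(d-1)$. By Lemma~\ref{lemma:jac_crit} these coincide with the critical set.

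\textbf{Step 1 (no destabilizing orbit).} If $f$ has no destabilizing orbit, then $f$ is algebraically stable by definition and we take $X=\mathbb{P}^2$.

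\textbf{Step 2 (one blow-up pass).} Otherwise, choose a minimal destabilizing orbit $p_0\mapsto p_1\mapsto\dots\mapsto p_n$. Here $p_0\in\mathcal{I}(f^{-1})$, so some curve $C$ is contracted to $p_0$, and $p_n\in\mathcal{I}(f)$. By minimality, the intermediate points $p_1,\dots,p_{n-1}$ lie in neither $\mathcal{I}(f)$ nor $\mathcal{I}(f^{-1})$. Let $\pi:X_1\to X$ be the blow-up of all of $p_0,\dots,p_n$, and let $f_1=\pi^{-1}\circ f\circ\pi$.

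\textbf{Step 3 (local analysis).} At each intermediate point $p_j$, $0<j<n$, the map $f$ is a local biholomorphism sending $p_j$ to $p_{j+1}$. Hence $f_1$ maps the exceptional curve $E_{p_j}$ isomorphically onto $E_{p_{j+1}}$, and no new contracted curves or indeterminacy points appear there.

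At $p_0$, the exceptional curve $E_{p_0}$ is not contracted, because $f$ is a local biholomorphism at $p_0$ (for $n>0$). At $p_n$, the curve $E_{p_n}$ is mapped by $f_1$ onto a curve, because $p_n\in\mathcal{I}(f)$ has a total transform containing a curve. The contracted curve $C$ lifts to its proper transform $C'$. Either $C'$ now maps to a curve, or it maps to a point of $E_{p_0}$.

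\textbf{Step 4 (the monovariant, the main obstacle).} I must show that the number of contracted curves, counted in the appropriate sense, strictly decreases or is controlled. Diller--Favre handle this through the action on $H^{1,1}$: algebraic stability is equivalent to $(f^n)^*=(f^*)^n$. Each blow-up at a minimal destabilizing orbit strictly lowers a suitable invariant. A concrete choice is the number of exceptional curves that are images of critical curves lying over indeterminacy.

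What I would try first is to show the following. The contracted curves of $f_1$ are the proper transforms of those of $f$, possibly with $C'$ removed (if it now maps onto $E_{p_0}$ in the sense of being non-contracted), plus at most $E_{p_n}$ when $p_n$ is also in $\mathcal{I}(f^{-1})$. The delicate point is that $C'$ might still be contracted, to a point of $E_{p_0}$. In that case the destabilizing orbit moves up into the exceptional divisor. To deal with this, I would use the standard infinitely-near-point argument: the orbit of a contracted curve can be traced through only finitely many infinitely near points before either becoming non-contracted or escaping. The local multiplicities of the germs of the map along $C$ strictly drop each time, much as in resolving indeterminacy of the inverse.

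\textbf{Step 5 (termination).} Since there are finitely many contracted curves and each has a bounded number of such refinements, the process terminates. It ends at a proper modification $X\to\mathbb{P}^2$ on which the lift $\hat f$ has no destabilizing orbit, and is therefore algebraically stable.
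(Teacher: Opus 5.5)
\textbf{Gap: termination is not proved, and the proposed monovariant fails.} The paper does not prove this theorem itself. It attributes it to Diller--Favre and cites Birkett for the termination of exactly the algorithm you describe (blow up a minimal destabilizing orbit, repeat). So the only real content is termination, and your Steps~4--5 do not establish it. Phrases such as ``counted in the appropriate sense'', ``a suitable invariant'' and ``the local multiplicities \dots\ strictly drop'' are not arguments.

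The one concrete monovariant you propose, the number of contracted curves, is not monotone. Your own bookkeeping in Step~4 (old curves, possibly minus $C'$, possibly plus $E_{p_n}$) already allows it to increase. It does increase in this paper. For $\mu_{p,1}$, only $\{x_0=0\}$ and $\{x_1=0\}$ are collapsed to $[0:1:0]$. After the two blow-ups over $[0:1:0]$, the three curves $\{x_0=0\}$, $E_{1,\infty}$, $E_{2,\infty}$ are all collapsed to the fixed point (Remark~\ref{rem:nonss_res}), while the other collapsed curves are unchanged.

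Your Step~3 claim that $f_1$ maps $E_{p_n}$ onto a curve is also false in general. Already for $\mu_{p,1}$, the first blow-up of $[0:1:0]$ sends $\{u_1=0\}$ to the indeterminate point $(u_1,v_1)=(0,0)$. For $n>0$ the claim fails whenever $f$ has a base point infinitely near $p_n$, since one blow-up then does not resolve $f$ at $p_n$. This is precisely how new destabilizing orbits appear, so Step~5's ``bounded number of refinements'' has nothing behind it.

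\textbf{A monovariant that works.} Let $\gamma_1,\gamma_2:\Gamma\to X$ be the minimal resolution of the graph, so $f=\gamma_2\circ\gamma_1^{-1}$, and set $b(f)=\rho(\Gamma)-\rho(X)$. Here $\rho$ is the Picard number, so $b(f)$ is the number of base points of $f$, infinitely near ones included.

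First, minimality of $p_0\mapsto\dots\mapsto p_n$ gives two facts. The $p_j$ are distinct. Also, $f$ is a local biholomorphism at $p_0,\dots,p_{n-1}$: if $p_j$ lay on a contracted curve, then $p_{j+1}\in\mathcal{I}(f^{-1})$ and $p_{j+1},\dots,p_n$ would be a shorter destabilizing orbit.

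Next, let $\hat\Gamma$ be the blow-up of $\Gamma$ at the $n$ points $\gamma_1^{-1}(p_0),\dots,\gamma_1^{-1}(p_{n-1})$. Note that $\gamma_1$ factors through the blow-up of $p_n\in\mathcal{I}(f)$, and $\gamma_2$ factors through the blow-up of $p_0\in\mathcal{I}(f^{-1})$. By the previous paragraph, $\gamma_2$ is a local isomorphism at each $\gamma_1^{-1}(p_j)$, with image $p_{j+1}$. Hence both maps $\hat\Gamma\to X$ factor through $\hat X=\mathrm{Bl}_{p_0,\dots,p_n}X$, so $\hat\Gamma$ resolves $\hat f$. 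Therefore
\[
b(\hat f)\le\rho(\hat\Gamma)-\rho(\hat X)=(b(f)+n)-(n+1)=b(f)-1 .
\]
So the algorithm stops after at most $b(f)$ passes, with no need to track where individual contracted curves go.
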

An algorithm to construct a projective space $X$ and the algebraically stable map $\hat{f}$ is the following: Start by blowing up $\mathbb{P}^2$ at all the points in a minimal destabilizing orbit of the map $f$. Such orbit exists since the map $f$ is no algebraically stable. 
This gives a lift $f_1:X_1\dashrightarrow X_1$. If the map $f_1$ is not algebraically stable, then blow up $X_1$ at all the points in a minimal destabilizing orbit of the map $f_1$. This gives a lift $f_2:X_2\dashrightarrow X_2$. One repeats such procedure until the map $f_i$ is algebraically stable. 
A proof that this process terminates after a finite number of steps can be found in  \cite{BIRKETT}.

Algebraic stability is relevant as it implies the functoriality of the pull-back $f^*:\Pic(X)\to \Pic(X)$. The relation of this to the dynamics induced by $f$ is explained below.

\begin{defi}
    Let $X$ be a smooth projective surface. Let $f:X\dashrightarrow X$ be a rational map. The first dynamical degree of $f$ is the limit
\begin{equation}\label{eq:dd_def}
    \lambda_1(f) = \mathop{\lim}\limits_{n\to\infty}(\lVert(f^n)^*\rVert^{\frac{1}{n}}).
\end{equation}
\end{defi}
\noindent Here $\lVert.\rVert$ is any norm  on $\mathrm{End}\big(\C \mathop{\otimes}\limits_{\Z}\Pic(X)\big)$. It is known that in general $(f^n)^* \neq (f^*)^{n}$. However it was proved in \cite[1.14]{diller_favre_2001} that if the map $f$ is algebraically stable then we have $(f^n)^* = (f^*)^{n}$.   (Readers who prefer a more algebraic approach proof of this fact can refer to \cite[Prop. 1.4]{Roeder}.) This implies

\begin{lemma}\label{lemma:spec_rad}
    Let $X$ be a smooth projective surface. Let $f:X\dashrightarrow X$ be an algebraically stable birational map.
Then the dynamical degree $\lambda_1(f)$ equals to the spectral radius of the operator $f^*:\Pic(X) \to \Pic(X)$.
\end{lemma}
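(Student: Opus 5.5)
The plan is to reduce the statement to Gelfand's spectral radius formula, using the functoriality of pullback that algebraic stability provides. First, since $X$ is a smooth projective surface, $\Pic(X)$ is a finitely generated abelian group. For the rational surfaces considered here (blowups of $\CP^2$) it is free of finite rank, as \eqref{eqn:pic_blowup} shows. In general I would pass to the finite-dimensional complex vector space $V=\C\otimes_\Z \Pic(X)$ (or to the Néron–Severi quotient, where torsion and the continuous part of $\Pic$ die). In every case $f^*$ induces a linear operator $A:=\C\otimes f^*$ on a finite-dimensional $V$, and its spectral radius $\rho(A)$ is what ``the spectral radius of $f^*$'' means.

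Second, I invoke the result quoted just before the lemma, \cite[1.14]{diller_favre_2001} (or \cite[Prop. 1.4]{Roeder}). Since $f$ is algebraically stable, $(f^n)^*=(f^*)^n$ for all $n\ge 1$. After tensoring with $\C$ this gives $\C\otimes (f^n)^* = A^n$. Hence the sequence in the definition \eqref{eq:dd_def} becomes
\begin{align*}
\lambda_1(f)=\lim_{n\to\infty}\lVert A^n\rVert^{1/n}.
\end{align*}

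Third, I apply Gelfand's formula on the finite-dimensional space $\mathrm{End}(V)$, which states that $\lim_{n\to\infty}\lVert A^n\rVert^{1/n}=\rho(A)$. I would note two points. All norms on $\mathrm{End}(V)$ are equivalent, so if $c\lVert\cdot\rVert\le\lVert\cdot\rVert'\le C\lVert\cdot\rVert$ then the $n$-th roots of $c$ and $C$ tend to $1$, and the limit does not depend on the chosen norm. Also, the limit genuinely exists, for instance by a Jordan normal form estimate $\lVert A^n\rVert\asymp n^{k}\rho(A)^n$, where $k+1$ is the largest Jordan block size among eigenvalues of modulus $\rho(A)$. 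Together these give $\lambda_1(f)=\rho(A)$, which is the claim.

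The only substantive input is the identity $(f^n)^*=(f^*)^n$. Everything after it is linear algebra, so I expect no real obstacle: the hard part is imported from Diller–Favre. The one point needing care is that $\Pic$ might carry torsion or a continuous part. This is handled by working with the complexified group, or numerical classes, on which $f^*$ is still well defined and functorial. For the blown-up planes used in the paper this issue does not even arise.
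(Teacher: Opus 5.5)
Your proposal is correct and is essentially the paper's argument: algebraic stability gives $(f^n)^*=(f^*)^n$ by \cite[1.14]{diller_favre_2001}, and then $\lambda_1(f)=\lim\lVert (f^*)^n\rVert^{1/n}$ equals the spectral radius by Gelfand's formula. One small correction is that for a general smooth projective surface $\Pic(X)$ need not be finitely generated, since irregular surfaces have a positive-dimensional $\Pic^0$, so $\C\otimes_\Z\Pic(X)$ can be infinite-dimensional and the N\'eron--Severi quotient you mention is the right object; for the blowups of $\CP^2$ used in the paper this issue does not arise.
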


We use \cref{lemma:spec_rad} to compute the dynamical degree of $\mu_{p,q}$ in \cref{sec:picard} (for $p, q > 0)$ and in Section \ref{Subsc: 8.3 Dynamical degree negative} (for $p,q \leq -2$).
We conclude this subsection with a number of technical statements which help to construct a proper modification of a birational dynamics.

\begin{lemma}\label{lemma:no_collapse_lift}
    Let $X$ be a surface and let $\pi:X_{y} \to X$ be the blowup of $X$ at $y$. Let $f:X\dashrightarrow X$ be a rational map. Let $C \subset X$ be an irreducible curve. 
    If the map $f$ does not collapse $C$ to a point, then the lift $\hat{f}:X_{y} \dashrightarrow X_y$ does not collapse the proper transform of $C$ to a point.
\end{lemma}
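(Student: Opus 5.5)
The plan is to argue by contradiction, comparing the images of $C$ and of its proper transform through the commutative square defining the lift. Write $\pi: X_y \to X$ for the canonical projection, so that the lift is characterized by $\pi\circ \hat f = f\circ \pi$ as rational maps. This identity holds on the dense open set $W\subset X_y$ where $\hat f$ is regular, $\pi$ is an isomorphism (the complement of the exceptional line $E_y$), and $f\circ\pi$ is regular. Let $\hat C = \pi'(C)$ be the proper transform of $C$. Because $C$ is irreducible and not equal to $\{y\}$, $\hat C$ is an irreducible curve, and $\pi|_{\hat C}:\hat C\to C$ is birational. It is an isomorphism away from the finitely many points of $\hat C\cap E_y$.

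Suppose, to get a contradiction, that $\hat f$ collapses $\hat C$ to a point $Q\in X_y$, that is, $\hat f(\hat C\setminus \mathcal I(\hat f)) = \{Q\}$. Take a point $z$ in the dense open subset $\hat C\cap W$ of $\hat C$, minus finitely many further points where $f$ fails to be regular at $\pi(z)$. At such a point, $f(\pi(z)) = \pi(\hat f(z)) = \pi(Q)$. As $z$ ranges over a dense open subset of $\hat C$, its image $\pi(z)$ ranges over a dense open subset of $C$ (cofinite in $C$). Therefore $f$ is constant, equal to $\pi(Q)$, on a dense open subset of $C\setminus\mathcal I(f)$. Since $f|_C$ is a rational map on an irreducible curve, its image is the closure of the image of any dense open set, so $f|_C(C)=\{\pi(Q)\}$. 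This says that $f$ collapses $C$ to a point, contradicting the hypothesis.

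The only step needing care is the bookkeeping of open sets: one must check that the relation $\pi\circ\hat f = f\circ\pi$ genuinely holds at generic points of $\hat C$. This does not follow from density in $X_y$ alone, since $\hat C$ itself is a proper closed subset. The way around it is to note that the identity of rational maps holds wherever both sides are regular, because two regular maps agreeing on a dense open set agree on the whole of their common domain. Both sides are regular at all but finitely many points of $\hat C$: the indeterminacy loci are finite, and $\hat C\not\subset E_y$. So the argument only ever discards finitely many points of $\hat C$, which is harmless for determining the image of an irreducible curve. If one instead wants to allow $y$ to be a point of $C$, or an indeterminacy point of $f$, nothing changes, because only finitely many points are ever removed.
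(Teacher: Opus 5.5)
Your argument is correct. The paper states this lemma without proof, but the argument it gives for the same step inside the proof of Lemma~\ref{lemma:nondestcurvecrit} is essentially yours: it restricts the relation defining the lift to generic points of the proper transform, where $\pi$ is biregular. The differences are cosmetic. You push down through the everywhere-regular map $\pi$ using $\pi\circ\hat f = f\circ\pi$, whereas the paper writes $\hat f = (\pi^{-1}\circ f)\circ\pi$ and then uses that $\pi^{-1}$ collapses no curve. Your direction avoids any discussion of the regularity of $\pi^{-1}\circ f$ along $C$, and your explicit separatedness remark justifies the identity at points of $\hat C$, a step the paper leaves implicit.
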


In particular, the set of critical curves for the lift $\hat{f}: X_{y}\dashrightarrow X_{y}$ is contained in the set of the proper transforms of the components of the critical set $\mcC(f)$ with the exceptional line $\pi^{-1}(\{y\})$ added.

For the rest of this subsection fix a smooth projective surface $X$ and a rational map $f:X\dashrightarrow X$. Below in this subsection we assume that $\pi : \hat{X} \to X$ is composition of a finite sequence of single-point blowups of $X$ at points that are indeterminacies of the corresponding lifts of the map $f$. Denote the lift of $f$ to $\hat{X}$ by $\hat{f}$.

\begin{lemma}\label{lemma:nondestcurvecrit}
Let $C\subset X$ be an irreducible curve.
     If $\pi^{-1}\circ f$ does not collapse $C$ to a point in a destabilizing orbit of $\hat{f}$ then $\hat{f}$ does not collapse the proper transform of $C$ to a point in a destabilizing orbit of $\hat{f}$. 
\end{lemma}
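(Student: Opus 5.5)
The plan is to argue by contraposition. Suppose $\hat f$ collapses the proper transform $\hat C$ of $C$ to a point $\hat p\in\hat X$ lying on a destabilizing orbit of $\hat f$; we want to show that $\pi^{-1}\circ f$ collapses $C$ to a point on a destabilizing orbit. The map $\pi$ is a morphism and $\pi\circ\hat f = f\circ\pi$ as rational maps, and $\pi$ restricted to $\hat C$ is birational onto $C$. Hence $f(C)=f(\pi(\hat C))=\pi(\hat f(\hat C))=\{\pi(\hat p)\}$, so $f$ collapses $C$ to the point $p=\pi(\hat p)$.

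Next I would compare $\pi^{-1}\circ f$ on $C$ with $\hat f\circ\pi$ on $\hat C$. Away from the finitely many points of $C$ that are blown up, $\pi^{-1}$ identifies $C$ with $\hat C$. Therefore $(\pi^{-1}\circ f)|_C$ agrees generically with $(\hat f\circ\pi^{-1})|_C$, which is the constant map to $\hat p$. Interpreting ``$\pi^{-1}\circ f$ collapses $C$ to a point'' as the rational map $C\dashrightarrow\hat X$ having a single-point image, that point is exactly $\hat p$. By hypothesis $\hat p$ lies in a destabilizing orbit of $\hat f$, which contradicts the assumption of the lemma. There is one degenerate case: $f$ collapses $C$ to a point $p$ that is blown up, and then the image of $C$ under $\pi^{-1}\circ f$ could be a whole curve inside $\pi^{-1}(p)$. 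In that case $\hat f(\hat C)$ is not a point at all, so $\hat f$ does not collapse $\hat C$ and the conclusion holds trivially.

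The main obstacle is justifying that $\pi^{-1}\circ f$ restricted to $C$ is a well-defined rational map whose image matches $\hat f(\hat C)$. One must check that the composition is defined, which holds because $C$ is not contained in $\mathcal I(f)$ (a finite set) and rational maps from curves extend. One must also check that the equality $\hat f=\pi^{-1}\circ f\circ\pi$ holds as rational maps on $\hat X$ and restricts correctly to $\hat C$, since $\hat C$ is not contained in the exceptional locus. I would handle this by working on the open dense set $\hat C\setminus\pi^{-1}(\text{blown-up points})$, where $\pi$ is an isomorphism, and then invoking uniqueness of extension for rational maps from curves.
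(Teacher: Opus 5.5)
Your proof is correct and is essentially the paper's argument in contrapositive form: both rest on the fact that, on a dense open subset of the proper transform where $\pi$ is biregular, $\hat f$ coincides with $(\pi^{-1}\circ f)\circ\pi$, so the point to which $\hat f$ collapses $\hat C$ is exactly the point to which $\pi^{-1}\circ f$ collapses $C$, and the hypothesis then applies. One small correction: regularity of $\pi^{-1}\circ f$ at generic points of $C$ does not follow from $C\not\subset\mathcal I(f)$, since when $f(C)$ is a blown-up point the curve-level composition is undefined. It follows instead from viewing $\pi^{-1}\circ f=\hat f\circ\pi^{-1}$ as a rational map on the surface $X$, whose representative $\hat f\circ\pi^{-1}$ is regular at generic points of $C$; your last paragraph effectively does this, and it is slightly cleaner than the paper's detour through the $f$-orbit of $P'$.
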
 
\begin{proof}
    Let $C^{\prime}$ be the proper transform of the curve $C$ with respect to $\pi$. If the map $\hat{f}$ does not collapse $C^{\prime}$ there is nothing to prove. 

    Assume that $C^{\prime}$ is collapsed to a point by the map $\hat{f}$. There is an open subset $U\subset C^{\prime}$ and an open subset $V \subset C$ such that $\pi|_U:U\to V$ is biregular. On an open non-empty subset $W\subset V$ the map $\pi^{-1}\circ f$ is regular. Therefore restricted to an open non-empty subset $\pi|_{U}^{-1}(W)$ the composition $\hat{f} = (\pi^{-1}\circ f)\circ \pi$ is just the composition of regular maps. Therefore $\pi^{-1}\circ f(V)$ is a point. The map $\pi$ is regular, therefore the map $\pi^{-1}$ does not collapse any curve, hence $f(V)$ is a point, $f(V) = \{P^{\prime}\}$. By the assumption the orbit of $P^{\prime}$ is not a destabilizing orbit. Since by assumption all blowups are performed at indeterminacies of lifts of $f$, the maps $\pi^{-1}$ and $f$ are regular at the points in the orbit of $P^{\prime}$ which implies that the map $\hat{f}$ is regular on the lift of the orbit of $P^{\prime}$.
\end{proof}

\begin{coro}\label{cor:alg_stab_1}
Assume that any component of the critical locus $\mcC(f)$ is not mapped by $\pi^{-1}\circ f$ to a point in destabilizing orbit and that any component of the exceptional curve of $\pi$ is not mapped by $\hat{f}$ to a point in a destabilizing orbit.  Then $\hat{f}$ is algebraically stable.
\end{coro}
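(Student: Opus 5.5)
The plan is to argue by contradiction, using \cref{lemma:nondestcurvecrit} to control the proper transforms of critical curves and a separate argument for the exceptional curves. Suppose $\hat{f}$ is not algebraically stable. Then there is a minimal destabilizing orbit $p_0 \mapsto p_1 \mapsto \dots \mapsto p_n$ of $\hat{f}$, with $p_0 \in \mathcal{I}(\hat{f}^{-1})$ and $p_n \in \mathcal{I}(\hat{f})$. By the remark recalled before the definition of algebraic stability, $p_0 \in \mathcal{I}(\hat{f}^{-1})$ means that some irreducible component $V$ of $\mcC(\hat{f})$ is collapsed by $\hat{f}$ to $p_0$. So $p_0$ is the image of a collapsed curve and lies in a destabilizing orbit.

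Next I classify $V$ using the remark after \cref{lemma:no_collapse_lift}, applied inductively along the finite sequence of single-point blowups composing $\pi$. It says that every critical curve of $\hat{f}$ is either the proper transform of a component of $\mcC(f)$ or a component of the exceptional curve of $\pi$. In the second case we get a direct contradiction, because the second hypothesis of the corollary says no exceptional component is mapped by $\hat{f}$ to a point of a destabilizing orbit. In the first case $V = C'$ is the proper transform of a component $C \subset \mcC(f)$. The first hypothesis says $\pi^{-1} \circ f$ does not map $C$ to a point in a destabilizing orbit, so \cref{lemma:nondestcurvecrit} shows that $\hat{f}$ does not collapse $C'$ to a point in a destabilizing orbit, which again contradicts $\hat{f}(C') = p_0$. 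Hence no destabilizing orbit exists and $\hat{f}$ is algebraically stable.

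The main obstacle is making the inductive use of \cref{lemma:no_collapse_lift} precise over several blowups. An exceptional line created at an early stage is itself a "curve of the intermediate surface," and its later proper transforms must still be counted among the exceptional components of $\pi$. I would handle this by inducting on the number of blowups: at each stage the critical curves are contained in the proper transforms of the previous stage's critical curves together with the new exceptional line. A secondary point is that "destabilizing orbit" always refers to $\hat{f}$ on $\hat{X}$, in both the hypothesis and \cref{lemma:nondestcurvecrit}, so the two notions match.
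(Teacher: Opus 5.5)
Your proposal is correct and follows essentially the same route as the paper. The paper also sorts curves collapsed by $\hat f$ by whether $\pi$ collapses them: proper transforms of non-critical curves are handled by \cref{lemma:no_collapse_lift}, proper transforms of components of $\mcC(f)$ by \cref{lemma:nondestcurvecrit}, and exceptional components by the second hypothesis. The differences are only in presentation: you argue by contradiction from a destabilizing orbit and make the induction over the individual blowups explicit, whereas the paper argues directly and applies \cref{lemma:no_collapse_lift} to the composite $\pi$ in one step.
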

\begin{proof}
      Let $C$ be an irreducible curve on $\hat{X}$ collapsed by $\hat{f}$. We have two cases.
    \begin{itemize}

        \item If $\pi$ does not collapse $C$ then $C$ is the proper transform of the curve $\pi(C)$. If $f$ does not collapse $\pi(C)$, then $\hat{f}$ does not collapse $C$ by \cref{lemma:no_collapse_lift}. 
        Otherwise $\pi(C)$ is a component of the critical set $\mcC(f)$. Then by the assumption $\pi^{-1}\circ f$ does not collapse $\pi(C)$ to a point in a destabilizing orbit. Then $\hat{f}$ does not collapse $C$ to a point in a destabilizing orbit of $\hat{f}$ by \cref{lemma:nondestcurvecrit}. 

         \item If $\pi$ collapses $C$, then $C$ is an irreducible component of the exceptional curve of $\pi$. Then $C$ is not collapsed by $\hat{f}$ to a point in a destabilizing orbit of $\hat{f}$ by the assumption.

  \end{itemize}

        Therefore any curve collapsed by $\hat{f}$ is not mapped to a point in a destabilizing orbit.
\end{proof}
\begin{coro}\label{cor:alg_stab_2}
    Under assumptions of  \cref{lemma:nondestcurvecrit}  let the maps $\pi_1,\pi_2$ correspond to finite sequences of blowups at points corresponding to the map $\pi = \pi_{1}\circ \pi_{2}$ (see the diagram below). Let $\tilde{f}, \hat{f}$ be the lifts of a birational map $f$. Let $C\subset X$ be an irreducible curve.
    \[\begin{tikzcd}
    {\hat{X}} & {\hat{X}} \\
	{\tilde{X}} & {\tilde{X}} \\
	{X} & {X}
	\arrow["{\hat{f}}"', from=1-1, to=1-2, dashed]
	\arrow["\pi_2"',from=1-1, to=2-1]
	\arrow["\pi_2"',from=1-2, to=2-2]
	\arrow["\tilde{f}"', from=2-1, to=2-2, dashed]
    \arrow["\pi_1"',from=2-2, to=3-2]
	\arrow["\pi_1"',from=2-1, to=3-1]
	\arrow["f"', from=3-1, to=3-2, dashed]
\end{tikzcd}\]
If the map $\pi_1^{-1}\circ f$ does not collapse the curve $C$ to a point in a destabilizing orbit of $\tilde{f}$, then the lift $\hat{f}:\hat{X} \to \hat{X}$ does not collapse the proper transform  of the curve $C\subset X$ under $\pi_1 \circ \pi_2$ to a point in a destabilizing orbit of $\hat{f}$.
\end{coro}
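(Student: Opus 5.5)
The plan is to reduce \cref{cor:alg_stab_2} to \cref{lemma:nondestcurvecrit}, applied to the intermediate surface $\tilde{X}$ with the map $\tilde{f}$ and the modification $\pi_2:\hat{X}\to\tilde{X}$. Let $C' \subset \tilde{X}$ be the proper transform of $C$ under $\pi_1$. The proper transform of $C$ under $\pi_1\circ\pi_2$ is then the proper transform of $C'$ under $\pi_2$, because both are the closure of the preimage of $C$ minus the finitely many blown-up points. It therefore suffices to check the hypothesis of \cref{lemma:nondestcurvecrit} for $(\tilde{X},\tilde{f},\pi_2,C')$: the map $\pi_2^{-1}\circ\tilde{f}$ must not collapse $C'$ to a point in a destabilizing orbit of $\hat{f}$. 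The blowups in $\pi_2$ are at indeterminacy points of lifts of $\tilde{f}$, since they are the later blowups in a sequence at indeterminacies of lifts of $f$, so the standing assumption of the lemma holds.

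First I dispose of the case where $\pi_1^{-1}\circ f$ does not collapse $C$. On a dense open subset of $C'$ we have $\tilde{f}=\pi_1^{-1}\circ f\circ\pi_1$, with $\pi_1$ biregular there, so $\tilde{f}$ does not collapse $C'$. Then \cref{lemma:no_collapse_lift}, iterated over the single blowups composing $\pi_2$, shows that $\hat{f}$ does not collapse the proper transform, and there is nothing to prove. Otherwise, the same local identification gives $\tilde{f}(C')=\{P'\}$, where $P'$ is the point $\pi_1^{-1}\circ f(C)$, and by hypothesis $P'$ is not in a destabilizing orbit of $\tilde{f}$.

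Next I argue as in the proof of \cref{lemma:nondestcurvecrit}. Since $P'$ lies in no destabilizing orbit of $\tilde{f}$, the forward orbit of $P'$ avoids $\mathcal{I}(\tilde{f})$ or else never starts in $\mathcal{I}(\tilde{f}^{-1})$. More precisely, the orbit segment from $P'$ cannot be the tail of a chain running from $\mathcal{I}(\tilde{f}^{-1})$ to $\mathcal{I}(\tilde{f})$, and $P'$ itself lies in $\mathcal{I}(\tilde{f}^{-1})$ because a curve is collapsed to it. Hence the whole forward orbit of $P'$ avoids $\mathcal{I}(\tilde{f})$. The blowups of $\pi_2$ are centered at indeterminacy points of successive lifts, all lying over $\mathcal{I}(\tilde{f})$, so none is at a point of the orbit of $P'$. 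Thus $\pi_2$ is a local biholomorphism near each point of the orbit, $\pi_2^{-1}\circ\tilde{f}$ sends $C'$ to a single point $\hat{P}=\pi_2^{-1}(P')$, and $\hat{f}$ is regular along the lifted orbit of $\hat{P}$. Therefore $\hat{P}$ never reaches $\mathcal{I}(\hat{f})$ and is in no destabilizing orbit of $\hat{f}$. Applying \cref{lemma:nondestcurvecrit} completes the proof.

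The main obstacle is the bookkeeping: showing that the blowup centers of $\pi_2$ avoid the orbit of $P'$. Those centers are indeterminacy points of intermediate lifts, and they may lie on exceptional curves created earlier within $\pi_2$. To handle this, I would induct on the number of single blowups in $\pi_2$. At each step the image of $C'$ stays a point whose orbit is regular for the current lift, so that point is never an indeterminacy point and hence never a blowup center.
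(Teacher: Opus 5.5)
Your argument is correct and is essentially the paper's proof. You transfer the hypothesis to $\tilde{f}$ acting on the proper transform $C'$, use \cref{lemma:no_collapse_lift} when $C'$ is not collapsed, and otherwise lift the orbit of the collapse point through $\pi_2$. Your induction over the single blowups of $\pi_2$ (each center lies in the indeterminacy locus of the current lift, while the lifted orbit stays regular) supplies exactly the justification behind the paper's brief claim that $\pi_2^{-1}$ is regular along that orbit.

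You should drop the intermediate assertion that every center of $\pi_2$ lies over $\mathcal{I}(\tilde{f})$. It is false in general: a point where $\tilde{f}$ is regular can become indeterminate for the lift once its image is blown up, which is what happens along destabilizing orbits in the stabilization algorithm. Your induction never uses that assertion, so the proof stands without it.
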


\begin{proof}
By  \cref{lemma:nondestcurvecrit} the map $\tilde{f}$ does not collapse the proper transform $\tilde{C}$ of $C$ with respect to the map $\pi_1$ to a point in a destabilizing orbit of $\tilde{f}$.

There are two cases
\begin{itemize}
    \item
    Assume that the map $\tilde{f}$ does not collapse $\tilde{C}$ to a point. Then by \cref{lemma:no_collapse_lift} the proper transform $\hat{C}$ of $\tilde{C}$ with respect to $\pi_2$ is not collapsed by the map $\hat{f}$. It remains to notice that $\hat{C}$ coincides with the proper transform of $C$ with respect to $\pi$.
    \item Assume that the proper transform $\tilde{C}$ is collapsed to a point $P$ in a non-destabilizing orbit of $\tilde{f}$. Since the blowup $\pi_2$ is done at indeterminacy points of lifts of $\tilde{f}$, the map $\pi_{2}^{-1}$ is regular at the points of orbit of $P$. Then the curve $\hat{C}$ is collapsed by $\hat{f}$ to the point $\pi_2^{-1}(P)$. Moreover, the orbit of $\pi_{2}^{-1}(P)$ is given by $\hat{f}^{n}(\pi_2^{-1}(P)) = \pi_2^{-1}(\tilde{f}^{n}(P))$ for $n\geq 0$ and is not destabilizing. 
\end{itemize}
\end{proof}
\subsection{Entropy and Lyapunov exponents of rational mappings}\label{subsec:entropy_of_the_mappings}
In this section, we provide definitions and some background material to understand the terminology in the statement of \cref{thm:Ergodic_properties}.

Let $X$ be a smooth projective surface and $f: X\dashrightarrow X$ be a rational map.  Recall the notation $\mcI(f)$ for the set of indeterminate points of $f$. Let $d$ be the metric induced by the Fubini-Study metric on $X$.
\subsubsection{Topological Entropy}
Let
  \begin{equation*}
   \Omega(f)=\{ x\in X\,\vert\,  f^i(x)\notin \mcI(f), \forall i=0,1,\dots\}.
\end{equation*}
Let $\mathcal{F}\subset \Omega(f)$, $n\in\N$ and $\epsilon>0$. The set $\mathcal{F}$ is said to be $(n,\epsilon)$-separated, if
\begin{equation*}
    \max_{0\leq i\leq n-1}d(f^i(x), f^i(y)) \geq \epsilon
\end{equation*}
for all distinct $x,y\in\mathcal{F}$.

\begin{defi}[See \cite{FRIEDLAND},\cite{dinhUpperBoundTopological2006}]
   Let $f:X\dashrightarrow X$ be a dominant rational self-map. The topological entropy $h_{top}(f)$ is defined by 
\[
    h_{top}(f):=\sup_{\epsilon>0}\left(\limsup_{n\to\infty}\frac{1}{n}\log\max\{\#\mathcal{F}\,\vert\,\mathcal{F}\, is\;(n,\epsilon)-separated \text{ in }\, \Omega(f)\}\right)
\]
\end{defi}

Dinh and Sibony in \cite{dinhUpperBoundTopological2006} showed the following result for the more general case when $f$ is a meromorphic map on a compact K\"ahler manifold of arbitrary dimension $n$. We state the simplified version needed here.
\begin{theo}\cite{dinhUpperBoundTopological2006}
    Let $f:X\dashrightarrow X$ be a birational self-map. Let $\lambda_1(f)$ denote the dynamical degree of $f$. Then,
    \[
        h_{top}(f) \leq \log{\lambda_1(f)}.
    \]
\end{theo}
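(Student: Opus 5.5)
The plan is to follow Gromov's graph-volume argument, as adapted by Dinh--Sibony to meromorphic maps. Fix a K\"ahler form $\omega$ on $X$, normalized to be cohomologous to an ample class, and $n\geq 1$. Consider the \emph{orbit graph}
\begin{align*}
\Gamma_n=\overline{\left\{(x,f(x),\dots,f^{n-1}(x))\,:\,x\in\Omega(f)\right\}}\subset X^n .
\end{align*}
Since $f^{i}$ is regular on the Zariski open set of points whose first $n$ iterates avoid $\mcI(f)$, $\Gamma_n$ is an irreducible analytic (indeed algebraic) surface in $X^n$. Equip $X^n$ with the Kähler form $\omega_n=\sum_{i=0}^{n-1}\pi_i^*\omega$, where $\pi_i$ is projection to the $i$-th factor. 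The associated distance dominates the max-distance $\max_i d(x_i,y_i)$ up to a fixed constant.

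\textbf{Step 1 (counting via volume).} Let $\mathcal{F}\subset\Omega(f)$ be $(n,\epsilon)$-separated. Its lift $\{(x,\dots,f^{n-1}(x))\}$ is an $\epsilon$-separated subset of $\Gamma_n$ for the max-distance. Hence the balls of radius $\epsilon/2$ about these points are disjoint. By Lelong's inequality, any ball of radius $r$ centered on a $2$-dimensional analytic set has $\omega_n$-area at least $c\,r^4$. The constant $c$ is uniform in $n$, because the max-metric balls are products and one can project to a suitable pair of factors. Therefore
\begin{align*}
\#\mathcal{F}\leq C\epsilon^{-4}\,\mathrm{vol}(\Gamma_n),\qquad \mathrm{vol}(\Gamma_n)=\int_{\Gamma_n}\omega_n^2 .
\end{align*}
So $h_{top}(f)\leq\limsup_n\frac1n\log\mathrm{vol}(\Gamma_n)$, and the $\epsilon^{-4}$ factor disappears after taking $\frac1n\log$.

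\textbf{Step 2 (expanding the volume).} Parametrizing $\Gamma_n$ by its first coordinate on the Zariski open set where everything is regular, we get
\begin{align*}
\mathrm{vol}(\Gamma_n)=\sum_{0\leq i,j\leq n-1}\int_{X\setminus A}(f^i)^*\omega\wedge(f^j)^*\omega .
\end{align*}
Here $A$ is a proper algebraic subset, which carries no mass for smooth forms. Take $i\leq j$ and use birationality: $f^i$ is a biholomorphism between Zariski open sets. Changing variables $y=f^i(x)$ turns the $(i,j)$ term into $\int_X\omega\wedge(f^{j-i})^*\omega$. That integral equals the intersection pairing $\{\omega\}\cdot(f^{j-i})^*\{\omega\}$ on $H^{1,1}(X)$, and it is bounded by $C\lVert(f^{j-i})^*\rVert$.

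\textbf{Step 3 (growth estimate).} By the definition \eqref{eq:dd_def} of $\lambda_1(f)$, for every $\delta>0$ there is $C_\delta$ with $\lVert(f^k)^*\rVert\leq C_\delta(\lambda_1+\delta)^k$ for all $k$. Summing over the $n^2$ pairs $(i,j)$ gives
\begin{align*}
\mathrm{vol}(\Gamma_n)\leq C\,C_\delta\, n^2(\lambda_1+\delta)^n .
\end{align*}
Hence $h_{top}(f)\leq\log(\lambda_1+\delta)$, and letting $\delta\to 0$ gives the claim.

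\textbf{Main obstacle.} I expect the delicate point to be Step 2: making the identity between the pairing $\int(f^i)^*\omega\wedge(f^j)^*\omega$ and a cohomological quantity rigorous. The pullbacks $(f^k)^*\omega$ are only $L^1$ forms, or currents, singular along curves collapsed by $f^k$. Here the birationality of $f$ does the work: the change of variables reduces every term to a pairing of a smooth form $\omega$ with the single pullback $(f^{k})^*\omega$. That pullback is a positive closed current whose mass is exactly the cohomological quantity $\{\omega\}\cdot(f^k)^*\{\omega\}$. No algebraic stability is required, since we only bound the terms by $\lVert(f^k)^*\rVert$ rather than by $\lVert f^*\rVert^k$.
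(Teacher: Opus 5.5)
The paper does not prove this statement; it only quotes Dinh--Sibony. Your outline is the Gromov-type argument that Dinh and Sibony use, specialized to a birational map of a surface, and Steps 2 and 3 are correct. Those two steps are where the surface case is genuinely easier than the general one. After the change of variables by $f^i$, only one singular pullback $(f^{j-i})^*\omega$ is left, paired with the smooth form $\omega$. So each term is exactly the cohomological pairing $\{\omega\}\cdot(f^{j-i})^*\{\omega\}$, and no regularization of currents is needed. In higher dimension the terms contain several wedged singular pullbacks, which is what requires regularization in the general argument. Birationality is also not essential: for topological degree $d_t$ the same change of variables produces a factor $d_t^{\,i}$, which gives the bound $\log\max(\lambda_1,d_t)$.

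The step that fails as written is your justification, in Step 1, that the Lelong constant is independent of $n$. Projecting to a pair of factors does not give this. Let $B=\prod_k B(x_k,\epsilon/2)$ be the max-ball. The image $(\pi_i,\pi_j)(\Gamma_n\cap B)$ is the graph of $f^{j-i}$ over $f^i$ of the Bowen ball $\{y:\ d(f^k(y),f^k(x_0))<\epsilon/2,\ 0\le k<n\}$. This set is cut down by the other $n-2$ constraints, so in general it is not a closed analytic subset of $B(x_i,\epsilon/2)\times B(x_j,\epsilon/2)$, and Lelong's inequality in $X\times X$ cannot be applied to it. The individual pieces of the volume have no $n$-independent lower bound either. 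For example, $\int_{\Gamma_n\cap B}(\pi_i^*\omega)^2$ is the area of $f^i$ of the Bowen ball, which is typically exponentially small in $n$.

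The correct reason is that Lelong's inequality in $\C^N$, namely $\mathrm{vol}(Z\cap B(a,r))\ge\frac{\pi^2}{2}r^4$ for a pure $2$-dimensional analytic set $Z\ni a$, has a constant that does not depend on $N$. The argument goes as follows.
\begin{enumerate}
\item Fix a finite atlas of $X$ with $a\,\omega_{\mathrm{flat}}\le\omega\le b\,\omega_{\mathrm{flat}}$ on each chart, and $r_0>0$ such that every $r_0$-ball lies in some chart.
\item In the product chart around $(x_0,\dots,x_{n-1})$, the form $\omega_n$ is comparable to the flat metric on $\C^{2n}$ with the same constants $a,b$.
\item For $\epsilon<r_0$, the flat ball of radius $\epsilon/(2\sqrt b)$ about that point lies inside $B$.
\item Applying the flat Lelong inequality there gives $\mathrm{vol}_{\omega_n}(\Gamma_n\cap B)\ge c\,\epsilon^4$ with $c=c(a,b)$ independent of $n$.
\end{enumerate}
With this replacement your argument goes through.
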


\subsubsection{Invariant measure} Let $\nu$ be a probability measure on $X$. On $X\setminus \mcI(f)$, the map $f$ is measurable. For any $A$ in the Borel $\sigma$-algebra, 
\begin{equation}
    (f_*\nu)(A):=\nu(\{x\in X\setminus \mcI(f)\; |\:
    f(x)\in A\}) 
\end{equation}
If $\nu$ does not give mass to the indeterminacy set $\mcI(f)$, then 
\begin{equation}
    \int_X\phi\circ f\,d\nu = \int_X \phi\,d(f_*\nu),
\end{equation}
for all $\phi \in L^1(f_*\nu)$. 
\begin{defi}
    A measure $\nu$ is invariant under $f$, if $\nu$ gives no mass to $\mcI(f)$ and $f_*\nu = \nu$
\end{defi}
An equivalent statement of $f$-invariant measure $\nu$ is given by the following lemma.
\begin{lemma}
    Let $\nu$ be a measure that gives no mass to $\mcI(f)$. Then the following statements are equivalent
    \begin{enumerate}
        \item $\nu$ is $f$-invariant.
        \item For any continuous function $\phi$, we have
        \[
        \int_{X\setminus \mcI(f)} \phi\circ f\,d\nu =\int_X\phi\,d\nu .
        \]
    \end{enumerate}
\end{lemma}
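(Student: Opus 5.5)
The plan is to prove both implications directly from the definitions, using only the measurability of $f$ off $\mcI(f)$ and the definition of $f_*\nu$.

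For (1)$\Rightarrow$(2), suppose $\nu$ is $f$-invariant, so $\nu(\mcI(f))=0$ and $f_*\nu=\nu$. Let $\phi$ be continuous on the compact surface $X$. Then $\phi$ is bounded, hence lies in $L^1(f_*\nu)=L^1(\nu)$. The change-of-variables formula recorded just before the definition gives
\begin{equation*}
\int_{X\setminus\mcI(f)}\phi\circ f\,d\nu=\int_X\phi\,d(f_*\nu)=\int_X\phi\,d\nu .
\end{equation*}
Here the left-hand integral is the same as $\int_X \phi\circ f\,d\nu$, because $\nu$ gives no mass to $\mcI(f)$.

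To make this self-contained, I would first check the formula for indicators. For $\phi=\mathbf 1_A$ the identity is exactly the definition of $(f_*\nu)(A)$. It then extends to simple functions by linearity, and to bounded Borel functions by uniform approximation or monotone convergence.

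For (2)$\Rightarrow$(1), it is already assumed that $\nu(\mcI(f))=0$, so only $f_*\nu=\nu$ needs proof. First I would note that $f_*\nu$ is a finite positive Borel measure on $X$, of total mass $\nu(X\setminus\mcI(f))=\nu(X)$. By the change-of-variables identity above, hypothesis (2) says $\int\phi\,d(f_*\nu)=\int\phi\,d\nu$ for every continuous $\phi$. Since $X$ is a compact metric space, a finite Borel measure is determined by its integrals against continuous functions. This is the uniqueness part of the Riesz representation theorem; alternatively, one approximates $\mathbf 1_K$ for compact $K$ from above by continuous functions and then uses regularity. Therefore $f_*\nu=\nu$.

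I do not expect a real obstacle. The only care needed is the measure-theoretic bookkeeping: $f$ is Borel measurable on the open set $X\setminus\mcI(f)$, and the change-of-variables formula has to be justified for bounded Borel functions rather than merely quoted. Once that is done, both directions are one line each.
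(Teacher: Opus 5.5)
Your argument is correct. The paper states this lemma as standard background and gives no proof of its own. What you supply is exactly the standard justification: build the identity $\int_{X\setminus\mcI(f)}\phi\circ f\,d\nu=\int_X\phi\,d(f_*\nu)$ up from indicators, then use uniqueness in the Riesz representation theorem on the compact metric space $X$. You rely on $\nu$ being finite, which holds because the paper's setup takes $\nu$ to be a probability measure; this is used both for integrability of bounded $\phi$ and in the uniqueness step.
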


\subsubsection{Measure-theoretic Entropy} Let $\nu$ be $f-$invariant probability measure. Let $\mathcal{A} = \{A_1,\dots,A_k\}$ be a partition of $X$, and let $\displaystyle \bigvee_{i=0}^{n-1}f^{-i}(\mathcal{A})$ be the partition generated by $\displaystyle \mathcal{A}, f^{-1}(\mathcal{A}),\dots,f^{-n+1}(\mathcal{A})$. Set 
\begin{align*}
    H(\mathcal{A}) = -\sum_{i=1}^{k}\nu(A_i)\log{\nu(A_i)}, \text{ and }\,
    h(\mathcal{A},f)=\lim_{n\to \infty}\frac{1} {n}H\left(\bigvee_{i=0}^{n-1}f^{-i}(\mathcal{A})\right).
\end{align*}
\begin{defi}
    Measure-theoretic entropy is defined as 
    \begin{equation}
        h_{\nu}(f):=\sup_{\mathcal{A}}h(\mathcal{A},f)
    \end{equation}
\end{defi}
\subsubsection{Lyapunov Exponents} let $p\in X$, and $v\in T_pX$, then
\begin{equation*}
    \chi(v,p):=\lim_{n\to \infty}\frac{1}{n}\log{|Df_p^n(v)|}
\end{equation*}
when the limit exists, it is called the  Lyapunov exponent of $(x,v)$.

Let $f:X\dashrightarrow X$ be a birational map and let $\nu$ be a probability measure that is ergodic with respect to $f$ i.e., if for any measurable $B\subset X$, $B= f^{-1}(B)$ then either $\nu(B)=0$ or $\nu(X\setminus B)=0$. If $\log{\|Df\|}, \log{\|Df^{-1}\|}\in L^1(\nu)$ then by Oseledec's Theorem, there exist two real numbers $\chi^{+}(f,\nu), \chi^-(f,\nu)$ such that for $\nu$ a.e. $p\in X$ and a generic vector $v\in T_pX$, 
$$\chi^{+}(f,\nu)=\lim_{n\to \infty}\frac{1}{n}\log{|Df_p^n(v)|}$$
and similarly for $f^{-1}$ and $\chi^{-}(f,\nu)$. (See \cite{dillerInvariantMeasureLyapunov2001} Section $1$)

\section{Ways to prove a birational map has no invariant fibration.}\label{SEC:GEN_WAYS_RULE_OUT_FIB}

We will now prove Propositions \ref{PROP:INV_FIBR_VS_DYN_DEG} and \ref{PROP:INV_FIBR_VS_SUPER_ATTR_FP}.

\begin{proof}[Proof of \cref{PROP:INV_FIBR_VS_DYN_DEG}]
First suppose that $f: X \dashrightarrow X$ is a birational map that has an invariant fibration.  It follows from powerful results of Dinh-Nguyen \cite{Dinh-Nguyen1} and Dinh-Nguyen-Truong \cite{Dinh-Nguyen2} that 
 a dominant rational self-map $f$ of a projective surface admitting an invariant fibration must have that the dynamical degree divides the topological degree of the map $f$.   (See \cite[Lemma 2.4]{kaschner2017complex} for further clarification.)  The topological degree of a birational map is $1$, therefore if it admits an invariant fibration the dynamical degree also has to be~$1$.

Converselly, if $f: X \dashrightarrow X$ is a birational map with $\lambda_1(f) = 1$ then Diller-Favre \cite[Thm. 0.2]{diller_favre_2001} proved that
$f$ has an invariant fibration.
(See also \cite{DESERTI_BLANC}.)
\end{proof}

\begin{lemma}\label{lemman: non-constant over exc. line}
   Let $\rho:X\dashrightarrow Y$ be a rational map between smooth projective surfaces. Assume that $P\in\mathcal{I}(\rho)$. Then there is a finite number of blowups $\pi:\hat{X} \to X$ over the point $P$ resolving the indetermination at $P$ such that the lift $\hat{\rho} = \rho \circ \pi:\hat{X}\to Y$ is regular in a neighborhood of $\pi^{-1}(P)$ and $\hat{\rho}$ is non-constant on some irreducible component $E$ of the exceptional divisor $\pi^{-1}(P)$.
\end{lemma}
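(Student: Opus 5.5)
We will rely on the classical resolution of indeterminacy for rational maps of surfaces, for instance as in \cite{shafarevich1994basic}: for any rational map $\rho:X\dashrightarrow Y$ from a smooth projective surface there is a finite sequence of point blowups $\pi:\hat X\to X$ such that $\rho\circ\pi$ is regular. The first step is to localize this statement at $P$. The points of $\mathcal{I}(\rho)$ are finitely many and isolated. The standard algorithm blows up a base point of the linear system defining $\rho$ near $P$, then base points lying on the new exceptional curves, and so on. We only carry out the blowups lying over $P$. Away from $\pi^{-1}(P)$ the map $\pi$ is an isomorphism onto a neighborhood of $X\setminus\{P\}$, so $\hat\rho=\rho\circ\pi$ is regular on a neighborhood of $\pi^{-1}(P)$ exactly when the algorithm terminates over $P$. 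Termination follows from the usual argument: each blowup at a base point strictly decreases the self-intersection of the moving part of the pulled-back linear system. That moving part is $\rho^*$ of an ample class on $Y$ (embed $Y\subset\CP^N$ and pull back hyperplanes), and its self-intersection is nonnegative, so only finitely many such steps are possible.

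The second step is non-constancy on some component, argued by contradiction. Write $\pi^{-1}(P)=E_1\cup\dots\cup E_k$. This set is connected, being the total transform of a point under iterated blowups, and each $E_i$ is a smooth rational curve. Suppose $\hat\rho$ is constant on every $E_i$. By connectedness all of $\pi^{-1}(P)$ maps to a single point $Q\in Y$. Since $\hat\rho$ is regular near $\pi^{-1}(P)$, we can take an affine neighborhood $V\ni Q$ with coordinate functions $g_1,\dots,g_m$. Then $U=\hat\rho^{-1}(V)$ is an open neighborhood of the compact fiber $\pi^{-1}(P)$. Because $\pi$ is proper, $U$ contains $\pi^{-1}(W)$ for some neighborhood $W$ of $P$, and each $g_j\circ\hat\rho$ is regular there.

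The functions $g_j\circ\hat\rho$ are holomorphic on $\pi^{-1}(W)$ and constant on the fiber $\pi^{-1}(P)$. Hence they descend to functions $h_j=g_j\circ\rho$ on $W$ that are continuous at $P$ and holomorphic on $W\setminus\{P\}$. By Riemann extension (or Hartogs, in dimension 2) each $h_j$ is holomorphic at $P$. So $\rho$ extends holomorphically, hence regularly, to a neighborhood of $P$. This contradicts $P\in\mathcal I(\rho)$. Therefore $\hat\rho$ is non-constant on some component $E$.

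We expect the main obstacle to be the first step. One must be precise that the indeterminacy can be resolved by blowups supported only over $P$, and that finitely many suffice. If the decreasing self-intersection argument is too heavy to write out, we will instead cite the standard resolution theorem (e.g. Shafarevich, or \cite[Thm.~0.1-style]{diller_favre_2001} references). We would then discard any blowups over the other indeterminacy points, noting that they do not affect a neighborhood of $\pi^{-1}(P)$.
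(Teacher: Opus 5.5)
Your proposal is correct and follows the same route as the paper: you obtain the blowups over $P$ from the classical resolution of indeterminacy (Shafarevich), then argue by contradiction that if $\hat\rho$ were constant on every component of the connected fiber $\pi^{-1}(P)$, then $\rho$ would be bounded near $P$, so $P$ would be a removable singularity. Your version only fills in details the paper leaves implicit: the self-intersection argument for termination, properness of $\pi$ to produce a neighborhood $\pi^{-1}(W)$, and Riemann extension applied to the descended coordinate functions.
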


\begin{proof}
Existence of the desired sequence of blowups is given by Theorem 4.8 in \cite{shafarevich1994basic}. Notice that if $\pi$ is the blow-up that regularizes the indetermination $P$, we have $E=\pi^{-1}(P)$ is connected. If  we suppose that $\hat{\rho}|_{E_0}$ is constant for every exceptional curve $E_0$ over $P$ then $\hat{\rho}(E)= Q$ for some fixed $Q\in Y$ as $E$ is connected. So, for a sufficiently small neighborhood $U$ of $E$, we have that $\hat{\rho}|_U$ is well-defined and bounded. Therefore, $P$ is a removable singularity for $\rho$, so it's not a genuine indetermination of $\rho$, a contradiction.
\end{proof}

\begin{proof}[Proof of \cref{PROP:INV_FIBR_VS_SUPER_ATTR_FP}]

     Suppose $f$ admits an invariant fibration:

    \[\begin{tikzcd}
	{X} & {X} \\
	Y & Y
	\arrow["{f}", dashed, from=1-1, to=1-2]
	\arrow["{\rho}"', dashed, from=1-1, to=2-1]
	\arrow["{\rho}", dashed, from=1-2, to=2-2]
	\arrow["g"', from=2-1, to=2-2]
\end{tikzcd}\]
Notice that if $f$ preserves an invariant fibration, then any iterate $f^n$ preserves the same fibration. So, without loss of generality, up to going to a suitable iterate, we can assume $P$ is a superattracting fixed point. Consider the Taylor series of $f$ by going to local affine coordinates $(z_1,z_2)$ centered at the superattracting fixed point $P$:
\begin{equation}\label{39}
    \displaystyle f(z_1,z_2)=\left(\displaystyle\sum_{i=k}^\infty H_i(z_1,z_2),\displaystyle\sum_{i=k}^\infty G_i(z_1,z_2)\right)
\end{equation}
where $k\geq2$, and each $H_j,G_j$ are homogeneous of degree $j$.

\textbf{Case 1: $P\notin\mathcal{I}(\rho)$.} 
 Fix some holomorphic coordinate $w$ on a small neighborhood of $\rho(P)$ centered at this point. As $P$ is a $\rho$-regular point, consider also a Taylor series with respect to the coordinate $w$. 
We have $\rho(z_1,z_2)=T_d(z_1,z_2)+T_{> d}(z_1,z_2)$ where $T_d(z_1,z_2)$ is the homogeneous term of minimal degree $d\geq 1$ on this Taylor series and $T_{> d}$ corresponds to greater degree terms. Given that $\rho\circ f=g\circ\rho$ we obtain 
\begin{align*}
    T_d\left(\displaystyle\sum_{i=k}^\infty H_i(z_1,z_2),\displaystyle\sum_{i=k}^\infty G_i(z_1,z_2)\right)+T_{>d}\left(\displaystyle\sum_{i=k}^\infty H_i(z_1,z_2),\displaystyle\sum_{i=k}^\infty G_i(z_1,z_2)\right)&=g(T_d(z_1,z_2)+T_{>d}(z_1,z_2))\\
    &= g'(0)T_d(z_1,z_2) + \tilde{T}_{>d}(z_1,z_2).
\end{align*}
Because $f$ is birational then $g$ is a biholomorphism of Riemann surfaces, therefore the differential of $g$ does not vanish. Since $d\geq 1$, there is a mismatch for the term of the smallest degree of both expressions. This is a contradiction.

     \textbf{Case 2: $P\in\mathcal{I}(\rho)$.}  By \cref{lemman: non-constant over exc. line}, there is a finite number of blow-ups $\pi=\pi_k\circ\dots\circ\pi_1$ over the point $P$, such that the composition $\hat{\rho}=\rho\circ\pi$ is regular on the preimage $\pi^{-1}(P)$. Moreover, there is a component $E_0$ of $E=\pi^{-1}(P)$ on which $\hat{\rho}$ is non-constant. We have a commuting diagram

\[\begin{tikzcd}
	{\hat{X}} & {\hat{X}} \\
	Y & Y
	\arrow["{\hat{f}}", dashed, from=1-1, to=1-2]
	\arrow["{\hat{\rho}}"',  from=1-1, to=2-1]
	\arrow["{\hat{\rho}}",  from=1-2, to=2-2]
	\arrow["g"', from=2-1, to=2-2]
\end{tikzcd}\]

 Notice that $\hat{f}$ maps $E$ to itself as $P$ was originally fixed by $f$. Moreover, the iterations of $\hat{f}$ do not collapse $E_0$. Indeed, otherwise due to $g\circ\hat{\rho}=\hat{\rho}\circ\hat{f}$ the set $g^{N}(\hat{\rho}(E_0))$ is a point for some $N\in \Z_{>0}$. But this is not possible as $g$ is a biholomorphic map and $\hat{\rho}|_{E_0}$ is non-constant. A similar reasoning shows that the map $\hat{\rho}$ is non-constant on the images $\hat{f}^{N}(E_0)$.

Without loss of generality, up to replacing $\hat{f}$ by some iterate and $E_0$ by an image with respect to some iterate of $\hat{f}$, we can assume that $\hat{f}\left(E_0\right)\subset E_0$. Indeed, as $E$ is invariant under $\hat{f}$ and the iterations of $\hat{f}$ do not collapse $E_0$, since $\{\hat{f}^{N}(E_0)\}$ is a subset of the set of components of $E$ there exist $0\leq N_1 < N_2$ such that $\hat{f}^{N_1}(E_0) = \hat{f}^{N_2}(E_0) = \hat{f}^{N_2-N_1}(\hat{f}^{N_1}(E_0))$.

Consider local affine coordinates $(w_1,w_2)$ on $\hat{X}$ near the exceptional line $E_0=\{w_1=0\}$ and local affine coordinates $(z_1,z_2)$ centered at the super attracting fixed point $P\in X$. We can write locally 
\begin{equation}\label{Eq: exceptional divisor 40}
    \pi(w_1,w_2)=(z_1,z_2)=\left(w_1^aP_1(w_1,w_2),w_1^bP_2(w_1,w_2)\right),
\end{equation}where the multiplicities (cf. \cref{sec:pic_gp_def}) $a,b$ are positive integers. (In particular $P_1$, $P_2$ are rational functions with $P_i(0,w_2)\not \equiv 0$ with $w_1=0$ not a pole for $i=1,2$.) 
Given $n\in \Z_{>0}$ by the considerations above we have $\hat{f}^n(E_0)\subset E_0$ and $\hat{f}^n$ does not collapse $E_0$. That implies 
\begin{equation}\label{Eq: local affine lift fibration}
    (\tilde{w}_1,\tilde{w}_2):=\hat{f}^{n}(w_1,w_2)=(w_1^{d_n}Q_{1,n}(w_1,w_2), Q_{2,n}(w_1,w_2)),
\end{equation}
where the multiplicity $d_n$ is a positive integer and $Q_{i,n}$ are rational functions such that $Q_{1,n}(0,w_2)\not\equiv 0$ ($w_1=0$ is not a pole of $Q_{i,n}$ for $i = 1,2$) and $Q_{2,n}(0,w_2)$ is a non-constant rational function. For any $n\in\N$, the following diagram commutes

\[\begin{tikzcd}
	{\hat{X}} & {\hat{X}} \\
	X & X
	\arrow["{\hat{f}^{n}}", dashed, from=1-1, to=1-2]
	\arrow["\pi"', from=1-1, to=2-1]
	\arrow["\pi", from=1-2, to=2-2]
	\arrow["{f^{n}}"', dashed, from=2-1, to=2-2].
\end{tikzcd}\]
Therefore, substituting \eqref{Eq: local affine lift fibration} into\eqref{Eq: exceptional divisor 40} we obtain
\begin{equation}\label{eq:superthmpf1}
    \pi\circ \hat{f}^{ n}(w_1,w_2) =\left(w_1^{ad_n}(Q_{1,n}(w_1,w_2))^aP_1(\tilde{w_1},\tilde{w_2}),\,w_1^{bd_n}(Q_{1,n}(w_1,w_2))^bP_2(\tilde{w_1},\tilde{w_2})\right) 
\end{equation}
Notice here that the greatest power of $w_1$ is factored already in both coordinates as $\tilde{w_1}|_{w_1=0}\equiv0$ and $\tilde{w_2}|_{w_1=0}$ is non-constant. Also, \eqref{eq:superthmpf1} equals to 

\begin{equation}\label{eq:superthmpf2}
    f^{n}\circ \pi(w_1,w_2)= F^{(n)}\left(w_1^aP_1(w_1,w_2),w_1^bP_2(w_1,w_2)\right) + O\left(\|\left(w_1^aP_1(w_1,w_2),w_1^bP_2(w_1,w_2)\right)\|^{k_n + 1}\right).
\end{equation}
Here $F^{(n)}=(H_{k_n},G_{k_n})$ is the lowest degree term (of degree $k_n$) in the Taylor expansion of $f^{n}$ at $P=(0,0)$ (cf. \eqref{39}). Note that the $k_n$'s grow exponentially because $P$ is a superattracting fixed point for $f$. The equality of \eqref{eq:superthmpf1} and \eqref{eq:superthmpf2} implies that the $d_n$'s grow exponentially too. Indeed, the lowest power of $w_1$ appearing in the right-hand side of \eqref{eq:superthmpf2} is bounded from below by $\min(a,b)k_n \geq k_n$.

Choose $n\in\Z_{>0}$ big enough so that $d_n\geq 2$. In the local coordinates $(w_1,w_2)$, one can use $\eqref{Eq: local affine lift fibration}$ to compute  
\begin{equation}\label{eq:pf_sap_det_crit}
    \det\left(D{\hat{f}^n}\right)\Big\vert_{w_1=0}=0.
\end{equation}
Note that $\hat{f}^n$ is a birational map, therefore \eqref{eq:pf_sap_det_crit} implies that $E_0=\{w_1=0\}$ is contracted to a point by $\hat{f}^n$ by \cref{{lemma:jac_crit}}. But the set $E_0$ is not contracted by iterations of $\hat{f}$. This is a contradiction that finishes the proof.
\end{proof}

\section{Affine pairs.}\label{sec:affine_cases}

In this section, we study the pairs $(p,q)\in\N^2$ such that $pq = 4$, which separate the pairs that give simple periodic dynamics of $\mu_{p,q}$ ($pq<4$) from the pairs which give non-integrable dynamics of $\mu_{p,q}$  $(pq > 4)$.
These pairs are given by $\mathcal{A} = \{(2,2), (1,4),(4,1)\}$ and are called affine. For the maps $\mu_{p,q}$ with $(p,q)\in \mathcal{A}$, there are explicit conserved quantities which were found in work \cite{CHEN202425}. Below we explicitly integrate the dynamics on generic level sets of these conserved quantities. In particular, we show that the closure of a generic level set is a rational curve and the rational parametrization conjugates the dynamics given by the restriction of $\mu_{p,q}$ on the level set to a linear fractional transformation. We provide explicit formulas for rational parametrizations and for the corresponding linear fractional transformations. Also, we show that for the pairs $(p,q)\in \mathcal{A}$ the dynamics of $\mu_{p,q}$ can be naturally included into a hamiltonian flow induced by the conserved quantity with respect to the logarithmically constant Poisson bracket. In particular, we partially address the question from \cite{machacek2024discrete} for the case of $(p,q)\in \mathcal{A}$.

For the affine pairs without loss of generality we have either $p = 4, \;q = 1$ or $p = q = 2$ (see  \cref{rem:pgeqq}). 

We extend the dynamics $\mu_{2,2}:\C^2 \to \C^2$ and $\mu_{4,1}:\C^2 \to \C^2$ to the dynamics on $\CP^2$ as follows.

\begin{align}
    \mu_{2,2}([x_0:x_1:x_2]) &= [(x_2^2+x_1^2)x_0x_1: x_2^2x_0^2 + (x_2^2+x_1^2)^2 :x_0^2x_1x_2],\\
    \mu_{4,1}([x_0:x_1:x_2]) &= [x_0^3 x_1 \left(x_1+x_2\right):x_2
   \left(x_0^4+\left(x_1+x_2\right){}^4\right):x_0^4 x_1].
\end{align}

We will also need a meromorphic two-form which in local affine coordinates $(x,y) = \left(\frac{x_0}{x_2},\frac{x_1}{x_2}\right)$ can be written as
\begin{equation}\label{eqn:symplectic_form}
   \eta = \frac{dx\wedge dy}{xy}.
\end{equation}
The form $\eta$ defines a symplectic structure on an open subset of $\CP^2$. The corresponding Poisson bracket writes
\begin{equation}\label{eqn:PB}
    \{f,g\} = -(\partial_x(f)\partial_y(g)-\partial_x(g)\partial_{y}(f))xy.
\end{equation}
This bracket is standard in the framework of cluster algebras. The map $\mu_{p,q}$ preserves $\eta$, see Equation \eqref{eqn:2form_pres}. In particular, it implies that if $\mu_{p,q}$ admits a conserved quantity $H(x,y)$ then the hamiltonian flow induced by $H$ on its level sets commutes with $\mu_{p,q}$ acting on them.

\subsection{The case $p = q =2$.}

The conserved quantity can be obtained from the following construction. Consider the $6$-dimensional space of homogeneous polynomials of degree $2$, $\rV_2 = \left<\{x_0^2, x_0x_1, x_0x_2, x_1^2, x_1x_2, x_2^2\}\right>$. Consider the subspace $\rW_2 \subset \rV_2$ of polynomials vanishing at $\{[\pm\ri:0:1], [0:\pm\ri:1]\}$. This subspace is given by $\left<\{x_0x_1, x_0^2 + x_1^2 + x_2^2\}\right>$. 
\begin{propo}\label{prop:22int}\cite{CHEN202425}
There is a mutation invariant Laurent polynomial $I_{2,2}(x_0,x_1,x_2) = \frac{x_0^2 + x_1^2 + x_2^2}{x_0x_1}$. In particular,
    $$\mu_{2,2}^*\left(I_{2,2}\right)(x_0,x_1,x_2) = I_{2,2}(x_0,x_1,x_2).$$
\end{propo}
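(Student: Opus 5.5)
The plan is to avoid expanding the degree-four homogeneous formula for $\mu_{2,2}$ directly. Instead I will exploit the factorization $\mu_{2,2}=\mu^{(2)}_{2,2}\circ\mu^{(1)}_{2,2}$ from \eqref{eq:mutations} and show that $I_{2,2}$ is invariant under each of the two involutions separately. Invariance under the composition then follows immediately from
\[
\mu_{2,2}^*=(\mu^{(1)}_{2,2})^*\circ(\mu^{(2)}_{2,2})^*.
\]
Since $I_{2,2}$ is homogeneous of degree $0$, it is a rational function on $\CP^2$. It therefore suffices to work in the affine chart $x=x_0/x_2$, $y=x_1/x_2$, where
\[
I_{2,2}(x,y)=\frac{x^2+y^2+1}{xy}.
\]
Two rational functions on $\CP^2$ that agree on a dense open subset of this chart agree everywhere.

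The key step is to rewrite $I_{2,2}$ in two ways adapted to the two mutations:
\[
I_{2,2}(x,y)=\frac{1}{y}\left(x+\frac{1+y^2}{x}\right)=\frac{1}{x}\left(y+\frac{1+x^2}{y}\right).
\]
For the first mutation, $\mu^{(1)}_{2,2}(x,y)=\left(\frac{1+y^2}{x},y\right)$ fixes $y$ and swaps the two summands $x$ and $\frac{1+y^2}{x}$. Hence the first expression is visibly unchanged under pullback by $\mu^{(1)}_{2,2}$. For the second mutation, $\mu^{(2)}_{2,2}(x,y)=\left(x,\frac{1+x^2}{y}\right)$ fixes $x$ and swaps $y$ and $\frac{1+x^2}{y}$. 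Hence the second expression is unchanged under pullback by $\mu^{(2)}_{2,2}$. Composing the two invariances gives $\mu_{2,2}^*(I_{2,2})=I_{2,2}$ as rational functions on $\C^2$, and therefore on $\CP^2$.

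There is essentially no obstacle. The only point needing a word of care is that pullback of rational functions is well defined and contravariantly functorial for dominant rational maps. Each $\mu^{(i)}_{2,2}$ is birational, hence dominant, so compositions and pullbacks make sense on a Zariski-dense open set.

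As a sanity check, one can also verify that the homogeneous formula for $\mu_{2,2}$ given above, with the form $x_0^2+x_1^2+x_2^2$ and the monomial $x_0x_1$ pulled back, produces numerator and denominator that differ from the originals by the same common factor. This is a routine but unnecessary computation.
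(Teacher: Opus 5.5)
Your argument is correct and complete. The paper does not prove this proposition; it quotes it from \cite{CHEN202425}. So the only in-paper alternative is to substitute directly into the homogeneous formula for $\mu_{2,2}$.

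Your route avoids any expansion and proves more. It shows that $I_{2,2}$ is invariant under each involution $\mu^{(1)}_{2,2}$ and $\mu^{(2)}_{2,2}$ separately, not just under the composite. It also explains why the invariant exists: each mutation swaps the two summands in $x+\frac{1+y^2}{x}$, resp.\ $y+\frac{1+x^2}{y}$.

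The one step you leave implicit is that the paper's displayed homogeneous formula is the homogenization of $\mu^{(2)}_{2,2}\circ\mu^{(1)}_{2,2}$ in the chart $x_2\neq 0$. This is immediate to check. Together with the degree-$0$ homogeneity of $I_{2,2}$, it turns your affine identity into the stated homogeneous one.

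For comparison, the direct check is also short. Write $s=x_1^2+x_2^2$ and $(X_0,X_1,X_2)=(s x_0x_1,\ x_0^2x_2^2+s^2,\ x_0^2x_1x_2)$. Then $X_0^2+X_2^2=x_0^2x_1^2X_1$, so
$I_{2,2}(X_0,X_1,X_2)=(x_0^2x_1^2+X_1)/X_0=(x_0^2+s)/(x_0x_1)=I_{2,2}(x_0,x_1,x_2)$.
This only gives invariance of the composite, however, and gives no insight into why it holds.
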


We introduce the coordinates as follows. A generic level set of the conserved quantity $I_{2,2}$ function is the conic $C_{\be} = \{x_0^2 + x_1^2 + x_2^2 - \be x_0x_1 = 0\}$. Note that for generic $\beta\in \C$ the curve $C_{\be}$ is irreducible and passes through the point $[\ri:0:1]$. Using the standard parametrization of conic with respect to this point we get a birational map $\psi_{2,2}: \CP^2 \dashrightarrow \C^2$.

\begin{equation}
    \psi_{2,2}:
       [x_0:x_1:x_2] \mapsto  \left(\frac{x_0 -\ri x_2}{x_1},I_{2,2}(x_0,x_1,x_2)\right).\\
\end{equation}
The inverse map is given by
\begin{equation}
    \psi_{2,2}^{-1}: (\al,\be) \mapsto [(\al^2-1):(2\al-\be):\ri(1+\al^2-\al\be)]. 
\end{equation}

Using functions $(\al,\be)$ as local coordinates we directly integrate the dynamics given by $\mu_{2,2}$.
\begin{propo}\label{prop:aff_22}
    The map $\psi_{2,2}$ conjugates the dynamics induced by $\mu_{2,2}$ to the one given by
    \begin{equation}
        L_{2,2}:(\al, \be) \mapsto \left(\frac{1}{\be-\al}, \be \right).
    \end{equation}
\end{propo}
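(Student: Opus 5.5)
The plan is to verify the conjugacy identity $\psi_{2,2}\circ\mu_{2,2} = L_{2,2}\circ\psi_{2,2}$ directly, as an identity of rational maps. Since $\psi_{2,2}$ is birational with the explicit inverse given above, it suffices to check the identity on a Zariski dense open subset. There we may work in the affine chart $x_2=1$ with coordinates $x = x_0/x_2$, $y = x_1/x_2$. In this chart
\[
\alpha = \frac{x-\ri}{y}, \qquad \beta = I_{2,2} = \frac{x^2+y^2+1}{xy}.
\]

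The second coordinate needs no computation. By Proposition \ref{prop:22int}, $I_{2,2}$ is $\mu_{2,2}$-invariant, so $\beta\circ\mu_{2,2} = \beta$, which is exactly the second component of $L_{2,2}$.

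For the first coordinate, I will write $\mu_{2,2} = \mu^{(2)}\circ\mu^{(1)}$ as in \eqref{eq:mutations}. This gives
\[
x' = \frac{1+y^2}{x}, \qquad y' = \frac{1+x'^2}{y}.
\]
The key simplification is that the factor $1+x'^2$ in $y'$ factors as $(x'+\ri)(x'-\ri)$, and the factor $x'-\ri$ cancels against the numerator of $\alpha'$. This yields
\[
\alpha' = \frac{x'-\ri}{y'} = \frac{y}{x'+\ri} = \frac{xy}{1+y^2+\ri x}.
\]
On the other side I will compute
\[
\beta - \alpha = \frac{x^2+y^2+1}{xy} - \frac{x^2-\ri x}{xy} = \frac{1+y^2+\ri x}{xy}.
\]
Its reciprocal equals $\alpha'$, which proves $\alpha\circ\mu_{2,2} = 1/(\beta-\alpha)$.

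The only step requiring care is this cancellation of the factor $x'-\ri$; once it is spotted, everything else is routine algebra. Finally, I will note that all the expressions involved are nonzero rational functions. Hence the identity holds off a proper subvariety, and therefore as an identity of birational maps, so $\psi_{2,2}$ conjugates $\mu_{2,2}$ to $L_{2,2}$.
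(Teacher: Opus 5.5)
Your proposal is correct and follows essentially the paper's route, which is simply a direct computation. The only difference is that you check the equivalent identity $\psi_{2,2}\circ\mu_{2,2}=L_{2,2}\circ\psi_{2,2}$ instead of composing with $\psi_{2,2}^{-1}$; this is legitimate because $\psi_{2,2}$ is birational. Your use of the invariance of $I_{2,2}$, together with the cancellation of the factor $x'-\ri$, makes the computation transparent, and both $\alpha\circ\mu_{2,2}=xy/(1+y^2+\ri x)$ and $\beta-\alpha=(1+y^2+\ri x)/(xy)$ are correct.
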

\begin{proof}
    By a direct computation 
    we have
    \begin{equation}
         \psi_{2,2} \circ \mu_{2,2} \circ\psi_{2,2}^{-1}((\al, \be)) = \left(\frac{1}{\be-\al},\be\right).
    \end{equation}

\end{proof}

Below we include the dynamics $L_{2,2}$ into a flow induced by $H_{2,2}(x,y) = I_{2,2}(x,y,1)$.
\begin{propo}\label{prop:hamdyn22}
    The dynamics $\mu_{2,2}$ on a generic level set $\{H_{2,2}(x,y) =\beta\}$ corresponds to moving along an integral curve of the hamiltonian vector field $v_{H_{2,2}}$ with the time $t = \frac{\de}{\sinh(\de)}$, where $\de$ is any solution of the equation $2\cosh(\de) = \be$.
\end{propo}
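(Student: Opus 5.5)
The plan is to work in the coordinates $(\al,\be)$ given by $\psi_{2,2}$, where by \cref{prop:aff_22} the map $\mu_{2,2}$ becomes $L_{2,2}(\al,\be)=\left(\frac{1}{\be-\al},\be\right)$. Since $\be=H_{2,2}$, the Hamiltonian vector field $v_{H_{2,2}}$ is tangent to the lines $\{\be=\text{const}\}$. It therefore has the form $v_{H_{2,2}}=\phi(\al,\be)\,\partial_\al$ for some function $\phi$ that we must determine. To find $\phi$, I will transport the form $\eta$ of \eqref{eqn:symplectic_form} to $(\al,\be)$ coordinates. I will substitute
\begin{equation*}
x=\frac{\al^2-1}{\ri(1+\al^2-\al\be)},\qquad y=\frac{2\al-\be}{\ri(1+\al^2-\al\be)}
\end{equation*}
into $\frac{dx\wedge dy}{xy}$ and simplify. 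I expect the result to be $\eta=\pm\frac{d\al\wedge d\be}{\al^2-\al\be+1}$, possibly with a constant factor such as $2$; the exact constant is precisely what fixes $t$.

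From $\eta=g\,d\al\wedge d\be$, the bracket \eqref{eqn:PB} becomes $\{f,h\}=\mp g^{-1}(\partial_\al f\,\partial_\be h-\partial_\be f\,\partial_\al h)$ in the new coordinates. Then $\dot\al=\{\al,H_{2,2}\}$ gives the flow equation
\begin{equation*}
\dot\al=k(\al^2-\be\al+1),\qquad \dot\be=0,
\end{equation*}
with $k$ a universal constant that I expect to be $\pm1$. This step carries all the computational content, and I expect it to be the main obstacle. The sign and orientation conventions in \eqref{eqn:PB} must be tracked carefully, since they determine $k$.

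Next I compare the time-$t$ flow with $L_{2,2}$ on a fixed generic line $\be=2\cosh\de$, with $\sinh\de\neq0$. The quadratic $\al^2-\be\al+1$ has roots $e^{\pm\de}$. The time-$t$ flow of a quadratic vector field on $\CP^1$ is a Möbius map fixing both zeros of the field. Its multiplier at $e^{\de}$ is $\exp\bigl(t\cdot k(2e^{\de}-\be)\bigr)=\exp(2kt\sinh\de)$.

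The map $\al\mapsto\frac{1}{\be-\al}$ also fixes $e^{\pm\de}$, since $(\be-e^{\de})e^{\de}=1$. Its derivative there is $(\be-e^{\de})^{-2}=e^{2\de}$. A Möbius map is determined by its two fixed points together with the multiplier at one of them. Hence the time-$t$ flow equals $L_{2,2}$ on this line if and only if $2kt\sinh\de\equiv 2\de \pmod{2\pi\ri}$. With $k=1$ this gives $t=\frac{\de}{\sinh\de}$.

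Finally I will check that the answer does not depend on which solution $\de$ of $2\cosh\de=\be$ is chosen. The substitution $\de\mapsto-\de$ leaves $\frac{\de}{\sinh\de}$ unchanged. A shift $\de\mapsto\de+2\pi\ri m$ changes the time by a period of the flow on the level set, and this is consistent with the ambiguity modulo $2\pi\ri$ above.

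If the computation gives $k=-1$ instead, then I will use the sign freedom $\de\leftrightarrow-\de$ in the comparison and correct the orientation convention of \eqref{eqn:PB} accordingly.

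Pulling back through $\psi_{2,2}$ transfers the statement to the level set $\{H_{2,2}(x,y)=\be\}$ in the original coordinates $(x,y)$.
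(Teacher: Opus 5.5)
Your route is essentially the paper's: find the field $\al^2-\al\be+1$ on the level lines $\{\be=\text{const}\}$, then match its time-$t$ Möbius map with $L_{2,2}$. The paper computes $\{H_{2,2},\al\}$ directly in $(x,y)$ with $\al=(x-\ri)/y$ and exponentiates $\xi\in\mathfrak{sl}_2(\C)$. Your transport of $\eta$ is equivalent to the first step, and your fixed-point/multiplier comparison is a clean substitute for the explicit $\exp(t\xi)$. However, the sign step, which you correctly single out as decisive, is set up so that it fails.

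Carrying out your computation gives exactly $\eta=\frac{d\al\wedge d\be}{\al^2-\al\be+1}$, with no extra constant and a plus sign. With \eqref{eqn:PB} the bracket is then $\{f,h\}=-(\al^2-\al\be+1)(\partial_\al f\,\partial_\be h-\partial_\be f\,\partial_\al h)$. Hence $\{H_{2,2},\al\}=\al^2-\al\be+1$, and your declared flow $\dot\al=\{\al,H_{2,2}\}$ has $k=-1$.

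Your fallback for this case does not work. The map $\de\mapsto-\de$ leaves $\frac{\de}{\sinh\de}$ unchanged because that function is even in $\de$. So with $k=-1$ your multiplier argument yields $t=-\frac{\de+\pi\ri n}{\sinh\de}$, which is the time-reversed answer. That set is disjoint from the one in the statement for generic $\be$. Moreover, \eqref{eqn:PB} is fixed by the paper, so it cannot be ``corrected'' after the fact.

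The fix is to choose the Hamiltonian vector field convention at the outset. The paper uses $v_{H_{2,2}}(f)=\{H_{2,2},f\}$; its proof writes $v_{H_{2,2}}(\al)=\{H_{2,2},\al\}$. With that convention $k=1$, and your argument gives $2t\sinh\de\equiv 2\de \pmod{2\pi\ri}$, i.e.\ $t=\frac{\de+\pi\ri n}{\sinh\de}$. This family contains $\frac{\de}{\sinh\de}$ for every solution $\de$ of $2\cosh\de=\be$, exactly as in the paper's computation. The rest of your proposal is correct as written: the fixed points $e^{\pm\de}$, the multiplier $e^{2\de}$ of $L_{2,2}$, and the independence of the choice of $\de$.
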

\begin{proof}
    We use the coordinate $\al = \frac{x-\ri}{y}$. One computes
    \begin{equation}\label{eqn:v22}
        v_{H_{2,2}}(\al) = \{H_{2,2}, \al\} = \al^2 - \al\be +1.
    \end{equation}
    
    This is a vector field corresponding to the one-parametric group of linear fractional transformations. The corresponding element in the Lie algebra $\mathfrak{sl}_2(\C)$ is $\xi = \begin{pmatrix}
    -\frac{\be}{2} & 1\\
     -1& \frac{\be}{2}
    \end{pmatrix}.
    $
    Direct computation gives 
    $$
    \exp(t\xi) = \frac{1}{\sinh(\de)}\begin{pmatrix}
         \sinh (\delta -t \sinh (\delta )) & \sinh (t \sinh (\delta )) \\
 -\sinh (t \sinh (\delta )) & \sinh (\delta +t \sinh (\delta )) \\
    \end{pmatrix}.
    $$

    The corresponding LFT equals $L_{2,2}$ if and only if $t = \frac{\de + \pi \ri n}{\sinh(\de)}$ for $n\in \Z$. Alternatively, the equation \eqref{eqn:v22} can be solved directly as a differential equation on a function $\al(t)$.
\end{proof}
\subsection{The case $p = 4, q = 1$.}

\begin{propo}\label{prop:22int(2)}\cite{CHEN202425}
There is a mutation invariant Laurent polynomial $I_{4,1}(x_0,x_1,x_2) = \frac{x_0^4+x_2^2 \left(x_1+x_2\right){}^2}{x_0^2 x_1  x_2}.$ In particular,
    $$\mu_{4,1}^*\left(\frac{x_0^4+x_2^2 \left(x_1+x_2\right){}^2}{x_0^2x_1x_2}\right) = \frac{x_0^4+x_2^2 \left(x_1+x_2\right){}^2}{x_0^2 x_1  x_2}.$$
\end{propo}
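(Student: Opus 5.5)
The plan is to verify the invariance directly in the affine chart, by showing that $I_{4,1}$ is invariant under each of the two involutions separately. In the affine coordinates $(x,y) = \left(\frac{x_0}{x_2},\frac{x_1}{x_2}\right)$ used throughout this section, the function becomes
\begin{align*}
H_{4,1}(x,y) := I_{4,1}(x,y,1) = \frac{x^4 + (y+1)^2}{x^2 y}.
\end{align*}
The map is $\mu_{4,1} = \mu^{(2)}_{4,1}\circ\mu^{(1)}_{4,1}$, where $\mu^{(1)}_{4,1}(x,y) = \left(\frac{1+y}{x},y\right)$ and $\mu^{(2)}_{4,1}(x,y) = \left(x,\frac{1+x^4}{y}\right)$. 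Since $I_{4,1}$ is homogeneous of degree $0$ and the displayed projective formula for $\mu_{4,1}$ restricts to this affine map on $\{x_2 \neq 0\}$, it suffices to prove the two identities $H_{4,1}\circ\mu^{(1)}_{4,1} = H_{4,1}$ and $H_{4,1}\circ\mu^{(2)}_{4,1} = H_{4,1}$ as rational functions. Invariance under the composition then follows, and passing from an identity on a Zariski open set to $\CP^2$ is automatic for rational functions.

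For $\mu^{(1)}_{4,1}$, substitute $x' = (1+y)/x$ into $H_{4,1}$. The numerator becomes $(1+y)^4/x^4 + (1+y)^2$ and the denominator becomes $(1+y)^2 y/x^2$. Cancelling the common factor $(1+y)^2$ gives
\begin{align*}
\frac{(1+y)^2/x^2 + x^2}{y} = \frac{(1+y)^2 + x^4}{x^2 y},
\end{align*}
which is $H_{4,1}(x,y)$.

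For $\mu^{(2)}_{4,1}$, expand $x^2 H_{4,1} = \frac{x^4}{y} + \frac{y^2+2y+1}{y}$ and rewrite it as
\begin{align*}
x^2 H_{4,1}(x,y) = \frac{1+x^4}{y} + y + 2.
\end{align*}
For fixed $x$, this expression is visibly symmetric under the exchange $y \leftrightarrow \frac{1+x^4}{y}$, which is exactly the action of $\mu^{(2)}_{4,1}$. Since $\mu^{(2)}_{4,1}$ does not change $x$, the prefactor $x^2$ is also unchanged, so $H_{4,1}$ is invariant.

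There is no real obstacle here. The only thing to be careful about is choosing the form of $H_{4,1}$ that makes each symmetry visible: the factorization through $(1+y)^2$ for the first mutation, and the decomposition $\frac{1+x^4}{y} + y + 2$ for the second. These two rearrangements avoid brute-force expansion of the degree-$8$ homogeneous expression obtained by composing with the projective formula for $\mu_{4,1}$. As a cross-check, the same two identities could be verified by substituting the projective formula and cancelling common factors.
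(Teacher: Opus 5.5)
Your argument is correct. Both identities hold: in $H_{4,1}\circ\mu^{(1)}_{4,1}=H_{4,1}$ the common factor $(1+y)^2$ cancels, and in $H_{4,1}\circ\mu^{(2)}_{4,1}=H_{4,1}$ the form $x^2H_{4,1}=\frac{1+x^4}{y}+y+2$ is visibly symmetric under $y\mapsto\frac{1+x^4}{y}$. You are also right that the paper's projective formula for $\mu_{4,1}$ restricts on $\{x_2\neq 0\}$ to $(x,y)\mapsto\bigl(\frac{1+y}{x},\frac{x^4+(1+y)^4}{x^4y}\bigr)=\mu^{(2)}_{4,1}\circ\mu^{(1)}_{4,1}(x,y)$.

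The paper gives no proof of this proposition; it simply attributes it to Chen--Li. Your write-up is therefore a self-contained verification rather than a reconstruction of an argument in the text. Compared with substituting the degree-$8$ projective formula into $I_{4,1}$ and cancelling by brute force, factoring through the two involutions is shorter. It also proves slightly more, namely that $I_{4,1}$ is invariant under each mutation separately, which is what the phrase ``mutation invariant'' suggests. The Laurent-polynomial claim follows at once from $H_{4,1}=x^2y^{-1}+x^{-2}y+2x^{-2}+x^{-2}y^{-1}$; you could state this explicitly for completeness.
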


In order to integrate the dynamics induced by $\mu_{4,1}$ we introduce the coordinates as follows. A generic level set of the conserved quantity $I_{4,1}$ function is an irreducible quartic $$C_{\be} = \{x_0^4+x_2^2 \left(x_1+x_2\right){}^2- \be x_0^2 x_1  x_2 = 0\}.$$ For generic values of $\be \in \C$ the curve $C_{\be}$ has two singular points $[0:-1:1]$ and $[0:1:0]$, both have multiplicity $2$. The point $[0:-1:1]$ is a double point. The point $[0:1:0]$ is a cusp with the tangent line given by $x_2 = 0$. Note also that for any $\om$ such that $\om^4+1=0$ and for any $\be\in \C$ the curve $C_{\be}$ passes through the point $[1:0:\om] = [\om^{-1}:0:1]$. In this section we fix $\om = e^{\frac{\pi\ri}{4}}$.

Consider the pencil of conics $\{R_{\al}\}_{\al\in \CP^1}$ given by the following conditions
\begin{itemize}
    \item The conic $R_{\al}$ passes through points $[0:-1:1], [1:0:\om], [0:1:0]$.
    \item The tangent line to $R_{\al}$ at the point $[0:1:0]$ is the line $\{x_2 = 0\}$.
\end{itemize}

 Explicitly the pencil is given by $R_{\al} = \{x_2(x_1+x_2) - \ri x_0^2 - \al x_0(x_2 - \om x_0) =0 \}$.
 
\begin{lemma}\label{lemma:aff_41}
    For generic values of $\al, \be$ the curves $C_\be, R_{\al}$ intersect at points $[0:-1:1], [1:0:\om], [0:1:0]$ with multiplicities $2, 1$ and $4$ respectively.
\end{lemma}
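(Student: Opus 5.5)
The plan is to compute each local intersection multiplicity directly. Bezout's theorem then serves as a consistency check. Since $\deg C_\be\cdot\deg R_\al = 4\cdot 2 = 8$ and the claimed multiplicities sum to $2+1+4=7$, there should be exactly one further, moving, intersection point. That extra point is what will later be used to parametrize $C_\be$ by $\al$.

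First, I verify that the three points lie on both curves. For $R_\al$ this holds by construction. For $C_\be$ it follows by substitution, using $\om^4=-1$ at $[1:0:\om]$.

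\textbf{The point $[1:0:\om]$.} I check that $C_\be$ is smooth there; its gradient involves $4x_0^3$ and does not vanish for generic $\be$. The tangent line of $C_\be$ depends on $\be$, while the tangent line of $R_\al$ depends on $\al$. So for generic $(\al,\be)$ they differ, the intersection is transversal, and the multiplicity is $1$.

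\textbf{The point $[0:-1:1]$.} Here $C_\be$ has an ordinary double point and $R_\al$ is smooth. The local intersection number is therefore at least $2\cdot 1=2$, with equality exactly when the tangent line of $R_\al$ is not one of the two branch tangents of $C_\be$. I would work in the affine chart $x_2=1$ with $u=x_1+1$. In that chart the quadratic part of $C_\be$ is $x_0^2\cdot(\text{const}) + u^2 + \be x_0^2$, up to normalization. The tangent of $R_\al$ is $u - \al x_0=0$, up to the $\ri x_0^2$ and $\al\om x_0^2$ terms, which are quadratic. This tangent varies with $\al$, so it avoids the two branch tangents for generic $\al$, and the multiplicity is $2$.

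\textbf{The point $[0:1:0]$.} This is the main computation. I use the chart $x_1=1$ with coordinates $(x_0,x_2)$. The conic is
\begin{equation*}
x_2(1+x_2-\al x_0) = (\ri-\al\om)x_0^2 .
\end{equation*}
It is smooth at the origin, tangent to $x_2=0$, and parametrized by $x_0=t$, $x_2 = c\,t^2 + O(t^3)$ with $c=\ri-\al\om$. Substituting this parametrization into $x_0^4 + x_2^2(1+x_2)^2 - \be x_0^2x_2$ gives
\begin{equation*}
t^4\,(1+c^2-\be c) + O(t^5).
\end{equation*}
For generic $(\al,\be)$ the coefficient $1+c^2-\be c$ is nonzero, so the order of vanishing, which equals the intersection multiplicity, is exactly $4$.

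The only delicate point is keeping track of which genericity conditions are needed. They are: $1+c^2-\be c\neq 0$, a tangent mismatch at the node, and a tangent mismatch at $[1:0:\om]$. Each is a nontrivial Zariski-open condition on $(\al,\be)$, so all hold simultaneously for generic parameters. Finally, Bezout confirms that, apart from these three points, exactly one additional intersection point remains.
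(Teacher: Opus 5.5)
Your proposal is correct. The paper states this lemma without proof, treating it as a direct verification, and your local computations at the three points are exactly that check.

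One minor imprecision: at $[0:-1:1]$ the quadratic part of $C_\beta$ is exactly $u^2+\beta x_0^2$, with no extra constant multiple of $x_0^2$. Substituting $u=\alpha x_0+(\ri-\alpha\omega)x_0^2$ into the local equation gives $(\alpha^2+\beta)x_0^2+O(x_0^3)$. So your three genericity conditions are explicitly:
\begin{itemize}
\item $\alpha^2+\beta\neq 0$ at $[0:-1:1]$;
\item $2\ri\beta+2\alpha\omega^3-\alpha\beta\omega\neq 0$ at $[1:0:\omega]$, for transversality; here $C_\beta$ is smooth for every $\beta$, since the $\partial_{x_0}$-component of its gradient equals $4$;
\item $1+c^2-\beta c\neq 0$ with $c=\ri-\alpha\omega$ at $[0:1:0]$.
\end{itemize}
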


\cref{lemma:aff_41} implies that the parameter $\al$ of the pencil $\{R_{\al}\}_{\al\in\C}$ gives a rational parametrization of $C_{\be}$ for generic $\be$. Below we give an explicit formulas for this parametrization.

Define a birational map $\psi_{4,1}: \CP^2 \dashrightarrow \C^2$ by
\begin{equation}
    \psi_{4,1}: [x_0:x_1:x_2] \mapsto  \left(\frac{x_2(x_1+x_2) -\ri x_0^2}{x_0(x_2-\om x_0)} , \frac{x_0^4+x_2^2 \left(x_1+x_2\right){}^2}{x_0^2x_1x_2}\right).\\
\end{equation}
We use the functions $(\al,\be) = \left(\frac{x_2(x_1+x_2) -\ri x_0^2}{x_0(x_2-\om x_0)} , \frac{x_0^4+x_2^2 \left(x_1+x_2\right){}^2}{x_0^2x_1x_2}\right)$
as local coordinates.

\begin{propo}\label{prop:41int}
    The map $\psi_{4,1}$ conjugates the dynamics induced by $\mu_{4,1}$ to the one given by
    \begin{equation}
        L_{4,1}:(\al, \be) \mapsto \left(\frac{\ri(\sqrt{2} + \be \om) \al +\be}{\al - \sqrt{2}}, \be\right).
    \end{equation}
\end{propo}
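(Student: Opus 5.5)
The plan is to split the conjugacy into its two coordinates. For the second coordinate there is nothing new to prove: by \cref{prop:22int(2)} we have $\mu_{4,1}^*I_{4,1}=I_{4,1}$, so $\be\circ\mu_{4,1}=\be$. The real content is the identity
$$\al\circ\mu_{4,1}=\frac{\ri(\sqrt2+\be\om)\al+\be}{\al-\sqrt2}.$$

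First I will check that $\psi_{4,1}$ is birational. By \cref{lemma:aff_41}, a generic conic $R_\al$ meets a generic quartic $C_\be$ at the three fixed base points with total multiplicity $2+1+4=7$. Bézout gives $2\cdot4=8$ intersections, so exactly one further point remains. That point is the unique point of $C_\be$ with coordinates $(\al,\be)$, so $(\al,\be)\mapsto$ (residual point) is a rational inverse of $\psi_{4,1}$. It also shows that, for generic $\be$, the function $\al$ restricts to a degree one map $C_\be\dashrightarrow\CP^1$, i.e.\ a rational parametrization of $C_\be$.

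Next comes a structural reduction. Since $\mu_{4,1}$ is birational and preserves each generic level curve $C_\be$, it induces a birational self-map of $C_\be\cong\CP^1$. In the coordinate $\al$ such a map is a Möbius transformation $\al\mapsto\frac{a(\be)\al+b(\be)}{c(\be)\al+d(\be)}$. So only the four coefficients need to be identified, and it suffices to check the claimed formula on three points of a generic $C_\be$, or to verify it as an identity of rational functions.

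I will do the verification directly, working in the affine chart $x=x_0/x_2$, $y=x_1/x_2$. There
$$\al=\frac{(1+y)-\ri x^2}{x(1-\om x)},\qquad \mu_{4,1}(x,y)=\Bigl(\tfrac{1+y}{x},\ \tfrac{x^4+(1+y)^4}{x^4y}\Bigr).$$
Substituting the second expression into the first gives $\al\circ\mu_{4,1}$ as an explicit rational function of $x,y$. Separately, I substitute $\be=\frac{x^4+(1+y)^2}{x^2y}$ and the formula for $\al$ into the right-hand side of $L_{4,1}$. Clearing denominators, the claim becomes a polynomial identity in $\C[x,y]$, using only $\om^2=\ri$, $\om^4=-1$ and $\sqrt2=\om+\om^{-1}=\om-\om^3$.

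The main obstacle is this last simplification. The expressions are of fairly high degree and the cancellations depend on the cyclotomic relations for $\om$. If the brute-force expansion gets unwieldy, I will fall back on the three-point method. Pick three convenient points on $C_\be$, for example the two points of $C_\be\cap\{x_0=0\}$ and the marked point $[1:0:\om]$, or points where $\al$ takes the values $\infty,\sqrt2,0$. Track their images under $\mu_{4,1}$, handling any indeterminacies by passing to limits along $C_\be$. Then match the resulting three values of $\al$ against the Möbius map; this is much lighter bookkeeping.
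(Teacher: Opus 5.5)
Your proposal is correct and is essentially the paper's argument: the paper's proof is exactly the direct verification you describe (with $\be\circ\mu_{4,1}=\be$ supplied by \cref{prop:22int(2)}). The identity does hold, and the computation becomes short once you use $\sqrt2\,\om=1+\ri$, which gives $\al-\sqrt2=\frac{(1+y)-\sqrt2\,x+x^2}{x(1-\om x)}$, to be compared with $\al\circ\mu_{4,1}=\frac{x^4+(1+y)^3-\ri x^2y(1+y)}{x^2y\,(x-\om(1+y))}$.

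If you use the three-point fallback, note that the points you suggest ($[0:1:0]$, $[0:-1:1]$, $[1:0:\om]$) are all indeterminacy points of $\mu_{4,1}$ and base points of the pencil $R_\al$, and $[0:-1:1]$ is a node of $C_\be$ with two branches, so that route would not be lighter.
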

\begin{proof}
    By a tedious but straightforward computation.
\end{proof}
Below we include the dynamics $L_{4,1}$ into a flow induced by $H_{4,1}(x,y) = I_{4,1}(x,y,1)$.

\begin{propo}
    The dynamics $\mu_{4,1}$ on a generic level set $\{H_{4,1}(x,y) =\beta\}$ corresponds to moving along an integral curve of the hamiltonian vector field $v_{H_{4,1}}$ with the time $t = \frac{\de}{2\sinh(\de)}$, where $\de$ is any solution of the equation $2\cosh(\de) = \be$.
\end{propo}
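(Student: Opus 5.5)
The plan is to mimic the proof of \cref{prop:hamdyn22}, working in the coordinates $(\al,\be)$ given by $\psi_{4,1}$. Since $H_{4,1}$ is conserved, $\{H_{4,1},\be\}=0$, so the Hamiltonian vector field $v_{H_{4,1}}$ is tangent to the level curves $\{\be=\mathrm{const}\}$. I will therefore compute only its $\al$-component,
\begin{equation*}
v_{H_{4,1}}(\al)=\{H_{4,1},\al\},
\end{equation*}
using the bracket \eqref{eqn:PB} with $\al=\frac{y+y^2-\ri x^2}{x(1-\om x)}$ in the chart $x_2=1$.

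The first step is to show that, after restricting to $C_\be$ and rewriting in terms of $(\al,\be)$ via $\psi_{4,1}^{-1}$, this derivative is a polynomial of degree at most two in $\al$ with coefficients depending only on $\be$:
\begin{equation*}
\{H_{4,1},\al\}=a(\be)\al^2+b(\be)\al+c(\be).
\end{equation*}
A priori reason: the restriction of $v_{H_{4,1}}$ to a generic level curve $\overline{C_\be}\cong\CP^1$ is a holomorphic vector field on $\CP^1$. This holds because $\eta$ has poles only on the coordinate axes, and one checks the field extends across the finitely many special points. Holomorphic vector fields on $\CP^1$ are exactly quadratic in an affine coordinate. In practice I will verify the formula by direct symbolic computation. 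This is the main, tedious obstacle.

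Once the quadratic is known, I associate to it the element
\begin{equation*}
\xi=\begin{pmatrix} -b/2 & -c\\ a & b/2\end{pmatrix}\in\mathfrak{sl}_2(\C),
\end{equation*}
up to the sign convention used for $\xi$ in the proof of \cref{prop:hamdyn22}. The eigenvalues of $\xi$ are $\pm\tfrac12\sqrt{b^2-4ac}$, so $\exp(t\xi)$ acts as a Möbius map whose eigenvalue ratio is $e^{t\sqrt{b^2-4ac}}$.

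Next I compare with $L_{4,1}$ from \cref{prop:41int}, whose matrix is
\begin{equation*}
M=\begin{pmatrix}\ri(\sqrt2+\be\om)&\be\\ 1&-\sqrt2\end{pmatrix}.
\end{equation*}
Using $\ri\om\sqrt2=\ri-1$, a short computation gives
\begin{equation*}
\det M=-\ri(2+\be),\qquad \operatorname{tr}M=\frac{(\ri-1)(2+\be)}{\sqrt2}.
\end{equation*}
Hence
\begin{equation*}
\frac{(\operatorname{tr}M)^2}{\det M}=2+\be.
\end{equation*}
Writing $\be=2\cosh\de$, the eigenvalue ratio $\la$ of $M$ satisfies $\la+\la^{-1}=\be$, so $\la=e^{\pm\de}$.

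Two Möbius maps with the same fixed points (both commute with the flow's generator, since $L_{4,1}$ does) coincide exactly when their multipliers agree. So I need
\begin{equation*}
t\sqrt{b^2-4ac}=\de \pmod{2\pi\ri}.
\end{equation*}
The claimed time $t=\frac{\de}{2\sinh\de}$ therefore amounts to the identity $b^2-4ac=4\sinh^2\de\cdot 4=4(\be^2-4)$, up to the convention for $\xi$. I will check this identity from the computed coefficients. I will also check that the fixed points of $M$ coincide with the zeros of $a\al^2+b\al+c$, which is the condition that $L_{4,1}$ lies in the one-parameter group $\exp(t\xi)$.

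The remaining care point is the choice of the branch of $\de$ and the direction of the flow. As in the $(2,2)$ case, any solution $\de$ of $2\cosh\de=\be$ works, with ambiguity $\de\mapsto\de+\pi\ri n$ absorbed. Alternatively, one can solve the Riccati equation $\dot\al=a\al^2+b\al+c$ explicitly and evaluate it at time $t$.
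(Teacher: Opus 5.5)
Your route is the same as the paper's. The paper computes $v_{H_{4,1}}(\al)=\{H_{4,1},\al\}=\om\al^2+(\be-2\ri)\al-\om\be$ and then matches the resulting one-parameter Möbius group with $L_{4,1}$, exactly as in the $(2,2)$ case. However, your matching step contains errors that would make it fail as written.

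\textbf{The coordinate and the discriminant.} A minor point first: in the chart $x_2=1$ the coordinate is $\al=\frac{y+1-\ri x^2}{x(1-\om x)}$, not $\frac{y+y^2-\ri x^2}{x(1-\om x)}$. More importantly, your reduction of the time to a discriminant identity is miscomputed. From your own condition $t\sqrt{b^2-4ac}=\de$ together with $t=\frac{\de}{2\sinh\de}$, you need $b^2-4ac=4\sinh^2\de=\be^2-4$, not $4(\be^2-4)$; no sign convention for $\xi$ changes this by a factor. The correct identity does hold for the paper's quadratic, since $(\be-2\ri)^2+4\om^2\be=\be^2-4$ by $\om^2=\ri$. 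The identity you wrote down would not hold. The fixed-point check does work: using $\sqrt2(1+\ri)=2\om$ and $\ri\om=\om^3$, the fixed-point equation of $L_{4,1}$ multiplied by $\om$ is exactly $\om\al^2+(\be-2\ri)\al-\om\be=0$.

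\textbf{The direction of the flow.} This is the more serious gap. Comparing only the unordered eigenvalue ratio shows $L_{4,1}=\exp(\pm t\xi)$, that is, time $\pm\frac{\de}{2\sinh\de}$. This sign cannot be absorbed by choosing a different root $\de$. The roots of $2\cosh\de=\be$ are $\pm\de+2\pi\ri n$, and $\frac{\de}{2\sinh\de}$ is even in $\de$. So changing the root only shifts $t$ by multiples of the period $\frac{\pi\ri}{\sinh\de}$. (That shift is $2\pi\ri n$ in the numerator here, not $\pi\ri n$ as in the $(2,2)$ case, because the multiplier is now $e^{\de}$ rather than $e^{2\de}$.) Moreover, for the action $\al\mapsto\frac{A\al+B}{C\al+D}$, the matrix $\xi$ you wrote generates $\dot\al=-(a\al^2+b\al+c)$. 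The paper's convention is $\begin{pmatrix} b/2 & c\\ -a & -b/2\end{pmatrix}$.

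The missing step is to compare multipliers at the same fixed point $z$ of $V(\al)=a\al^2+b\al+c$. There the time-$t$ map has derivative $e^{tV'(z)}$, while $L_{4,1}'(z)=\la:=\det M/(z-\sqrt2)^2$. So you must verify $V'(z)=\la-\la^{-1}$. Then $\de:=\log\la$ satisfies $2\cosh\de=\be$ and $2\sinh\de=V'(z)$, which gives $t=\frac{\de}{2\sinh\de}$. This identity is true: $\la-\la^{-1}=\frac{\operatorname{tr}M}{\det M}\bigl(\operatorname{tr}M-2(z-\sqrt2)\bigr)$ with $\frac{\operatorname{tr}M}{\det M}=-\om$, which simplifies to $2\om z+\be-2\ri=V'(z)$. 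Alternatively, compute $\exp(t\xi)$ explicitly and match entries, as the paper does for $(2,2)$; this fixes the sign automatically.
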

\begin{proof}
    We use the coordinate $\al = \frac{-i x^2+y+1}{x (1-x \omega )}$ on a generic level set $\{ H_{4,1}(x,y) = \be\}$ for $\be\in \C $ generic. One computes
    \begin{equation}\label{eqn:v41}
        v_{H_{4,1}}(\al) = \{H_{4,1}, \al\} = \omega \al^2 + (\be - 2\ri) \al - \om \be.
    \end{equation}
    The rest of the proof is similar to the one of  \cref{prop:hamdyn22}.
\end{proof}

\cref{prop:aff_22,prop:41int} imply that for the affine pairs $(p,q)\in \mathcal{A}$ the dynamics of $\mu_{p,q}$ is birationally conjugate to the dynamics on $\CP^1 \times \C$ induced by a linear fractional transformation acting on the first factor.
\begin{coro}
    Let $(p,q) \in \mathcal{A}$, then the dynamical degree of $\mu_{p,q}$ equals to one.
\end{coro}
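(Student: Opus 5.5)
The plan is to derive the statement from the explicit conjugacies in \cref{prop:aff_22} and \cref{prop:41int}, using that $\lambda_1$ is a birational conjugacy invariant. Concretely, $\mu_{2,2}$ is birationally conjugate via $\psi_{2,2}$ to $L_{2,2}(\al,\be)=\left(\frac{1}{\be-\al},\be\right)$, and $\mu_{4,1}$ via $\psi_{4,1}$ to $L_{4,1}$, which has the same form: a M\"obius transformation in $\al$ whose coefficients are polynomial in $\be$ of degree at most one, with $\be$ fixed. The case $(1,4)$ reduces to $(4,1)$ by \cref{rem:pgeqq}: $\mu_{1,4}^{-1}$ is birationally conjugate to $\mu_{4,1}$, and $\lambda_1(f^{-1})=\lambda_1(f)$ for birational surface maps, since $d_{\rm alg}(f^{-n})=d_{\rm alg}(f^n)$ on $\CP^2$.

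Rather than compute degrees directly, I would apply \cref{PROP:INV_FIBR_VS_DYN_DEG}. Compactify $\C^2_{(\al,\be)}$ to $\CP^1\times\CP^1$ (or to $\CP^2$); this is a smooth projective surface, and the conjugated map remains birational. The projection $\rho(\al,\be)=\be$ to $\CP^1$ satisfies $\rho\circ L=\rho$, so $(\CP^1,\rho,\mathrm{Id})$ is an invariant fibration, in fact a conserved quantity. Pulling this back by $\psi_{p,q}$ gives the conserved quantity $I_{p,q}$ for $\mu_{p,q}$ itself. \cref{PROP:INV_FIBR_VS_DYN_DEG}, applied on $\CP^2$ directly, then gives $\lambda_1(\mu_{p,q})=1$.

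As an independent elementary check that avoids the heavy theorems behind \cref{PROP:INV_FIBR_VS_DYN_DEG}, I would compute iterates directly. Write $L^n(\al,\be)=\left(\frac{a_n(\be)\al+b_n(\be)}{c_n(\be)\al+d_n(\be)},\be\right)$, where the coefficient matrix is $M(\be)^n$ and $M(\be)$ has entries of degree at most one in $\be$. The entries of $M^n$ then have degree at most $n$ in $\be$. Hence the bidegree of $L^n$, and so its algebraic degree as a map of $\CP^2$, grows at most linearly in $n$. Since algebraic degrees of birationally conjugate maps differ by at most bounded multiplicative factors ($d_{\rm alg}(\psi\circ L^n\circ\psi^{-1})\le d_{\rm alg}(\psi)\,d_{\rm alg}(\psi^{-1})\,d_{\rm alg}(L^n)$), $d_{\rm alg}(\mu_{p,q}^n)$ also grows at most linearly. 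Taking $n$-th roots gives $\lambda_1\le 1$, and $\lambda_1\ge1$ always holds for birational maps.

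The main obstacle is bookkeeping, not any conceptual step. One has to justify that the conjugacy of \cref{prop:41int} is genuinely birational, i.e.\ that $\psi_{4,1}$ is invertible. This follows from \cref{lemma:aff_41}: residual intersection $4\cdot2-(2+1+4)=1$ means each $R_\al$ meets $C_\be$ in exactly one further point, so $(\al,\be)$ determines a unique point. With that established, the rest is the linear degree bound or the direct appeal to \cref{PROP:INV_FIBR_VS_DYN_DEG}.
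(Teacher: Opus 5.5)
Your proposal is correct and follows the paper's own implicit argument: the conjugacies of \cref{prop:aff_22} and \cref{prop:41int} to a fiberwise M\"obius map on $\CP^1\times\C$ give $\lambda_1=1$, and you supply the step the paper leaves unstated, either through \cref{PROP:INV_FIBR_VS_DYN_DEG} applied to the conserved quantity $I_{p,q}=\rho\circ\psi_{p,q}$ or through the linear growth of $d_{\rm alg}(L^n)$. The first route does not actually need $\psi_{4,1}$ to be birational, since $I_{4,1}$ is already known to be conserved on $\CP^2$ by the Chen--Li result quoted in that section, so your ``main obstacle'' matters only for the degree-growth check.
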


\section{Stable Model}\label{sec:stable_model}
In \cref{sec:affine_cases} we studied the maps  $\mu_{p,q}$ for $(p,q)\in \mathcal{A}$. Now, we are interested in the pairs $(p,q)\in\N^2$ defined by the condition $pq>4$. These pairs are called non-affine in \cite{CHEN202425}. From the perspective of our work it is natural to further divide them into two families.

\begin{itemize}
    \item[(1)] The pairs $(p,q)$ such that $pq > p +q$. We call these pairs semi-simple. 
    \item[(2)] The pairs $(p,q)$ such that $p+q \geq pq > 4$, or equivalently $\min(p,q) = 1$ and $\max(p,q) > 4$. we call these pairs non-semi-simple.
\end{itemize}

\begin{rem}\label{rem:pgeqq}
    Notice that the maps $\mu_{p,q}$ and $\mu_{q,p}$ are birationally conjugated. More precisely we have $$\mu_{q,p}=\Psi  \circ \mu_{p,q}\circ \Psi^{-1},$$ where $\Psi(x,y):= \mu_{p,q}^{(2)}(y,x)$. Therefore we can always assume without losing generality that $p \geq q$.
\end{rem}

The goal of this section is to construct a proper modification $\pi:X_{p,q}\to \mathbb{P}^2$ such that the lift of $\mu_{p,q}:\mathbb{P}^2\dashrightarrow \mathbb{P}^2$ to $X_{p,q}$ will be algebraically stable. In this framework where we will set our results, in particular the computation of the dynamical degree for $\mu_{p,q}$.

\subsection{Compactification}\label{subsec:compactification}

Recall the formula 
\begin{equation*}\label{eq: c^2 formula for mutation}
    \mu_{p,q}: (x,y) \mapsto \left(\frac{1+y^q}{x},\frac{x^p+(1+y^q)^p}{x^py}\right).
\end{equation*}
We extend the birational dynamics induced by $\mu_{p,q}$ to $\CP^2$. Let $(p,q)$ be semi-simple, then

\begin{equation}\label{eqn: 5}
    \mu_{p,q}([x_0\colon x_1\colon x_2])=\left[x_0^{p - 1}x_1x_2^{p(q - 1) - q}\left(x_1^q + x_2^q\right)\colon \left(x_1^q + x_2^q\right)^p + x_0^px_2^{p (q - 1)}\colon 
  x_0^px_1x_2^{p (q - 1) - 1}\right].
\end{equation}
For the case of a non-semi-simple pair $(p,1)$ the map $\mu_{p,1}$ extends to $\CP^2$ as follows
 \begin{equation}\label{eq:mu_cp2}
\mu_{p,1}([x_0\colon x_1 \colon x_2])=\left[x_0^{p - 1}x_1(x_1 + x_2)\colon x_2\left(x_0^p+ (x_1 + x_2)^p\right)\colon 
  x_0^px_1\right].
\end{equation}

\begin{rem}
  Notice that the dynamical behavior of the point $[0\colon1\colon 0]$ is not the same for all cases. In particular we will see that for the semi-simple pairs $[0:1:0]$ is a superattracting fixed point for $\mu_{p,q}$. For non-semi-simple pairs $(p,q)$ this point is part of a destabilizing orbit. This makes the construction of $X_{p,q}$ dependent on the parameters $p$ and $q$ (see \cref{subsec:alg_stab}).
\end{rem}

\subsection{Indeterminacy and Critical Sets.}\label{subsec:indeterminacy}
In this section, we compute the indeterminacy set and critical set for the map $\mu_{p,q}$ for $pq>4$. We use these sets to identify and eliminate the destabilizing orbits in order to reach algebraic stability; cf.  \cref{subsec:alg_stab}.

We use the following notation: For any $n\in \N$ denote $R_{n} = \{z\in \C\; |\;  z^n+1=0\}$, denote $S_{p} = R_p \times \{0\} \subset \C^2 \subset \CP^2$ and $T_q = \{0\}\times R_q \subset \C^2 \subset \CP^2$, where $\C^2$ is embedded by $(x,y)\mapsto [x:y:1]$. For any $n$ we will denote by $\iota$ the involution on $R_n$ which sends $\xi \mapsto \xi^{-1}$ and the corresponding involutions on the sets $S_p, T_q$.

\begin{lemma}\label{Lemma: Indeterminate set} Let $pq>4$ then
  \begin{itemize}
\item If a pair $(p,q)$ is semi-simple then the indeterminacy locus is given by

\begin{equation*}
    \mathcal{I}(\mu_{p,q}) 
    = \{[1:0:0]\} \cup S_p \cup T_q.
\end{equation*}

\item If a pair $(p,1)$ is non-semi-simple then the indeterminacy locus is given by

\begin{equation*}
    \mathcal{I}(\mu_{p,1})
    = \{[1:0:0], [0:1:0]\} \cup S_p \cup T_{1}.
\end{equation*}
\end{itemize}
  
\end{lemma}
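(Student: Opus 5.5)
Since \eqref{eqn: 5} and \eqref{eq:mu_cp2} are the homogeneous representatives of $\mu_{p,q}$ on $\CP^2$, and $\mcI(f)=V(f_0,f_1,f_2)$ for a representative with coprime components, the plan is to solve the system $f_0=f_1=f_2=0$ directly. We first check that the three components share no common factor, so that $V(f_0,f_1,f_2)$ really is the indeterminacy locus. In the semi-simple case the middle component $(x_1^q+x_2^q)^p+x_0^px_2^{p(q-1)}$ is not divisible by $x_0$, $x_1$, $x_2$, or any factor $x_1-\xi x_2$ with $\xi\in R_q$: restrict it to each of these lines and observe that it is not identically zero. These are the only irreducible factors of the third component $x_0^px_1x_2^{p(q-1)-1}$, so the components are coprime. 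The exponent $p(q-1)-q\ge 0$ holds exactly because $pq>p+q$. The non-semi-simple case is handled the same way using $x_2(x_0^p+(x_1+x_2)^p)$ and $x_0^px_1$.

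Next we split into cases according to which factor of the third component vanishes.

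\textbf{Semi-simple case.}
\begin{itemize}
\item If $x_0=0$, the middle component becomes $(x_1^q+x_2^q)^p$. It vanishes iff $x_1^q+x_2^q=0$, and then the first component vanishes too, since it contains $x_0^{p-1}$. This gives the points $[0:y:1]$ with $y\in R_q$, i.e.\ $T_q$. We must also confirm that $x_2\neq0$ here: if $x_2=0$ then $x_1=0$ as well, which is not a point.
\item If $x_1=0$, the first component vanishes and the middle one becomes $x_2^{pq}+x_0^px_2^{p(q-1)} = x_2^{p(q-1)}(x_2^p+x_0^p)$. This gives either $x_2=0$, i.e.\ the point $[1:0:0]$, or $x_0^p=-x_2^p$, i.e.\ $S_p$.
\item If $x_2=0$ with $x_0,x_1\neq 0$, the middle component reduces to $x_1^{pq}\neq 0$, because $p(q-1)>0$ kills the $x_0$ term. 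So there are no further points, and in particular $[0:1:0]$ is not indeterminate.
\end{itemize}

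\textbf{Non-semi-simple case.} Repeat the analysis with \eqref{eq:mu_cp2}.
\begin{itemize}
\item If $x_0=0$, then $f_0=0$ and $f_1=x_2(x_1+x_2)^p$. This gives $x_2=0$, i.e.\ $[0:1:0]$, or $x_1=-x_2$, i.e.\ $T_1=\{[0:-1:1]\}$.
\item If $x_1=0$, then $f_1=x_2(x_0^p+x_2^p)$. This gives $[1:0:0]$ or $S_p$.
\end{itemize}

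I expect no real obstacle. The only delicate point is the coprimality check, together with the correct bookkeeping of the exponents $p(q-1)-q$ and $p(q-1)-1$ that distinguish the two cases. That bookkeeping is precisely what makes $[0:1:0]$ a regular point in the semi-simple case but an indeterminacy point when $q=1$.
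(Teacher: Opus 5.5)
Your proposal is correct and is exactly the direct computation the paper means by ``by a straightforward computation''. You check that the homogeneous components in \eqref{eqn: 5} and \eqref{eq:mu_cp2} are coprime, then solve $f_0=f_1=f_2=0$ case by case according to which factor of $f_2$ vanishes. One cosmetic slip: the irreducible factors of $x_0^px_1x_2^{p(q-1)-1}$ are only $x_0,x_1,x_2$, since the $x_1-\xi x_2$ divide the first component instead, so your extra check against them is unnecessary but harmless.
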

\begin{proof}
    By a straightforward computation.
\end{proof}

\begin{lemma}\label{Lemma1: Critical set}
    Let $pq>4$. Then the critical set is given by 
    \begin{equation*}
         \displaystyle\mathcal{C}(\mu_{p,q})=\mu_{p,q}^{-1}([0\colon 1\colon 0])\cup\bigcup_{P\in S_p\cup T_q}\mu_{p,q}^{-1}(P).
    \end{equation*}
 Moreover, 
 \begin{align*}
     \mu_{p,q}^{-1}([0\colon 1\colon 0])&=\begin{cases}\displaystyle
     \bigcup_{i=0}^2\{x_i=0\}, \; q\neq1,
         \\\displaystyle
     \bigcup_{i=0}^1\{x_i=0\}, q=1.
     \end{cases}, \\
     \mu_{p,q}^{-1}([\omega:0:1])&=K_{\omega}^h, \qquad \mbox{and} \qquad \mu_{p,q}^{-1}([0:\zeta:1])=K_{\zeta}^v.
 \end{align*}
 where $K^\textit{h}_\omega:=\{x_0x_2^{q-1}-\omega^{-1}(x_1^q + x_2^q)=0\},\;\;  K^\textit{v}_\zeta:=\{x_2-\zeta x_1=0\}.$
    \end{lemma}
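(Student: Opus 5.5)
We will use \cref{lemma:jac_crit}: since $\mu_{p,q}$ is birational, $\mathcal{C}(\mu_{p,q})$ is exactly the union of the irreducible curves collapsed by $\mu_{p,q}$. Any collapsed curve is sent to a point of $\mathcal{I}(\mu_{p,q}^{-1})$. So the plan has three steps: locate the possible target points, find all curves collapsed onto each of them, and check the explicit images of the candidate curves.

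\textbf{Step 1: the target points.} By \cref{rem:pgeqq}, $\mu_{p,q}^{-1}=\mu^{(1)}_{p,q}\circ\mu^{(2)}_{p,q}$ has the same shape as $\mu_{p,q}$ with the roles of $x$ and $y$ exchanged. Hence its indeterminacy locus can be read off from \cref{Lemma: Indeterminate set} with the coordinates swapped. This should give $[0:1:0]$ together with the points $[\omega:0:1]$, $\omega\in R_p$, and $[0:\zeta:1]$, $\zeta\in R_q$; that is, $\{[0:1:0]\}\cup S_p\cup T_q$. It then suffices to show that the curves collapsed to these points are exactly the ones listed in the statement.

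\textbf{Step 2: curves collapsed to $[0:1:0]$.} Using \eqref{eqn: 5}, a curve lies in $\mu_{p,q}^{-1}([0:1:0])$ iff the first and third coordinates vanish identically on it while the second does not. The third coordinate is $x_0^px_1x_2^{p(q-1)-1}$, so such a curve must be one of the lines $\{x_i=0\}$.
\begin{itemize}
\item On $\{x_0=0\}$ the image is $[0:(x_1^q+x_2^q)^p:0]$.
\item On $\{x_1=0\}$ the image is $[0:x_2^{pq}+x_0^px_2^{p(q-1)}:0]$.
\item On $\{x_2=0\}$ the first and third exponents $p(q-1)-q$ and $p(q-1)-1$ are positive in the semi-simple case, so the image is $[0:x_1^{pq}:0]$.
\end{itemize}
When $q=1$ we use \eqref{eq:mu_cp2} instead. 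There $\{x_2=0\}$ maps to $[x_0^{p-1}x_1^2:0:x_0^px_1]=[x_1:0:x_0]$, so it is not collapsed, which explains the case split in the statement. One must also check that the second coordinate does not vanish identically on each of these lines; this is immediate from the displayed images.

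\textbf{Step 3: the curves $K^v_\zeta$ and $K^h_\omega$.}
\begin{itemize}
\item On $K^v_\zeta=\{x_2=\zeta x_1\}$ with $\zeta\in R_q$, we have $x_1^q+x_2^q=x_1^q(1+\zeta^q)=0$. The image is therefore $[0:x_0^px_2^{p(q-1)}:x_0^px_1x_2^{p(q-1)-1}]=[0:x_2:x_1]=[0:\zeta:1]$.
\item On $K^h_\omega$ we substitute $x_1^q+x_2^q=\omega x_0x_2^{q-1}$. The second coordinate becomes $(\omega^p+1)x_0^px_2^{p(q-1)}=0$. The first coordinate becomes $\omega x_0^px_1x_2^{p(q-1)-1}$, which is $\omega$ times the third, so the image is $[\omega:0:1]$.
\end{itemize}
The analogous computation with \eqref{eq:mu_cp2} handles $q=1$. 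This shows that every listed curve is collapsed to the claimed point.

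\textbf{Main obstacle: completeness.} It remains to show there are no further collapsed curves. For $[0:1:0]$ this was settled in Step 2. For $[\omega:0:1]$, I would solve the system ``second coordinate $=0$ and first $=\omega\cdot$third''. The factor $x_0^{p}x_1x_2^{p(q-1)-1}$ splits off (those lines map to $[0:1:0]$ instead). What remains is $x_1^q+x_2^q=\omega x_0x_2^{q-1}$, i.e.\ $K^h_\omega$. I expect the second coordinate then vanishes automatically, as computed in Step 3.

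For $[0:\zeta:1]$, the first coordinate vanishes only on $\{x_0x_1x_2=0\}$ or on $\{x_1^q+x_2^q=0\}=\bigcup_{\zeta'\in R_q}K^v_{\zeta'}$. The lines $\{x_i=0\}$ are already accounted for, and the ratio condition then selects $K^v_\zeta$.

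As a cross-check, I would compare degrees. The Jacobian determinant of the homogeneous lift has degree $3(d_{\rm alg}-1)$, and this should match the total degree of the listed curves counted with multiplicity.
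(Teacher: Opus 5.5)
Your proof is correct, but it takes a different route from the paper's. The paper computes the Jacobian determinant of the homogeneous lift, takes its zero set as $\mathcal{C}(\mu_{p,q})$, and then restricts $\mu_{p,q}$ to each factor to read off the images.

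You never factor the Jacobian. Instead you:
\begin{itemize}
\item use \cref{lemma:jac_crit} to identify $\mathcal{C}(\mu_{p,q})$ with the union of collapsed curves;
\item confine the possible targets to $\mathcal{I}(\mu_{p,q}^{-1})$;
\item for each target, use the collapse condition to force the curve into the zero set of the third coordinate, of the first minus $\omega$ times the third, or of the first coordinate. Each of these factors on sight.
\end{itemize}
In the semi-simple case the first minus $\omega$ times the third equals $x_0^{p-1}x_1x_2^{p(q-1)-q}\bigl(x_1^q+x_2^q-\omega x_0x_2^{q-1}\bigr)$. So the factor that splits off is $x_0^{p-1}x_1x_2^{p(q-1)-q}$, not the one you wrote; this does not affect the argument.

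Your route avoids factoring a polynomial of degree $3(d_{\rm alg}-1)$ for general $p,q$, and it produces the critical values at the same time. However, it depends on knowing $\mathcal{I}(\mu_{p,q}^{-1})$ exactly, and it yields no multiplicities. The Jacobian route needs no prior knowledge of $\mathcal{I}(\mu_{p,q}^{-1})$ and records the multiplicities, which is exactly what your degree cross-check would require. In the semi-simple case one finds
\[
\det D\hat\mu_{p,q}=pq\,x_0^{2p-2}\,x_1\,x_2^{2pq-2p-q-2}\,\bigl(x_0^px_2^{p(q-1)}+(x_1^q+x_2^q)^p\bigr)\,(x_1^q+x_2^q),
\]
which has degree $3(pq-1)$. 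For $q=1$ the factor $x_2$ is absent, matching the fact that $\{x_2=0\}$ is not collapsed.

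One step needs tightening. With $\sigma(x,y)=(y,x)$ one has $\mu_{p,q}^{-1}=\sigma\circ\mu_{q,p}\circ\sigma$. So you must swap coordinates in $\mathcal{I}(\mu_{q,p})$, not in $\mathcal{I}(\mu_{p,q})$; swapping the latter gives $T_p\cup S_q$, which is wrong when $p\neq q$.
\begin{itemize}
\item In the semi-simple case this is fine, because semi-simplicity is symmetric and \cref{Lemma: Indeterminate set} applies to $(q,p)$.
\item For $q=1$ you would need $\mathcal{I}(\mu_{1,p})$, which that lemma does not treat. Compute it directly instead:
\[
\mu_{p,1}^{-1}=[x_2(x_0^p+x_1x_2^{p-1}+x_2^p):x_0(x_0^p+x_2^p):x_0x_1x_2^{p-1}].
\]
Its indeterminacy locus is $\{[0:1:0],[0:-1:1]\}\cup S_p$, as you expected.
\end{itemize}
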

    \begin{proof}   
        The solution of $\mbox{det}(D \hat{\mu}_{p,q})=0$ gives us the critical locus $\mathcal{C}(\mu_{p,q})$. It remains to restrict the map $\mu_{p,q}$ to this set to obtain the corresponding critical values. 
    \end{proof}

We illustrate with an example the critical behavior of $\mu_{p,q}$ for a semi-simple pair on  \cref{fig:crit_mu23}.

\begin{figure}[h]
    \centering
    \begin{tikzpicture}[scale=0.85]

 \begin{axis}[
 hide axis,
 xmax=6, xmin=-1,
 ymax=7, ymin=-1,
 x=1cm,
 y=1cm,
 >=stealth,
 clip=false,
 ]

\addplot[red,smooth,
    tension=0.6,
    ultra thick,
    name path=plot]
coordinates{
(5.3,0.5)(5,0) (4,-1) (2.2,-0.93) (1.5,0)  (0.9, 0.55) (0, 1.5) (-0.3,2)  (-0.25,2.5) (0,3) (0.15,3.5) (0.15,4) (0,4.5) (-0.25, 5)};

    \addplot[red,smooth,
    tension=0.6,
    ultra thick,
    name path=plot]
coordinates{
(5.4,0.2)(5,0) (4,-0.4) (3,0) (2.5,0.4) (2, 0.7) (1.5, 0.9) (1,1.1) (0.5, 1.3) (0, 1.5) (-0.5, 1.8)  (-0.7,2.25)  (-0.5,2.7) (0,3) (0.25,3.3) (0.35, 3.75)  (0.25, 4.2) (0,4.5) (-0.5, 4.8)};

\draw[line width=1.4pt] (5.833,-1) -- (-0.833,7);
\draw[line width=1.4pt] (-1,0) -- (6,0);
\draw[line width=1.4pt] (0,-1) -- (0,8);
\draw[blue,line width=1.4pt] (5,0) -- (-2,2.1);
\draw[blue,line width=1.4pt] (5,0) -- (-0.5,3.3);
\draw[blue,line width=1.4pt] (5,0) -- (0,4.5);

\node[fill=black,circle,scale=0.6] at (0,1.5){};
\node[fill=black,circle,scale=0.6] at (0,3){};
\node[fill=black,circle,scale=0.6] at (0,4.5){};
\node[fill=black,circle,scale=0.6] at (1.5,0){};
\node[fill=black,circle,scale=0.6] at (3,0){};
\node[fill=black!30!green,circle,scale=0.7] at (0,6){};
\node[fill=black,circle,scale=0.6] at (5,0){};
\draw[black, very thick] (5,0) circle (0.2cm);
\draw[black, very thick] (0,1.5) circle (0.2cm);
\draw[black, very thick] (0,3) circle (0.2cm);
\draw[black, very thick] (0,4.5) circle (0.2cm);
\draw[black, very thick] (1.5,0) circle (0.2cm);
\draw[black, very thick] (3,0) circle (0.2cm);

\node (1) at (-1.5,1.9) {};
\node (2) at (0.9-1,5.4-1) {};
\node (3) at (2.5-1,4.2-1) {};
\node (4) at (1.1-1,2.6-1) {};
\node (5) at (1.7-1,3.5-1) {};
\node (6) at (1.1-1,4.0-1) {};
\node (7) at (3-1,0.2-1) {};
\node (8) at (4-1,0.9-1) {};
\node (9) at (2.9-1,1.8-1) {};
\node (10) at (2.5-1,1.1-1) {};
\node (11) at (2.7-1,5-1) {};
\node (12) at (1.08-1,6.9-1) {};
\node (13) at (1.05-1,8.5-1) {};
\node (14) at (1-1,7.1-1) {};
\node (15) at (6.7-1,0.95-1) {};
\node (16) at (1-1,7.05-1) {};

\draw[violet,->,-{Stealth[round]},line width=1pt] (1) to[bend left] (2);
\draw[violet,->,-{Stealth[round]},line width=1pt] (3) to[bend left] (4);
\draw[violet,->,-{Stealth[round]},line width=1pt] (5) to[in=10, out=40,looseness=2] (6);
\draw[violet,->,-{Stealth[round]},line width=1pt] (7) to[bend right] (8);
\draw[violet,->,-{Stealth[round]},line width=1pt] (9) to[bend left] (10);
\draw[violet,->,-{Stealth[round]},line width=1pt] (11) to[in=-80, out=-180,looseness=1] (12);

\draw[violet,->,-{Stealth[round]},line width=1pt] (13) to[in=120, out=180,looseness=1.2] (14);

\draw[violet,->,-{Stealth[round]},line width=1pt] (15) to[in=-10, out=90,looseness=1] (16);
 \node (A) at (-0.6,-0.5) {[0{:}0{:}1]};
 \node (B) at (5,-1) {[1{:}0{:}0]};
 \node (C) at (-0.7,6) {[0{:}1{:}0]};
 \node (C2) at (3-0.2,6.2) {"Superattracting fixed point"};
 \node (D) at (6.7,0) {\{y=0\}};
 \node (E) at (0.7,7.2) {\{x=0\}};
\node (F) at (3.3,3) {\{z=0\}};
\node (G) at (-2.2,3) {$[0{:}\zeta_2{:}1]$};
\node (H) at (-1,1.3) {$[0{:}\zeta_1{:}1]$};
\node (I) at (-1.5,4.5) {$[0{:}\zeta_3{:}1]$};
\node (J) at (1.2,-0.8) {$[\omega_1{:}0{:}1]$};
\node (K) at (3.5,-1.6) {$[\omega_2{:}0{:}1]$};
\end{axis}

\end{tikzpicture}
\caption{Critical behavior of $\mu_{2,3}$.}
    \label{fig:crit_mu23}
\end{figure}

The components $K^{h}_{\omega},\; \omega\in R_2$ and $K^{v}_{\zeta},\; \zeta\in R_{3}$ are drawn in red and blue respectively. 
The violet arrows show to where the components of the critical locus of $\mu_{2,3}$ are collapsed.

\begin{rem}\label{rem: fixed point}
    Due to  \cref{Lemma: Indeterminate set,Lemma1: Critical set} for a non-semi-simple pair every critical value induces a destabilizing orbit. For a semi-simple pair, the critical value $[0:1:0]$ is fixed. This is the only critical value which does not induce a destabilizing orbit.
\end{rem}
\begin{lemma}\label{Lemma: Superattracting semi-simple}
    Let $(p,q)$ be a semi-simple pair. Then the map $\mu_{p,q}:\CP^2\dashrightarrow\CP^2$ has a superattracting fixed point at $[0:1:0]$.
\end{lemma}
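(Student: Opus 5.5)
We work in the affine chart $x_1=1$ centered at $[0:1:0]$, with coordinates $(u,v)=(x_0/x_1,\,x_2/x_1)$, so that $[0:1:0]$ corresponds to $(u,v)=(0,0)$. Dividing each coordinate of \eqref{eqn: 5} by $x_1^{\deg}$ gives
\[
\mu_{p,q}(u,v)=\left[u^{p-1}v^{p(q-1)-q}(1+v^q) : (1+v^q)^p+u^pv^{p(q-1)} : u^pv^{p(q-1)-1}\right].
\]
The middle entry equals $1$ at $(0,0)$. Hence $\mu_{p,q}$ is regular near the origin (consistent with \cref{Lemma: Indeterminate set}, since $[0:1:0]\notin\mathcal I(\mu_{p,q})$), and it fixes the origin because the first and third entries vanish there. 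In the chart, the map is therefore
\[
F(u,v)=\left(\frac{u^{p-1}v^{p(q-1)-q}(1+v^q)}{(1+v^q)^p+u^pv^{p(q-1)}},\ \frac{u^pv^{p(q-1)-1}}{(1+v^q)^p+u^pv^{p(q-1)}}\right).
\]

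The denominator is a unit near $0$, so the vanishing order of each component at the origin is the total degree of the leading monomial in its numerator. The first component has order $(p-1)+p(q-1)-q=pq-q-1$, and the second has order $p+p(q-1)-1=pq-1$.

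Semi-simplicity ($pq>p+q$) gives $pq-q-1>p-1$. We assumed $p\ge q$ via \cref{rem:pgeqq}, and if $q=1$ the pair is not semi-simple, so $p\ge q\ge2$. Then $pq-q-1>p-1\geq1$, so both orders are at least $2$. Consequently $DF_{(0,0)}=0$, and the origin is a superattracting fixed point in the sense of \cref{def:aff_superattrfp}.

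There is one subtlety. The reduction $p\ge q$ uses a conjugation $\Psi$ that is only birational, so it does not literally transport the chart computation. It is cleaner to check the inequality directly without assuming $p\ge q$. We need $pq-q-1\ge2$ and $pq-1\ge 2$. The second is immediate. For the first, semi-simplicity gives $pq-q-1\ge p$, and $p\ge2$ because $p=1$ would give $q>1+q$. Thus both exponents are at least $2$ for every semi-simple pair, with no symmetry assumption needed.

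We also need \eqref{eqn: 5} to be the correctly reduced form with nonnegative exponents. This requires $p(q-1)-q\ge0$, which follows from $pq>p+q$. The exponent $p(q-1)-1\ge0$ is clear.

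The only step that needs any care is confirming that these exponents are nonnegative and at least $2$ under semi-simplicity; everything else is a direct reading of the formula.
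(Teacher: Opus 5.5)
Your proposal is correct and takes the same approach as the paper: pass to the affine chart $(x_0/x_1,\,x_2/x_1)$ at $[0:1:0]$, write $\mu_{p,q}$ explicitly there, and observe that the differential at the origin vanishes. Your additional checks fill in details the paper leaves to direct computation. You verify that semi-simplicity forces $p,q\ge 2$ and $pq-p-q\ge 1$, so the exponents in \eqref{eqn: 5} are nonnegative and both components vanish to order at least $2$, and you do this without relying on the birational symmetry of \cref{rem:pgeqq}.
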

\begin{proof}
   For a semi-simple pair $(p,q)$ the point $\displaystyle [0\colon1\colon0]$ is a fixed point under $\mu_{p,q}$. We claim that it is superattracting. To see this, on affine coordinates $\displaystyle a=\frac{x_0}{x_1}$, $\displaystyle b=\frac{x_2}{x_1}$ Equation \eqref{eqn: 5} becomes
 \begin{equation*}
    \mu_{p,q}(a,b)=(\tilde{a},\tilde{b})=\left(\frac{a^{p - 1}b^{p(q - 1) - q}\left(1 + b^q\right)}{\left(1 + b^q\right)^p + a^pb^{p (q - 1)}},\frac{a^pb^{p (q - 1) - 1}}{\left(1 + b^q\right)^p + a^pb^{p (q - 1)}}\right).
\end{equation*}
Therefore, $D(\mu_{p,q})_{(0,0)}=\begin{bmatrix}
        0&0\\
        0&0
    \end{bmatrix}$, and hence $[0\colon1\colon0]$ is superattracting. 
\end{proof}
\begin{rem}\label{remark:superattracting_fixed_point}
    Notice that for $(p,q)$ semi-simple then $[0:1:0]$ is not part of any destabilizing orbit  so after we construct the stable model $X_{p,q}$  in \cref{subsec:stable_model_ss} we will have that the map $\mu_{p,q}:X_{p,q}\dashrightarrow X_{p,q}$ will have a superattracting fixed point at the lift of $[0:1:0]$. 
\end{rem}

Notice that all the minimal destabilizing orbits have the form $C \to P$, where $C$ is a component of the critical set $\mathcal{C}(\mu_{p,q})$ and $P$ is a point in the indeterminacy locus $\mathcal{I}(\mu_{p,q})$. For a semi-simple pair $(p,q)$, there are $p+q$ different critical values and $p+2$ for a non-semi-simple pair $(p,1)$.

In the two following subsections, we construct the stable model space $X_{p,q}$ resulting from blowing up over the critical values inducing destabilizing orbits. For big values of $p$ and $q$ defining charts for blow-ups is a notational challenge. To avoid this problem, we imitate the formal construction of a blow-up that appears on \cite{Hu1997}, which for the maps $\mu_{p,q}$ has the advantage of resulting in at most 4 blowups for any choice of $(p,q)$.

\subsection{Stable Model, semi-simple pair $(p,q)$.}\label{subsec:stable_model_ss}
In this case, the space $X_{p,q}$ is constructed by blowing up at $S_p\cup T_q$ only. Below we introduce the notations for the local coordinates and show that the exceptional lines at blowups are not collapsed by the lift of $\mu_{p,q}$ to $X_{p,q}$. We show that this is enough to conclude algebraic stability using  \cref{lemma:nondestcurvecrit,cor:alg_stab_1,cor:alg_stab_2}.
\subsubsection{Blow-up at points in $T_q$}\label{subsubsec:comp_Tq} For this we will use the chart at $U_2=\{x_2\neq 0\}$. The points of the form $[0\colon \zeta\colon 1]$ where $\zeta^q+1=0$ are clearly the locus $Y_1=f^{-1}(0)$ of the map $f:U_2\to \C^2$ where $f\left([x\colon y\colon 1]\right):=(x,y^q+1)$, so by \cite{Hu1997} the blow-up is an open subset of

\begin{equation*}
    (U_2)'_{Y_1}=\left\{\left([x:y:1],[A_0, B_0]\right)\ |\  (x,y^q+1)\in [A_0\colon B_0]\right\}.
\end{equation*}
Set local coordinates $(v_0,u_0) =(\frac{A_0}{B_0},y)$.
We denote the projection $\beta_q: (U_2)'_{Y_1} \to U_2.$
We highlight that this procedure is equivalent to visiting each critical value in $T_q$ and performing a simple blow-up at each point separately due to the Universal Principle for blow ups.
The lift of the map $\mu_{p,q}$ in the local coordinates $(u_0,v_0)$ is given by
\begin{equation}
    \mu_{p,q}(u_0,v_0)=(\tilde{u}_0,\tilde{v}_0)=\left(\frac{1 + v_0^p}{v_0^pu_0},\frac{v_0^{pq-1}u_0^q}{v_0^{pq}u_0^q+(1+v_0^p)^q}\right).
\end{equation}

\begin{rem}\label{rem:lift_v}
    Notice that the component  $\{u_0=\zeta\}$ of the exceptional divisor is mapped by the lift of $\mu_{p,q}$ birationally to the proper transform of a component $\{x_2(x_0^p+x_2^p)=\zeta x_1x_2^p\}$ of $\mathcal{C}(\mu^{-1}_{p,q}).$
    The image of $K_{\zeta}^{\textit{v}}$ under $\beta_q^{-1}\circ \mu_{p,q}$ is a dense subset of the component $\{u_0 = \zeta\}$ of the exceptional line. 
    Therefore, the proper transforms of exceptional lines over the points in $T_q$ and the proper transforms of curves $K_{\zeta}^{\textit{v}}$ in $X_{p,q}$ are not collapsed by the lift of $\mu_{p,q}$.
 \end{rem}   
    
\subsubsection{Blow-up at points in $S_p$}\label{subsubsec:comp_Sp} In a similar fashion here we can work over $U_2$ and blow-up along the locus $Y_2$ of the map $[x\colon y \colon1]\mapsto(x^p+1,y)$. Denote

\begin{equation*}
    (U_2)'_{Y_2}=\left\{\left([x:y:1],[C_0, D_0]\right)| (x^p+1,y)\in [C_0\colon D_0]\right\}.
\end{equation*}
Set local coordinates $(r_0,s_0) =\left(x,\frac{D_0}{C_0}\right)$. We denote the projection $\varphi: (U_2)'_{Y_2} \to U_2.$
The lift of the map $\mu_{p,q}$ in the local coordinates $(r_0,s_0)$ is given by
  \begin{equation}
      \mu_{p,q}(r_0,s_0)=(\tilde{r}_0,\tilde{s}_0)=\left(\frac{s_0^q(r_0^p+1)^q + 1}{r_0},\frac{1+\displaystyle\sum_{k=1}^p\binom{p}{k}s_0^{kq}(r_0^p+1)^{qk-1}}{r_0^ps_0(r_0^p+1)}\right).
  \end{equation}

  \begin{rem}\label{rem: exceptional divisor}
      Notice that the component $\{r_0=\omega\}$ of the exceptional divisor is mapped by the lift of $\mu_{p,q}$ birationally to the proper transform of a component $\{x_0=\omega^{-1}x_2\}$ of $\mathcal{C}(\mu^{-1}_{p,q}).$ 
      It's also a similar check that the proper transforms of curves $K^h_{\omega}$ are not collapsed by the lifted map.
     
  \end{rem}

\subsection{Stable Model, non-semi-simple pair $(p,1)$.}\label{subsec:stable_model_nss}
In this case the space $X_{p,1}$ is constructed as a simple blowup at points in $S_p\cup T_1$ and two blowups at $[0:1:0]$. Below we introduce the notations for the local coordinates and explain that the exceptional lines at blowups either are not collapsed by the lift of $\mu_{p,1}$ or are collapsed to fixed points. This implies algebraic stability.
\subsubsection{Blow-ups at $P=[0\colon 1\colon 0]$}\label{ssec: coordinates for [0:1:0]}
For this we will use the chart at $U_1=\{x_1\neq 0\}$. The point $P=[0\colon 1\colon 0]$ is the locus of the Identity map on $U_1\cong \C^2$. The blow-up is an open subset of
\begin{equation*}
    (U_1)'_{P}=\left\{\left([x\colon1\colon z],[A_1\colon B_1]\right)\ |\ (x,z)\in [A_1\colon B_1]\right\}.
\end{equation*}
Set local coordinates $(u_1,v_1) = \left(x,\frac{B_1}{A_1}\right)$. We denote the projection $\pi_1: (U_1)'_{P} \to U_1.$
The lift of the map $\mu_{p,q}$ in the local coordinates $(u_1,v_1)$ is given by
\begin{equation}
    \mu_{p,1}(u_1,v_1)=(\tilde{u}_1, \tilde{v}_1)=\left(\frac{u_1^{p-2}(1+u_1v_1)}{v_1(u_1^p+(1+u_1v_1)^p)},\frac{u_1}{1+u_1v_1}\right).
\end{equation}
Notice that the exceptional divisor $\{u_1=0\}$ is mapped to the point $(0,0)$ in affine coordinates which is indeterminate, so we need to do a second blow-up there. 
We need to blow up the point $(u_1,v_1) = (0,0)$ as its orbit is clearly a minimal destabilizing orbit. We blow-up this point and use the local coordinates $(u_2,v_2)=(v_1, \frac{u_1}{v_1})$. The lift of the map to the blow-up space is

\begin{equation}\label{Equation Other super attracting}
    \mu_{p,1}(u_2,v_2)=(\tilde{u}_2, \tilde{v}_2)=\left(\frac{u_2v_2}{1+u_2^2v_2},\frac{u_2^{p-4}v_2^{p-3}(1+u_2^2v_2)^2}{u_2^pv_2^p+(1+u_2^2v_2)^p}\right).
\end{equation}
\begin{figure}[H]
    \centering
    \begin{tikzpicture}[every node/.style={font=\normalsize,
    inner sep=0.2pt,
    outer sep=0pt}, scale=0.85]

 \begin{axis}[
 hide axis,
 xmax=6, xmin=-1,
 ymax=7, ymin=-1,
 x=1cm,
 y=1cm,
 >=stealth,
 clip=false,
 ]

\draw[line width=1.4pt] (9,0) -- (0,6);
\draw[line width=1.4pt] (-2,0) -- (10,0);
\draw[line width=1.4pt] (0,-1) -- (0,8);
\draw[blue,line width=1.4pt] (9,0) -- (0,4.5);
\draw[red,line width=1.4pt] (1.5,0) -- (0,4.5);
\draw[red,line width=1.4pt] (3,0) -- (0,4.5);
\draw[red,line width=1.4pt] (4.5,0) -- (0,4.5);
\draw[red,line width=1.4pt] (6,0) -- (0,4.5);
\draw[red,line width=1.4pt] (7.5,0) -- (0,4.5);

\node[fill=black,circle,scale=5] at (0,4.5){};
\node[fill=black,circle,scale=5] at (1.5,0){};
\node[fill=black,circle,scale=5] at (3,0){};
\node[fill=black,circle,scale=5] at (4.5,0){};
\node[fill=black,circle,scale=5] at (6,0){};
\node[fill=black,circle,scale=5] at (7.5,0){};
\node[fill=black,circle,scale=5] at (0,6){};
\node[fill=black,circle,scale=5] at (9,0){};
\draw[black, very thick] (0,4.5) circle (0.2cm);
\draw[black, very thick] (1.5,0) circle (0.2cm);
\draw[black, very thick] (3,0) circle (0.2cm);
\draw[black, very thick] (4.5,0) circle (0.2cm);
\draw[black, very thick] (6,0) circle (0.2cm);
\draw[black, very thick] (7.5,0) circle (0.2cm);
\draw[black, very thick] (9,0) circle (0.2cm);
\draw[black, very thick] (0,6) circle (0.2cm);

\node (1) at (6,2.1) {};
\node (2) at (9.7,0) {};
\node (3) at (3,3) {};
\node (4) at (0.1,4.5) {};
\node (5) at (0.4,3) {};
\node (6) at (7.5,0.1) {};
\node (7) at (1.3,2.5) {};
\node (8) at (6,0.1) {};
\node (9) at (3,1.5) {};
\node (10) at (4.5,0.1) {};
\node (11) at (2,3.3) {};
\node (12) at (3,0.1) {};
\node (13) at (1.05-1,8.5-1) {};
\node (14) at (1-1,7.1-1) {};
\node (15) at (-1.5,0.95-1) {};
\node (16) at (-0.1,7.05-1) {};
\node (17) at (1.5,3.5) {};
\node (18) at (1.5,0.1) {};

\draw[violet,->,-{Stealth[round]},line width=1pt] (1) to[bend left] (2);
\draw[violet,->,-{Stealth[round]},line width=1pt] (3) to[bend right] (4);
\draw[violet,->,-{Stealth[round]},line width=1pt] (5) to[in=145, out=-10,looseness=1.2] (6);
\draw[violet,->,-{Stealth[round]},line width=1pt] (7) to[in=135, out=-10,looseness=1.2] (8);
\draw[violet,->,-{Stealth[round]},line width=1pt] (9) to[bend left] (10);
\draw[violet,->,-{Stealth[round]},line width=1pt] (11) to[in=100, out=-160,looseness=1] (12);

\draw[violet,->,-{Stealth[round]},line width=1pt] (13) to[in=120, out=180,looseness=1.2] (14);

\draw[violet,->,-{Stealth[round]},line width=1pt] (15) to[in=170, out=90,looseness=1.3] (16);
\draw[violet,->,-{Stealth[round]},line width=1pt] (17) to[in=80, out=-160,looseness=1.2] (18);
 \node (A) at (-0.5,-0.5) {[0{:}0{:}1]};
 \node (B) at (9,-0.8) {[1{:}0{:}0]};
 \node (C) at (1,6) {[0{:}1{:}0]};
 
 \node (D) at (10.7,0) {\{y=0\}};
 \node (E) at (0.7,7.2) {\{x=0\}};
\node (F) at (5,3.2) {\{z=0\}};
\node (I) at (-1,4.5) {$[0:-1:1]$};
\node (J) at (1,-0.8) {$[\omega_1{:}0{:}1]$};
\node (K) at (3,-0.8) {$[\omega_2{:}0{:}1]$};
\node (L) at (4.5,-0.8) {$[\omega_3{:}0{:}1]$};
\node (M) at (6,-0.8) {$[\omega_4{:}0{:}1]$};
\node (N) at (7.5,-0.8) {$[\omega_5{:}0{:}1]$};
\end{axis}

\end{tikzpicture}
    \caption{Critical behavior of $\mu_{5,1}$ before blowing up at $[0:1:0]$}
    \label{fig:placeholder}
\end{figure}

\begin{figure}[H]
    \centering
    \begin{tikzpicture}[scale=0.85]

 \begin{axis}[
 hide axis,
 xmax=6, xmin=-1,
 ymax=7, ymin=-1,
 x=1cm,
 y=1cm,
 >=stealth,
 clip=false,
 ]

\draw[line width=1.4pt] (9,0) -- (3.5,7);
\draw[line width=1.4pt] (-2,0) -- (10,0);
\draw[line width=1.4pt] (0,-1) -- (0,8);
\draw[dashed, very thick] (-1,5) -- (3,9);
\draw[dashed, very thick] (1,8.5) -- (3.5,7);
\draw[blue,line width=1.4pt] (9,0) -- (0,4.5);
\draw[red,line width=1.4pt] (1.5,0) -- (0,4.5);
\draw[red,line width=1.4pt] (3,0) -- (0,4.5);
\draw[red,line width=1.4pt] (4.5,0) -- (0,4.5);
\draw[red,line width=1.4pt] (6,0) -- (0,4.5);
\draw[red,line width=1.4pt] (7.5,0) -- (0,4.5);

\node[fill=black,circle,scale=0.6] at (0,4.5){};
\node[fill=black,circle,scale=0.6] at (1.5,0){};
\node[fill=black,circle,scale=0.6] at (3,0){};
\node[fill=black,circle,scale=0.6] at (4.5,0){};
\node[fill=black,circle,scale=0.6] at (6,0){};
\node[fill=black,circle,scale=0.6] at (7.5,0){};
\node[fill=black!30!green,circle,scale=0.6] at (1.9,7.9){};
\node[fill=black,circle,scale=0.6] at (9,0){};
\node[fill=black,circle,scale=0.6] at (3.5,7){};
\draw[black, very thick] (0,4.5) circle (0.2cm);
\draw[black, very thick] (1.5,0) circle (0.2cm);
\draw[black, very thick] (3,0) circle (0.2cm);
\draw[black, very thick] (4.5,0) circle (0.2cm);
\draw[black, very thick] (6,0) circle (0.2cm);
\draw[black, very thick] (7.5,0) circle (0.2cm);
\draw[black, very thick] (9,0) circle (0.2cm);
\draw[black, very thick] (3.5,7) circle (0.2cm);

\node (1) at (7.3,2.1) {};
\node (2) at (9.7,0) {};
\node (3) at (3,3) {};
\node (4) at (0.1,4.5) {};
\node (5) at (0.4,3) {};
\node (6) at (7.5,0.1) {};
\node (7) at (1.3,2.5) {};
\node (8) at (6,0.1) {};
\node (9) at (3,1.5) {};
\node (10) at (4.5,0.1) {};
\node (11) at (1.8,3.3) {};
\node (12) at (3,0.1) {};
\node (13) at (1.05-1,8.5-1) {};
\node (14) at (1.9,7.9) {};
\node (15) at (-1.5,0.95-1) {};
\node (16) at (-0.5,5.5) {};
\node (17) at (1.7,3.6) {};
\node (18) at (1.5,0.1) {};
\node (19) at (1,7) {};
\node (21) at (3,7.3) {};

\draw[violet,->,-{Stealth[round]},line width=1pt] (1) to[bend left] (2);
\draw[violet,->,-{Stealth[round]},line width=1pt] (3) to[bend right] (4);
\draw[violet,->,-{Stealth[round]},line width=1pt] (5) to[in=145, out=-10,looseness=1.2] (6);
\draw[violet,->,-{Stealth[round]},line width=1pt] (7) to[in=135, out=-10,looseness=1.2] (8);
\draw[violet,->,-{Stealth[round]},line width=1pt] (9) to[bend left] (10);
\draw[violet,->,-{Stealth[round]},line width=1pt] (11) to[in=90, out=-130,looseness=1] (12);
\draw[violet,->,-{Stealth[round]},line width=1pt] (17) to[in=80, out=-160,looseness=1.2] (18);
\draw[violet,->,-{Stealth[round]},line width=1pt] (13) to[in=180, out=80,looseness=1.2] (14);

\draw[violet,->,-{Stealth[round]},line width=1pt] (15) to[in=170, out=90,looseness=1] (16);
\draw[violet,->,-{Stealth[round]},line width=1pt] (19) to[in=-110, out=0,looseness=1] (14);
\draw[violet,->,-{Stealth[round]},line width=1pt] (21) to[in=-70, out=-110,looseness=1] (14);

\node (A) at (-0.7,-0.5) {$[0{:}0{:}1]$};
 \node (B) at (9,-0.8) {[1:0:0]};
 \node (D) at (10.7,0) {\{y=0\}};
 \node (E) at (-1,7.2) {\{x=0\}};
\node (F) at (7,3.5) {\{z=0\}};
\node (g) at (4.9,8) {"Superattracting fixed point"};
\node (I) at (-1,4.5) {$[0{:}{-}1{:}1]$};
\node (J) at (1.5,-0.8) {$[\omega_1{:}0{:}1]$};
\node (K) at (3,-0.8) {$[\omega_2{:}0{:}1]$};
\node (L) at (4.5,-0.8) {$[\omega_3{:}0{:}1]$};
\node (M) at (6,-0.8) {$[\omega_4{:}0{:}1]$};
\node (N) at (7.5,-0.8) {$[\omega_5{:}0{:}1]$};
\end{axis}

\end{tikzpicture}
\caption{Critical behavior of $\mu_{5,1}$ after blowing up.}
    \label{
}
\end{figure}
\begin{rem}\label{rem:nonss_res}
 Notice that here the exceptional divisor $\{u_2=0\}$ and the proper transform $\{v_2 = 0\}$ of the divisor $\{u_1=0\}$ are mapped to the fixed point $(0,0)$. As well as in previous examples, the proper transforms of the collapsing curves $K_{\omega}^{h}, K_{\zeta}^{v}$ are not collapsed. One can check explicitly that proper transforms of coordinate axes do not collapse to indeterminacies (cf. \cref{sec:tropicalization}).
So, we don't need to do further blow-ups over the point $[0:1:0]$, in particular, by the \cref{cor:alg_stab_1,cor:alg_stab_2}
we have eliminated the destabilizing orbits related to the point $[0:1:0]$, now we just need to blow-up the points in $S_p\cup \{[0\colon-1\colon 1]\}$ in an analogous way as we did for semi-simple pairs.
\end{rem}
If we let $Y$ be the surface obtained after these two blowups over $[0:1:0]$, then the point $(u_2,v_2)=(0,0)$ happens to be a superattracting fixed point. The reader can think of this Lemma as complementary to \cref{Lemma: Superattracting semi-simple} .
\begin{lemma}\label{Lemma: superattracting non-semisimple}
 Let $(p,1)$ be a non-semi-simple pair. Then the map $\mu_{p,1}:Y\dashrightarrow Y$ has a superattracting fixed point at $(u_2,v_2)=(0,0)$.
\end{lemma}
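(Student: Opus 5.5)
The plan is to work directly with the local formula \eqref{Equation Other super attracting} for the lift of $\mu_{p,1}$ in the coordinates $(u_2,v_2)$ on the second blow-up over $[0:1:0]$. There are three things to check. First, that $(0,0)$ is a fixed point of the lift. Second, that the lift is holomorphic near $(0,0)$, so that $(0,0)\notin\mathcal{I}(\mu_{p,1})$ on $Y$. Third, that the Jacobian matrix at $(0,0)$ vanishes. Then Definition \ref{def:aff_superattrfp} and the definition of a superattracting fixed point on a surface apply directly, with the chart $(u_2,v_2)$ itself as the analytic chart centered at the point.

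For holomorphy and the fixed-point property, I note that the denominators $1+u_2^2v_2$ and $u_2^pv_2^p+(1+u_2^2v_2)^p$ both equal $1$ at the origin, so they do not vanish near $(0,0)$. The numerators $u_2v_2$ and $u_2^{p-4}v_2^{p-3}(1+u_2^2v_2)^2$ are polynomials, since non-semi-simple means $p>4$, so $p-4\geq 1$ and $p-3\geq 2$. Hence both components are holomorphic near the origin and both vanish there, so $(0,0)\mapsto(0,0)$.

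For the derivative, the first component is $u_2v_2$ times a unit, so it vanishes to order $2$ at the origin and its differential there is zero. The second component is $u_2^{p-4}v_2^{p-3}$ times a unit, so it vanishes to order $2p-7\geq 3$ and its differential is also zero. Thus $D\mu_{p,1}$ at $(0,0)$ is the zero matrix.

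The only point requiring care is justifying that \eqref{Equation Other super attracting} really is the lift in a chart of the smooth surface $Y$ containing $(0,0)$. That is, the coordinates $(u_2,v_2)=(v_1,u_1/v_1)$ must give a genuine chart of the second blow-up, and the point $(0,0)$ must correspond to a point on the second exceptional line. I expect this to be the main (though minor) obstacle. I would confirm it by composing the substitutions $x=u_1=u_2v_2$, $z=u_1v_1=u_2^2v_2$ into the affine formula for $\mu_{p,1}$ in the chart $U_1$. I would then check that the result matches \eqref{Equation Other super attracting} as a routine computation, which also confirms that the lift is regular, not merely rational, at this point.
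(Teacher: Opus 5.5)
Your proposal is correct and takes the same approach as the paper, whose proof is just a direct computation of the differential at $(0,0)$ from \eqref{Equation Other super attracting}. Your extra checks fill in details the paper leaves implicit: the denominators equal $1$ at the origin, the two components vanish to orders $2$ and $2p-7$ because $p\geq 5$, and the substitution $x=u_2v_2$, $z=u_2^2v_2$ (which does reproduce the stated formula) shows the origin is a non-indeterminate point on the second exceptional line.
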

\begin{proof}
 For a non-semi-simple pair $(p,1)$ recall local affine coordinates $(u_2, v_2)$ on $Y.$ 
 The point $(u_2,v_2) = (0,0)$ is fixed under $\mu_{p,1}$ and superattracting by a direct computation of the differential using Equation \eqref{Equation Other super attracting}.   
\end{proof}
We highlight that Lemma \ref{Lemma: superattracting non-semisimple} is true also for the lift to the stable model $\mu_{p,1}:X_{p,1}\dashrightarrow X_{p,1}$ that we will produce below as we only need to blow-up $Y$ at points far from $(u_2,v_2)=(0,0)$ to reach algebraic stability. This follows from the universal property of blow-ups.

\subsubsection{Blow-up at $[0:-1:1]$}\label{Section: u,v coordinates}

This can be solved in the same fashion as before, for this we will use chart $U_2$. We use the local blow-up coordinates $(u,v) = (x, \frac{y+1}{x})$.

The lift of the map $\mu_{p,1}$ in the local coordinates $(u,v)$ is given by
 \begin{equation}
     \mu_{p,1}(u,v)=(\tilde{u}, \tilde{v})=\left(v,\frac{u+v^{p-1}}{uv-1}\right)
 \end{equation}
   Analogously to the case of a semi-simple pair $(p,q)$ this blowup eliminates the destabilizing orbit at the point $[0:-1:1]$. 

\subsubsection{Blow-up at points in $S_p$} 
We use chart $U_2$. We use the local blow-up coordinates $(s_0,r_0) = (\frac{y}{x^p+1},x)$. 
The lift of the map $\mu_{p,1}$ in the local coordinates $(r_0,s_0)$ is given by
\begin{equation}
    \mu_{p,1}(r_0,s_0)=(\tilde{r}_0,\tilde{s}_0)=\left(\frac{s_0(r_0^p+1) + 1}{r_0},\frac{\sum_{k=1}^p\binom{p}{k}s_0^k(r_0^p+1)^{k-1}+1}{s_0(s_0(r_0^p+1) + 1)^p+s_0r_0^p}\right)
\end{equation}
Analogously to the case of a semi-simple pair $(p,q)$ this blowup eliminates destabilizing orbits at points in $S_p$.

From the  \cref{subsec:stable_model_ss,subsec:stable_model_nss} we obtain the following.

  \begin{propo}\label{propo:stable_model}
    Let $pq>4$. Then the lift of the map $\mu_{p,q}$ to $X_{p,q}$ is algebraically stable. 
  \end{propo}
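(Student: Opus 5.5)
The plan is to verify the hypotheses of \cref{cor:alg_stab_1} for the proper modification $\pi:X_{p,q}\to\CP^2$ built in \cref{subsec:stable_model_ss,subsec:stable_model_nss}. All blowups there are centered at indeterminacy points of successive lifts of $\mu_{p,q}$, as \cref{lemma:nondestcurvecrit} requires. Two things must be checked. First, every component of $\mcC(\mu_{p,q})$ listed in \cref{Lemma1: Critical set} is sent by $\pi^{-1}\circ\mu_{p,q}$ either onto a curve or to a point not lying on a destabilizing orbit. Second, every exceptional curve of $\pi$ is either not collapsed by the lift $\hat\mu$ or is collapsed to a point whose forward orbit never meets $\mcI(\hat\mu)$. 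By \cref{lemma:jac_crit}, collapsed curves and critical curves coincide, so algebraic stability then follows directly from \cref{cor:alg_stab_1}.

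\textbf{Semi-simple case.} The critical components are the lines $\{x_i=0\}$, the curves $K^v_\zeta$ and the curves $K^h_\omega$. The lines $\{x_i=0\}$ are collapsed to $[0:1:0]$. By \cref{Lemma: Superattracting semi-simple} this point is fixed, and it is not blown up, so $\pi^{-1}$ is regular there. Its orbit is therefore constant and never reaches $\mcI$, and by \cref{rem: fixed point} it is not destabilizing. For $K^v_\zeta$ and $K^h_\omega$, \cref{rem:lift_v,rem: exceptional divisor} show that $\beta_q^{-1}\circ\mu_{p,q}$ (respectively $\varphi^{-1}\circ\mu_{p,q}$) maps them onto dense subsets of the exceptional curves over $T_q$ and $S_p$, so they are no longer collapsed. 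The same remarks show that the exceptional curves $\{u_0=\zeta\}$ and $\{r_0=\omega\}$ are mapped birationally onto proper transforms of components of $\mcC(\mu_{p,q}^{-1})$, so no exceptional curve is collapsed. I will double-check this step against the explicit local formulas for $(\tilde u_0,\tilde v_0)$ and $(\tilde r_0,\tilde s_0)$: restricting to $v_0=0$ and $r_0^p=-1$ respectively, the image coordinate should be non-constant on each component. Finally, any new indeterminacy points of the lift lie on exceptional curves or on proper transforms of existing curves. Since no collapsed curve lands on them, no destabilizing orbit can begin, and \cref{cor:alg_stab_1} applies, using \cref{cor:alg_stab_2} to handle the fact that the blowups are done in stages.

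\textbf{Non-semi-simple case.} The argument runs the same way, with the two blowups over $[0:1:0]$ treated separately. The lines $\{x_0=0\}$ and $\{x_1=0\}$, which were collapsed to $[0:1:0]$, are now collapsed to the origin of the $(u_2,v_2)$-chart. The same happens to the first exceptional divisor $\{v_2=0\}$ and the second exceptional divisor $\{u_2=0\}$ (\cref{rem:nonss_res}). That origin is a fixed point by \cref{Lemma: superattracting non-semisimple}, and it is regular, since the formula for $(\tilde u_2,\tilde v_2)$ has denominator $1$ at the origin. Its orbit is therefore non-destabilizing. The blowup at $[0:-1:1]$ has the explicit lift $(u,v)\mapsto\bigl(v,(u+v^{p-1})/(uv-1)\bigr)$. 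On $\{u=0\}$ this gives $v\mapsto(v,-v^{p-1})$, which is non-constant, so that exceptional curve is not collapsed. The blowups over $S_p$ are handled as in the semi-simple case. The curves $K^v_{-1}$ and $K^h_\omega$ become non-collapsed exactly as before.

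The main obstacle is verifying that the \emph{images} of exceptional curves, and of curves such as $\{x_2=0\}$ in the non-semi-simple case, avoid the newly created indeterminacy points of the lifted map. These are the points on the exceptional curves where the local formulas degenerate, for example where denominators like $v_0^pu_0$ or $r_0^ps_0(r_0^p+1)$ vanish. I would first try to confirm in each chart that every critical curve of the lift maps either to a non-collapsed curve or to the fixed regular point, so that no orbit of a collapsed curve's image is ever needed. If that bookkeeping becomes awkward, the fallback is the tropicalization in \cref{sec:tropicalization}, which tracks the behavior of the coordinate axes and exceptional curves combinatorially.
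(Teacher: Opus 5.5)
Your proposal follows the paper's own argument, spelled out in more detail. You use \cref{rem:lift_v,rem: exceptional divisor,rem:nonss_res} to check that every critical curve of $\mu_{p,q}$ and every exceptional curve of $\pi$ is either not collapsed by the lift, or is collapsed to a regular fixed point ($[0:1:0]$, respectively the origin of the $(u_2,v_2)$-chart). You then apply \cref{cor:alg_stab_1,cor:alg_stab_2}, exactly as the paper does.

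There are two slips, and neither affects the conclusion:
\begin{itemize}
\item In the $T_q$ chart, the exceptional components are $\{u_0=\zeta\}$, not $\{v_0=0\}$. The curve $\{v_0=0\}$ is the proper transform of $\{x_0=0\}$, which genuinely collapses to $[0:1:0]$, so restricting your double-check there would raise a false alarm.
\item In the non-semi-simple case, $\{x_1=0\}$ is not collapsed to the origin of the $(u_2,v_2)$-chart. In the $(u_1,v_1)$-chart the point $[x:0:1]$ goes to $(0,x)$, so $\{x_1=0\}$ maps onto the proper transform $\{v_2=0\}$ of $E_{1,\infty}$; this is consistent with $A\tau_1=\tau_3$ in \cref{sec:tropicalization}. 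A non-collapsed curve imposes no condition, so this only makes your verification easier.
\end{itemize}
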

  \begin{proof}
    It it straightforward to check that the proper transforms of curves in $\CP^2$ which are collapsed by $\mu_{p,q}$ to the indeterminacy locus $\mathcal{I}(\mupq)$ are not collapsed by the lift of $\mu_{p,q}$ (and are mapped to the union of exceptional lines). Moreover one can check (cf. \cref{rem:lift_v,rem: exceptional divisor,rem:nonss_res}) that the proper transforms of the curves in critical locus of $\mu_{p,q}:\CP^2 \dashrightarrow \CP^2$ and the proper transforms of all intermediate exceptional lines are not collapsed by $\mu_{p,q}:X_{p,q} \dashrightarrow X_{p,q}$ to indeterminate points. Hence by \cref{cor:alg_stab_1,cor:alg_stab_2} the algorithm  has finished and the model is stable.  
\end{proof}

\begin{rem}
    We highlight that for any parameters $(p,q) \in \mathbb{N}^2$ with $pq>4$ although the map $\mu_{p,q}:X_{p,q}\dashrightarrow X_{p,q}$ is algebraically stable, it happens to be non-biregular, as for both semi-simple and non-semi-simple parameters there exist curves that are contracted to fixed points by this map. When $pq > 4$ we have that $\mu_{p,q}$ cannot be made regular by a finite sequence of blowups (cf. \cref{RMK:NO_REG_MODEL}).
    
\end{rem}

\section{Picard Group Method for computing $\lambda_1(\mu_{p,q})$ when $p, q > 0.$}\label{sec:picard}
In this section we compute the map $\mu_{p,q}^*$ for non-affine pairs $(p,q)$\footnote{Namely those for which $pq>4$.} on $\Pic(X_{p,q})$. By \cref{lemma:spec_rad} it is sufficient to compute the spectrum of $\mu_{p,q}^*$ to obtain the dynamical degree $\lambda_1(\mu_{p,q})$.

\subsection{Semi-simple pairs.}
The space $X_{p,q}$ is obtained by the blowup at points $S_p\cup T_q$. This gives a natural projection $\pi_{p,q}:X_{p,q} \to \CP^2$. Then due to Equation \eqref{eqn:pic_blowup} there is a basis in $\Pic(X_{p,q})$ given by $\{C\}\cup\{E_{P}\}_{P\in S_p\cup T_q }$. Here $C$ is the class of the proper transform of a generic line in $\CP^2$ with respect to $\pi_{p,q}$ and for each $P \in S_p\cup T_q$, an exceptional divisor $E_{P}$ is the class of the line $\pi_{p,q}^{-1}(P)$. 

 The map $\mupq^*:\Pic(X_{p,q}) \to \Pic(X_{p,q})$ is given by the following.
\begin{propo}\label{prop:coh_action_ss}
    Assume that the pair $(p,q)$ is semi-simple. Let $P = [\omega:0:1] \in S_p,\;\; Q = [0:\zeta:1]\in T_q$. Recall the points $\iota(P) = [\omega^{-1}:0:1],\iota(Q) = [0:\zeta^{-1}:1]$. Then
    \begin{align}    
            \mupq^*(E_{Q}) &= C - E_{\iota(Q)},\label{eqn:picmuexcz}\\
            \mupq^*(E_{P}) &= q C - \sum_{Q_1\in T_{q}}E_{Q_1} - E_{\iota(P)}.\label{eqn:picmuexcw}\\
            \mupq^*(C) &= p\big(q C - \sum_{Q_1\in T_{q}}E_{Q_1}\big) - \sum_{P_1\in S_{p}}E_{P_1},\label{eqn:picmuline}
    \end{align}
\end{propo}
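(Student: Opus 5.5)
We compute $\mupq^*$ on each basis element by applying \cite[Lemma 2.1]{kaschner2017complex} to the lift $\mupq:X_{p,q}\dashrightarrow X_{p,q}$. That is, we pull back a concrete representative curve and decompose its total preimage into irreducible components with multiplicities, then express each component in the basis $\{C\}\cup\{E_P\}$ using \eqref{eqn:int_blowup}. Since $X_{p,q}$ is obtained from $\CP^2$ by blowing up the distinct points $S_p\cup T_q$, the class of the proper transform of a plane curve $D$ of degree $d$ is $dC-\sum_P m_P(D)E_P$, where $m_P(D)$ is the multiplicity of $D$ at $P$. So every step reduces to three pieces of information: the degree of the pulled-back curve in $\CP^2$, its multiplicities at the points of $S_p\cup T_q$, and which exceptional lines appear as components of the total transform.

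For the line class, I pick a generic line $L=\{a x_0+bx_1+cx_2=0\}$ missing $\mathcal I(\mupq^{-1})$. Then $\mupq^*L$ is the proper transform of the curve $\{a f_0+b f_1+c f_2=0\}$, where the $f_i$ are the components in \eqref{eqn: 5}. This curve has degree $\deg\mupq=pq$. I then check its multiplicities using the local coordinates of \S\ref{subsubsec:comp_Tq} and \S\ref{subsubsec:comp_Sp}, and expect multiplicity $p$ at each $Q_1\in T_q$ and $1$ at each $P_1\in S_p$. For instance, at $Q_1=[0:\zeta:1]$ one has $f_0,f_2\sim x_0^{p}$ or $x_0^{p-1}(y^q+1)$, and $f_1=(y^q+1)^p+x_0^p$, all vanishing to order $p$. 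This gives \eqref{eqn:picmuline}. As a consistency check, $\mu_{p,q}^*$ must preserve the intersection form: $p^2q^2-p^2q-p=?$ should equal $1$. That fails, which suggests the coefficient bookkeeping (in particular the degree $pq$ versus the actual $d_{\rm alg}$ of \eqref{eqn: 5}) needs care. I will therefore recompute the degree directly from \eqref{eqn: 5} and the multiplicities, and read off the coefficients from the chart formulas rather than trusting the heuristic.

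For the exceptional classes I use the lifted formulas directly. By Remark~\ref{rem:lift_v}, the exceptional line over $Q=[0:\zeta:1]$ has preimage under the lift equal to the proper transform of $K^v_{\zeta^{-1}}$-type curves. Concretely, $\{\tilde u_0=\zeta\}$ in the chart of \S\ref{subsubsec:comp_Tq} pulls back to a line through $[1:0:0]$ and $\iota(Q)$, together with possibly an exceptional component. Its class is $C-E_{\iota(Q)}$, giving \eqref{eqn:picmuexcz}. Similarly, by Remark~\ref{rem: exceptional divisor}, $E_P$ pulls back to the proper transform of $K^h_{\omega'}$. This is a degree-$q$ curve $x_0x_2^{q-1}=\omega'^{-1}(x_1^q+x_2^q)$ passing simply through every point of $T_q$ and through $\iota(P)$, which yields \eqref{eqn:picmuexcw}. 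In each case I must confirm the multiplicity is $1$ (by the vanishing order of $\tilde u_0-\zeta$ or $\tilde r_0-\omega$ in the local formulas) and that no other exceptional line is mapped onto $E_Q$ or $E_P$. The latter holds since the exceptional lines map birationally onto proper transforms of $\mathcal C(\mupq^{-1})$-components, which are not exceptional.

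The main obstacle is the multiplicity computation at the blown-up points for the pullback of a generic line. This means determining exactly the vanishing orders of $af_0+bf_1+cf_2$ at the points of $T_q$ and $S_p$ in the blow-up charts, and reconciling the resulting coefficients with invariance of the intersection form. I would handle it by writing the pullback in the coordinates $(u_0,v_0)$ and $(r_0,s_0)$ and reading off the order of vanishing along $\{v_0=0\}$ and $\{s_0=0\}$.
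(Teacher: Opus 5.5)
Your method is the paper's: pull back representative curves via the Kaschner--Roeder lemma, read off the degree and the multiplicities at $S_p\cup T_q$, and rule out extra components. The data you predict are exactly right: degree $pq$, multiplicity $p$ at each point of $T_q$ and $1$ at each point of $S_p$; the line $K^v_\zeta$ through $[1:0:0]$ and $\iota(Q)$; and the degree-$q$ curve $K^h_\omega$, smooth through $T_q\cup\{\iota(P)\}$.

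\textbf{The genuine error is your ``consistency check.''} The lift of $\mu_{p,q}$ is not an automorphism of $X_{p,q}$. The point $[1:0:0]$ is not blown up and stays indeterminate; its total image contains the three coordinate lines, and those lines are in turn collapsed to the lift of $[0:1:0]$. For such a map, $\mu_{p,q}^*$ does \emph{not} preserve the intersection form. Write $f^*=\pi_{1*}\pi_2^*$ on a graph resolution. Then $\pi_1^*f^*\alpha=\pi_2^*\alpha+F$, where $F$ is supported on $\pi_1$-exceptional curves and is orthogonal to $\pi_1^*f^*\alpha$. Hence $(f^*\alpha)^2=\alpha^2-F^2\ge\alpha^2$. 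The inequality is strict for $\alpha=C$ here, because $C$ meets the coordinate lines onto which $[1:0:0]$ is blown up. Only the adjunction $f^*\alpha\cdot\beta=\alpha\cdot f_*\beta$ survives.

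You can see this already in the answer: $(\mu_{p,q}^*E_Q)^2=(C-E_{\iota(Q)})^2=0\neq -1=E_Q^2$, and $(\mu_{p,q}^*C)^2=p^2q^2-p^2q-p>1$ is exactly what one should expect. As written, your proposal asserts a property that would refute the very formulas it derives. Your plan to ``reconcile the resulting coefficients with invariance of the intersection form'' cannot succeed, so the check must simply be dropped. Also, $d_{\rm alg}$ of the homogeneous formula for $\mu_{p,q}$ is indeed $pq$, so there is nothing to recompute there.

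Two smaller points:
\begin{enumerate}
\item \textbf{Charts.} In the stable-model charts the exceptional lines are $\{u_0=\zeta\}$ and $\{r_0=\omega\}$, the zero sets of $u_0^q+1$ and $r_0^p+1$. By contrast, $\{v_0=0\}$ and $\{s_0=0\}$ are the strict transforms of $\{x=0\}$ and $\{y=0\}$. So the multiplicity at $Q_1$ is the exponent of $u_0^q+1$ dividing the pulled-back equation, not an order of vanishing along $\{v_0=0\}$. It is simpler to do as the paper does and expand $\alpha x^{p-1}y(1+y^q)+\beta(x^p+(1+y^q)^p)+\gamma x^py$ directly. At $Q_1=[0:\zeta:1]$ the lowest-order part is a form of degree $p$ in $(x,y-\zeta)$, nonzero for generic coefficients. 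At $P_1=[\omega:0:1]$ the $y$-derivative is $-(\omega^{-1}\alpha+\gamma)\neq 0$. Genericity of the line is needed precisely to prevent cancellation in these terms.
\item \textbf{Coordinate lines.} You should state why the coordinate lines never occur as components of $\mu_{p,q}^*E_P$, $\mu_{p,q}^*E_Q$ or $\mu_{p,q}^*C$. The reason is that they are collapsed to the fixed point $[0:1:0]$, which lies on no $E_P$ or $E_Q$ and not on a generic line.
\end{enumerate}

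With the false check removed and these details supplied, your argument coincides with the paper's.
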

\begin{proof}
    We've checked in  \cref{subsec:stable_model_ss} that the images of exceptional lines under $\mu_{p,q}$ intersect exceptional lines in a finite number of points. Therefore the set-theoretic preimage of an exceptional line appearing as the blowup of a point $P$ coincides with the proper transform of the curve collapsing to $P$. As well the subset $\{x_0x_1x_2 = 0\}$ is collapsed by $\mu_{p,q}$ to the fixed point $[0:1:0]$, hence the preimage of an exceptional divisor does not contain components of this subset. 

Then in order to compute the pullback of a prime divisor $D$ in the Picard group due to Equation \eqref{eqn:curve_pullback} it is sufficient to
\begin{enumerate}
    \item Compute the preimage $\mu_{p,q}^{-1}(D_0)$ of a representative $D_0$ of the divisor $D$ in an open subset $U = \{x_0x_1x_2 \neq 0\}$ of $X_{p,q}$.
    \item Compute the multiplicity of the local equation of $\mu_{p,q}^{-1}(D_0)$ in the pullback of a local equation of $D_0$ at a generic point of $\mu_{p,q}^{-1}(D_0)$.
    \item Compute the class of the closure $\overline{\mu_{p,q}^{-1}(D_0)}$ in $X_{p,q}$.
\end{enumerate}

We put the detailed computations of these three steps for the basis of $\Pic(X_{p,q})$ to \cref{sec:appx_pullback}.
\end{proof}

In particular, we compute the norm of this operator acting on the complexification $\rV_{p,q} = \C \otimes_\Z \Pic(X_{p,q})$.

\begin{lemma}\label{lemma:spectrum_ss}
    In semi-simple case the operator $\mupq^* \in \mathrm{End}_{\C}(\rV_{p,q})$ is diagonalizable with spectrum 
    \begin{align*}(\lambda_{+},\lambda_{-}, 0, \underbrace{-1,\dots, -1}_{\left \lceil{\frac{p}{2}}\right \rceil + \left \lceil{\frac{q}{2}}\right \rceil -2},\underbrace{1,\dots, 1}_{ \left\lfloor{\frac{p}{2}}\right\rfloor  + \left \lfloor{\frac{q}{2}}\right \rfloor}), \qquad \mbox{where} \qquad
        \lambda_{\pm} = \frac{pq-2\pm\sqrt{(pq-2)^2-4}}{2}.
        \end{align*}
\end{lemma}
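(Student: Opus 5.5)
The plan is to decompose $\rV_{p,q}$ into $\mupq^*$-invariant pieces using the explicit formulas of \cref{prop:coh_action_ss}. The dimension is $1+p+q$, with basis $C$, $\{E_P\}_{P\in S_p}$, $\{E_Q\}_{Q\in T_q}$. First we introduce the symmetric sums $\Sigma_S=\sum_{P\in S_p}E_P$ and $\Sigma_T=\sum_{Q\in T_q}E_Q$. Since $\iota$ permutes $S_p$ and $T_q$, summing \eqref{eqn:picmuexcz} and \eqref{eqn:picmuexcw} over $T_q$ and $S_p$ gives
\begin{align*}
\mupq^*(\Sigma_T)&=qC-\Sigma_T,\\
\mupq^*(\Sigma_S)&=p\,qC-p\,\Sigma_T-\Sigma_S,\\
\mupq^*(C)&=pqC-p\,\Sigma_T-\Sigma_S.
\end{align*}
So the span $W=\langle C,\Sigma_S,\Sigma_T\rangle$ is invariant. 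On this span we compute the $3\times 3$ characteristic polynomial. We note that $\mupq^*(C-\Sigma_S)=0$, so $0$ is an eigenvalue with eigenvector $C-\Sigma_S$. The quotient of $W$ by this kernel has basis $C,\Sigma_T$, on which the map acts by
\begin{align*}
C&\mapsto pqC-p\Sigma_T-\Sigma_S\equiv (pq-1)C-p\Sigma_T,\\
\Sigma_T&\mapsto qC-\Sigma_T.
\end{align*}
Its trace is $pq-2$ and its determinant is $-(pq-1)+pq=1$. Hence the remaining eigenvalues are the roots of $\lambda^2-(pq-2)\lambda+1$, namely $\lambda_\pm$. For $pq>4$ these are distinct reals, and they are distinct from $0$, so $W$ is diagonalizable.

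Next we need a complementary invariant subspace. We take $W'$ spanned by the $\iota$-antisymmetric-type combinations $E_P-E_{P'}$ with $P,P'\in S_p$, and $E_Q-E_{Q'}$ with $Q,Q'\in T_q$. These are the zero-sum combinations, of dimension $(p-1)+(q-1)$, and $W\oplus W'=\rV_{p,q}$. By \eqref{eqn:picmuexcz} and \eqref{eqn:picmuexcw}, the terms $C$ and $\Sigma_T$ cancel on differences, giving
\[
\mupq^*(E_P-E_{P'})=-(E_{\iota P}-E_{\iota P'}),
\]
and similarly on $T_q$. Thus on each zero-sum space the map is $-\iota_*$, where $\iota_*$ is the permutation action of the involution. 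It is diagonalizable with eigenvalues $\pm1$.

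The main point, and the step needing care, is counting the multiplicities. On the full permutation module $\C^{R_n}$, the involution $\iota$ has fixed points $\xi=\pm1$ among the $n$-th roots of $-1$; only $\xi=-1$ qualifies, and only when $n$ is odd. So $\iota$ has $\lfloor n/2\rfloor$ two-cycles and $n-2\lfloor n/2\rfloor$ fixed points. The $+1$-eigenspace of $\iota_*$ therefore has dimension $\lceil n/2\rceil$, and the $-1$-eigenspace has dimension $\lfloor n/2\rfloor$. Removing the all-ones vector, which lies in the $+1$-eigenspace of $\iota_*$, the zero-sum subspace has $\iota_*$-eigenvalue $+1$ with multiplicity $\lceil n/2\rceil-1$ and $-1$ with multiplicity $\lfloor n/2\rfloor$. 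Applying the sign flip, $-\iota_*$ has eigenvalue $-1$ with multiplicity $\lceil n/2\rceil-1$ and $+1$ with multiplicity $\lfloor n/2\rfloor$.

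Summing over $n=p$ and $n=q$ gives exactly the stated counts. Combined with the diagonalizable block $W$ having eigenvalues $\lambda_\pm$ and $0$, this proves the claim. The obstacle is only bookkeeping: checking that $W'$ is invariant and complementary to $W$, which is clear since $W'$ consists of zero-sum vectors with no $C$-component.
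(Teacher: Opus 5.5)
Your proof is correct, but it handles the complement of the three-dimensional invariant block differently from the paper.

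The paper first splits $\rV_{p,q}=\rV_+\oplus\rV_-$ into $\iota$-symmetric and $\iota$-antisymmetric parts, with $\mupq^*=\mathrm{Id}$ on $\rV_-$. Inside $\rV_+$ it uses only the invariant subspace $\langle A,B,C\rangle$ (your $W$) and the induced action $-\mathrm{Id}$ on the quotient $\rV_+/\langle A,B,C\rangle$. A quotient does not supply a complement, so the paper gets diagonalizability by showing that $(\mupq^*-\lambda_+)(\mupq^*-\lambda_-)\mupq^*(\mupq^*+\mathrm{Id})(\mupq^*-\mathrm{Id})$ vanishes and that these five roots are pairwise distinct.

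You instead exhibit an explicit invariant complement, the zero-sum subspace $W'$, on which $\mupq^*=-\iota_*$. Diagonalizability then follows piece by piece: three distinct eigenvalues on $W$, and an involution on $W'$. You never need to compare $\lambda_\pm$ with $\pm1$. The multiplicities of $\pm1$ come from counting the orbits of $\iota$ on $R_n$ rather than from the dimensions of $\rV_-$ and of the quotient.

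You also verify the eigenvalues on $W$ directly: the kernel vector $C-\Sigma_S$, then trace $pq-2$ and determinant $1$ on the quotient. The paper only says this block ``can be diagonalized explicitly.'' Your $W'$ is the paper's $\rV_-$ together with the zero-sum part of the $\iota$-symmetric vectors, so both arguments carry the same information; yours is slightly more self-contained.

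One wording point: the vectors $E_P-E_{P'}$ spanning $W'$ are not $\iota$-antisymmetric in general. You correctly identify $W'$ as the zero-sum subspace, though, and that is all the argument uses.
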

\begin{proof}
Is given in \cref{sec:appx_pullback}.
\end{proof}

\subsection{Non-semi-simple pairs.}
The space $X_{p,1}$ is obtained by blowups at points $S_p\cup T_1$ and two consistent blowups at $[0:1:0]$. Due to Equation \eqref{eqn:pic_blowup} there is a basis in $\Pic(X_{p,1})$ given by $\{C\}\cup \{E_{P}\}_{P \in S_p}\cup \{E_Q\}_{Q\in T_1}\cup \{E_{1,\infty}, E_{2,\infty}\}$. Here we denote by $C$ the class of the lift of a generic line in $\CP^2$, for each $P \in S_p\cup T_1$ we denote by $E_{P}$ the class of exceptional line $\pi_{p,1}^{-1}(P)$. We denote by $E_{1,\infty}$ the class of the proper transform of the exceptional line corresponding to the first blowup at point $[0:1:0]$ and by $E_{2,\infty}$ the exceptional line corresponding to the second blowup at $[0:1:0]$.

 The map $\mu_{p,1}^*\in  \mathrm{End}_{\Z}(\Pic(X_{p,1}))$ is given by the following.
\begin{propo}\label{prop:coh_action_nonss}
    Let $p > 4$. Let $P = [\omega:0:1] \in S_p,\;\; Q = [0:-1:1]\in T_1$. Then
    \begin{align}
    \mupo^*(C) &= (p+1) C - pE_{Q} - \sum_{P_1\in S_{p}}E_{P_1} - E_{1,\infty} - 2E_{2,\infty},\label{eqn:picmulinenonss}\\
            \mupo^*(E_{P}) &= C - E_{Q} - E_{\iota(P)},\label{eqn:picmuomnss}\\
           \mupo^*(E_{Q}) &= C - E_{Q},\label{eqn:picmuzenss}\\
           \mupo^*(E_{1,\infty}) &= (p-1) C +(2-p) E_{Q} - \sum_{P_1\in S_{p}}E_{P_1} - E_{1,\infty} - 2E_{2,\infty},\label{eqn:picmuinf1}\\
           \mupo^*(E_{2,\infty}) &= C - E_{Q}\label{eqn:picmuinf2}.
    \end{align}
\end{propo}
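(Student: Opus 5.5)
The plan follows the same three-step scheme used for \cref{prop:coh_action_ss}. For each basis class $D\in\{C,E_P,E_Q,E_{1,\infty},E_{2,\infty}\}$ we identify the total transform $\mupo^*(D)$ via \eqref{eqn:curve_pullback}. We first compute the set-theoretic preimage of a representative of $D$ under the lift $\mu_{p,1}:X_{p,1}\dashrightarrow X_{p,1}$, using the local charts of \cref{subsec:stable_model_nss}. We then read off multiplicities at generic points of each component. Finally we express the class of each component in the basis, using \eqref{eqn:int_blowup}: a proper transform of a plane curve of degree $d$ passing through the blown-up points with multiplicities $k$ contributes $dC-\sum kE$. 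For the infinitely near point over $[0:1:0]$ we must remember that a curve through $[0:1:0]$ in the tangent direction $\{x_0=0\}$ also picks up $E_{2,\infty}$, and that the proper transform of the first exceptional curve has class $E_{1,\infty}-E_{2,\infty}$ in the basis of total transforms. A key input is that, by algebraic stability (\cref{propo:stable_model}) and \cref{rem:nonss_res}, no curve is collapsed into the blown-up points. Consequently pullbacks of exceptional classes are exactly the proper transforms of the curves collapsed onto the corresponding centers.

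\textbf{Exceptional classes over $S_p\cup T_1$.} By \cref{Lemma1: Critical set}, $\mu_{p,1}^{-1}([\omega:0:1])=K^h_\omega$, which for $q=1$ is the line $x_0-\omega^{-1}(x_1+x_2)=0$. This line passes through $[0:-1:1]$ and $[\omega^{-1}:0:1]=\iota(P)$, and it misses $[0:1:0]$ and the other points of $S_p$. This gives \eqref{eqn:picmuomnss}. Similarly, $K^v_{-1}=\{x_2+x_1=0\}$ passes through $[0:-1:1]$ (note $\iota(Q)=Q$) and not through $[0:1:0]$, which gives \eqref{eqn:picmuzenss}. In both cases the multiplicity is $1$, which we check in the charts $(r_0,s_0)$ and $(u,v)$. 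Here $E_{2,\infty}$ is the exceptional curve collapsed onto nothing new; it is the image of a collapsed curve. To find its preimage, we use the chart \eqref{Equation Other super attracting} together with the description of $\mu_{p,1}^{-1}(\{u_2=0\})$. We expect this preimage to be a line through $Q$, and we verify this via the formula $\tilde u_2=u_2v_2/(1+u_2^2v_2)$ transported back to $\CP^2$, which gives \eqref{eqn:picmuinf2}.

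\textbf{$\mupo^*(C)$ and $\mupo^*(E_{1,\infty})$.} For $C$ we take a generic line $\{a x_0+bx_1+cx_2=0\}$ and pull it back by \eqref{eq:mu_cp2}. This yields a curve of degree $p+1$. Its multiplicities are read from the lowest-order terms of the three components at each blown-up point: at $[0:-1:1]$, where $x_1+x_2$ vanishes, the terms $x_0^{p-1}x_1(x_1+x_2)$, $x_2(x_0^p+(x_1+x_2)^p)$ and $x_0^px_1$ all have order $p$, giving multiplicity $p$. At each point of $S_p$ the multiplicity is $1$. At $[0:1:0]$ we compute in the blow-up charts $(u_1,v_1)$ and $(u_2,v_2)$ and expect orders $1$ and $2$, where the coefficient of $E_{2,\infty}$ accumulates through the infinitely near point. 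For $E_{1,\infty}$ we use $\mupo^*(E_{1,\infty})=\mupo^*(\pi^*\text{-pullback of the proper transform})+\mupo^*(E_{2,\infty})$, or more directly we compute the preimage of $\{v_2=0\}$ (the proper transform of $\{u_1=0\}$) in the charts. A consistency check is available: $\mupo^*$ must preserve the intersection form up to the known relation, and the self-intersection of $\mupo^*(C)$ must equal $(p+1)^2-p^2-p-1-4\cdot 1$, where the last term needs the correct $E_{2,\infty}$ bookkeeping. The canonical class $-3C+\sum E$ is also preserved, since $\eta$ is invariant, which gives a further sanity check on every row.

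The main obstacle is the bookkeeping over $[0:1:0]$: obtaining the correct coefficients of $E_{1,\infty}$ and $E_{2,\infty}$, and in particular the unexpected coefficient $2-p$ of $E_Q$ in \eqref{eqn:picmuinf1}. Because these are the places where errors are most likely, we would first verify them by checking that the matrix preserves the intersection form and the canonical class.
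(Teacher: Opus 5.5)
There is a genuine gap in your treatment of $E_{1,\infty}$ and $E_{2,\infty}$; your handling of $E_P$, $E_Q$ and of the multiplicities of $K_L$ at $Q$ and at $S_p$ is fine and matches the paper.

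\textbf{The false premise.} Algebraic stability only forbids collapsing a curve to a point whose forward orbit meets $\mathcal{I}(\mu_{p,1})$. It does not prevent curves from being collapsed into the exceptional curves over $[0:1:0]$. In fact, as \cref{rem:nonss_res} says, $E_{2,\infty}=\{u_2=0\}$, $E_{1,\infty}=\{v_2=0\}$ and the proper transform $L_0'$ of $\{x_0=0\}$ are all collapsed to the fixed point $(u_2,v_2)=(0,0)=E_{1,\infty}\cap E_{2,\infty}$. Moreover, no curve is mapped onto $E_{2,\infty}$. So these collapsed curves make up most of the pullbacks, and your claim that pullbacks of exceptional classes are proper transforms of curves collapsed onto the centers fails here.

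\textbf{What the pullbacks actually are.} Read off vanishing orders from $\tilde u_2=u_2v_2/(1+u_2^2v_2)$ and $\tilde v_2=u_2^{p-4}v_2^{p-3}(\cdots)$. Near $\{x=0\}$, in $(x,y)=(x_0/x_2,x_1/x_2)$, also use $u_2\circ\mu_{p,1}=x/(1+y)$ and $v_2\circ\mu_{p,1}=(1+y)^2x^{p-2}y/(x^p+(1+y)^p)$. This gives
\[
\mu_{p,1}^*E_{2,\infty}=L_0'+E_{1,\infty}+E_{2,\infty},\qquad \mu_{p,1}^*E_{1,\infty}=L_1'+(p-2)L_0'+(p-3)E_{1,\infty}+(p-4)E_{2,\infty}.
\]
Here $[L_0']=C-E_Q-E_{1,\infty}-E_{2,\infty}$, and $L_1'$, the proper transform of $\{x_1=0\}$, maps onto $E_{1,\infty}$ and has class $C-\sum_{P_1\in S_p}E_{P_1}$.

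So the preimage of $E_{2,\infty}$ is not a line through $Q$; only its class happens to equal $C-E_Q$. The coefficient $2-p$ of $E_Q$ comes from the $(p-2)L_0'$ term. The formula for $\tilde u_2$ that you planned to use as verification already shows the local preimage of $\{u_2=0\}$ is $\{u_2v_2=0\}$, which is two exceptional curves.

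\textbf{How the paper avoids this.} It pulls back three line classes whose representatives avoid the fixed point: a generic line (class $C$), a generic line through $[0:1:0]$ (class $C-E_{1,\infty}-E_{2,\infty}$), and $\{x_2=0\}$ (class $C-E_{1,\infty}-2E_{2,\infty}$). For these, the pullback is just the closure of the preimage in $\{x_0x_1x_2\neq 0\}$, giving $\mu_{p,1}^*C$, $C-E_Q$ and $0$ respectively. Solving the resulting linear system gives \eqref{eqn:picmuinf1} and \eqref{eqn:picmuinf2}.

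\textbf{Basis conventions.} In the statement, $E_{1,\infty}$ is the proper transform of the first exceptional curve, so its total transform is $E_{1,\infty}+E_{2,\infty}$. The second center is $v_1=x_2/x_0=0$, which lies on the proper transform of $\{x_2=0\}$, not of $\{x_0=0\}$. The curve $K_L$ has multiplicity $1$ at both centers. The coefficient $-2$ of $E_{2,\infty}$ in \eqref{eqn:picmulinenonss} is purely a basis effect; in your total-transform basis it would be $-1$.

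\textbf{Your sanity checks would mislead you.} $\mu_{p,1}^*$ is not an isometry for a non-biregular map. Using $E_{1,\infty}^2=-2$ and $E_{1,\infty}\cdot E_{2,\infty}=1$, one gets $(\mu_{p,1}^*C)^2=p-1$, not your target $p-4$. Invariance of $\eta$ gives $K=\mu_{p,1}^*K+R$, where $R=(p-2)L_0'+(p-3)E_{1,\infty}+(p-4)E_{2,\infty}\neq 0$ is the Jacobian divisor. So the canonical class is not preserved either, and both checks would reject the correct matrix.
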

\begin{proof}
    Derivation of equations \eqref{eqn:picmuomnss},\eqref{eqn:picmuzenss} repeats the computations for semi-simple pairs (see the proof of \cref{prop:coh_action_ss}). In order to compute the pullback for the classes of exceptional lines corresponding to blowups at $[0:1:0]$ we compute pullbacks of classes of proper transforms of lines with different intersection conditions at $[0:1:0]$. The details are given in \cref{sec:appx_pullback}.
\end{proof}
    The following statement is an analogue of  \cref{lemma:spectrum_ss}. In particular, it explains our choice of naming for semi-simple and non-semi-simple pairs.

    \begin{lemma}\label{lemma:spectrum_nonss}
    In a non-semi-simple case the operator $\mupo^* \in \mathrm{End}_{\C}(\rV_{p,1})$ the spectrum of operator $\mupo^*$ is given by 
    \begin{equation}\label{eqn:specnonss}
    (\lambda_{+},\lambda_{-}, 0,0,0, \underbrace{-1,\dots, -1}_{\left \lceil{\frac{p}{2}}\right \rceil -1},\underbrace{1,\dots, 1}_{ \left\lfloor{\frac{p}{2}}\right\rfloor }).    
    \end{equation}

    In the Jordan normal form of the operator $\mupo^*$ all blocks except for one have size $1$ and there is one block corresponding to the eigenvalue $0$ of size $2$.  
    
    Here
    \begin{equation*}
        \lambda_{\pm} = \frac{p-2\pm\sqrt{(p-2)^2-4}}{2}.
    \end{equation*}
\end{lemma}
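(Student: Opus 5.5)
The plan is to split $\rV_{p,1}$ into $\mupo^*$-invariant subspaces using the involution $\iota$ on $S_p$. We then read off the eigenvalues $\pm 1$ directly and reduce everything else to a $5\times 5$ computation. Note first that $\dim \rV_{p,1} = 1 + p + 1 + 2 = p+4$. This matches the number of entries in \eqref{eqn:specnonss}, since $2+3+(\lceil p/2\rceil -1)+\lfloor p/2\rfloor = p+4$.

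\textbf{Step 1: the eigenvalue $1$.} By \eqref{eqn:picmuomnss}, for $P\in S_p$ we have
\begin{align*}
\mupo^*(E_P - E_{\iota(P)}) = (C-E_Q-E_{\iota(P)}) - (C-E_Q-E_P) = E_P - E_{\iota(P)}.
\end{align*}
The fixed points of $\iota$ on $R_p$ satisfy $\omega^2=1$ and $\omega^p=-1$, so $\iota$ has exactly one fixed point ($\omega=-1$) if $p$ is odd and none if $p$ is even. Hence there are $\lfloor p/2\rfloor$ free $\iota$-orbits $\{P,\iota(P)\}$. The corresponding differences are linearly independent eigenvectors with eigenvalue $1$.

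\textbf{Step 2: the eigenvalue $-1$.} For each $\iota$-orbit $O\subset S_p$ put $F_O=\sum_{P\in O}E_P$. Then $\mupo^*(F_O) = |O|(C-E_Q) - F_O$. Take two orbits $O,O'$ and set $G = |O'|F_O - |O|F_{O'}$. The $(C-E_Q)$ terms cancel, so $\mupo^*(G) = -G$. There are $\lceil p/2\rceil$ orbits, so this produces $\lceil p/2\rceil -1$ independent eigenvectors with eigenvalue $-1$. Together with Step 1 this gives $p-1$ independent eigenvectors, all lying in $\mathrm{span}\{E_P\}_{P\in S_p}$ and meeting $\mathrm{span}\{S\}$ trivially, where $S=\sum_{P\in S_p}E_P$.

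\textbf{Step 3: the complementary block.} The subspace $W=\mathrm{span}\{C,E_Q,S,E_{1,\infty},E_{2,\infty}\}$ is invariant. Indeed, Proposition \ref{prop:coh_action_nonss} gives
\begin{align*}
\mupo^*C &= (p+1)C - pE_Q - S - E_{1,\infty} - 2E_{2,\infty},\\
\mupo^*S &= pC - pE_Q - S,\\
\mupo^*E_Q &= \mupo^*E_{2,\infty} = C - E_Q,\\
\mupo^*E_{1,\infty} &= (p-1)C + (2-p)E_Q - S - E_{1,\infty} - 2E_{2,\infty}.
\end{align*}
Since $W$ together with the $p-1$ eigenvectors above spans $\rV_{p,1}$, it remains to show that $\mupo^*|_W$ has characteristic polynomial $t^3(t^2-(p-2)t+1)$ and a single $2\times 2$ Jordan block at $0$.

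\begin{itemize}
\item \emph{Kernel.} The relations above show that $E_Q - E_{2,\infty}$ and $C - E_{1,\infty} - 2E_Q$ both lie in the kernel. The images of $C$, $S$, $E_Q$ are independent (check the coefficients of $E_{1,\infty}$, $S$, $C$), so the rank is $3$ and the kernel is exactly $2$-dimensional.
\item \emph{Characteristic polynomial.} The remaining factor comes from the induced action on $W/\ker$, of dimension $3$. One computes the trace and the principal minors of the $5\times5$ matrix and checks they match $t^5-(p-2)t^4+t^3$.
\item \emph{Jordan structure.} If the polynomial is as claimed, the algebraic multiplicity of $0$ is $3$ while the geometric multiplicity is $2$. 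This forces exactly one Jordan block of size $2$ at $0$, which is the same as checking $\mathrm{rank}\,(\mupo^*|_W)^2 = 2$.
\item \emph{The eigenvalues $\lambda_\pm$.} Since $p>4$, $\lambda_\pm$ are real, distinct and different from $0,\pm1$, so they contribute size-one blocks.
\end{itemize}

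The only real obstacle is the determinant bookkeeping in Step 3, that is, confirming the characteristic polynomial of the $5\times5$ block. I would do it by passing to the basis $\{E_Q-E_{2,\infty},\,C-E_{1,\infty}-2E_Q,\,C,\,E_Q,\,S\}$. In this basis the matrix is block upper triangular with a zero $2\times 2$ block, leaving only a $3\times 3$ determinant to expand.
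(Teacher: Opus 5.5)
Your proposal is correct and follows essentially the same route as the paper. The $\iota$-antisymmetric classes give the eigenvalue $1$. The same five-dimensional invariant subspace $W=\langle C,E_Q,S,E_{1,\infty},E_{2,\infty}\rangle$ carries $\lambda_\pm$ and the eigenvalue $0$, with a two-dimensional kernel and characteristic polynomial $t^3(t^2-(p-2)t+1)$. The $\iota$-symmetric classes modulo $W$ give $-1$.

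The only difference is how the $-1$ part is split off. You write down explicit $-1$-eigenvectors $|O'|F_O-|O|F_{O'}$ and an explicit invariant complement to $W$. The paper instead passes to the quotient $\rV_{+}/W$, observes that the induced map is $-\mathrm{Id}$, and uses coprimality of the characteristic polynomials to get the complement.
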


\begin{proof}
    Is given in \cref{sec:appx_pullback}.
\end{proof}

 \cref{prop:coh_action_ss,prop:coh_action_nonss} together with \cref{lemma:spec_rad,lemma:spectrum_ss,lemma:spectrum_nonss} imply the following statement.
\begin{theo}\label{Theorem Dynamical degree}
    Let $pq > 4$. Then the dynamical degree of $\mu_{p,q}$ is given by
    \begin{equation}
        \lambda_{+} = \frac{pq-2+\sqrt{(pq-2)^2-4}}{2}.
    \end{equation}
\end{theo}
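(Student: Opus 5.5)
The plan is to combine the algebraically stable model with the explicit action on the Picard group. By \cref{propo:stable_model}, for every $pq>4$ the lift of $\mu_{p,q}$ to $X_{p,q}$ is algebraically stable. Dynamical degree is a birational conjugacy invariant, so $\lambda_1(\mu_{p,q})$ equals $\lambda_1$ of this lift. \cref{lemma:spec_rad} then identifies it with the spectral radius of $\mu_{p,q}^*$ on $\rV_{p,q}=\C\otimes_\Z\Pic(X_{p,q})$. By \cref{rem:pgeqq} we may take $p\ge q$. This leaves two cases: semi-simple pairs, and non-semi-simple pairs $(p,1)$ with $p>4$.

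\emph{Semi-simple case.} Here $\mu_{p,q}^*$ is given by \cref{prop:coh_action_ss}, and its spectrum is listed in \cref{lemma:spectrum_ss}. Besides $\lambda_\pm$, every eigenvalue lies in $\{0,\pm1\}$. Since $pq>4$, the discriminant $(pq-2)^2-4$ is positive, so $\lambda_\pm$ are real with $\lambda_+\lambda_-=1$ and $\lambda_++\lambda_-=pq-2>2$. Hence $\lambda_+>1>\lambda_->0$, and the spectral radius is $\lambda_+$.

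\emph{Non-semi-simple case $(p,1)$.} \cref{lemma:spectrum_nonss} gives the spectrum of $\mu_{p,1}^*$ from \cref{prop:coh_action_nonss}. Again all eigenvalues other than $\lambda_\pm=\frac{p-2\pm\sqrt{(p-2)^2-4}}{2}$ lie in $\{0,\pm1\}$, and $pq=p$, so the formula agrees with the stated one. The nontrivial Jordan block belongs to the eigenvalue $0$, so it does not affect the spectral radius. The same argument as before gives spectral radius $\lambda_+>1$ because $p-2>2$.

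The substantive work lies in the inputs rather than in this assembly. These are the pullback formulas and the spectra, and I would verify the spectral statements as follows.

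\emph{Semi-simple spectrum.} Use the symmetry $\iota$. On the span of $E_Q-E_{\iota(Q)}$ and $E_P-E_{\iota(P)}$ the formulas act by $-1$. On $\iota$-symmetric sums they act by $+1$, except on the totals. The fixed points of $\iota$ are $\xi=\pm1$ when these lie in $R_p$ or $R_q$, and this produces the floor/ceiling counts.

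\emph{Three-dimensional block.} Restrict to the invariant subspace spanned by $C$, $\Sigma_S=\sum_{P\in S_p}E_P$ and $\Sigma_T=\sum_{Q\in T_q}E_Q$. There the matrix is
\[
C\mapsto pqC-p\Sigma_T-\Sigma_S,\qquad \Sigma_T\mapsto qC-\Sigma_T,\qquad \Sigma_S\mapsto pqC-p\Sigma_T-\Sigma_S .
\]
Its characteristic polynomial factors as $t\,(t^2-(pq-2)t+1)$, which produces $0,\lambda_+,\lambda_-$.

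\emph{Non-semi-simple analogue.} The same reduction works with $E_{1,\infty}$ and $E_{2,\infty}$ adjoined. The expected main obstacle is the pullback of $E_{1,\infty}$ and $E_{2,\infty}$, which needs the careful multiplicity bookkeeping deferred to the appendix. Granting the earlier results, the theorem follows directly.
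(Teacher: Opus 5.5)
This is essentially the paper's own argument: the stable model $X_{p,q}$ from \cref{propo:stable_model}, then \cref{lemma:spec_rad}, then the spectra in \cref{lemma:spectrum_ss,lemma:spectrum_nonss}. The assembly is correct, and your $3\times 3$ block with characteristic polynomial $t\,(t^2-(pq-2)t+1)$ checks out.

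There is one slip in your side verification, though it does not affect the spectral radius since both eigenvalues involved have modulus $1$. From $\mu_{p,q}^*(E_Q)=C-E_{\iota(Q)}$ and $\mu_{p,q}^*(E_P)=qC-\Sigma_T-E_{\iota(P)}$, the antisymmetric classes $E_P-E_{\iota(P)}$ are \emph{fixed}, giving eigenvalue $+1$ with multiplicity $\lfloor p/2\rfloor+\lfloor q/2\rfloor$. The symmetric classes act by $-1$ modulo $\langle C,\Sigma_S,\Sigma_T\rangle$. You have these two signs swapped. Also, $1\notin R_n$, so the only $\iota$-fixed root is $-1$, which occurs exactly when $n$ is odd.
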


\begin{rem}
One can give an alternative proof of the above theorem based on the algorithm described in \cite{alonso2023}, and can deduce the following recurrence relation for the degree of the iterates of the mapping (cf. \cref{theo: recurrenceformulasemisimplecase} and \cref{propo:degreenon_semisimple}).
    
    \begin{equation*}
        d_{\rm alg}(\mu_{p,q}^{n+2})+(2-pq)d_{\rm alg}(\mu_{p,q}^{n+1})+d_{\rm alg}(\mu_{p,q}^{n})=0,
    \end{equation*}
    for $pq > 4$.
    This recurrence implies \cref{Theorem Dynamical degree} and the algorithm of \cite{alonso2023} does not use the computation of the pullback operator induced by the map $\mu_{p,q}$ on the Picard group.
\end{rem}

\section{Tropicalization}\label{sec:tropicalization}
We will now present a method developed in \cite{diller_lin_2016} and further discussed in \cite{DR1,DR2} that will play an essential role in our study of $\mu_{p,q}$ when both $p,q < 0$.  It also provides further clarification on what we have already done when $p,q > 0$.   More specifically, 
for a non-semi-simple pair $(p,q)$ there is an additional destabilizing orbit of $\mupq$ (see  \cref{subsec:indeterminacy}) and the methods in this section give us a better understanding of why this orbit exists and why it is not eliminated via a single blow-up but it is eliminated by two blowups.

\begin{defi}
    Denote $\T^k = (\C^{\times})^k$. A toric surface is a complex surface $X$ with an embedding $\T^2 \xhookrightarrow{} X$ as an open dense subset such that the natural action $\T^2\curvearrowright \T^2$ extends holomorphically to an action $\T^2\curvearrowright X$.
\end{defi}

To each toric surface  $X$ there is an associated fan $\Sigma(X) $ which is a partition of $\R^2$ into a union of closed strongly convex two-dimensional cones which intersect by one-dimensional cones generated by integral vectors.  
We denote the sets of two-dimensional cones and their one-dimensional intersections by $\Sigma_2(X)$ and $\Sigma_1(X)$ respectively. We explain the construction of fan on the example of $\CP^2$.

\subsubsection{Fan of $\CP^2$} The natural action  $\T^2 \curvearrowright \CP^2$ is given by
\begin{equation*}
    t \cdot [x] := [t_1x_0: t_2x_1: x_2].
\end{equation*}
where $t= (t_1,t_2)\in \T^2$ and $[x]=[x_0:x_1:x_2] \in \CP^2$.
Observe that, $\T^2 \cong \T^2\cdot[1:1:1] \subset \CP^2$ is an open dense orbit in $\CP^2$. Thus, $\CP^2$ is a toric surface.  
 The sets of cones $\Sigma_1(\CP^2)$ and $\Sigma_2(\CP^2)$ are obtained as follows.  \\

Denote the lines $L_j=\{x_j=0\},\;j= 0,1,2$. For the action $\T^2 \curvearrowright \CP^2$ there is one two-dimensional orbit $\mathcal{O}^{(2)} =\T^2\subset \CP^2$. The one-dimensional orbits are $\mathcal{O}^{(1)}_j = L_j\backslash(L_{j-1}\cup L_{j+1})\cong \T^1$, $j = 0,1,2$. The $0$-dimensional orbits are $\mathcal{O}^{(0)}_j  =L_{j-1}\cap L_{j+1}$. For any orbit $\mathcal{O}$ one corresponds a cone 
\[\sigma(\mathcal{O}) = \left\{(a,b)\in \R^2 | \lim\limits_{z\to0} [z^a\al : z^b\be:1 ]\in \mathcal{O}, \forall (\al,\be) \in \T^2\right\}.\] The dimensions of an orbit and of the corresponding cone sum up to $2$. We denote $\sigma_{j} = \sigma(\mathcal{}\mathcal{O}^{(0)}_j),\; \tau_j = \sigma(\mathcal{O}^{(1)}_j)$ and we have $\{(0,0)\} = \sigma(\mathcal{O}^{(2)})$.

One computes $\tau_j = \R_+u_j$, where $u_0 = (1,0), u_1=(0,1)$ and $u_2=(-1,-1)$ and $\sigma_j\subset \R^2$ is the open and strictly convex cone bounded by the rays $\tau_{j-1}, \tau_{j+1}$, see \cref{fig:1}. \begin{figure}[h]
    \centering
    \begin{tikzpicture}[scale=1.25]
  \draw[line width=1pt,blue,-stealth](0,0)--(1,0) node[anchor=west]{$\boldsymbol{\tau_0}$};
  \draw[line width=1pt,blue,-stealth](0,0)--(0,1) node[anchor=south]{$\boldsymbol{\tau_1}$};
  \draw[line width=1pt,blue,-stealth](0,0)--(-0.707106781,-0.707106781) node[anchor=north east]{$\boldsymbol{\tau_2}$};
 \path node at (0.1,-0.2) [] {$\boldsymbol{0}$}
 node at (0.6,0.6) [] {$\boldsymbol{\sigma_2}$}
     node at (0.2,-0.75) [] {$\boldsymbol{\sigma_1}$}
      node at (-0.75,0.2) [] {$\boldsymbol{\sigma_0}$};
\end{tikzpicture}
\caption{Fan of $\CP^2$}
\label{fig:1}
\end{figure}\\
\subsubsection{Toric maps.}
Recall the affine coordinates $(x,y) = \left(\frac{x_0}{x_2}, \frac{x_1}{x_2}\right)$. Define the meromorphic two-form $\eta = \frac{dx\wedge dy}{xy}$.
\begin{defi}
    A toric map is a rational map $f:\CP^2 \dashrightarrow \CP^2$ such that $f^*\eta = \rho(f)\eta$ for some constant $\rho(f) \in \C^{\times}$.
\end{defi}

\begin{defi}
For a rational toric map $f: \C^2 \dashrightarrow \C^2$ set $(x(t),y(t)) = f(w_1t^a,w_2t^b) $, for some generic $(w_1,w_2)\in\T^2$. Define $$\displaystyle A_{f}(a,b) = \lim\limits_{t\to 0} \left(\left(\frac{\log(|x(t)|)}{\log(|t|)},\frac{\log(|y(t)|)}{\log(|t|)}\right)\right).$$ The map $A_{f}$ is called the tropicalization of the map $f$.
\end{defi}
The map $A_f: \R^2 \to \R^2$ is continuous, piecewise linear and positive homogeneous. If $f$ and $g$ are toric maps then $A_{f\circ g} = A_f \circ A_g$.

In this section we consider the toric surfaces $X$ obtained after finite number of blow-ups  of $\CP^2$ at torus-invariant points. Let $\pi:X\to \CP^2$ denote the blowup map, then $\eta_X:= \pi^*\eta$ is a torus-invariant meromorphic two-form on $X$. The set of poles of $\eta_X \text{ is } X \setminus \T^2$. The torus action preserves the irreducible components of this set and the intersections of these components are the fixed points of the torus action.
The tropicalization helps to trace the behavior of one and zero-dimensional orbits of the torus action under the map $f$.  
\begin{lemma}\label{lemma:Diller_trop}(\cite{diller_lin_2016}, Lemma  8.8) Let $X$ be a smooth compact toric surface and $f: X \dashrightarrow X$ be a toric map. 
    Let $\tau, \tau'$ be the rays $\sigma, \sigma'$ be two-dimensional cones in $\Sigma(X)$. Let $C, C'$ and $P,P'$ be the associated irreducible components and fixed points of the torus action in $X\setminus\T^2$ respectively.
    \begin{enumerate}
        \item $A_f(\tau) = \tau'$ if and only if $f(C) = C'$.
        \item $A_f(\tau) \subset \sigma$ if and only if $f(C) = P$.
        \item $A_f(\sigma) \subset \sigma'$ if and only if $f(P) \setminus \T^2 = P'$.
        \item $\tau \subset A_f(\sigma)$ if and only if $C\subset f(P)$. In particular, $P\in \mathcal{I}(f)$
    \end{enumerate}
\end{lemma}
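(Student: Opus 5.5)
The plan is to prove all four statements through one local description of $f$ near the boundary of $X$, using monomial coordinates adapted to the fan. Every two-dimensional cone $\sigma\in\Sigma_2(X)$ is smooth, so it is spanned by primitive vectors $v_1,v_2$ forming a $\Z$-basis. The fixed point $P$ then has an affine chart $\C^2_{(s_1,s_2)}$, and in this chart the torus embeds as $t\mapsto(\chi_1(t),\chi_2(t))$, where $\chi_1,\chi_2$ are the characters dual to $v_1,v_2$. The ray through $v_j$ corresponds to the curve $\{s_j=0\}$. Consequently, a one-parameter arc $t\mapsto(w_1t^a,w_2t^b)$ with $(a,b)$ in the interior of $\sigma$ (resp.\ on the ray $\R_+v_1$) tends to $P$ (resp.\ to a generic point of $C$) as $t\to0$. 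This is the content of the cone-orbit correspondence $\sigma(\mathcal O)$ described above for $\CP^2$, and it carries over verbatim to any smooth toric $X$.

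The main step is a valuation interpretation of $A_f$. Write $f=(x(x,y),y(x,y))$ in torus coordinates, with $x,y$ rational functions. For a generic arc of weight $(a,b)$, the order of vanishing in $t$ of a rational function $g$ along the arc is the tropicalization $\mathrm{trop}(g)(a,b)=\min\{\langle m,(a,b)\rangle : m\in\operatorname{Newt}(g)\}$, with numerator and denominator treated separately. The point of choosing $(w_1,w_2)$ generic is that there is no cancellation among the lowest-order terms. Hence $A_f(a,b)=(\mathrm{trop}\,x,\mathrm{trop}\,y)(a,b)$, which is the valuation of $f^*$ of the coordinate characters along the monomial valuation determined by $(a,b)$. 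For $(a,b)=v$ primitive on a ray $\tau$, this valuation is $\mathrm{ord}_C$. So $A_f(v)$ records the orders of vanishing of $f^*\chi$ along $C$, for all characters $\chi$.

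With this in hand, (1) and (2) become bookkeeping. The curve $f(C)$ is determined by which characters $\chi$ have $f^*\chi$ vanishing identically, or having a pole, along $C$; these are exactly the $\chi$ with $\langle\chi,A_f(v)\rangle\ne0$. If $A_f(v)$ is interior to a two-dimensional cone $\sigma'$, both local coordinates $s_1',s_2'$ of $P'$ vanish on $C$, so $C$ is collapsed to $P'$. If $A_f(v)$ lies on a ray $\tau'$, exactly one coordinate vanishes and the other restricts to a nonconstant function on $C$, so $f(C)=C'$. The nonconstancy is where the toric hypothesis $f^*\eta=\rho\eta$ enters. The residue of $\eta$ along $C$ pulls back to $\rho$ times a nonzero multiple of the residue along $C'$, and this would be impossible if $C$ were collapsed. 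The same identity gives the converse directions, since $A_f(v)\neq0$: $\eta$ has a pole along $C$, and this forces $f(C)\subset X\setminus\T^2$.

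Items (3) and (4) are handled the same way, using arcs of weight interior to $\sigma$ that approach $P$. The closure $f(P)$ consists of the limits of $f$ along all such arcs (together with their perturbations). A limit point lies in $X\setminus\T^2$ at the orbit whose cone contains $A_f(a,b)$. If $A_f(\sigma)\subset\sigma'$, every such limit is $P'$, which gives (3). If a ray $\tau\subset A_f(\sigma)$, the arcs whose images lie on $\tau$ reach a one-parameter family of points of $C$, parametrized by the coefficients $(w_1,w_2)$. Thus $C\subset f(P)$ and $P\in\mathcal I(f)$, which gives (4).

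The obstacle I expect is the rigorous identification of $f(P)$ with these arc limits, together with the absence of cancellation in the leading terms. I would handle it by blowing up $P$ torically, i.e.\ subdividing $\sigma$, until $A_f$ is linear on each subcone. Then (1)–(2) apply to the new exceptional rays, and the argument reduces to those cases. Alternatively, since this is exactly \cite[Lemma 8.8]{diller_lin_2016}, one may simply cite it.
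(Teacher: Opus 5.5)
The paper gives no proof of this lemma; it only cites \cite[Lemma 8.8]{diller_lin_2016}, so your closing fallback is exactly what the paper does.

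\textbf{Parts (1)--(2).} Your argument here is sound. You read $A_f(v)$ as $(\mathrm{ord}_C f^*x,\mathrm{ord}_C f^*y)$ and work in the chart at $P'$. You use the residue of $\eta$ twice: to get $A_f(v)\neq 0$, and to rule out $C$ collapsing to a non-torus-fixed point of $C'$. Together these give a complete proof of (1) and (2).

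\textbf{The gap in (3)--(4).} The gap sits exactly at the step you flag, and your proposed repair does not close it. The total transform $f(P)$ is the image of all curves lying over $P$ in a resolution of $f$. Subdividing $\sigma$ only produces the toric curves $E_w$, $w\in\operatorname{int}\sigma$. The lift of $f$ can remain indeterminate at non-torus-fixed points of $E_w$, however fine the subdivision. The curves created by blowing up such points are not poles of the pulled-back form. Generic monomial arcs never see them, and they can map onto internal curves.

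\textbf{Repairing (4).} Your existing residue computation handles this case. A curve over $P$ that maps onto a boundary curve must be a pole of the pulled-back $\eta$, hence is some toric $E_w$. Then (1), applied on a toric model containing $w$, gives the converse. For the forward direction, add to the fan a rational ray of $\sigma\cap A_f^{-1}(\tau)$ and apply (1).

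\textbf{Why (3) cannot be repaired.} The forward implication of (3) does not follow from your argument, and as stated here it is false when $P\in\mcI(f)$.
\begin{itemize}
\item Let $X$ be the toric surface with rays $(1,0),(0,1),(-1,-2),(0,-1)$ (a Hirzebruch surface $\mathbb{F}_2$), and let $g(x,y)=\bigl(\frac{1+xy}{x},\,xy\bigr)$. Then $g^*\eta=-\eta$ and $A_g(a,b)=(\min(0,a+b)-a,\;a+b)$.
\item With $\sigma=\mathrm{cone}((1,0),(0,-1))$, $A_g$ maps $\bar\sigma\setminus\{0\}$ into the open cone $\sigma'=\mathrm{cone}((0,1),(-1,-2))$.
\item The inverse $g^{-1}(x,y)=\bigl(\frac{1+y}{x},\,\frac{xy}{1+y}\bigr)$ contracts the internal curve $\{y=-1\}$ to $P_\sigma$. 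Its coordinate functions have orders $1$ and $-1$ along that curve, and $(1,-1)\in\operatorname{int}\sigma$.
\item Hence $g(P_\sigma)\supset\overline{\{y=-1\}}$. This curve meets $X\setminus\T^2$ both at $P_{\sigma'}$ and at the non-torus-fixed point $(0,-1)$ of $\{x=0\}$.
\end{itemize}

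\textbf{What your argument does prove.} The limits of generic monomial arcs at $P$ all equal $P'$. Consequently $f(P)=P'$ whenever $P\notin\mcI(f)$. This weaker statement is the only form in which the paper actually uses (3): it always checks separately that the point is not indeterminate (cf.\ \cref{REM:FINDING_FIXEDPTS}).
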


\begin{rem}\label{REM:FINDING_FIXEDPTS}
Typically, Part (3) of 
Lemma \ref{lemma:Diller_trop} corresponds to $P$ being a fixed point for $f$.  However, there is a subtle issue because 
of the possibility that  $P \in \mathcal{I}(f)$.
However, this can only happen if 
there is exceptional curve $E$ for $f$ with $E \cap \mathbb{T}^2 \neq \emptyset$ and with $P \in E$; see \cite[Lemma 4.7]{DR1}.
\end{rem}

Below, we check that the maps $\mu_{p,q}, \mu_{p,q}^{(1)},\mu_{p,q}^{(2)}$ are toric and compute the tropicalization map for the last two of them.
\begin{equation*}
    \left(\mu_{p,q}^{(1)}\right)^*\left(\frac{dx\wedge dy}{xy}\right) = \frac{d\left(\frac{1+y^q}{x}\right)\wedge dy}{\left(\frac{1+y^q}{x}\right)y}=-\frac{dx\wedge dy}{xy} \text{ , }\;\;\; \left(\mu_{p,q}^{(2)}\right)^*\left(\frac{dx\wedge dy}{xy}\right) = \frac{dx\wedge d\left(\frac{1+x^p}{y}\right)}{\left(\frac{1+x^p}{y}\right)x}=-\frac{dx\wedge dy}{xy}.
\end{equation*}
Hence,
\begin{equation}\label{eqn:2form_pres}
    \mu_{p,q}^*\left(\frac{dx\wedge dy}{xy}\right) = \left(\frac{dx\wedge dy}{xy}\right).
\end{equation}\\
We have $\displaystyle \mu_{p,q}^{(1)}(t^a, t^b) = ( t^{-a}(1+t^{qb}), t^b),\; \mu_{p,q}^{(2)}(t^a, t^b) = (t^a, t^{-b}(1+t^{pa}))$. Then the tropicalizations $A_1, A_2$ of the maps $\mu_{p,q}^{(1)},  \mu_{p,q}^{(2)}$ respectively are given by 
\begin{align}
    A_1\begin{bmatrix}
        a\\
        b
    \end{bmatrix} = 
        \begin{bmatrix}
            \begin{cases}
                -a &\ \text{if}\ \ b\geq 0\\
                -a+qb&\ \text{if}\ \ b\leq 0\\
            \end{cases}\\
            b
        \end{bmatrix},\;\;\;\;
        A_2\begin{bmatrix}
        a\\
        b
    \end{bmatrix} = \begin{bmatrix}
            a\\
            \begin{cases}
                -b &\ \text{if}\ \ a\geq 0\\
                -b+ap&\ \text{if}\ \ a\leq 0\\
            \end{cases}            
        \end{bmatrix}.
\end{align}

Since $\mupq = \mu_{p,q}^{(2)}\circ \mu_{p,q}^{(1)}$ we obtain the tropicalization
$A$ of $\mupq$ to be $A=A_2 \circ A_1$ by functoriality.   It is a homeomorphism as well.
Applying the tropicalization map $A$ to $\Sigma_1(\mathbb{CP}^2)$ we get,

\begin{equation*}
    Au_0 = \begin{bmatrix}
        -1\\
        -p
    \end{bmatrix},\ Au_1 = \begin{bmatrix}
        0\\
        -1
    \end{bmatrix},\ Au_2 = \begin{bmatrix}
        1-q\\
        1+(1-q)p
    \end{bmatrix} .
\end{equation*}

\subsubsection{Semi-simple pairs.} Given $pq-p-q>0$, the action of $A$ to $\Sigma(\CP^2)$ is as shown in $\cref{fig:2}$.

\begin{figure}[h!]
   \centering
    \begin{tikzpicture}[yscale=0.75]
  \draw[line width=1pt,blue,-stealth](0,0)--(1.414213562,0) node[anchor=west]{$\boldsymbol{\tau_0}$};
  \draw[line width=1pt,blue,-stealth](0,0)--(0,1.414213562) node[anchor=south]{$\boldsymbol{\tau_1}$};
  \draw[line width=1pt,blue,-stealth](0,0)--(-1,-1) node[anchor=north east]{$\boldsymbol{\tau_2}$};
  \draw[line width=0.5pt,red,-stealth](0,0)--(0,-1.414213562) node[anchor=north west]
  {$\boldsymbol{A\tau_1}$};
  \draw[line width=0.5pt,red,-stealth](0,0)--(-1,-3) node[anchor=north]
  {$\boldsymbol{A\tau_0}$};
  \draw[line width=0.5pt,red,-stealth](0,0)--(-1,-2) node[anchor=north east]
  {$\boldsymbol{A\tau_2}$};
  \path  node at (0.6,0.6) [] {$\boldsymbol{\sigma_2}$}
     node at (0.4,-0.75) [] {$\boldsymbol{\sigma_1}$}
      node at (-0.75,0.2) [] {$\boldsymbol{\sigma_0}$};
\end{tikzpicture}
\caption{Action of the tropicalization $A$ of $\mu_{p,q}$ on the fan $\Sigma(\mathbb{CP}^2)$ in the semi-simple case.} \label{fig:2}
\end{figure}

Using  $\cref{lemma:Diller_trop}$, we get the following observations (cf.  $\cref{subsec:indeterminacy}$).
\begin{itemize}
    \item Since $A(\sigma_1) \subset \sigma_1$  we expect that $[0:1:0]$ is a fixed point; see Remark \ref{REM:FINDING_FIXEDPTS}.  In fact it is a fixed point because there are no exceptional
    curves $E$ with $E \cap \mathbb{T}^2 \neq \emptyset$ passing through $P$, as
    shown in Figure \ref{fig:crit_mu23}.  
    \item $\tau_j \subset A(\sigma_0)$ for all $j=0,1,2$, implies $[1:0:0] \in \mathcal{I}(\mupq)$, 
    \item $A\tau_j \subset \sigma_1$ implies $\mupq(\{x_j=0\}) =[0:1:0]$ for all $j=0,1,2$. 
\end{itemize}

\subsubsection{Non-semi-simple pairs.} Given $p>4,\ q=1$, the action of $A$ to $\Sigma(\CP^2)$ is as shown in $\cref{fig:subfig8}$.
\begin{figure}[h!]
 \centering
    \begin{subfigure}[b]{0.2\textwidth}
           \begin{tikzpicture}[yscale=0.75]
  
  \draw[line width=0.5pt,red,-stealth](0,0)--(0,-2) node[anchor= south west]
  {$\boldsymbol{A\tau_1}$};
  \draw[line width=0.5pt,red,-stealth](0,0)--(-0.5546,-2.773) node[anchor= west ]
  {$\boldsymbol{A\tau_0}$};
  \draw[line width=0.5pt,red,-stealth](0,0)--(0,2) node[anchor= east]
  {$\boldsymbol{A\tau_2}$};
  \draw[line width=1pt,blue,-stealth](0,0)--(1,0) node[anchor=west]{$\boldsymbol{\tau_0}$};
  \draw[line width=1pt,blue,-stealth](0,0)--(0,1) node[anchor=east]{$\boldsymbol{\tau_1}$};
  \draw[line width=1pt,blue,-stealth](0,0)--(-1,-1) node[anchor=north east]{$\boldsymbol{\tau_2}$};
  \path  node at (0.6,0.6) [] {$\boldsymbol{\sigma_2}$}
     node at (0.4,-0.75) [] {$\boldsymbol{\sigma_1}$}
      node at (-0.75,0.2) [] {$\boldsymbol{\sigma_0}$};
      \node at (0,-3.984375) [] {$ $};
\end{tikzpicture}
        \caption{Fan of $\CP^2$}
        \label{fig:subfig8}
    \end{subfigure}
    \hfill
    \begin{subfigure}[b]{0.2\textwidth}
            \begin{tikzpicture}[yscale=0.75]
\draw[line width=0.5pt,red,-stealth](0,0)--(0,-2) node[anchor=west]
  {$\boldsymbol{A\tau_1}$};
  \draw[line width=0.5pt,red,-stealth](0,0)--(-0.5546,-2.773) node[anchor=west ]
  {$\boldsymbol{A\tau_0}$};
  \draw[line width=0.5pt,red,-stealth](0,0)--(0,2) node[anchor= east]
  {$\boldsymbol{A\tau_2}$};
  \draw[line width=1pt,blue,-stealth](0,0)--(1,0) node[anchor=west]{$\boldsymbol{\tau_0}$};
  \draw[line width=1pt,blue,-stealth](0,0)--(0,1) node[anchor=east]{$\boldsymbol{\tau_1}$};
  \draw[line width=1pt,blue,-stealth](0,0)--(-1,-1) node[anchor=north east]{$\boldsymbol{\tau_2}$};

  \draw[line width=1pt,blue,-stealth](0,0)--(0,-1) node[anchor=west ]{$\boldsymbol{\tau_3}$};
  \draw[line width=0.5pt,red,-stealth](0,0)--(-1,-4) node[anchor=west]{$\boldsymbol{A\tau_3}$};
  \path  node at (0.6,0.6) [] {$\boldsymbol{\sigma_2}$}
     node at (0.75,-0.75) [] {$\boldsymbol{\sigma_3}$}
     node at (-0.75, -1.75) [] {$\boldsymbol{\sigma_4}$}
      node at (-0.75,0.2) [] {$\boldsymbol{\sigma_0}$};
\end{tikzpicture}
        
        \caption{Blow-up at $\sigma_1$}  
        \label{fig:subfig9}
    \end{subfigure}
    \hfill
    \begin{subfigure}[b]{0.2\textwidth}
        
            \begin{tikzpicture}[yscale=0.75]
            \draw[line width=0.5pt,red,-stealth](0,0)--(0,-2) node[anchor=west]
  {$\boldsymbol{A\tau_1}$};
  \draw[line width=0.5pt,red,-stealth](0,0)--(-0.5546,-2.773) node[anchor=west ]
  {$\boldsymbol{A\tau_0}$};
  \draw[line width=0.5pt,red,-stealth](0,0)--(0,2) node[anchor= east]
  {$\boldsymbol{A\tau_2}$};
  \draw[line width=1pt,blue,-stealth](0,0)--(1,0) node[anchor=west]{$\boldsymbol{\tau_0}$};
  \draw[line width=1pt,blue,-stealth](0,0)--(0,1) node[anchor=east]{$\boldsymbol{\tau_1}$};
  \draw[line width=1pt,blue,-stealth](0,0)--(-1,-1) node[anchor=south east]{$\boldsymbol{\tau_2}$};
  \draw[line width=1pt,blue,-stealth](0,0)--(0,-1) node[anchor=west ]{$\boldsymbol{\tau_3}$};
  \draw[line width=0.5pt,red,-stealth](0,0)--(-1,-4) node[anchor=west]{$\boldsymbol{A\tau_3}$};
 \draw[line width=1pt,blue,-stealth](0,0)--(-1,-2) node[anchor=north east ]{$\boldsymbol{\tau_4}$};
   \draw[line width=0.5pt,red,-stealth](0,0)--(-1,-3) node[anchor=north east]{$\boldsymbol{A\tau_4}$};
  \path  node at (0.6,0.6) [] {$\boldsymbol{\sigma_2}$}
     node at (0.75,-0.75) [] {$\boldsymbol{\sigma_3}$}
     node at (-1.1, -1.5) [] {$\boldsymbol{\sigma_6}$}
     node at (-0.4 ,-3.35) [] {$\boldsymbol{\sigma_5}$}
      node at (-0.75,0.2) [] {$\boldsymbol{\sigma_0}$};
      
\end{tikzpicture}
        
        \caption{Blow-up at $\sigma_4$}
        \label{fig:subfig10}
    \end{subfigure}
\caption{ } 
\label{fig:subfig1.a.4}
\end{figure}
Using $\cref{lemma:Diller_trop}$, we get the following observations (cf.  $\cref{subsec:indeterminacy}$).
\begin{itemize}
    \item $\tau_2 \subset A(\sigma_1)$ implies $[0:1:0]\in\mathcal{I}(\mupq)$,
    \item $A\tau_j \subset \sigma_1$  implies $\mupq(\{x_j=0\})=[0:1:0]$ for $j=0,1$. 
\end{itemize}

This implies that there is a destabilizing orbit terminating at $[0:1:0]$. The fan of the blowup $\tilde{X}$ of $\CP^2$ at $[0:1:0]$ and the action of the tropicalization map on it, are shown in $\cref{fig:subfig9}$. The rational ray corresponding to the exceptional line $E_{1,\infty}$ is $\tau_3 =\R_+(u_3)$ where $u_3=u_0+u_2$. There are two torus-invariant points on the exceptional line $E_{1,\infty}$. We denote the corresponding cones by $\sigma_3, \sigma_4$.
\begin{itemize}
    \item $\tau_2\subset A(\sigma_4)$ implies that the point corresponding to $\sigma_4$ is indeterminate.
    \item $A\tau_3 \subset \sigma_4$ implies that the exceptional line $E_{1,\infty}$ collapses to the point corresponding to $\sigma_4$.
    \item $A\tau_j \not\subset \sigma_3$ implies that there is no one-dimensional torus orbit collapsed to $\sigma_3$. 
\end{itemize}
This implies the existence of a destabilizing orbit. The fan of blowup $\tilde{\tilde{X}}$ of $\tilde{X}$  at the point corresponding to $\sigma_4$ and the action of the tropicalization map on it, are shown in $\cref{fig:subfig10}$. The rational ray corresponding to the exceptional line $E_{2,\infty}$ is $\tau_4 = \R_+(u_4)$ where $u_4 = u_2 +u_3$. There are two torus-invariant points on the exceptional line $E_{2,\infty}$. We denote the corresponding cones by $\sigma_5, \sigma_6$.

\begin{itemize}
    \item Since $A(\sigma_5) \subset \sigma_5$, can can
    follow the method described in Remark \ref{REM:FINDING_FIXEDPTS} to verify that the torus invariant point corresponding to $\sigma_5$ is a fixed point for $\mu_{p,q}$.
    \item $A\tau_j \not\subset \sigma_k$ for all $j$ and $k\neq 5$ implies that there is no one-dimensional torus orbit collapsed to $\sigma_6$. 
\end{itemize}
Thus, if the curves corresponding to one-dimensional orbits of torus action on $\tilde{\tilde{X}}$ are collapsing, then they are only collapsing to point corresponding to $\sigma_5$. The only curves that are collapsing to $[0:1:0]$ under the map $\mupq$ are $\{x_0=0\}$ and $\{x_1=0\}$ (cf.  $\cref{subsec:indeterminacy}$). Hence, after two blow-ups at $[0:1:0]$, the minimal destabilization orbit that arose from this point is eliminated.  This is the way in which the tropicalization shows why
we needed two blow-ups over $[0:1:0]$ when forming the algebraically stable model $X_{p,q}$ in the non-semi-simple case.

\subsubsection{Eigenvalues of the tropicalization}
One can check that the fixed rays of $A$ are in the third quadrant and are generated by the eigenvectors 
\begin{equation*}
    e_+=\begin{bmatrix}
        \frac{-pq + \sqrt{pq(pq-4)}}{2} \\
        -p
    \end{bmatrix}\text{ and } e_-= \begin{bmatrix}
        \frac{-pq - \sqrt{pq(pq-4)}}{2} \\
        -p
    \end{bmatrix}.
\end{equation*}

The respective eigenvalues are
\begin{equation*}
    \lambda_+= \frac{pq-2 + \sqrt{pq(pq-4)}}{2}\ \text{ and }\ \lambda_-= \frac{pq-2 - \sqrt{pq(pq-4)}}{2}.
\end{equation*}

 Observe that the largest eigenvalue coincides with the dynamical degree of the map (See  $\cref{Theorem Dynamical degree}$).  
The map $\mupq$ behaves like a monomial map in the sense that the dynamical degree is equal to the largest eigenvalue $\lambda_+$ of the tropicalization of the map $\mupq$ (See Theorem 9.1 in \cite{DR1}).
    
\begin{rem}
\label{RMK:NO_REG_MODEL}
Note that the ray $\R_{+}(e_-)$ is repelling and has irrational slope. This implies that after any number of blowups at torus-invariant points corresponding to the sector $\boldsymbol{\sigma_0}$, there is always another indeterminacy point. This supports the idea that the map $\mu_{p,q}$ cannot be made regular by a finite number of blowups.
\end{rem}

\section{Study of $\mu_{p,q}$ when $p,q \leq -1$.}\label{sec:negative_pairs}
\subsection{Critical behavior for negative pairs.} Now we consider a pair $(p,q)\in\{(p,q)\in\Z^2_{<0}:{p}{q}>4\}$; these pairs will be called \textit{negative pairs}. In this section we use the notation $\tilde{p} = -p, \tilde{q} = -q$. We study this case separately as we can show that after blowing up $\CP^2$ four times, the map $\mu_{-\tilde{p},-\tilde{q}}$ (for most choices of $p,q$) has a superattracting 3-cycle. This phenomenon is analogous to the existence of superattracting fixed point in the case $p,q\geq 1$ with $pq>4$ (cf. \cref{remark:superattracting_fixed_point}). Some aspects are simpler for the negative pairs, for example, in case $\tilde{p},\tilde{q}\geq 2$ we only need to blow up torus-invariant points to reach algebraic stability. In this section we assume $\tilde{p}\tilde{q} = pq>4$ and $p,q\in\Z_{<0}$. (For the case $p=q=-2$, see Remark \ref{Rem: stability p=2, q=2}.) Denote $\nu_{\tilde{p},\tilde{q}}:=\mu_{-\tilde{p},-\tilde{q}}$. By Equation \eqref{eq: c^2 formula for mutation} we have 
\begin{equation*}
    \nu_{\tilde{p},\tilde{q}}:= (x,y) \mapsto \left(\frac{1+y^{-\tilde{q}}}{x},\frac{x^{-\tilde{p}}+(1+y^{-\tilde{q}})^{-\tilde{p}}}{x^{-\tilde{p}}y}\right)=\left(\frac{y^{\tilde{q}}+1}{xy^{\tilde{q}}},\frac{x^{\tilde{p}}y^{\tilde{p}\tilde{q}}+(1+y^{\tilde{q}})^{\tilde{p}}}{y(y^{\tilde{q}}+1)^{\tilde{p}}}\right).
\end{equation*}
We extend the birational dynamics $\nu_{\tilde{p},{\tilde{q}}}$ to $\CP^2$
\begin{equation}
    \nu_{\tilde{p},{\tilde{q}}}([x_0\colon x_1\colon x_2])=\left[x_2^{\tilde{p}}(x_1^{\tilde{q}}+x_2^{\tilde{q}})^{\tilde{p}+1}\colon x_0x_1^{{\tilde{q}}-1}\left(x_0^{\tilde{p}}x_1^{\tilde{p}{\tilde{q}}}+x_2^{\tilde{p}}(x_1^{\tilde{q}}+x_2^{\tilde{q}})^{\tilde{p}}\right)\colon x_0x_1^{{\tilde{q}}}x_2^{\tilde{p}-1}(x_1^{\tilde{q}}+x_2^{\tilde{q}})^{\tilde{p}}\right]
\end{equation}
The following two lemmas are similar to Lemma \ref{Lemma: Indeterminate set} and Lemma \ref{Lemma1: Critical set} correspondingly. We use the same notation as in Section \ref{subsec:indeterminacy}. We omit the proofs as they are done by direct calculations that are very similar to those in \cref{subsec:indeterminacy}.
\begin{lemma}\label{lemma: indeterminacy negative pairs}
    Let $\tilde{p},\tilde{q} \in \Z_{>0}$.
    Then the indeterminacy locus of $\nu_{{\tilde{p}},{\tilde{q}}}$ is given by 
    $$\mathcal{I}(\nu_{{\tilde{p}},{\tilde{q}}})=\{[0:1:0],[1:0:0]\} \cup T_{\tilde{q}}.$$    
\end{lemma}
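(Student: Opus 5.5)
The plan is a direct computation. Since the homogeneous formula for $\nu_{\tilde{p},\tilde{q}}$ displayed above is meant to be the reduced representative, $\mathcal{I}(\nu_{\tilde p,\tilde q})=V(F_0,F_1,F_2)$ with
\begin{align*}
F_0 &= x_2^{\tilde p}(x_1^{\tilde q}+x_2^{\tilde q})^{\tilde p+1},\\
F_1 &= x_0x_1^{\tilde q-1}\big(x_0^{\tilde p}x_1^{\tilde p\tilde q}+x_2^{\tilde p}(x_1^{\tilde q}+x_2^{\tilde q})^{\tilde p}\big),\\
F_2 &= x_0x_1^{\tilde q}x_2^{\tilde p-1}(x_1^{\tilde q}+x_2^{\tilde q})^{\tilde p}.
\end{align*}
The first step is to confirm there is no common factor. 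The irreducible factors of $F_0$ are $x_2$ and the linear factors $x_1-\zeta^{-1}x_2$, $\zeta\in R_{\tilde q}$. At a generic point of $\{x_2=0\}$ we have $F_1=x_0^{\tilde p+1}x_1^{\tilde q-1+\tilde p\tilde q}\neq 0$, and at a generic point of $\{x_1^{\tilde q}+x_2^{\tilde q}=0\}$ we also have $F_1=x_0^{\tilde p+1}x_1^{\tilde q-1+\tilde p\tilde q}\neq0$. So the three polynomials are coprime and the formula is reduced.

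Next I would solve $F_0=0$ by splitting into its two factors.

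\emph{Case $x_2=0$.} Here $F_2=0$ automatically when $\tilde p\ge 2$; if $\tilde p=1$, then $F_2=x_0x_1^{\tilde q}x_1^{\tilde q}$ still vanishes exactly when $x_0x_1=0$. Also $F_1=x_0^{\tilde p+1}x_1^{\tilde q-1+\tilde p\tilde q}$, which vanishes iff $x_0=0$ or $x_1=0$. Each option gives one point: $[0:1:0]$ or $[1:0:0]$.

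\emph{Case $x_2\neq0$ and $x_1^{\tilde q}+x_2^{\tilde q}=0$.} Then $x_1\neq0$ and $F_2=0$, while $F_1=x_0^{\tilde p+1}x_1^{\tilde q-1+\tilde p\tilde q}$ vanishes iff $x_0=0$. Normalizing $x_2=1$ gives exactly the points $[0:\zeta:1]$ with $\zeta^{\tilde q}=-1$, that is, the set $T_{\tilde q}$.

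The main point needing care is the edge cases $\tilde p=1$ or $\tilde q=1$, where some exponents such as $x_2^{\tilde p-1}$ or $x_1^{\tilde q-1}$ become trivial. I would recheck both the coprimality claim and the case analysis with these exponents set to zero. The argument above uses only the factors $x_0$, $x_1^{\tilde q}$ and $x_1^{\tilde p\tilde q}$, which survive in those cases, so the conclusion should hold uniformly for all $\tilde p,\tilde q\ge1$.

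Finally, I would sanity-check the points at infinity against the tropicalization: for negative $p,q$, $A$ should send the cones $\sigma_0$ and $\sigma_1$ onto regions containing rays, consistent with Lemma \ref{lemma:Diller_trop}(4).
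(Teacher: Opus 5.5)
Your argument is correct and is exactly the direct computation the paper has in mind but omits. Showing that neither $x_2$ nor any linear factor of $x_1^{\tilde q}+x_2^{\tilde q}$ divides $F_1$ proves the formula is reduced, and solving $F_1=F_2=0$ on the two components of $\{F_0=0\}$ then gives precisely $\{[0:1:0],[1:0:0]\}\cup T_{\tilde q}$ for all $\tilde p,\tilde q\ge 1$.

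The closing tropical check is unnecessary, and it is not a reliable test. First, Lemma \ref{lemma:Diller_trop} only concerns torus-fixed points, so it says nothing about $T_{\tilde q}$. Second, for $\tilde q=1$ one has $A\tau_1=\tau_2$, so the open cone $A(\sigma_0)$ contains no ray of $\Sigma(\CP^2)$ even though $[1:0:0]$ is indeterminate; its total transform contains the lines $\{x_2=cx_0\}$ with $c^{\tilde p}=-1$, which are not torus-invariant.
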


\begin{lemma}\label{Lemma: Critical set negative pair}
    Let ${\tilde{p}},{\tilde{q}}\geq 2$, then the critical set of $\nu_{{\tilde{p}},{\tilde{q}}}$ is given by 
    \begin{equation*}
         \displaystyle\mathcal{C}(\nu_{{\tilde{p}},{\tilde{q}}})=\bigcup_{i=0}^2\{x_i=0\}\cup\bigcup_{\zeta^{\tilde{q}}+1=0} K^\textit{v}_\zeta\cup\bigcup_{\omega^{\tilde{p}}+1=0}H^\textit{h}_\omega.
    \end{equation*}
 Moreover, 
 \begin{equation*}
     \nu_{{\tilde{p}},{\tilde{q}}}^{-1}([1\colon 0\colon 0])= \bigcup_{i=0}^1\{x_i=0\}, \;\;\; \nu_{{\tilde{p}},{\tilde{q}}}^{-1}([0\colon 1\colon 0])=\{x_2=0\}\cup\bigcup_{\zeta^{\tilde{q}}+1=0}K^\textit{v}_\zeta, \;\;\;\nu_{{\tilde{p}},{\tilde{q}}}^{-1}([\omega:0:1])=H_{\omega}^h.
 \end{equation*}
 where $H^\textit{h}_\omega:=\{x_0x_1^{\tilde{q}}-\omega^{-1}x_2(x_1^{\tilde{q}}+x_2^{\tilde{q}})=0\},\;\;  K^\textit{v}_\zeta:=\{x_2-\zeta x_1=0\}.$
    \end{lemma}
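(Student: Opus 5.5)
We follow the approach of \cref{Lemma1: Critical set}: compute the Jacobian determinant of the homogeneous lift, factor it, and then restrict the map to each factor to find its image. Write $F=(F_0,F_1,F_2)$ for the three components of $\nu_{\tilde p,\tilde q}$ in homogeneous coordinates, each of degree $d=\tilde p(\tilde q+1)+\tilde q$. By Euler's formula, $\det DF$ is a homogeneous polynomial of degree $3(d-1)$, and the critical set of $\nu_{\tilde p,\tilde q}$ is its zero locus. Rather than expanding this determinant directly, we use the factorization $\nu_{\tilde p,\tilde q}=\mu^{(2)}\circ\mu^{(1)}$ together with the preserved form $\eta$ from \eqref{eqn:2form_pres}.

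\textbf{Jacobian in the affine chart.} In the chart $x_2=1$, the relation $\nu^*\eta=\eta$ gives
\[
\det D\nu(x,y)=\frac{\tilde x\,\tilde y}{xy},
\qquad\text{where }(\tilde x,\tilde y)=\nu(x,y).
\]
Hence the critical curves meeting $\C^2$ are exactly the curves on which $\tilde x\tilde y$ vanishes or has a pole, apart from the axes $\{x=0\}$ and $\{y=0\}$, whose contribution cancels against the denominator $xy$. From the explicit formula:
\begin{itemize}
\item $\tilde x=(y^{\tilde q}+1)/(xy^{\tilde q})$ vanishes along $y^{\tilde q}=-1$, i.e.\ along the lines $K^v_\zeta$.
\item $\tilde y$ has its numerator vanishing on $x^{\tilde p}y^{\tilde p\tilde q}+(1+y^{\tilde q})^{\tilde p}=0$. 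This curve factors into the $\tilde p$ components $xy^{\tilde q}=\omega^{-1}(1+y^{\tilde q})$ with $\omega^{\tilde p}=-1$, which are exactly the curves $H^h_\omega$.
\item The poles of $\tilde y$ lie along $y^{\tilde q}+1=0$, so they occur again on $K^v_\zeta$.
\end{itemize}
The multiplicities must be tracked carefully, since a zero and a pole along the same line $K^v_\zeta$ could cancel. The cleanest way is to compute $\det D\mu^{(1)}$ and $\det D\mu^{(2)}$ separately by the chain rule. These are $-(1+y^q)/x^2$ and $-(1+x^p)/y^2$, rewritten with $q=-\tilde q$ and $p=-\tilde p$. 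On $K^v_\zeta$ the Jacobian degenerates because $\mu^{(1)}$ collapses the line $y=\zeta$ to $\{x=0\}$, which is a genuine collapse rather than a cancellation. We then confirm that $\{x_0=0\}$, $\{x_1=0\}$ and the line at infinity $\{x_2=0\}$ are critical by checking in the other charts that each is collapsed. \cref{lemma:jac_crit} identifies the critical set with the union of collapsed curves, so it suffices to show that every listed curve is collapsed and that no other curve is.

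\textbf{Completeness by degree count.} To make sure no component is missing, we compare degrees. The listed curves have total degree $3+\tilde q+\tilde p(\tilde q+1)$, which is smaller than $3(d-1)$. So the homogeneous Jacobian must contain them with multiplicities, for instance $\{x_1=0\}$ to order roughly $\tilde p\tilde q+\tilde q-2$. We would read these multiplicities off the local expansions, or simply accept the set-theoretic statement. Set-theoretically, any further component would have to meet $\C^2$, where the form computation already excludes it, or be one of the coordinate lines.

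\textbf{Images of the critical curves.} We substitute into $F$ directly:
\begin{itemize}
\item On $x_0=0$ only $F_0$ survives, giving $[1:0:0]$.
\item On $x_1=0$ (for $\tilde q\ge2$), $F_1=F_2=0$ and $F_0=x_2^{\tilde p}x_2^{\tilde q(\tilde p+1)}\neq0$, giving $[1:0:0]$.
\item On $x_2=0$, $F_0=F_2=0$ while $F_1=x_0^{\tilde p+1}x_1^{\tilde q-1+\tilde p\tilde q}$, giving $[0:1:0]$.
\item On $K^v_\zeta$, the factor $x_1^{\tilde q}+x_2^{\tilde q}$ kills $F_0$ and $F_2$, giving $[0:1:0]$.
\item On $H^h_\omega$, we substitute $x_0x_1^{\tilde q}=\omega^{-1}x_2(x_1^{\tilde q}+x_2^{\tilde q})$. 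This kills the bracket in $F_1$, and the ratio $F_0:F_2$ simplifies to $\omega$, giving $[\omega:0:1]$.
\end{itemize}
The main obstacle is the multiplicity and cancellation bookkeeping along $K^v_\zeta$ and at infinity. We handle it through the collapse criterion of \cref{lemma:jac_crit} rather than by expanding the determinant.
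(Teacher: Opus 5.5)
Your argument is correct, but it takes a different route from the paper. The paper, as in the proof of \cref{Lemma1: Critical set}, expands the Jacobian determinant of the homogeneous lift, factors it, and restricts $\nu_{\tilde p,\tilde q}$ to each factor.

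You instead get the Jacobian for free from $\nu^*\eta=\eta$. On $\{xy(y^{\tilde q}+1)\neq 0\}$, where $\nu$ is regular with values in $\C^2$, $\det D\nu=\tilde x\tilde y/(xy)$ vanishes exactly on $\bigcup_\omega H^h_\omega$. So every other component of the critical set lies in $\{x_0x_1x_2=0\}\cup\bigcup_\zeta K^v_\zeta$. \cref{lemma:jac_crit} then reduces those candidates to the collapse check you do by substitution, and those image computations are all correct.

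Your route avoids expanding a large determinant. In exchange, it leans on birationality through \cref{lemma:jac_crit} and gives no multiplicities, which the statement does not need.

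Several of your supporting remarks are wrong, though. Drop them rather than rely on them:
\begin{itemize}
\item A pole of $\tilde x\tilde y$ says nothing about criticality. The affine formula is meaningless once the image leaves $\C^2$; a curve mapped onto the line at infinity would also give a pole without being critical.
\item The axes do not ``cancel'' against $xy$. In fact $\det D\nu$ has poles of order $2$ along $\{x=0\}$ and order $\tilde q+2$ along $\{y=0\}$.
\item The chain rule reproduces the same $0\cdot\infty$ along $K^v_\zeta$ and resolves nothing.
\item A degree count cannot certify completeness, and your multiplicity guess for $\{x_1=0\}$ is off.
\end{itemize}

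The clean way to get multiplicities from your own computation is Euler's formula $\det DF(x,y,1)=d\,F_2(x,y,1)^3\,\det D\nu(x,y)$. It yields the paper's factorization directly:
\[
\det DF=d\,x_0\,x_1^{2\tilde q-2}\,x_2^{2\tilde p-2}\,\bigl(x_1^{\tilde q}+x_2^{\tilde q}\bigr)^{2\tilde p+1}\prod_{\omega^{\tilde p}=-1}\bigl(x_0x_1^{\tilde q}-\omega^{-1}x_2(x_1^{\tilde q}+x_2^{\tilde q})\bigr).
\]
This also shows exactly where the hypothesis $\tilde p,\tilde q\ge 2$ enters. For $\tilde q=1$ the line $\{x_1=0\}$ is not critical, and for $\tilde p=1$ the line $\{x_2=0\}$ is not critical; your substitution for $\{x_2=0\}$ silently uses $\tilde p\ge 2$ to get $F_2=0$.
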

    \begin{rem}\label{Rem: omega for negative pairs}
        Notice that unlike the case $p>0,q>0$, the points $[\omega:0:1]\in S_{{\tilde{p}}}$ are no longer indeterminate. Nevertheless, they are in destabilizing orbits of the form $H^h_{\omega}\mapsto[\omega:0:1]\mapsto[1:0:0]\in\mathcal{I}(\nu_{{\tilde{p}},{\tilde{q}}})$.
    \end{rem}
\subsubsection{Tropicalization for negative pairs.}\label{SEC:TROP_NEG_PQ}
Like for $\mu_{p,q}$ the map $\nu_{{\tilde{p}},{\tilde{q}}}$ can be written as composition of two toric maps as follows
\begin{equation*}
    \nu_{{\tilde{p}},{\tilde{q}}}(x,y)=\nu_{{\tilde{p}},{\tilde{q}}}^{(2)}(x,y)\circ\nu_{{\tilde{p}},{\tilde{q}}}^{(1)}(x,y)=\left(x,\frac{x^{\tilde{p}}+1}{x^{\tilde{p}}y}\right)\circ\left(\frac{y^{\tilde{q}}+1}{xy^{\tilde{q}}},y\right).
\end{equation*}
 Tropicalization of the maps $\nu_{{\tilde{p}},{\tilde{q}}}^{(1)},\nu_{{\tilde{p}},{\tilde{q}}}^{(2)}$ is given by
 \begin{align}
    A_{\nu_{{\tilde{p}},{\tilde{q}}}^{(1)}}\begin{bmatrix}
        a\\
        b
    \end{bmatrix} = 
        \begin{bmatrix}
            \begin{cases}
                -a -b{\tilde{q}}&\ \text{if}\ \ b\geq 0\\
                -a&\ \text{if}\ \ b\leq 0\\
            \end{cases}\\
            b
        \end{bmatrix},\;\;\;\;
        A_{\nu_{{\tilde{p}},{\tilde{q}}}^{(2)}}\begin{bmatrix}
        a\\
        b
    \end{bmatrix} = \begin{bmatrix}
            a\\
            \begin{cases}
                -b-a{\tilde{p}} &\ \text{if}\ \ a\geq 0\\
                -b&\ \text{if}\ \ a\leq 0\\
            \end{cases}            
        \end{bmatrix}.
\end{align}
Since tropicalization is functorial, the tropicalization map of $\nu_{{\tilde{p}},{\tilde{q}}}$ is given by 
\[
A = A_{\nu_{{\tilde{p}},{\tilde{q}}}^{(2)}}\circ A_{\nu_{{\tilde{p}},{\tilde{q}}}^{(1)}}.
\]

\begin{figure}[h!]
\centering
\begin{tikzpicture}[>=stealth, thick]
  \begin{scope}[xshift=0cm, scale=0.4]
    \draw[blue, ->] (0,0) -- (4,0)   node[right]       {$\tau_0$};
    \draw[blue, ->] (0,0) -- (0,4)   node[above right] {$\tau_1$};
    \draw[blue, ->] (0,0) -- (-3,-3) node[below left]  {$\tau_2$};
 
    \draw[red, ->] (0,0) -- (-3.5,0)    node[left, xshift=-3pt] {$A\tau_0$};
    \draw[red, ->] (0,0) -- (-4.5,-1.5) node[below left]        {$A\tau_1$};
    \draw[red, ->] (0,0) -- (2,-4)      node[below right]       {$A\tau_2$};
 
    \node at (-1.8,  1.5) {$\sigma_0$};
    \node at ( 2.0,  2.2) {$\sigma_2$};
    \node at ( 2,   -2.0) {$\sigma_1$};
  \end{scope}
 
  \begin{scope}[xshift=6cm, scale=0.4]
    \draw[blue, ->] (0,0) -- (4,0)   node[right]      {$\tau_0$};
    \draw[blue, ->] (0,0) -- (0,4);
    \draw[blue, ->] (0,0) -- (0,-4)  node[below]      {$\tau_3$};
    \draw[blue, ->] (0,0) -- (-2,-3) node[below left] {$\tau_2$};
    \draw[blue, ->] (0,0) -- (-4,0);
 
    \draw[red, ->] (0,0) -- (0,3.2)
      node[above right, xshift=2pt]
        {$\textcolor{blue}{\tau_1}\textcolor{black}{=}A\tau_3$};
    \draw[red, ->] (0,0) -- (-3.5,0)
      node[left, xshift=-3pt]
        {$\textcolor{blue}{\tau_4}\textcolor{black}{=}A\tau_0$};
    \draw[red, ->] (0,0) -- (-4.5,-1.5) node[below left]  {$A\tau_1$};
    \draw[red, ->] (0,0) -- (2,-4)      node[below right] {$A\tau_2$};
    \draw[red, ->] (0,0) -- (2,-6)      node[below right] {$A\tau_4$};
 
    \node at (-1.8,  2.2) {$\sigma_6$};
    \node at ( 2.0,  2.2) {$\sigma_2$};
    \node at ( 2.2, -2.0) {$\sigma_3$};
    \node at (-0.6, -2.8) {$\sigma_4$};
    \node at (-2,   -1.5) {$\sigma_5$};
   \end{scope}

  \begin{scope}[xshift=12.5cm, scale=0.4]
    \draw[blue, ->] (0,0) -- (4,0)   node[right]       {$\tau_0$};
    \draw[blue, ->] (0,0) -- (-4,0);
    \draw[blue, ->] (0,0) -- (0,4)
      node[above right] {$\tau_1\textcolor{black}{=}\textcolor{red}{A\tau_3}$};
    \draw[blue, ->] (0,0) -- (0,-4)  node[below]       {$\tau_3$};
    \draw[blue, ->] (0,0) -- (-4,-4) node[below left]  {$\tau_2$};
    \draw[blue, ->] (0,0) -- (4,-4)  node[below right] {$\tau_6$};
    \draw[blue, ->] (0,0) -- (-4,4);
 
    \draw[red, ->] (0,0) -- (0,3.8);
    \draw[red, ->] (0,0) -- (-3.8,0)
      node[left, xshift=-2pt]
        {$\textcolor{blue}{\tau_4}\textcolor{black}{=}A\tau_0$};
    \draw[red, ->] (0,0) -- (-4.5,-1.5) node[left]              {$A\tau_1$};
    \draw[red, ->] (0,0) -- (-4,-2)     node[left, yshift=-5pt] {$A\tau_5$};
    \draw[red, ->] (0,0) -- (-3.8,3.8)
      node[above left]
        {$\textcolor{blue}{\tau_5}\textcolor{black}{=}A\tau_6$};
    \draw[red, ->] (0,0) -- (2,-4)  node[below right]            {$A\tau_2$};
    \draw[red, ->] (0,0) -- (2,-6)  node[below left, xshift=10pt]{$A\tau_4$};
 
    \node at ( 2.2,  2.0) {$\sigma_2$};
    \node at (-1.2,  2.8) {$\sigma_{10}$};
    \node at (-2.8,  1.5) {$\sigma_9$};
    \node at (-3.2, -2.2) {$\sigma_5$};
    \node at (-1.0, -2.5) {$\sigma_4$};
    \node at ( 1,   -2.8) {$\sigma_8$};
    \node at ( 2.5, -1.0) {$\sigma_7$};
  \end{scope}

  \node[below, text width=4cm, align=justify, font=\footnotesize]
    at (0, -3.0)
    {(a) Blue: Fan $\Sigma(\mathbb{CP}^2)$. Red: action of the
     tropicalization $A$ on $\Sigma_1(\mathbb{CP}^2)$.};
 
  \node[below, text width=4.5cm, align=justify, font=\footnotesize]
    at (6cm, -3.0)
    {(b) Surface $Z$ is obtained by blowing up $\mathbb{CP}^2$ at
     $P_{\sigma_0}$ and $P_{\sigma_1}$. Blue: Fan $\Sigma(Z)$.
     Red: action of the tropicalization $A$ on $\Sigma_1(Z)$.};
 
  \node[below, text width=4.5cm, align=justify, font=\footnotesize]
    at (12.5cm, -3.0)
    {(c) Surface $Y$ is obtained by blowing up $Z$ at
     $P_{\sigma_6}$ and $P_{\sigma_3}$. Blue: Fan $\Sigma(Y)$.
     Red: action of the tropicalization $A$ on $\Sigma_1(Y)$.};
 
\end{tikzpicture}
\caption{Action of the tropicalization $A$ of $\nu_{{\tilde{p}},{\tilde{q}}}$ when ${\tilde{p}}, {\tilde{q}} \geq 1$ and
${\tilde{p}}{\tilde{q}} > 4$.}
\label{fig:neg_pq_fan}
\end{figure}

We construct surfaces $Y, Z$  via the following sequence of blowups. 
Let $Z$ be the blowup of $\CP^2$ at $P_{\sigma_{0}}, P_{\sigma_1}$ (see Figure \hyperref[fig:neg_pq_fan]{7a}). Denote the exceptional lines over these points by $\tilde{C}_{\tau_4}$ and $\tilde{C}_{\tau_3}$ respectively. 
Let $Y$ be the blowup of $Z$ at the points $P_{\sigma_{6}}, P_{\sigma_3}$ (see Figure \hyperref[fig:neg_pq_fan]{7b}). Denote the exceptional lines over these points by $\hat{C}_{\tau_5}$ and $\hat{C}_{\tau_6}$ respectively.  If $C$ is a curve on $\CP^2$ we call its lift to $Z$ by $\tilde{C}$ and its lift to $Y$ by $\hat{C}$. Denote the lift of $\tilde{C}_{\tau_{3}}, \tilde{C}_{\tau_4}$ to $Y$ by $\hat{C}_{\tau_{3}}, \hat{C}_{\tau_4}$ respectively.

The following result shows that we obtain algebraic stability for the lift of $\nu_{{\tilde{p}},{\tilde{q}}}$ to $Y$.

\begin{propo}\label{Propo: Negative pairs} 
If ${\tilde{p}},{\tilde{q}}\geq 2$ with ${\tilde{p}}{\tilde{q}}>4$, then the  lift $\nu_{{\tilde{p}},{\tilde{q}}}: Y \dashrightarrow Y$ is algebraically stable. 
\end{propo}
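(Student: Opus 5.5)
The plan is to apply \cref{cor:alg_stab_1} and \cref{cor:alg_stab_2} to the composite $\pi:Y\to\CP^2$. We must show two things. First, no component of $\mathcal{C}(\nu_{\tilde p,\tilde q})$, composed with $\pi^{-1}$, lands on a point of a destabilizing orbit. Second, no exceptional curve $\hat C_{\tau_3},\hat C_{\tau_4},\hat C_{\tau_5},\hat C_{\tau_6}$ is collapsed by the lift to such a point. We split the critical components of \cref{Lemma: Critical set negative pair} into torus-invariant ones and non-torus-invariant ones. The torus-invariant ones are the three coordinate lines, and the exceptional curves belong to this group too. The non-torus-invariant ones are $K^v_\zeta$ and $H^h_\omega$.

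\textbf{Torus-invariant curves.} Here everything is controlled by \cref{lemma:Diller_trop} applied to the tropicalization $A=A_{\nu^{(2)}}\circ A_{\nu^{(1)}}$ acting on the fan $\Sigma(Y)$ of Figure~\ref{fig:neg_pq_fan}(c). We compute $A$ on the rays $u_0,\dots,u_6$ explicitly:
\begin{itemize}
\item $A\tau_3=\tau_1$, $A\tau_0=\tau_4$ and $A\tau_6=\tau_5$, so these curves map onto curves and are not collapsed;
\item $A\tau_1$, $A\tau_2$, $A\tau_4$ and $A\tau_5$ lie in the interiors of two-dimensional cones, so these curves are collapsed to torus-fixed points, which we identify from the picture.
\end{itemize}
For each such fixed point we then follow the forward orbit of cones under $A$ using part (3) of \cref{lemma:Diller_trop}. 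We aim to show the orbit enters the $3$-cycle of cones containing the superattracting $3$-cycle mentioned at the start of the section, along which $A(\sigma)\subset\sigma'$ with no ray of $\Sigma(Y)$ inside $A(\sigma)$. By part (4), no point in that cycle is indeterminate. \cref{REM:FINDING_FIXEDPTS} requires checking that no non-toric exceptional curve passes through these points, which we do from the explicit equations of $K^v_\zeta$ and $H^h_\omega$. In the same computation we check that the indeterminate torus-fixed points of $Y$ (the cones $\sigma$ with some ray in $A(\sigma)$) are not images of any collapsed curve.

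\textbf{Non-toric critical curves.} The curves $K^v_\zeta$ are collapsed to $[0:1:0]=P_{\sigma_1}$. After the blowups over $\sigma_1$ and $\sigma_3$, a local computation should show that their proper transforms map to non-torus-invariant curves or to generic points of the exceptional lines, in the spirit of \cref{rem:lift_v}. We check this in the chart with coordinates $(x_0/x_1,x_2/x_1)$ blown up twice. The curves $H^h_\omega$ map to $[\omega:0:1]$. By \cref{Rem: omega for negative pairs} the next image is $[1:0:0]=P_{\sigma_0}$, which is blown up in $Y$. So we must verify that the lift sends $[\omega:0:1]$ to a regular point of $\hat C_{\tau_4}$ or $\hat C_{\tau_5}$ with non-toric coordinate depending on $\omega$. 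That point is then off the torus-invariant locus, and its further orbit stays regular because $A\tau_4$ and $A\tau_5$ behave as computed above, while generic points of those lines map into the torus. The remaining indeterminacy $T_{\tilde q}$ is not blown up, so we also check that no critical image lands on it. The critical images are toric points, $[\omega:0:1]$ and points of the exceptional lines, and their orbits avoid $x_1\neq0=x_0$ with $x_1^{\tilde q}+x_2^{\tilde q}=0$.

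\textbf{Main obstacle.} The hardest step is the local computation for $H^h_\omega$ and $K^v_\zeta$ through the iterated blowups. Specifically, we must confirm that the image point on the exceptional line is not one of the torus-fixed points $P_{\sigma}$ on it, since those can be indeterminate. I would first try explicit blowup coordinates at $P_{\sigma_0}$ of the form $(x^{-1}, x/y)$-type monomial charts read off from the cone generators, and substitute the parametrization of $[\omega:0:1]$'s neighborhood.
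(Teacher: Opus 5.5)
\textbf{The gap is in your paragraph on the non-toric critical curves.} Your treatment of the torus-invariant curves matches the paper's Step~2. The non-toric paragraph, however, rests on predictions that are false, and the check you call the main obstacle would come out the wrong way.

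\emph{The curves $K^v_\zeta$.} All four blow-ups producing $Y$ are at torus-fixed points. So a curve $K$ collapsed to a torus-fixed point still lands on a torus-fixed point $P_\sigma$ of $Y$ whenever the vanishing orders $(\mathrm{ord}_K\nu^*x,\mathrm{ord}_K\nu^*y)$ lie in the interior of a cone $\sigma$ of $\Sigma(Y)$. Along $K^v_\zeta$ the factor $1+y^{\tilde q}$ vanishes to first order while $x$ stays generic. Hence the image has valuation $(1,-\tilde p)$, which is interior to $\sigma_8$ for $\tilde p\ge 2$. Equivalently, in \eqref{eq: 46} the locus $m_1^{\tilde q}=-1$ goes to $(\tilde e_1,\tilde s_1)=(0,0)$. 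So $\widehat{K^v_\zeta}$ is still collapsed, to $P_{\sigma_8}$, not onto a curve or a generic point of an exceptional line.

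\emph{The curves $H^h_\omega$.} The point $[\omega:0:1]'$ is a regular point of $\hat C_{\tau_1}$, and your own computation $A\tau_1\subset\sigma_5$ says this whole line goes to $P_{\sigma_5}=\hat C_{\tau_4}\cap\hat C_{\tau_2}$. In the blow-up chart $(x_1/x_0,\,x_2/x_1)$ at $[1:0:0]$, the image of $(x,y)$ near $(\omega,0)$ is $\approx(\omega y^{\tilde q-1},\,y)$. This is regular, with value $P_{\sigma_5}$. Thus $[\omega:0:1]'$ goes to a torus-fixed point independent of $\omega$, not to a non-toric point of $\hat C_{\tau_4}$ or $\hat C_{\tau_5}$. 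Your follow-up argument, that the orbit then enters the torus, therefore never applies.

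\textbf{The missing idea.} Landing on a torus-fixed point is harmless when it is a point of the $3$-cycle. We have $\widehat{K^v_\zeta}\mapsto P_{\sigma_8}$ and $\widehat{H^h_\omega}\mapsto[\omega:0:1]'\mapsto P_{\sigma_5}$. Both images lie in the cycle $P_{\sigma_{10}}\mapsto P_{\sigma_5}\mapsto P_{\sigma_8}\mapsto P_{\sigma_{10}}$, which your toric analysis already shows consists of regular points. Hence these orbits never meet $\mathcal{I}$, including the points of $T_{\tilde q}$ that are not blown up. This is exactly the paper's Step~3. With it in place of your non-toric paragraph, the rest of your outline goes through.

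\textbf{A minor point, shared with the paper's Step~2.} We have $Au_2=(1,1-\tilde p)$ and $Au_5=(1-\tilde q,-1)$. So when $\tilde p=2$, $\hat C_{\tau_2}$ maps onto $\hat C_{\tau_6}$, and when $\tilde q=2$, $\hat C_{\tau_5}$ maps onto $\hat C_{\tau_2}$, instead of being collapsed. This does not affect stability, but your list of collapsed curves should account for it.
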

\begin{proof}
We will first use the tropicalization $A$ to guide us in doing the four blow-ups over torus
invariant points indicated
in the construction of $Y$ before the statement of the proposition. This will
eliminate any destabilizing orbit resulting from collapse of a line $C_{\tau_i},\; i = 0,1,2$ by $\nu_{{\tilde{p}},{\tilde{q}}}$.  Even though there were destabilizing
orbits of $\nu_{{\tilde{p}},{\tilde{q}}}$ in $\mathbb{CP}^2$ that corresponded to the collapsing curves $H^h_{\omega}$ and $K^v_\zeta$, it fortuitously turns out that they were eliminated after doing the four blow-ups mentioned
in the previous sentence.

When using Part (3) from Lemma \ref{lemma:Diller_trop} in the following computations, we must always verify that the point $P$ corresponding to the
sector $\sigma$ is not indeterminate.   In each case, we either verify it with explicit calculations in local coordinates or by checking that there is
no exceptional curve $E$ passing through $P$ with $E \cap \mathbb{T}^2 \neq \emptyset$;  See Remark \ref{REM:FINDING_FIXEDPTS}.

 \textbf{Step 1:} By Lemmas \ref{lemma: indeterminacy negative pairs} and \ref{Lemma: Critical set negative pair} the indeterminate points $P_{\sigma_0}$ and $P_{\sigma_1}$ of $\nu_{{\tilde{p}},{\tilde{q}}}: \mathbb{CP}^2 \dashrightarrow \mathbb{CP}^2$ induce two minimal destabilizing orbits. Following the algorithm of \cref{{theo: Diller-Stabilization algorithm}} we need to blow-up at $P_{\sigma_0}$ and $P_{\sigma_1}$ and check if the lift of the map $\nu_{{\tilde{p}},{\tilde{q}}}$ is algebraically stable.

 \cref{lemma:Diller_trop} and the action of the tropicalization $A$ on $\Sigma(Z)$, as shown in \hyperref[fig:neg_pq_fan]{7b}, give us the following information about the action of the lift of $\nu_{{\tilde{p}},{\tilde{q}}}$ on the blown-up space $Z$:

 \begin{itemize}
    \item $P_{\sigma_6},P_{\sigma_4}\in\mathcal{I}(\nu_{{\tilde{p}},{\tilde{q}}})$, due to $\tau_2\subset A(\sigma_6),\; \tau_0\subset A(\sigma_4).$ 
     \item $\nu_{{\tilde{p}},{\tilde{q}}}(P_{\sigma_3})=P_{\sigma_6}$ due to $A(\sigma_3)\subset \sigma_6$.
     \item $\nu_{{\tilde{p}},{\tilde{q}}}(\tilde C_{\tau_4})=\nu_{{\tilde{p}},{\tilde{q}}}\left(\tilde{C}_{\tau_2}\right) =P_{\sigma_3},\;\; \nu_{{\tilde{p}},{\tilde{q}}}\left(\tilde{C}_{\tau_1}\right)=P_{\sigma_5}$ due to $A(\tau_4)\cup A(\tau_2) \subset \sigma_3,\;\;A(\tau_1)\subset \sigma_5$.
     \item $\nu_{{\tilde{p}},{\tilde{q}}}\left(\tilde C_{\tau_3}\right)=\tilde{C}_{\tau_1},\;\nu_{{\tilde{p}},{\tilde{q}}}\left(\tilde{C}_{\tau_0}\right)=\tilde C_{ \tau_4}$,  due to $A(\tau_3)=\tau_1,\;\;A(\tau_0)=\tau_4$.
 \end{itemize}
 Hence, we have a minimal destabilizing orbit $\tilde C_{\tau_4}\mapsto P_{\sigma_3}\mapsto P_{\sigma_6}\in\mathcal{I}(\nu_{{\tilde{p}},{\tilde{q}}})$ and thus we need to blow-up $Z$ at $P_{\sigma_3}$ and $P_{\sigma_6}$ in order to obtain the surface $Y$.
 
 \textbf{Step 2:} 
Analogously to Step $1$ from the action of the tropicalization $A$ on $\Sigma(Y)$ we obtain (cf. Figure \hyperref[fig:neg_pq_fan]{7c})
 \begin{itemize}
     \item We have a 3-cycle $P_{\sigma_{10}}\mapsto P_{\sigma_{5}}\mapsto P_{\sigma_{8}}\mapsto P_{\sigma_{10}}$, with each point in the cycle being a regular point for $\nu_{{\tilde{p}},{\tilde{q}}}$.
     \item $\nu_{{\tilde{p}},{\tilde{q}}}\left(\hat{C}_{\tau_2}\right)=\nu_{{\tilde{p}},{\tilde{q}}}\left(\hat{C}_{\tau_4}\right)=P_{\sigma_{8}}$ and $\nu_{{\tilde{p}},{\tilde{q}}}\left(\hat{C}_{\tau_1}\right)=\nu_{{\tilde{p}},{\tilde{q}}}\left(\hat{C}_{\tau_5}\right)=P_{\sigma_{5}}$.
     \item $\nu_{{\tilde{p}},{\tilde{q}}}\left(\hat{C}_{\tau_0}\right)=\hat{C}_{\tau_4}$, $\nu_{{\tilde{p}},{\tilde{q}}}\left(\hat{C}_{\tau_6}\right)=\hat{C}_{\tau_5}$ and $\nu_{{\tilde{p}},{\tilde{q}}}\left(\hat{C}_{\tau_3}\right)=\hat{C}_{\tau_1}.$
    
 \end{itemize}
In particular, all (proper transforms of) exceptional lines and the proper transforms of the coordinate axes are eventually contracted to a point in the $3$-cycle and thus they are not contracted to a point in a destabilizing orbit.

\textbf{Step 3:} The only curves in $\mathbb{CP}^2$ except coordinate axes that were collapsed
by $\nu_{{\tilde{p}},{\tilde{q}}}: \mathbb{CP}^2 \dashrightarrow \mathbb{CP}^2$ were the ${\tilde{p}}+{\tilde{q}}$ irreducible curves $H^h_{\omega}$ and $K^v_\zeta$.   We verify that their proper transforms $\widehat{H^\textit{h}_\omega}, \widehat{K^v_\zeta} \subset Y$ do
not lead to destabilizing orbits. Indeed, by a direct computation we have 
$$
\widehat{H^\textit{h}_\omega} \mapsto [\omega:0:1]' \subset \widehat{C_{\tau_1}} \mapsto P_{\sigma_5},\;\; \widehat{K^v_\zeta}\mapsto P_{\sigma_8} (\,\,\mbox{See} \,\, \eqref{eq: 46}).
$$
Here $[\omega:0:1]'$ is the lift of the regular point $[\omega:0:1]$ to $Y$.
\end{proof}

    For the case ${\tilde{p}}=1$ and ${\tilde{q}}>4$ (or equivalently ${\tilde{q}}=1$ and ${\tilde{p}}>4$) we do not have algebraic stability for the lift of $\nu_{{\tilde{p}},{\tilde{q}}}$ to $Y$. Indeed, consider blow-up coordinates $(\frac{x_1}{x_0},\frac{x_2}{x_0})=(mz,z)$ for the blow-up at $P_{\sigma_0}$ and the local coordinates $(\frac{x_0}{x_1},\frac{x_2}{x_1})=(es,s)$ for the blow-up at  $P_{\sigma_1}$. Fix the local affine coordinates $(e,s)=(e_1, e_1s_1)$ for the blow-up of $Z$ at $P_{\sigma_3}$ with  and the local affine coordinates $(m,z)=(m_1,m_1z_1)$ for the blowup of $Z$ at $P_{\sigma_6}$. 
We obtain
\begin{equation}\label{eq: 46}
    \nu_{{\tilde{p}},{\tilde{q}}}(m_1,z_1)=(\tilde{e_1},\tilde{s_1})=\left(\frac{z_1(m_1^{\tilde{q}}+1)}{m_1^{{\tilde{q}}-1}},\frac{m_1^{\tilde{q}}z_1^{{\tilde{p}}-1}(m_1^{\tilde{q}}+1)^{{\tilde{p}}-1}}{m_1^{{\tilde{p}}{\tilde{q}}-{\tilde{p}}}+z_1^{{\tilde{p}}}(m_1^{\tilde{q}}+1)^{\tilde{p}}}\right).
\end{equation}
If ${\tilde{p}}=1$ the cancellations of \eqref{eq: 46} imply that $\widehat{K^v_\zeta}$ is eventually collapsed to a point in a destabilizing orbit.

\begin{rem}\label{Rem: stability p=2, q=2}
\cref{Propo: Negative pairs} is done in the context of ${\tilde{p}},{\tilde{q}}\geq 2$ with ${\tilde{p}}{\tilde{q}}>4$. Nevertheless, it applies also to the map $\nu_{2,2}:Y\dashrightarrow Y$ which turns out to be algebraically stable. This is due to the tropicalization of $\nu_{2,2}$ being a degenerate version of the tropicalization appearing in \cref{fig:neg_pq_fan}. More precisely, the reader can check that the only difference between these tropicalizations is that for the version of \hyperref[fig:neg_pq_fan]{7c} for $\nu_{2,2}$ we have $A(\tau_2)=\tau_6$ and $A(\tau_5)=\tau_2.$ The reasoning to show algebraic stability is completely analogous.   
\end{rem} 

\subsection{Existence of superattracting 3-cycle.}
Let $Y$ be the surface result of blowing-up $\CP^2$ four times, as described in Section \ref{SEC:TROP_NEG_PQ}, then the following result follows:
\begin{lemma}\label{Lemma: Superattracting negative pair}
     Let ${\tilde{p}},{\tilde{q}}\geq 2$ and ${\tilde{p}}\tilde{q}>4$  then $\nu_{{\tilde{p}},{\tilde{q}}}=\mu_{p,q}:Y\dashrightarrow Y$ has a superattracting 3-cycle. 
\end{lemma}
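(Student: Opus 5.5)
The candidate cycle is the one found in Step 2 of the proof of \cref{Propo: Negative pairs}, namely $P_{\sigma_{10}}\mapsto P_{\sigma_5}\mapsto P_{\sigma_8}\mapsto P_{\sigma_{10}}$, where every point is regular for the lift of $\nu_{\tilde p,\tilde q}$ to $Y$. It suffices to show that the differential of $\nu_{\tilde p,\tilde q}^3$ at $P_{\sigma_{10}}$ vanishes. By the chain rule this differential is $D\nu_{P_{\sigma_8}}\circ D\nu_{P_{\sigma_5}}\circ D\nu_{P_{\sigma_{10}}}$. It is therefore enough to show that the three $2\times 2$ differentials have enough vanishing structure for the product to be zero. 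Ideally one factor is already zero, or two rank-one factors compose to zero.

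To compute these differentials, I will write down toric affine charts on $Y$ centered at each of the three torus-invariant points. Each point is the intersection of two boundary curves, and the relevant pairs are read off from \hyperref[fig:neg_pq_fan]{7c}. For $P_{\sigma_5}$ the boundary curves are $\hat C_{\tau_2}$ and $\hat C_{\tau_5}$. For $P_{\sigma_8}$ they are $\hat C_{\tau_3}$ and $\hat C_{\tau_6}$. For $P_{\sigma_{10}}$ they are $\hat C_{\tau_1}$ and $\hat C_{\tau_5}$. Each chart has monomial coordinates $(x^ay^b,x^cy^d)$, where $(a,b),(c,d)$ form the dual basis to the primitive generators of the two rays bounding the cone. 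Some of these charts already appear in \eqref{eq: 46}, namely the coordinates $(m_1,z_1)$ near $\hat C_{\tau_5}$ and $(e_1,s_1)$ near $\hat C_{\tau_6}$. I will express $\nu_{\tilde p,\tilde q}$ in these coordinates, as was done in \eqref{eq: 46} and \eqref{Equation Other super attracting}. The resulting map has the form $(w_1,w_2)\mapsto (w_1^{\alpha}w_2^{\beta}U, w_1^{\gamma}w_2^{\delta}V)$ with $U,V$ units near the origin. The exponent matrix is the linear part of $A$ on the cone: it is precisely the matrix that the tropicalization induces from $\sigma_{10}$ to $\sigma_5$, and so on.

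Superattraction then reduces to exponent bookkeeping. A component such as $w_1^{\alpha}w_2^{\beta}U$ has vanishing derivative at $0$ unless $(\alpha,\beta)$ is a unit vector. From Step 2 we know that $\hat C_{\tau_1}$ and $\hat C_{\tau_5}$, the boundary curves through $P_{\sigma_{10}}$, both collapse to $P_{\sigma_5}$. Similarly, $\hat C_{\tau_2}$ and $\hat C_{\tau_4}$ collapse to $P_{\sigma_8}$. Collapse of a boundary curve through the point forces the corresponding column of exponents to have both entries positive. Concretely, I would compute $A\tau_1$ and $A\tau_5$ in the basis of $\sigma_5$ and check that all exponents are at least $1$, with at least one of them $\ge 2$ in each row. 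This uses $\tilde p,\tilde q\ge 2$ and $\tilde p\tilde q>4$; for instance, \eqref{eq: 46} exhibits exponents $\tilde p-1$ and $\tilde p\tilde q-\tilde p$. Alternatively, because $P_{\sigma_{10}}$ lies on two curves that are both collapsed, $D\nu_{P_{\sigma_{10}}}$ has rank at most $0$ along two independent directions, and by \cref{lemma:jac_crit} the Jacobian vanishes there. I will aim for the cleaner statement that the composite exponent matrix of $A^3$ on $\sigma_{10}$ has all entries $\ge 2$ after accounting for units, which forces $D\nu^3=0$.

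The main obstacle is verifying that the factors $U,V$ really are units, i.e.\ nonvanishing at the origin of each chart, and that no cancellation occurs. This was exactly the issue that caused trouble for $\tilde p=1$. The fallback is to carry out the explicit chart computation, as in \eqref{eq: 46}, for all three transitions and read off the Jacobians directly.
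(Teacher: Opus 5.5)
Your fallback argument uses the same mechanism as the paper's proof. Two transverse toric curves through a point of the cycle are collapsed, so the differential kills both tangent lines, and the chain rule then gives $D\nu^3=0$. The paper applies this at $P_{\sigma_5}$, using that $\nu^3$ collapses both $\hat C_{\tau_2}$ and $\hat C_{\tau_4}$. Note that these two, not $\hat C_{\tau_2}$ and $\hat C_{\tau_5}$ as you wrote, are the boundary curves through $P_{\sigma_5}$; $\hat C_{\tau_2}$ and $\hat C_{\tau_5}$ are disjoint because their rays are not adjacent.

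You instead apply it at $P_{\sigma_{10}}$, using a one-step collapse of $\hat C_{\tau_1}$ and $\hat C_{\tau_5}$, and that is where the argument breaks. One computes $A\tau_5=A(-1,1)=(1-\tilde q,-1)$, which equals $\tau_2$ when $\tilde q=2$. So for $\tilde q=2$ the curve $\hat C_{\tau_5}$ is mapped onto $\hat C_{\tau_2}$ rather than collapsed. Explicitly, take toric coordinates $(s_1,s_2)=(xy,x^{-1})$ at $P_{\sigma_{10}}$ and $(t_1,t_2)=(yx^{-1},y^{-1})$ at $P_{\sigma_5}$. Then $\nu^*t_1=s_1^{\tilde q-1}s_2^{\tilde q-2}U$ and $\nu^*t_2=s_1s_2V$ with $U(0)=V(0)=1$. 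For $(\tilde p,\tilde q)=(3,2)$, which satisfies the hypotheses, $D\nu_{P_{\sigma_{10}}}$ has rank one, your check that all exponents are $\ge 1$ fails, and the argument does not close.

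The repair is short. Take the chart $(r_1,r_2)=((xy)^{-1},x)$ at $P_{\sigma_8}$. Then $\nu^*r_1=t_1^{\tilde p-1}t_2^{\tilde p-2}U'$ and $\nu^*r_2=t_1t_2V'$ with units $U',V'$. Hence $D\nu_{P_{\sigma_5}}=0$ when $\tilde p\ge 3$, while $D\nu_{P_{\sigma_{10}}}=0$ when $\tilde q\ge 3$. Since $\tilde p,\tilde q\ge 2$ and $\tilde p\tilde q>4$ force $\max(\tilde p,\tilde q)\ge 3$, one factor in your chain-rule product always vanishes. Equivalently, argue with $\nu^3$ as the paper does. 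For instance, when $\tilde q=2$ (so $\tilde p\ge 3$) one still has $\hat C_{\tau_5}\mapsto\hat C_{\tau_2}\mapsto P_{\sigma_8}\mapsto P_{\sigma_{10}}$, so $\nu^3$ collapses both curves through $P_{\sigma_{10}}$.

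Two smaller corrections. First, \cref{lemma:jac_crit} only yields $\det D\nu=0$, i.e.\ rank $\le 1$, which is not enough. The vanishing of $D\nu$ comes directly from $D\nu$ annihilating both tangent lines. Second, ``all entries of the composite exponent matrix $\ge 2$'' is not the right test. For $(3,2)$ that matrix is $\begin{pmatrix}3&1\\2&1\end{pmatrix}$, yet $D\nu^3=0$. What is needed is that no row is a standard basis vector, i.e.\ each row sum is at least $2$.
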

\begin{proof} The reader can check on local coordinates that $\nu_{{\tilde{p}},{\tilde{q}}}^{3}$ has a superattracting fixed point at $P_{\sigma_5}$. Another way to see this is the following, notice that by the tropicalization 
\begin{equation*}
    \nu_{{\tilde{p}},\tilde{q}}^3\left(\hat{C}_{\tau_2}\right)=\nu_{{\tilde{p}},\tilde{q}}^3\left(\hat{C}_{\tau_4}\right)=P_{\sigma_5}.
\end{equation*}

As the lines $\hat{C}_{\tau_2}$ and $\hat{C}_{\tau_4}$ are transverse lines passing through $P_{\sigma_5}$, then $D(\nu^3_{{\tilde{p}},{\tilde{q}}})_{P_{\sigma_5}}=0$.
\end{proof}
 Remark \ref{Rem: stability p=2, q=2} implies that $\nu_{2,2}:Y\dashrightarrow Y$ has a periodic point at $P_{\sigma_5}$ but this point is no longer superattracting. Indeed, going to local affine coordinates $(r,s)$ centered at $P_{\sigma_5}$ we have $\nu_{2,2}^3(r,s)=(r^2sQ_1(r,s),sQ_2(r,s))$.
\begin{rem}\label{rem: p=-1, q<-4}
    When ${\tilde{p}}=1, {\tilde{q}}>4$ or ${\tilde{q}}=1, {\tilde{p}}>4$, one can do similar to \cref{Propo: Negative pairs} to see that there is a superattracting periodic point after blowing up torus invariant points in a similar fashion, but in these cases one needs 7 of them to see a superattracting periodic point.
\end{rem}
\subsection{Dynamical degree $\lambda_1(\nu_{{\tilde{p}},{\tilde{q}}})$}\label{Subsc: 8.3 Dynamical degree negative}
In this section, we employ a similar approach to that given in the proofs of \cref{prop:coh_action_nonss,prop:coh_action_ss} to compute the $\lambda_1(\nu_{{\tilde{p}},{\tilde{q}}})$ when ${\tilde{p}}\geq 2, \tilde{q}\geq2$. 

\begin{proof}[Proof of Theorem \ref{thm:C}]
Denote $E_3, E_4, E_5, E_6$ classes of the exceptional curves $\hat{C}_{\tau_3}, \hat{C}_{\tau_4}, \hat{C}_{\tau_5}, \hat{C}_{\tau_6}$ respectively, and H be the class of a generic line in $Y$. The reader can check that the classes
\begin{align*}
    [\pi'\{x_0=0\}] &= H-E_3-2E_6\\
    [\pi'\{x_1=0\}] &= H-E_4-2E_5\\
    [\pi'\{x_2=0\}] &= H-E_3-E_4-E_5-E_6\\
    [\pi'\{H^h_{\omega}=0\}] &= ({\tilde{q}}+1)H-E_3-E_6-({\tilde{q}}+1)E_5-{\tilde{q}}E_4\\
    [\pi'\{K^v_{\zeta}=0\}] &= H-E_4-E_5
\end{align*}
and the action of the induced map is
\begin{align*}
    \nu_{{\tilde{p}},{\tilde{q}}}^*(H-E_3-2E_6)&=0\\
    \nu_{{\tilde{p}},{\tilde{q}}}^*(H-E_4-2E_5) &= {\tilde{p}}(({\tilde{q}}+1)H-E_3-E_6-({\tilde{q}}+1)E_5-{\tilde{q}}E_4) + E_3\\
    \nu_{{\tilde{p}},{\tilde{q}}}^*(E_3) &= ({\tilde{p}}-1){\tilde{q}}(H-E_4-E_5) + ({\tilde{p}}-1)E_4 + ({\tilde{p}}-2)(H-E_3-E_4-E_5-E_6)\\
    \nu_{{\tilde{p}},{\tilde{q}}}^*(E_5) &=E_6 \\
    \nu_{{\tilde{p}},{\tilde{q}}}^*(E_6) &= E_4 + (H-E_3-E_4-E_5-E_6) + {\tilde{q}}(H-E_4-E_5)    
\end{align*}
and $\lambda_1(\nu_{{\tilde{p}},{\tilde{q}}}) = \la_1(\mu_{p,q})$ is the largest real valued root of the polynomial equation
\begin{equation}\label{dynamical_degree}
   C(x)= x^4-{\tilde{p}} {\tilde{q}} x^3-2 x^3+3 x^2-2 x+1  =x^4-pq x^3-2 x^3+3 x^2-2 x+1 =0.
\end{equation}
\end{proof}
\subsection{Numerical experiment to find approximate value of $\lambda_1$}\label{subsec:NUM_APPROX}Let $f: \mathbb{CP}^2\dashrightarrow \mathbb{CP}^2$ be a dominant rational self-map defined over $\mathbb{Q}$. Let $P=[x_0:y_0:z_0]$ be a generic rational point and $f^n(P)$ denote the $n^{th}$ iterate of $P$. The Kawaguchi–Silverman conjecture (\cite{kawaguchiExamplesDynamicalDegree2014}, Conjecture 1), says that the norm of the lift $\widetilde{\|f^n(P)\|}$ behaves approximately as $ \|\widetilde{P}\|^{\lambda_1^n}$, where $\|\cdot\|$ is Euclidean norm and $\widetilde{P}$ represents the lift of the point $P$ in $\mathbb{CP}^2$ to $\C^3\setminus \{0\}$ with relatively prime integer coordinates. Moreover, (\cite{kawaguchiDynamicalArithmeticDegreesof2016}, Theorem 4) gives that the dynamical degree of $f$ is at least the growth rate of the degree of norm of the initial point $\|\widetilde{P}\|$.  This method of approximating dynamical degrees has been used before in the literature; see e.g. \cite[Section 5]{AMV}.

Proceeding from the numerical experiment described above we inferred that for $p=-1$ and $q=-2,-3$ or $-4$, $\lambda_1(\mu_{p,q})$ is greater than $1$. More specifically, when $p=-1$ and $q=-2$, after around 15 iterates, $\lambda_1$ is approximately 3.28 which is consistent with the actual computation. Furthermore, if $p=-1$, $q=-3$, the approximate value for $
\lambda_1$ is in between 4 and 5 and when $p=-1$ and $q=-4$, it is between 5 and 6.

\section{Entropy and further dynamical consequences}\label{sec:entropy_and_futher_dynamical_consequences}
In this section, we will use the results proved in \cite{dillerInvariantMeasureLyapunov2001} and \cite{DillerDynamicsMeromorphicMaps2009b} to show that $\cref{thm:Ergodic_properties}$ holds. 
\begin{defi}
    An algebraically stable birational self-map $f$ of a projective surface $X$ is \textbf{separating} if $$\displaystyle \overline{\bigcup_{n=0}^{\infty}f^{-n}(I(f))}\,\bigcap\,\,\overline{\bigcup_{n=0}^{\infty}f^{n}(I(f^{-1}))} = \emptyset.$$
\end{defi}
\begin{theo}{\cite{dillerInvariantMeasureLyapunov2001}}\label{thm:diller_invariant_measure}
    Let $X$ be a proper-modification of $\CP^2$ and $f:X\dashrightarrow X$ be algebraically stable separating birational self-map with $\lambda_1(f)>1$, then
    \begin{enumerate}
        \item There exists a $f$-invariant measure $\nu$.
        \item $\nu$ is mixing with respect to $f$.
        \item $\|\log^+(Df)\|$ is $\nu-$integrable and hence by Osedelec's theorem, the Lyapunov exponents $\chi_+(\nu,f)$ and $ \chi_-(\nu,f)$ exists.
    \end{enumerate}
\end{theo}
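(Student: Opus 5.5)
The plan follows the Bedford–Lyubich–Smillie construction of an invariant measure as a wedge product of Green currents, adapted to birational maps. I would use the separating hypothesis at two points: to make the wedge product well defined, and to control the measure near the indeterminacy sets.

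\textbf{Step 1 (Green currents).} Algebraic stability gives $(f^n)^* = (f^*)^n$ on $H^{1,1}(X)$ (as in \cref{lemma:spec_rad}). Since $\lambda_1(f)>1$, the Hodge index theorem shows that $\lambda_1$ is a simple eigenvalue of $f^*$ with a nef eigenclass $\theta^+$. Similarly $f_*$ has a nef eigenclass $\theta^-$ with eigenvalue $\lambda_1$, and we normalize so that $\theta^+\cdot\theta^- = 1$. Fix a smooth closed form $\omega^+$ representing $\theta^+$ and write
\[
\lambda_1^{-1} f^*\omega^+ = \omega^+ + dd^c \gamma^+ .
\]
Here $\gamma^+$ is quasi-psh, smooth off $\mathcal{I}(f)$, and has at worst logarithmic poles at $\mathcal{I}(f)$. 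The Green potential is
\[
G^+ = \sum_{n\ge 0} \lambda_1^{-n}\,\gamma^+\circ f^n .
\]
This series converges in $L^1$ because $\lambda_1>1$, and it defines a positive closed current $T^+ = \omega^+ + dd^c G^+$ with $f^*T^+ = \lambda_1 T^+$. The same construction with $f^{-1}$ gives $T^-$ with $f_*T^- = \lambda_1 T^-$.

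\textbf{Step 2 (the measure).} The potential $G^+$ is $-\infty$ only on the closure of $\bigcup_n f^{-n}\mathcal{I}(f)$. Off that set it is locally bounded, and in fact continuous, since each $\gamma^+\circ f^n$ is smooth there and the tail of the series is uniformly small. Likewise $G^-$ is bounded off the closure of $\bigcup_n f^n\mathcal{I}(f^{-1})$. The separating hypothesis says these two closed sets are disjoint. Using a partition of unity, in each region one of the two currents has a locally bounded potential, so the Bedford–Taylor product
\[
\nu = T^+\wedge T^-
\]
is well defined. It is a probability measure by the normalization $\theta^+\cdot\theta^-=1$.

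\textbf{Step 3 (invariance).} Each $\gamma^\pm$ has only logarithmic singularities, so $\nu$ gives no mass to $\mathcal{I}(f)$. Away from $\mathcal{I}(f)$, $f$ is a local biholomorphism outside the critical set, and we compute
\[
f^*\nu = f^*T^+\wedge f^*T^- = \lambda_1 T^+\wedge \lambda_1^{-1}T^- = \nu .
\]
Here I would need to check that pullback commutes with the wedge product; this holds by continuity of the Bedford–Taylor product under decreasing sequences of potentials.

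\textbf{Step 4 (mixing).} For test functions $\phi,\psi$ we must show
\[
\int (\phi\circ f^n)\,\psi\, d\nu \to \int\phi\, d\nu\int\psi\, d\nu .
\]
Following Bedford–Smillie, write $\psi\,T^-$ and use the convergence $\lambda_1^{-n}(f^n)^*S \to c_S T^+$ for smooth closed $(1,1)$-forms $S$, which comes from the spectral gap of $f^*$ on the orthogonal complement of $\theta^+$. Combining this with the laminar or "slicing" structure would give the decorrelation. I expect this step to require citing the mixing argument of \cite{dillerInvariantMeasureLyapunov2001} essentially verbatim.

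\textbf{Main obstacle (integrability of $\log^+\|Df\|$).} The derivative blows up like $\mathrm{dist}(\cdot,\mathcal{I}(f))^{-N}$, so it suffices to show
\[
\int \log\mathrm{dist}(\cdot,\mathcal{I}(f))\, d\nu > -\infty .
\]
By separation, $G^-$ is bounded and continuous near $\mathcal{I}(f)$. Hence near each point $p\in\mathcal{I}(f)$ we have $\nu = T^+\wedge dd^c(\text{bounded})$. Integrating $\log|z-p|$ by parts, a Chern–Levine–Nirenberg type estimate bounds this integral by the mass of $T^+$ times $\sup|G^-|$. The symmetric argument with $f^{-1}$ handles $\log^+\|Df^{-1}\|$. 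Oseledec's theorem then gives $\chi_\pm(\nu,f)$. The delicate point is the integration by parts against a current with unbounded potential, which requires regularizing $\log|z-p|$ and passing to the limit.
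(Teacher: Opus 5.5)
The paper does not prove this statement; it is quoted from Diller's 2001 paper. Your overall strategy is the standard one: Green currents, a Bedford--Taylor product made possible by separation, and a Chern--Levine--Nirenberg (CLN) bound for $\log^+\|Df\|$. But the step everything rests on is asserted, not proved.

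Put $A^+=\overline{\bigcup_{n\ge0}f^{-n}(\mathcal{I}(f))}$ and $A^-=\overline{\bigcup_{n\ge0}f^{n}(\mathcal{I}(f^{-1}))}$. In Step 2 you say $G^+$ is continuous off $A^+$ ``since the tail of the series is uniformly small'', but that uniform smallness is the whole content. On a compact $K$ disjoint from $A^+$ it needs $\log\mathrm{dist}(f^n(K),\mathcal{I}(f))\ge -o(\lambda_1^n)$, and $K\cap A^+=\emptyset$ alone gives no rate at which $f^n(K)$ may approach $\mathcal{I}(f)$.

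This is where separation has to be used, not only afterwards for the partition of unity. Separation gives $A^+\cap\mathcal{I}(f^{-1})=\emptyset$, so $f^{-1}$ is holomorphic and $L$-Lipschitz on a $\delta$-neighbourhood $W$ of $A^+$ and maps $A^+$ into itself. If $\mathrm{dist}(f^n x,\mathcal{I}(f))<\delta L^{-n}$, pulling back one step at a time keeps $f^{n-k}x$ within $L^k\,\mathrm{dist}(f^n x,\mathcal{I}(f))$ of $f^{-k}(\mathcal{I}(f))\subset A^+$, which forces $x\in W$. Hence $\gamma^+\circ f^n\ge -C(1+n)$ off $W$, and the tail is $O\bigl(\sum_{n\ge N}n\lambda_1^{-n}\bigr)$ uniformly there. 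The same argument works for $G^-$, using $A^-\cap\mathcal{I}(f)=\emptyset$.

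Without this, neither the product in Step 2 nor your final step (which needs $G^-$ bounded near $\mathcal{I}(f)$) is justified. Note also that $\nu(\mathcal{I}(f))=0$ follows from that CLN estimate, not from $\gamma^\pm$ having logarithmic poles.

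Item (2) is not proved at all: you defer it, and the tool you name does not apply. Convergence of $\lambda_1^{-n}(f^n)^*S$ for smooth \emph{closed} forms $S$ says nothing about $\psi T^-$, which is neither smooth nor closed. What is needed is $\lambda_1^{-n}(f^n)_*(\psi T^-)\to\left(\int\psi\,d\nu\right)T^-$ for test functions $\psi$. This requires two things:
(a) showing that the non-closed part $\lambda_1^{-n}(f^n)_*(d\psi\wedge T^-)$ tends to $0$ (a Cauchy--Schwarz estimate gives mass $O(\lambda_1^{-n/2})$);
(b) a rigidity property of $T^-$ (e.g.\ extremality) forcing every closed positive limit, which for $\psi\ge0$ is dominated by $(\sup\psi)\,T^-$, to be a multiple of $T^-$.
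Appealing to ``laminar or slicing structure'' does not supply (b).

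Finally, in Step 3 the identity $f^*T^-=\lambda_1^{-1}T^-$ holds only where $f$ is a local biholomorphism. Globally it fails, already on $\CP^2$, where $f^*$ multiplies the line class by $\deg f$. So to get $f_*\nu=\nu$ you must also show that $\nu$ does not charge the curves in $\mathcal{C}(f)$, which you do not address.
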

The below theorem is proved in great generality in \cite{DillerDynamicsMeromorphicMaps2009b}, but we are stating it in the context of interest in our paper.
\begin{theo}[Theorem B, \cite{DillerDynamicsMeromorphicMaps2009b}]\label{thm:ddg2009} Let $f:X\dashrightarrow X$ be algebraically stable separating birational map  with $\lambda_1(f)>1$, then the Lyapunov exponents $\chi_+(\nu,f)$ and $\chi_-(\nu,f)$ of $\nu$ under $f$ satisfy
        \begin{align*}
           \chi_+(\nu,f) \geq \log(\lambda_1) / 2 > 0 > - \log(\lambda_1) / 2 \geq \chi_-(\nu,f)
        \end{align*}
and $h_{\nu}(f) = \log{\lambda_1(f)}$, hence $\nu$ is a measure of maximal entropy and $h_{top}(f)= \log{\lambda_1(f)}$.
\end{theo}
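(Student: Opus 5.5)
The statement is quoted from \cite{DillerDynamicsMeromorphicMaps2009b}. The plan is to show that an algebraically stable, separating birational map $f$ with $\lambda_1=\lambda_1(f)>1$ falls within the scope of that paper's general machinery, and then to follow its proof in three stages: Green currents and the measure $\nu$, the entropy identity, and the Lyapunov bounds.

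\textbf{Step 1: Green currents and the measure.} Fix a K\"ahler form $\omega$ on $X$. Algebraic stability gives $(f^n)^*=(f^*)^n$ (the discussion before \cref{lemma:spec_rad}). Hence $\lambda_1^{-n}(f^n)^*\omega$ and $\lambda_1^{-n}(f^n)_*\omega$ converge to positive closed $(1,1)$-currents $T^+$ and $T^-$. These satisfy $f^*T^+=\lambda_1T^+$ and $f_*T^-=\lambda_1T^-$, and their cohomology classes are the leading eigenvectors of $f^*$ and $f_*$. To define $\nu=T^+\wedge T^-$ one needs the Bedford--Diller energy condition
\begin{align*}
\sum_{n\ge0}\lambda_1^{-n}\log \mathrm{dist}\bigl(f^n(\mathcal{I}(f^{-1})),\mathcal{I}(f)\bigr)>-\infty .
\end{align*}
The separating hypothesis bounds these distances uniformly from below, so the series converges trivially. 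With $\nu$ defined, \cref{thm:diller_invariant_measure} gives invariance, mixing, and integrability of $\log^+\|Df^{\pm1}\|$. Consequently the Oseledec exponents $\chi_\pm(\nu,f)$ exist.

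\textbf{Step 2: the entropy identity $h_\nu(f)=\log\lambda_1$.} The upper bound $h_\nu\le h_{top}\le\log\lambda_1$ follows from the variational principle together with the Dinh--Sibony bound recalled in \cref{subsec:entropy_of_the_mappings}. For the lower bound I would use Dujardin's theorem that $T^+$ is uniformly laminar (made of disks) and $T^-$ woven or laminar, outside sets of arbitrarily small mass. The finite energy condition then lets one realize $\nu$ as a geometric intersection of stable and unstable disks. Next, fix a generating partition. Pushing an unstable disk forward multiplies its area by roughly $\lambda_1^n$ in cohomology, because $f_*T^-=\lambda_1T^-$. A Bowen ball of length $n$ can carry only a bounded portion of that area, since Yomdin-type estimates control the mass lost near $\mathcal{I}(f)$. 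It follows that a typical $(n,\varepsilon)$-Bowen ball has $\nu$-mass at most $\lambda_1^{-n(1-\delta)}$. The Brin--Katok formula then gives $h_\nu\ge\log\lambda_1$. Once $h_\nu=\log\lambda_1$ holds, the inequalities $\log\lambda_1=h_\nu\le h_{top}\le\log\lambda_1$ give $h_{top}=\log\lambda_1$, and $\nu$ is a measure of maximal entropy.

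\textbf{Step 3: Lyapunov exponents.} Apply the Ruelle inequality in the Katok--Strelcyn form, which is valid for maps with singularities. It requires integrability of $\log\|Df^{\pm1}\|$ and a polynomial bound $\nu(B(\mathcal{I}(f^{\pm1}),\varepsilon))\le C\varepsilon^{a}$. The latter should follow from H\"older continuity of the local potentials of $T^\pm$ near the indeterminacy sets, which in turn comes from separation. Since each complex direction has real dimension two, Ruelle gives $h_\nu(f)\le 2\max(\chi_+,0)$. Combined with Step 2 this yields $\chi_+\ge\tfrac12\log\lambda_1>0$. Applying the same argument to $f^{-1}$, which has the same dynamical degree and the same measure $\nu$, gives $\chi_-\le-\tfrac12\log\lambda_1$.

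\textbf{Main obstacle.} The hardest step is the entropy lower bound in Step 2. It requires the geometric-intersection description of $\nu$ and quantitative control of how area of unstable disks accumulates near $\mathcal{I}(f)$. The separating hypothesis is exactly what I would lean on to keep these error terms summable.
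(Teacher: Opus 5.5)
The paper does not prove this statement. It imports it from \cite{DillerDynamicsMeromorphicMaps2009b}, and its only related verification is that separation gives finite dynamical energy, via \cite[Corollary 3.5]{DDG2}. Your Step 1 is the same reduction in Bedford--Diller form, and it is correct: separation bounds $\mathrm{dist}(f^n(\mcI(f^{-1})),\mcI(f))$ uniformly from below, so the series converges trivially. At that point, citing \cite{DUJARDIN_LAMINAR,DillerDynamicsMeromorphicMaps2009b} finishes the job. Read as a reproof, however, Steps 2 and 3 have gaps.

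In Step 2, the bound $\nu(B_n(x,\varepsilon))\le\lambda_1^{-n(1-\delta)}$ does not follow from area growth plus a Yomdin-type local bound. That combination only counts how many Bowen balls are needed to cover $f^n(D)$. This is a topological-entropy statement and says nothing about how $\nu$ is distributed among the balls. The missing idea is the structure of the conditional measures. In the geometric-intersection description, the conditional measures of $\nu$ on unstable disks $D$ are the slices $T^+|_D$, and $f^*T^+=\lambda_1T^+$ makes $f^n$ multiply them by exactly $\lambda_1^n$. Hence the conditional mass of $B_n(x,\varepsilon)\cap D$ is $\lambda_1^{-n}$ times the $T^+$-slice mass of a piece of local unstable manifold at $f^n(x)$. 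What remains is to control these slice masses, and the size of local unstable manifolds along orbits approaching $\mcI(f)$. That is Pesin theory with singularities, and it uses $\log\mathrm{dist}(\cdot,\mcI(f))\in L^1(\nu)$, which comes from finite energy. Yomdin's estimates are $C^r$ estimates and give no control at indeterminacy. Also, for unstable disks and forward Bowen balls the relevant identity is $f^*T^+=\lambda_1T^+$; $f_*T^-=\lambda_1T^-$ only gives area growth averaged over the lamination.

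In Step 3, you make Ruelle's inequality depend on $\nu(B(\mcI(f^{\pm1}),\varepsilon))\le C\varepsilon^{a}$, derived from H\"older continuity of the Green potentials ``coming from separation''. You give no argument that separation yields a H\"older modulus, as opposed to mere boundedness of the potentials near the relevant orbit closures. The polynomial bound is also unnecessary. A Ruelle-type inequality for meromorphic maps (de Th\'elin) needs only $\log\mathrm{dist}(\cdot,\mcI(f^{\pm1}))\in L^1(\nu)$, and finite energy supplies exactly this.

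Moreover, Ruelle's inequality reads $h_\nu(f)\le 2\max(\chi_+,0)+2\max(\chi_-,0)$, not $h_\nu(f)\le 2\max(\chi_+,0)$. To drop the second term you first need $\chi_-\le0\le\chi_+$. If $\chi_->0$, both exponents are positive, and Ruelle for $f^{-1}$ gives $h_\nu(f^{-1})=0$. If $\chi_+<0$, both are negative, and Ruelle for $f$ gives $h_\nu(f)=0$. Either case contradicts $h_\nu(f^{-1})=h_\nu(f)=\log\lambda_1>0$. With $\chi_-\le0\le\chi_+$ in hand, the two applications of Ruelle give $\chi_+\ge\frac12\log\lambda_1$ and $\chi_-\le-\frac12\log\lambda_1$.
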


\begin{proof}[Proof of \cref{thm:Ergodic_properties}]
    For $p,q>0$ and $pq>4$, By \cref{propo:stable_model}, one can see that the lifts of the map $\mu_{p,q}: X_{p,q} \dashrightarrow X_{p,q}$ is algebraically stable and when $pq-p-q>0$, $$\overline{\bigcup_{n=0}^{\infty}{(\mu_{p,q})}^{n}(I((\mu_{p,q})^{-1}))} = \{ P_{\sigma_{1}}\}  = \{[0:1:0]'\}$$ and $$\overline{\bigcup_{n=0}^{\infty}{(\mu_{p,q})}^{-n}(I(\mu_{p,q}))} = \{ P_{\sigma_{0}}\}  = \{[1:0:0]'\}$$ which implies separating.   Here, $[0:1:0]'$ and $[1:0:0]'$ denote the lifts of $[0:1:0]$ and $[1:0:0]$ from $\mathbb{CP}^2$ to $X_{p,q}$.  Similarly one can check when $p>4,q=1$ and $p=1,q>4$ that $\mu_{p,q}$ is separating on a twice further blown-up copy of $X_{p,q}$.\\~\\
    If $p, q \leq -2$, By \cref{Propo: Negative pairs},  $\mu_{p,q}: Y\to Y$ is algebraically stable and $$\overline{\bigcup_{n=0}^{\infty}{(\mu_{p,q})}^{n}(I((\mu_{p,q})^{-1}))} = \{[\omega:0:1]', P_{\sigma_{5}}, P_{\sigma_{8}}, P_{\sigma_{10}}\}$$ and $$\overline{\bigcup_{n=0}^{\infty}{(\mu_{p,q})}^{-n}(I(\mu_{p,q}))} = \{ [0:\zeta: 1]', P_{\sigma_{9}}, P_{\sigma_{4}},P_{\sigma_{7}}\}$$ where $[\omega:0:1]',[0:\zeta:1]'$ is the lift of $[\omega:0:1],[0:\zeta:1]$ respectively from $\mathbb{CP}^2$ to $Y$ for all $\omega^p+1=0$ and $\zeta^q+1=0$ which implies that the lift of $\mu_{p,q}$ is separating and by \cref{THM:NEGPQ_NO_FIBR}, $\lambda_1(\mu_{p,q})>1$.\\ In all the cases when $(p,q)\in \N^2$ and $pq>4$ or in the cases $p,q\leq -2$,  \cref{thm:Ergodic_properties} follows from \cref{thm:diller_invariant_measure,thm:ddg2009}. By \cite[Corollary 3.5]{DDG2}, any algebraically stable separating birational mapping has finite dynamical energy, so Conclusion $(v)$ follows from By \cite[Theorem 1]{DDG2}.  Note
    any algebraic curve $C$ in $X_{p,q}$ is pluripolar.
\end{proof}

\section{Computer experiment for the case $p = -1, q=-5$.}\label{sec:computer_experiment}
We used the Fractalstream \cite{FRACTALSTREAM} computer software
to make a picture of the initial conditions in the positive real quadrant $Q:= \{(x,y) \in \mathbb{R}^2 \, : \, x,y > 0\}$ whose orbits escape to infinity under $\mu_{-1,-5}$.   They are shown in green in Figure~\ref{FIG:COMPUTER_BASINS} and points whose orbits remain bounded are shown in black. Recall
from Remark \ref{rem: p=-1, q<-4} that when $p=-1$ and $q=-5$ the mapping $\mu_{p,q}$ has a superattattracting cycle on a blown-up copy of $\mathbb{CP}^2$, that is disjoint from $Q$ (the superattracting cycle lies on the poles of the invariant meromorphic form $\eta = \frac{dx \wedge dy}{xy}$).  We expect that the points colored green in Figure \ref{FIG:COMPUTER_BASINS} are in the basin of attraction of this periodic cycle.

It is noteworthy that there seems to be a large open subset of $Q$ consisting of points whose orbits remain bounded (colored black in the figure).  In contrast, when $p, q > 0$ and $pq > 4$, Machachek and Ovenhouse \cite[Theorem 2.5]{machacek2024discrete} proved that every orbit in $Q$ is unbounded.

To explore the dynamics in the black region, we computed five different finite orbits (each of length 200) which are shown in white, yellow, orange, blue, and purple, respectively in the figure.  On compact subsets of $Q$ the invariant two form $\eta$ is comparable to the Euclidean area.  The finite orbits computed here are reminiscent of the KAM phenomenon seen on small perturbations of area preserving integrable systems; see e.g. \cite[Section 4.8]{GH}.  Indeed the orbits in white, yellow, and orange appear to be on invariant KAM curves, the orbit in purple appears to be on a family of resonant islands, and the orbit in blue may be near a Smale horseshoe cased by transverse intersections of stable and unstable manifolds
of saddle-type periodic orbits.   Clearly all of this is speculation and we invite the reader to make a more rigorous study of it.

We have also observed a similar phenomenon for other values of $q < -5$ when $p=-1.$

\begin{figure}[h!]
    \includegraphics[scale=0.45]{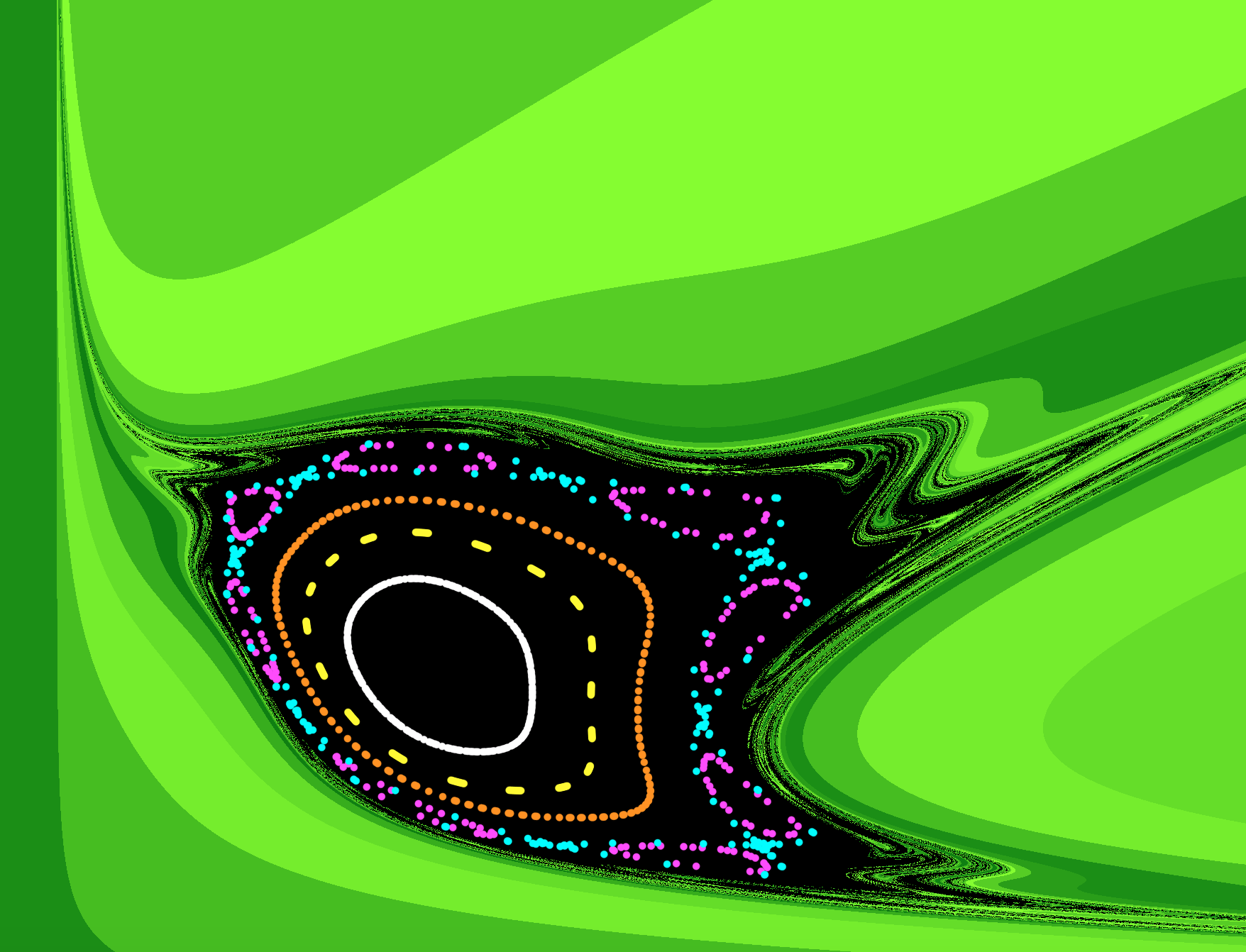}\caption{\label{FIG:COMPUTER_BASINS}Computer plot showing approximately the region $0 < x < 5$ and $0 < y < 5$.   Points in green have orbit escaping to infinity.    Points in black have a bounded orbit.  Five different finite orbits (each of length 200) are shown in white, yellow, orange, light blue, and purple, respectively.}
\end{figure}

\appendix 

\section{Computations and proofs for \cref{sec:picard}}\label{sec:appx_pullback}
\subsection{Computation of pullbacks for \cref{prop:coh_action_ss}}

The preimages of exceptional lines are the critical curves that were collapsing to the corresponding blow-up points. These are illustrated on Figure \ref{fig:crit_mu23}.

Let $Q = [0:\zeta:1]\in T_q$. An open subset of the preimage is $U_{\ze}= \{y-\zeta^{-1} = 0, x\neq 0,y\neq 0\}$, where $(x,y) = (\frac{x_0}{x_2}, \frac{x_1}{x_2})$ are local affine coordinates in the codomain. Recall the local coordinates $(u_0,v_0)$ (see \cref{subsubsec:comp_Tq}). In the coordinates $(u_0,v_0)$ taken near the blow-up point $Q$ the local equation of the exceptional line is $u_0 = \zeta$. Taking the pullback of this equation with respect to $\mu_{p,q}$ we obtain the equation 

$$\frac{x^p + (1+y^q)^p-\zeta x^p y}{x^py} = \frac{(1-\zeta y)\left(x^p + \frac{(1+y^q)^p}{(1-\zeta y)}\right)}{x^py} = 0.$$
This equation contains $y - \zeta^{-1} = 0$ with the multiplicity $1$. 

The line $\{x_1-\zeta^{-1}x_0 = 0\} = \overline{U_{\ze}} \subset \CP^2$ passes through only one blow-up point $[0:\ze^{-1}:1]$ and this point has multiplicity $1$ (as any point on a line). Therefore by \eqref{eqn:int_blowup} the pullback is given by \eqref{eqn:picmuexcz}.

Let $P = [\omega:0:1]\in S_p$. An open subset of the preimage is $U_{\om}=\{x-\omega^{-1}(y^q + 1)=0, x\neq 0, y\neq 0\}$. Recall the local coordinates $(r_0,s_0)$ (see \cref{subsubsec:comp_Sp}). In the coordinates $(r_0,s_0)$ the local equation of the exceptional divisor is $r_0 = \omega$. The pullback of this equation to the affine chart with coordinates $(x,y) = (\frac{x_0}{x_2}, \frac{x_1}{x_2})$ gives the equation 
$$
\frac{x-\omega^{-1}(y^q+1)}{x},
$$
which contains $x-\omega^{-1}(y^q + 1)$ with multiplicity $1$.

The closure $\overline{U_{\om}} \subset\CP^2$ is $\Kh_\omega = \{x_2^{q-1}x_0 - \omega^{-1}(x_1^q + x_2^q) = 0\}$ passes through blow-up points $T_q \cup \{\iota(P)\}$. All these points belong to the affine chart $\{x_2 \neq 0\}$. In local coordinates $(x,y)$ in this chart the curve is given by $x - \omega^{-1}(y^q + 1) = 0$ and clearly is smooth at each its point. Therefore all points in  $T_q \cup \{\iota(P)\}$ have multiplicity $1$. Combining this with the fact that the curve $\Kh_\omega$ has degree $q$ by \eqref{eqn:int_blowup} we obtain \eqref{eqn:picmuexcw}.

Let $L\subset X_{p,q}$ be the proper transform of a generic line $\{\al x_0 + \beta x_1 + \gamma x_2 = 0\} \subset \CP^2$. The preimage on an open subset in coordinates $(x,y)= (\frac{x_0}{x_2}, \frac{x_1}{x_2})$ is given by $\{\al x^{p-1}y(1+y^q) + \beta(x^p+(1+y^q)^p) + \gamma x^py = 0\}$.

We compute the closure of this curve in $X_{p,q}$ as the proper transform of this affine curve in $\CP^2$ which is given by
$$
K_L := \{\al x_0^{p-1}x_1(x_2^q+x_1^q)x_2^{p(q-1)-q} + \beta(x_0^px_2^{p(q-1)}+(x_2^q+x_1^q)^p) + \gamma x_0^px_1x_2^{p(q-1)-1} = 0\}.
$$

This curve passes through all blow-up points $S_p \cup T_q$.

Let $P = [\omega : 0 : 1] \in S_p$. Then in coordinates $(x,y)$ we have
$$\partial_y(\al x^{p-1}y(1+y^q) + \beta(x^p+(1+y^q)^p) + \gamma x^py)|_{P} = -(\omega^{-1}\al+\gamma).$$

Therefore from assumption $\al^p+(-\gamma)^p\neq 0$ (recall that $\al, \be, \ga$ are generic) we get that the curve $K_L$ is smooth at all points of $S_p$, hence each of these points has multiplicity $1$ on $K_L$. 

Let $Q = [0:\zeta:1] \in T_q$. Denote for convenience $\psi:=\mathop{\prod}\limits_{\zeta^{\prime}\in R_{q}\backslash\{\zeta\}}(\zeta-\zeta^{\prime})\neq 0$.

The lower order terms of the expansion of $\al x^{p-1}y(1+y^q) + \beta(x^p+(1+y^q)^p) + \gamma x^py$ at $Q$ is given by 
$$(\be + \ze \ga)x^p + (\al \ze\psi)x^{p-1}(y-\ze) + \be \psi^p (y-\ze)^p.$$

This term is non-zero provided that $\be^q + (-\ga)^q\neq 0$. Therefore the multiplicity of the point $Q$ is $p$. 

Combining these multiplicities with the degree of the curve $K_L$, which is $pq$ we obtain \eqref{eqn:picmuline}. 

\subsection{Proof of \cref{lemma:spectrum_ss}}
\begin{proof}
    For any $P\in S_p\cup T_q$ denote 
    
    $$e_{P,\pm} := \frac{E_{P}\pm E_{\iota(P)}}{2}.$$
    
    Denote $A = \sum_{P\in S_p}E_{P},\;\;B = \sum\limits_{P\in T_q}E_{P}.$
    
    Note that there is a decomposition of $\rV_{p,q}$ into a sum of two $\mupq^*$ invariant subspaces
    $$\rV_{p,q} = \rV_{+} \oplus\rV_{-},$$
    where $\rV_{+} = \left<\{C\}\cup\{e_{P,+}\}_{P\in S_p\cup T_q}\right>$ and $\rV_{-} = \left<\{e_{P,-}\}_{P\in S_p\cup T_q}\right>$.
    
    We have $\mupq^*|_{\rV_{-}} = \mathrm{Id}_{\rV_{-}}$ and $\dim(\rV_{-}) = \left\lfloor{\frac{p}{2}}\right\rfloor  + \left \lfloor{\frac{q}{2}}\right \rfloor$.
    
    It remains to diagonalize the action of $\mupq^*$ on $\rV_{+}$. First, note that the subspace $\left<\{A,B,C\}\right> \subset \rV_{+}$ is invariant with respect to $\mupq^*$. The action on that subspace is given by 
    \begin{equation}
        \mupq^*(A) = p(q C - B) - A,\;\;\;\mupq^*(B) = qC - B,\;\;\;\mupq^*(C) = p\big(q C - B\big) - A.
    \end{equation}
    This action can be diagonalized explicitly. The eigenvalues are $\lambda_{+}, \lambda_{-}, 0$.

    Consider the action on the quotient space $\hat{\rV} = \rV_{+}/\left<\{A,B,C\}\right>$. The space $\hat{\rV}$ is generated by classes of vectors $\{e_{P,+}\}_{P\in S_p\cup T_q}$. The action on such a vector is given by 
    \begin{equation}
        \mupq^*(e_{P}) = \begin{cases}
            q C - B - e_{P},\;\; P \in S_p,\\
            C - e_{P},\quad\qquad P \in T_q.\\
        \end{cases}
    \end{equation}
    Therefore we obtain $[\mupq^*] = -\mathrm{Id}_{\hat{\rV}}$. Note that $\dim\big(\hat{\rV}\big) = \left \lceil{\frac{p}{2}}\right \rceil + \left \lceil{\frac{q}{2}}\right \rceil -2$.
    
    We observe $(\mupq^* + \mathrm{Id}_{\rV})(\mupq^* - \mathrm{Id}_{\rV})(\rV) \subset \left<\{A,B,C\}\right>$, hence 
    \begin{equation}\label{eqn:min_poly}
    (\mupq^* -\lambda_{+} \mathrm{Id}_{\rV})(\mupq^* -\lambda_{-} \mathrm{Id}_{\rV})(\mupq^*)(\mupq^* + \mathrm{Id}_{\rV})(\mupq^* - \mathrm{Id}_{\rV})(\rV) =0.
    \end{equation}
    The assumption $pq-p-q >0$ implies $pq > 4$, which implies $\lambda_{+}>1$ and since $\lambda_{+}\lambda_{-} = 1$, $0 < \lambda_{-} < 1$. Therefore all factors in \eqref{eqn:min_poly} are distinct so the minimal polynomial of $\mupq^*$ splits and then $\mupq^*$ is diagonalizable.
\end{proof}
\subsection{Proof of \cref{prop:coh_action_nonss}}
\begin{proof}
    Let us denote by $X_{p,1}^{(0)} \overset{\pi_0}{\rightarrow} \CP^2$ at points of $S_{p}\cup T_1$. Denote by $X_{p,1}^{(1)} \overset{\pi_1}{\rightarrow} X_{p,1}^{(0)}$ the blowup of $X_{p,1}^{(0)}$ at the point $\pi_0^{-1}([0:1:0])$ and by $X_{p,1} \overset{\pi_2}{\rightarrow} X_{p,1}^{(1)}$ at the point $([0:1:0], [1:0])$ (Cf. Section \ref{sec:stable_model}).

    Images of exceptional lines and proper transforms of the coordinate axes intersect the exceptional lines corresponding to the points in $S_p\cup T_1$ in a finite number of points. Therefore to compute pullbacks of divisors $E_{P}$ for $P \in S_p\cup T_1$ it is sufficient to compute the preimage of the corresponding exceptional lines (counting multiplicity) on the open subset $U = \{x_{0}x_1x_2\neq 0\}$ then compute the closure of this preimage in $\CP^2$ and to compute the class of the  proper transform of this closure. The closures are given by $K^{\textit{h}}_{\om} = \{x_0 -\omega^{-1}(x_1 + x_2) = 0\}$ and $K^{\textit{v}}_{-1} = \{x_1 +x_0 = 0\}$ for the divisors $E_{[\om:0:1]}$ and $E_{[0:-1:1]}$ respectively. Note that both these closures do not pass through the point $[0:1:0]$. The rest of computation repeats the corresponding part of the proof of  \cref{prop:coh_action_ss}. Therefore we obtain \eqref{eqn:picmuomnss} and \eqref{eqn:picmuzenss}.

    In order to compute the pullbacks of the remaining classes we compute the pullbacks for classes of proper transforms of three kinds of lines in $\CP^2$. For all of them it is sufficient to compute the preimage on the open subset $U =\{x_0x_1x_2\neq 0\}$ and then take the class of closure in $X_{p,1}$.

    \textbf{A generic line.}
    Let $L = \{\al x_0 + \be x_1 + \ga x_2 = 0\}$ with $\al, \be, \ga$ generic. The class of the proper transform of $L$ is $C$. 

    The closure of the preimage of $L$ in $\CP^2$ under map $\mu_{p,q}$ is given by
    $$K_L=\{\al x_0^{p-1}x_1(x_1 + x_2) + \be x_2 (x_0^p + (x_1 + x_2)^p) + \ga x_0^px_1 = 0\}.$$

    The points $Q = [0:-1:1]$ and $P = [\om :0:1]\in S_p$ have multiplicities $p$ and $1$ respectively on $K_L$. That implies
    $$[\pi_0^{\prime}(K_L)] = \pi_0^*([K_L]) - \mathop{\sum}\limits_{P \in S_p }E^{(0)}_{P} - pE^{(0)}_{Q}.$$  Here $E^{(0)}_{P}$ are the classes of exceptional lines over blow-up points $P \in S_p\cup T_1$.
    
    Now we compute the class of the proper transform of this curve with respect to two remaining blowups at $[0:1:0]$.

    In the local coordinates $(x,z) = \left(\pi_0^*(\frac{x_0}{x_1}), \pi_0^*(\frac{x_2}{x_1})\right)$ near $\pi_0^{-1}([0:1:0])$ the curve $\pi_0^{\prime}(K_L)$ is given by the equation
        $$
                \al x^{p-1}(1 + z) + \be(x^p + (1+z)^p)z + \ga x^p =0.
        $$
        The point $(x,z) = (0,0)$ has multiplicity $1$. Therefore we get 
        $$
        [\pi_1^{\prime}(\pi_{0}^{\prime}(K_L))] = \pi_{1}^{*}([\pi_{0}^{\prime}(K_L)]) - E_{1,\infty}^{(1)}.
        $$
        Here $E_{1,\infty}^{(1)}$ is the exceptional divisor corresponding to the line $\pi_{1}^{-1}(\pi_{0}^{-1}([0:1:0]))$.

        The proper transform of the curve $\pi_1^{\prime}(\pi_{0}^{\prime}(K_L))$ in the coordinates $(u_1, v_1)$ (see Section \ref{subsec:stable_model_nss}) is given by 
        $$
        \al u_1^{p-2}(1+ u_1v_1 ) + \be(u_1^p + (1 + u_1 v_1)^p) + \ga u_1^{p-1} = 0.
        $$
        The multiplicity of the point $(u_1, v_1) = (0,0)$ is $1$, therefore we get
        $$
        [\pi_2^{\prime}(\pi_1^{\prime}(\pi_{0}^{\prime}(K_L)))] = \pi_{2}^{*}([\pi_1^{\prime}(\pi_{0}^{\prime}(K_L))]) - E_{2,\infty} = (\pi_{1}\circ \pi_{2})^*(\pi_0^*([K_L]) - \mathop{\sum}\limits_{P \in S_p }E^{(0)}_{P} - pE^{(0)}_{Q}) - \pi_{2}^*(E_{1,\infty}^{(1)})  - E_{2,\infty}.
        $$
        
        Note that the point $(u_1,v_1) = (0,0)$ has multiplicity $1$ on the line $\pi_{1}^{-1}(\pi_{0}^{-1}([0:1:0]))$. That implies
        $$\pi_2^*(E_{1,\infty}^{(1)}) = E_{1,\infty} + E_{2,\infty}.$$
        Also since $K_L$ has degree $p+1$ and the exceptional lines corresponding to the blowup $\pi_0$ do not pass through $\pi_{0}^{-1}([0:1:0])$ we have 
        $$(\pi_{1}\circ \pi_{2})^*(\pi_0^*([K_L]) = (p+1)C,\;\; (\pi_{1}\circ \pi_{2})^*(\pi_0^*([E^{(0)}_P]) = E_P, \;\forall P\in S_p\cup T_1.$$
        Hence we conclude
        \begin{equation}\label{eqn:nonsscases_tmp0}
        \mu_{p,1}^*(C) = [\pi_2^{\prime}(\pi_1^{\prime}(\pi_{0}^{\prime}(K_L)))] = (p+1)C - \mathop{\sum}\limits_{P \in S_p }E_{P} - pE_{Q} - E_{1,\infty} - 2E_{2,\infty}. 
        \end{equation}

    \textbf{A generic line passing through $[0:1:0]$.}
    Let $L = \{\al x_0 + \ga x_2 = 0\}$ with $\al, \ga$ generic. 
    The line $L$ does not pass through points in $S_p\cup T_1$ and passes through $[0:1:0]$. The proper transform of $L$ with respect to $\pi_1\circ \pi_0$ does not pass through the point of the last blowup. This implies
    $$[(\pi_2\circ \pi_1\circ \pi_0)^{\prime}(L)] = C - E_{1,\infty} - E_{2,\infty}.$$

        On open subset $U$ the preimage (of the proper transform) of $L$ is $M_{L} = \{\al(x_1 + x_2) + \ga x_0 = 0\}$. Taking the closure in $\CP^2$ we obtain a line passing through one and only one blow-up point $Q \in T_1$. Therefore the class of the proper transform is $C - E_{Q}$. Therefore
        \begin{equation}\label{eqn:nonsscases_tmp1}
            \mu_{p,1}^*(C- E_{1,\infty} - E_{2,\infty}) = C - E_{Q}.
        \end{equation}
    \textbf{Specific line.}

        Let $L = \{x_2 = 0\}$. This line does not pass through points in $S_p\cup T_1$. It passes through the point $[0:1:0]$ and the proper transform with respect to blowup $\pi_1\circ\pi_0$ passes through the point of the last blowup. Therefore we get 
        $$[\pi_{2}^{\prime}((\pi_1\circ \pi_0)^{\prime})(L)] = \pi_2^*([(\pi_1\circ \pi_0)^{\prime})(L)]) - E_{2,\infty} = (\pi_2\circ\pi_1\circ\pi_0)^*(L_z) - \pi_2^*(E_{1,\infty}^{(1)}) - E_{2,\infty} = C - E_{1,\infty} - 2E_{2,\infty}.$$

        On open subset $U$ the preimage of the line $L$ is empty. This implies that the preimage of the proper transform of $L$ is finite. This gives 
        
        \begin{equation}\label{eqn:nonsscases_tmp2}
            \mu_{p,1}^*(C - E_{1,\infty} - 2E_{2,\infty}) = 0.
        \end{equation}

    Linearly combining equations \eqref{eqn:nonsscases_tmp0}, \eqref{eqn:nonsscases_tmp1}, \eqref{eqn:nonsscases_tmp2} we obtain \eqref{eqn:picmuinf1} and \eqref{eqn:picmuinf2}.
    \end{proof}
\subsection{Proof of \cref{lemma:spectrum_nonss}}
    \begin{proof}
The proof is similar to the one of  \cref{lemma:spectrum_ss}. For any $P\in S_p$ denote $$e_{P,\pm} = \frac{E_{P}\pm E_{\iota(P)}}{2}$$
Denote $A = \suml_{P_1\in S_p}E_{P_1}, B = E_Q$, where $Q \in T_1$. Recall that $C$ is the class of the proper transform of a generic line.

Note that there is a decomposition of $\rV_{p,q}$ into a sum of two $\mupo^*$ invariant subspaces
    $$\rV(\rX_{p,q}) = \rV_{+} \oplus\rV_{-},$$
    where $\rV_{+} = \left<\{B,C,E_{1,\infty},E_{2,\infty}\}\cup\{e_{P,+}\}_{P\in S_p}\right>$ and $\rV_{-} = \left<\{e_{P,-}\}_{P\in S_p}\right>$.

    We have $\mupo^*|_{\rV_{-}} = \mathrm{Id}_{\rV_{-}}$ and $\dim(\rV_{-}) = \left\lfloor{\frac{p}{2}}\right\rfloor.$

    It remains to compute the action of $\mupo^*$ on $\rV_{+}$. First, note that the subspace $\rW = \left<\{A,B,C, E_{1,\infty}, E_{2,\infty}\}\right>$ is invariant with respect to $\mupo^*$. The action on this subspace is given by 
    \begin{align*}
        \mupo^*(A)& = pC - pB - A,\;\;\;\mupo^*(B) = C - B,\;\;\;\mupo^*(C) = (p+1)C - p B - A - E_{1,\infty}-2E_{2,\infty},\\
        \mupo^*(E_{1,\infty})&= (p-1)C + (2-p)B - A - E_{1,\infty}-2E_{2,\infty},\;\;\; \mupo^*(E_{2,\infty}) = C - B.
    \end{align*}

    By explicit computation we obtain the characteristic polynomial of the operator $\mupo^*|_{\rW } \in \mathrm{End}_{\C}(\rW )$
    $$\mathcal{P}(\lambda) = \lambda^3(\lambda - \lambda_{+})(\lambda- \lambda_{-}).$$
    Also we have $\dim(\ker(\mupo^*|_{\rW})) = 2$.

    The quotient space $\hat{\rV}=\rV_{+}/\rW$ is generated by the classes of $e_{P, +}$ where $P \in S_p$ and the action on such a vector is given by 
    $$ \mupo^*(e_{P, +}) = -e_{P,+} + C - B,$$
    which implies for the induced operator $[\mupo^*]\in \mathrm{End}_{\C}(\hat\rV)$, $$[\mupo^*] = - \mathrm{Id}_{\hat\rV}.$$

    By additivity of spectrum, this proves that the spectrum of $\mupo^*$ is of the form $\eqref{eqn:specnonss}$. Since the characteristic polynomials of $\mupo^*|_{\rW}$ and $[\mupo^*]$ are coprime there is a $\mupo^*$-invariant complement to $\rW$ in $\rV_{+}$ isomorphic to $\hat{\rV}$ as a $\mupo^*$-space.

    \end{proof}

    \section{Local Indices Method}\label{sec: Local Indices}
 
    In this appendix we provide an alternate proof of \cref{thm:A} using the 
local indices method developed in \cite{alonso2023} and \cite{alonso3D}.  This is an alternative method to the computation in the Picard group that allows to obtain the dynamical degree of a birational map of $\mathbb{CP}^N$. 
 We leave the analogous computations for $p,q < 0$ to the reader.

The idea of the method is to compute $\deg(f^n)$ by means of the local indices which are numbers associated to a polynomial and  a blow-up point in 
$\mathcal{I}(f^{-1})$ (see Definition \ref{def: Localindex}) and the computation of what is called by \cite{alonso2023} the proper pull-back of a polynomial. This is done by deducing recurrence relations for the degrees and the indices. 
This approach can be thought of as an alternative proof of \cref{Theorem Dynamical degree}. Nevertheless, this procedure is mathematically interesting by itself. In particular, it gives
a recurrence formula for the sequence $(\deg(f^n))_{n\in\Z_{>0}}$. One of the results of this Section is a closed formula for $\deg(\mu_{p,q}^n)$ for any non-affine pair $(p,q)$.  
In that perspective \cref{Theorem Dynamical degree} is a corollary of this formula and the following classical result. Let $f$ be a rational map of $\mathbb{P}^N$ then
\begin{equation}\label{Classical dyn. degree}
    \lambda_1(f)=\displaystyle\lim_{n\to\infty}(\deg(f^n))^{1/n}.
\end{equation}

In general, this method requires additional conditions, which are not satisfied for the non-semi-simple pairs but we were able to adapt it so it also works on this case.

\begin{defi}
    A hypersurface $S\subset\mathcal{C}(f)$ is called a degree lowering hypersurface of $f$ if there is some $n\in\Z_{>0}$ such that $f^n(S)\subset \mathcal{I}(f)$. 
\end{defi}

The following result mentioned in \cite[2.3]{alonso2023} illustrates how we can think of algebraical stability in terms of degree lowering curves:

\begin{propo}
    Let $f:\mathbb{CP}^2\dashrightarrow\mathbb{CP}^2$ be a rational map. The following are equivalent:
    \begin{itemize}
        \item[(i)] $f$ is algebraically stable.
        \item[(ii)] For all $n\in\Z_{>0}$, $\deg(f^n)=\left(\deg(f)\right)^n.$
        \item[(iii)] There are no degree lowering curves of $f$.
    \end{itemize}
\end{propo}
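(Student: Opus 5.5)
I would prove the three equivalences through a single mechanism: degree drops in a composition are controlled by common factors of the homogeneous components. Fix a homogeneous representative $F=(F_0,F_1,F_2)$ of $f$ with coprime components of degree $d=\deg(f)$. For any rational map $g$ with reduced representative $G$ of degree $e$, the composition $F\circ G$ has components of degree $de$. We get $\deg(f\circ g)=de-\deg h$, where $h=\gcd\big(F_0(G),F_1(G),F_2(G)\big)$. The key lemma I would establish first is:
\[
\deg h>0 \iff \text{there is an irreducible curve } V\subset \CP^2 \text{ with } g(V\setminus\mathcal{I}(g))\subset \mathcal{I}(f).
\]
For the forward direction, an irreducible factor $h_1$ of $h$ defines a curve $V=\{h_1=0\}$. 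On $V$ all $F_i\circ G$ vanish, so $G(V)$ lies in $V(F_0,F_1,F_2)=\mathcal{I}(f)$ off $\mathcal{I}(g)$. Since $\mathcal{I}(f)$ is finite, $V$ is collapsed by $g$ to an indeterminate point of $f$. Conversely, if $g$ maps a curve $V=\{h_1=0\}$ into $\mathcal{I}(f)$, then each $F_i\circ G$ vanishes on $V$, so $h_1$ divides all of them.

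Next I would apply the lemma with $g=f^{n-1}$ and induct on $n$. If there are no degree lowering curves, then at every step $\deg(f^n)=\deg(f)\deg(f^{n-1})$, which gives (iii)$\Rightarrow$(ii). For the converse, suppose $S$ is degree lowering with $f^n(S)\subset\mathcal{I}(f)$, and choose $n$ minimal. I would check that $S$ is not contained in $\mathcal{I}(f^{n})$. This holds because $\mathcal{I}(f^n)$ is finite, and because $f^k$ is regular at generic points of $S$ for $k\le n$, which follows from minimality together with the fact that points off $\mathcal{I}(f)$ stay regular. Then $f^n$ maps $S$ into $\mathcal{I}(f)$, the lemma gives a nontrivial common factor, and hence $\deg(f^{n+1})<\deg(f)\deg(f^n)\le \deg(f)^{n+1}$. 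This gives (ii)$\Rightarrow$(iii).

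Finally, I would link to (i). Since $\Pic(\CP^2)=\Z$ with $f^*[C]=\deg(f)[C]$, we have $(f^n)^*=(f^*)^n$ if and only if (ii) holds. This matches the functorial notion of stability of \cite{diller_favre_2001} used in Lemma \ref{lemma:spec_rad}. For birational $f$, with the orbit definition of Section \ref{subsec:alg_stab}, I would argue as follows. A degree lowering curve $S$ is a critical curve, so by Lemma \ref{lemma:jac_crit} it is collapsed to a point $p_0=f(S)\in\mathcal{I}(f^{-1})$, and the forward orbit of $p_0$ reaches $\mathcal{I}(f)$. This is exactly a destabilizing orbit. Conversely, a destabilizing orbit starting at $p_0\in\mathcal{I}(f^{-1})$ comes from a curve collapsed to $p_0$, and that curve is degree lowering.

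The main obstacle is the minimality and regularity bookkeeping in (ii)$\Rightarrow$(iii). The orbit of $S$ must genuinely pass through points where the intermediate iterates are defined, so that the common factor really appears in $F\circ F^{(n)}$ and is not already absorbed when $F^{(n)}$ itself was reduced. I would handle this by choosing a generic point of $S$ and tracking reduced representatives step by step.
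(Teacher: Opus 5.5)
The paper gives no proof of this proposition; it quotes it from \cite[2.3]{alonso2023}, where it rests on the classical Forn\ae ss--Sibony gcd criterion, which is the mechanism you use. Several parts of your proposal are sound:
\begin{itemize}
\item your key lemma;
\item the direction (ii)$\Rightarrow$(iii);
\item the identification of (i) with (ii) through $\Pic(\CP^2)\cong\Z$;
\item the translation between degree lowering curves and destabilizing orbits for birational $f$, via \cref{lemma:jac_crit} and the characterization of $\mathcal{I}(f^{-1})$.
\end{itemize}
The ``main obstacle'' you flag in (ii)$\Rightarrow$(iii) is already handled by your own lemma. Minimality of $n$ shows that the rational map $f^n$ is regular at generic points of $S$ and sends them into $\mathcal{I}(f)$. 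The lemma then applies directly to the reduced representative of $f^n$, and nothing can have been absorbed earlier.

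The step that is actually missing is in (iii)$\Rightarrow$(ii). If $\deg h>0$ for $g=f^{n-1}$, your lemma produces an irreducible curve $V$ with $f^{n-1}(V\setminus\mathcal{I}(f^{n-1}))\subset\mathcal{I}(f)$. However, a degree lowering curve is by definition a curve $S\subset\mathcal{C}(f)$, and $V$ need not be critical for $f$. For instance, any non-critical curve $V$ whose image $f(V)$ is a critical curve collapsed into $\mathcal{I}(f)$ satisfies $f^2(V)\subset\mathcal{I}(f)$. So ``no degree lowering curves'' does not by itself exclude $V$, and ``induct on $n$'' alone does not close this.

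You need to push $V$ forward along its orbit:
\begin{itemize}
\item Take $k\ge 1$ minimal with $f^k(V)\subset\mathcal{I}(f)$. Then generic points of $V$ have a genuine orbit up to time $k$, along which the rational maps $f^i$ agree with step-by-step composition.
\item Let $j<k$ be the last index for which $W:=f^j(V)$ is a curve. Then $f$ is constant on a dense subset of $W$, so $W$ is contracted by $f$.
\item Hence $W\subset\mathcal{C}(f)$: by \cref{lemma:jac_crit} in the birational case, or because $df$ kills $TW$ at generic points in general.
\item Also $f^{k-j}(W)\subset\mathcal{I}(f)$ with $k-j\ge 1$, so $W$ is a degree lowering curve, which is the required contradiction.
\end{itemize}
Equivalently, at the first $n$ where the degree drops, this argument forces $j=0$, i.e.\ $V$ itself is contracted.

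Finally, your formula $\deg(f\circ g)=de-\deg h$ needs $F\circ G\not\equiv 0$. So $f$ should be assumed dominant, which holds in the birational setting of the paper.
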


Recall that for any rational $f$ and any $n \in \Z_{>0}$, $\deg(f^n)\leq \left(\deg(f)\right)^n$ . The idea behind the local indices method is that the degree lowering components of $\mathcal{C}(f)$, are reducing the degree of the composition $f^n$. Hence, if we can account for how this reduction evolves for each of these components, then we can keep track of the sequence $(\deg(f^n))_{n\in\Z_{>0}}.$ More precisely, define the pull-back by $f:[x_0:x_1:x_2]\mapsto [f_0(x):f_{1}(x):f_{2}(x)]$ of a homogeneous polynomial $P$  as $f^{*}P:=P\circ \hat{f}$,  where $\hat{f}:\C^3\to\C^3$ is defined by $\hat{f}:(x_0,x_1,x_2) \mapsto (f_0(x),f_1(x),f_2(x))$. Then we can define the proper pull-back of $P$ as follows.
\begin{defi}
    The proper pull-back  of a polynomial $P$ with respect to a rational map $f$ is the polynomial $\tilde{P}$ such that 
\begin{equation*}
    f^{*}P=M_1^{\nu_1}M_2^{\nu_2}\dots M_k^{\nu_k}\tilde{P}
\end{equation*}

where $M_1$, \dots $M_k$ are the defining polynomials of the irreducible components of $\mathcal{C}(f)$ which $f$ contracts to points, and $\tilde{P}$ is not divisible by any $M_i$.
\end{defi}

Now, for a degree 1 homogeneous polynomial $P_0$ define inductively the sequence of polynomials $(P_n)_{n\geq 0}$ as follows: Given $P_n$, the polynomial $P_{n+1}$ is the proper pullback of $P_n$ with the associated equation:

\begin{equation}\label{eqn: Pn sequence}
    f^{*}P_n=M_1^{\nu_1(n)}M_2^{\nu_2(n)}\dots M_k^{\nu_k(n)}P_{n+1}
\end{equation}

\begin{propo}\label{Prop: generic Alonso}\cite[5.2]{alonso2023}
   For a generic $P_0$ we have $\deg(f^n)=\deg(P_n)$. 
\end{propo}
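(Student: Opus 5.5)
The statement to prove is Proposition \ref{Prop: generic Alonso}: for a generic linear form $P_0$ and the sequence defined by \eqref{eqn: Pn sequence}, we have $\deg(f^n)=\deg(P_n)$.

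The plan is to show by induction on $n$ that $P_n$ is, up to a nonzero constant, the primitive (gcd-free) defining polynomial of the curve $\overline{(f^n)^{-1}(L)}$, where $L=\{P_0=0\}$. The degree of that curve equals $\deg(f^n)$. To see this, write $f^n=[g_0:g_1:g_2]$ with $\gcd(g_0,g_1,g_2)=1$ and $\deg g_i=\deg(f^n)$. For $P_0=\alpha x_0+\beta x_1+\gamma x_2$ the pullback is $\alpha g_0+\beta g_1+\gamma g_2$. For generic $(\alpha,\beta,\gamma)$ this polynomial has no fixed component, because a common factor of all members of the net would divide every $g_i$. By Bertini it is also reduced, since $f^n$ is dominant. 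Hence the closure of $(f^n)^{-1}(L)$ is defined by a polynomial of degree exactly $\deg(f^n)$.

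The induction step compares $(f^{n+1})^*P_0=(f^n)^*(f^*P_0)$, computed with the reduced representatives, with $f^*P_n$. First, $P_n$ defines $\overline{(f^n)^{-1}L}$. Then $f^*P_n=P_n\circ\hat f$ vanishes on $\overline{f^{-1}((f^n)^{-1}L)}=\overline{(f^{n+1})^{-1}L}$, possibly together with extra components. Any extra component $V$ satisfies $\hat f(V)\subset\{P_n=0\}$ but is not a component of the generic preimage. Since $P_0$ is generic, the curve $(f^n)^{-1}(L)$ does not contain the image of any non-collapsed curve that fails to be a genuine preimage. So $V$ must be collapsed by $f$ to a point, and by Lemma \ref{lemma:jac_crit} this means $V$ is one of the components $M_i$ of $\mathcal{C}(f)$. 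Dividing out all powers of the $M_i$ therefore leaves exactly the defining polynomial of $\overline{(f^{n+1})^{-1}L}$. This relies on that polynomial not itself containing any $M_i$: the curve $(f^{n+1})^{-1}L$ contains no $M_i$, because $f$ maps $M_i$ to a single point and generic $L$ avoids the countably many points $f^{n}(f(M_i))$, where these are defined. One also needs the multiplicity of the genuine part to be $1$; this follows from genericity together with birationality of $f$, so the genuine part is reduced.

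The main obstacle is the genericity bookkeeping. The condition on $P_0$ depends on $n$: we must avoid the finitely many points $f^k(M_i)$ (or their indeterminacy images) for $k\le n$. A countable intersection of Zariski-open dense conditions is still nonempty, by Baire in parameter space, so a single very generic $P_0$ works for all $n$; this is what ``generic'' means here. The delicate case is when $f(M_i)$ lands in $\mathcal{I}(f^n)$, i.e. exactly the degree-lowering situation. There the relevant point has a total transform that is a curve, and that curve has a fixed component of the linear system of $(f^{n})^*$ lines only if it lies in $\mathcal{I}$. I would handle this by noting that $P_n\circ\hat f$ restricted to $M_i$ is constant on the image point, and that for generic $L$ it vanishes there precisely when the point is indeterminate for $f^n$. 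In that case $M_i$ divides $f^*P_n$ and is removed. This matches the drop $\deg(f)\deg(f^n)-\deg(f^{n+1})=\sum_i\nu_i(n)\deg M_i$, which serves as a final consistency check.
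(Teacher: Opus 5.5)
The paper gives no argument of its own for this proposition; it only quotes \cite[5.2]{alonso2023}. Your proof is correct in substance, but it takes a geometric route. You identify $P_n$ with the reduced equation of $\overline{(f^n)^{-1}L}$ and the proper pullback with a strict transform. That forces you to use Bertini for reducedness, a local-biholomorphism argument for the multiplicity of the genuine part, and a case analysis of the extra components.

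The algebraic route is shorter. Let $\hat g^{(n)}$ be the gcd-free lift of $f^n$. Then $\hat g^{(n)}\circ\hat f=h_n\,\hat g^{(n+1)}$ for some polynomial $h_n$. Every prime factor of $h_n$ defines a curve whose generic points $f$ sends into the finite set $\mathcal{I}(f^n)$, so it is one of the $M_i$ by Lemma~\ref{lemma:jac_crit}. Since $P_0$ is linear, $f^*\bigl(P_0\circ\hat g^{(n)}\bigr)=h_n\cdot\bigl(P_0\circ\hat g^{(n+1)}\bigr)$. Unique factorization and induction then give $P_n=c_n\,P_0\circ\hat g^{(n)}$, hence $\deg P_n=\deg(f^n)$, provided no $M_i$ divides $P_0\circ\hat g^{(m)}$ for $m\le n$, i.e.\ $L\not\supset f^m(M_i)$. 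This version needs no reducedness or multiplicity count, and it makes the countable genericity condition explicit. Your version buys the interpretation of the proper pullback as a strict transform, which matches the divisor-pullback viewpoint used elsewhere in the paper.

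A few sentences should be tightened.
\begin{itemize}
\item The equality $\overline{f^{-1}((f^n)^{-1}L)}=\overline{(f^{n+1})^{-1}L}$ is false as written. The left side also contains every $M_i$ with $f(M_i)\in\overline{(f^n)^{-1}L}$, and these are exactly your extra components.
\item The reason a non-collapsed extra component cannot occur has nothing to do with genericity. The map $f^n$ is regular at the generic point of the curve $f(V)$, so $f^{n+1}=f^n\circ f$ is regular on a dense subset of $V$ and maps it into $L$.
\item In the delicate case $f(M_i)\in\mathcal{I}(f^n)$, what you actually need is $M_i\not\subset\overline{(f^{n+1})^{-1}L}$, i.e.\ $L\not\supset f^{n+1}(M_i)$. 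This holds for generic $L$ because $f^{n+1}(M_i)$ is a point or an irreducible curve; equivalently, $M_i$ is not a fixed component of the net given by the gcd-free lift of $f^{n+1}$. This should replace the sentence about a curve lying in $\mathcal{I}$, which cannot happen since $\mathcal{I}$ is finite.
\item Your reading of ``generic'' as very generic is necessary in general. If $f(M_i)$ has an infinite forward orbit, every line through a point of that orbit breaks the equality at some step.
\end{itemize}
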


\begin{rem}
    The method then consists of computing the precise sequence of exponents $\nu_i(n)$ for all components $\left\{M_i=0\right\}\subset\mathcal{C}(f)$ contracted to points by $f$. To be able to do that we need to deduce equations that relate the exponents $\nu_i(n)$ with numbers associated to the top-level blow-ups (see below) needed to reach algebraic stability, these numbers are called local indices by \cite{alonso2023}and we define them next. 
\end{rem}

 Let $f:\mathbb{P}^2\dashrightarrow\mathbb{P}^2$ be birational with $\{M_i=0\}$ being a component of $\mathcal{C}(f)$ contracted to a point $\tilde{p}$ by the map $f$. Let $\phi=\pi_n\circ\dots\circ\pi_1:X_{\tilde{p}}\to \mathbb{P}^2$ be the projection corresponding to the last blowup over the point $\tilde{p}$ needed to reach algebraic stability (we call this blowup top-level for the point $\tilde{p}$). Without loss of generality assume that $\tilde{p}\in\{x_0\neq 0\}$, notice that $\phi$ can be expressed as a map in local coordinates $\phi:(U_1, U_2)\mapsto [1\colon a_1\colon a_2]$ with exceptional line $\{U_1=0\}$ corresponding to the last simple blow-up $\pi_1$.
\begin{defi}\label{def: Localindex}
   Let $P$ be a homogeneous polynomial. Define the \textbf{local index} of $P$ associated to blow-up $\phi$ as the largest number $\nu_\phi(P)$ such that $U_1^{\nu_\phi(P)}$ divides $P\circ \phi$.
\end{defi}

\begin{defi}
 Let $\{M_i=0\}\mapsto\tilde{p}\in\mathbb{CP}^2$. We call $\tilde{p}$ a singularity. We say that the singularity is resolved if for a top level blow-up $\phi$ the exceptional divisor is not contracted to a point under the lift of $f$ to the blow-up space.
\end{defi}

\begin{rem}
    If we need only $1$ blow-up to resolve the singularity at $\tilde{p}$ then $\nu_\phi(P)$ is equal to the multiplicity of point $\tilde{p}$ on hypersurface $\{P=0\}$.
\end{rem}

We highlight that for the maps $\mu_{p,q}$ all components of $\mathcal{C}(\mu_{p,q})$ are mapped either to indeterminate points or to fixed points (See  \cref{subsec:stable_model_ss,subsec:stable_model_nss}). Also all singularities except one require a single-point blow-up only to reach algebraic stability. The remaining singularity requires two consequent single-point blow-ups. 

In \cite{alonso2023} there is an extensive characterization of the different equations relating the $\nu_i$'s and local indices that can occur depending on the type of singularity. As we highlighted before, the realization of the stable model for $\mu_{p,q}$  is not convoluted so we will not explore cases beyond our setting. In particular, we are only interested in the cases such that the system of
recurrent relations is finite. This happens if for each $\tilde{p}\in \mathcal{I}(f^{-1})$ the following alternative holds (cf. Section $6$ in \cite{alonso2023}),

\begin{enumerate}\label{enum}
    \item The top exceptional divisor $E$ of blowup $\phi$ over the point $\tilde{p}$ is mapped birationally to some irreducible component of $\mathcal{C}(f^{-1})$. In this case there is the following relation between exponents and local indices (see the \textbf{case $(3a)$} in \cite{alonso2023}):
    \begin{equation}\label{eqn:case 3a}
        \nu_\phi(P_{n+1})=s\deg(P_n)-\nu_1(n)\nu_\phi(M_1)-\dots-\nu_k(n)\nu_\phi(M_k).
    \end{equation}

    Here $s\in\Z_{>0}$ is the largest number such that $U_1^s$ divides all the components of $\hat{f}\circ \phi.$ 
    \item One (and then any) curve $\{R=0\}\subset f^{-1}(\tilde{p})$ is not degree lowering. 
    Then $$\nu_{\phi}(P_n)=0.$$ 
\end{enumerate}

We also highlight the following trivial but nevertheless important equation

\begin{equation}\label{eqn: trivial relation}
    \deg(P_{n+1})=d \cdot \deg(P_n)-\nu_1(n)\deg(M_1)-\dots-\nu_k(n)\deg(M_k),
\end{equation}
where $d = \deg(f)$ is the algebraic degree of $f$.

We end this section by summarizing the idea of the next subsections and the end goal for each of them. In  \cref{subsec:local_ind_ss,subsec:local_ind_nonss} we compute the dynamical degree of the maps $\mu_{p,q}$ for semi-simple pairs and non semi-simple pairs respectively using the method of local indices. The key for accomplishing a finite system of recurrence relations is to find proportional relations between the local indices $\nu_{\varphi}(P_n)$, $\nu_{\beta_q}(P_n)$, $\nu_{\beta}(P_n)$ $\nu_{\alpha}(P_n)$, $\nu_{\pi_1}(P_n)$ (See  \cref{subsec:stable_model_ss,subsec:stable_model_nss} for the description of these blow-ups) and the powers of the degree lowering curves $\nu_i(n)$. For instance we show that for both the semi-simple and non-semi-simple case $\nu_{\varphi}(P_n)$ corresponds to the greatest power of the polynomial $K^h=\prod_{\omega\in R_p}\left(K^h_{\omega}\right)$ in the factorization of $\mu_{p,q}^{*}P_n$. Similar relations are found for the other blowups. This resembles the examples in works \cite{alonso2023} and \cite{alonso3D}. For non-semi-simple pairs the computation is more subtle as we reach a stable model but the blowup over $[0\colon1\colon 0]$ does not satisfy any of the conditions of the two alternatives above. In particular, we can not apply Equation \eqref{eqn:case 3a} to this singularity. Nevertheless we find relations that reduce the system of recurrent equations.

\subsection{Semi-simple pairs.}\label{subsec:local_ind_ss}
For the semi-simple pairs, notice that we can disregard critical curves $\{x_0=0\}$, $\{x_1=0\}$ and $\{x_2=0\}$ as they are not degree lowering curves by  \cref{rem: fixed point}. Then by Equation \eqref{eqn: Pn sequence} we have

\begin{equation}\label{eqn: Components-semi-simple}
    \mu_{p,q}^{*}P_n=\prod_{\omega\in R_p} \left(K^h_{\omega}\right)^{\nu_{\omega}(n)}\prod_{\zeta\in R_q} \left(K^v_\zeta\right)^{\nu_\zeta(n)} P_{n+1},
\end{equation}
where we abuse notation of \cref{subsec:indeterminacy} by calling the curves and the equations defining them as $K^h_\omega$ and $K^v_\zeta$.

Let $\omega\in R_p$. Notice that $K^h_{\omega^{-1}}$ is the only degree lowering component of $\mathcal{C}(\mu_{p,q})$ passing through $[\omega\colon 0\colon 1]$ with multiplicity 1. Then by \eqref{eqn:case 3a} we obtain:

\begin{equation*}
    \nu_\varphi(P_{n+1})=\deg(P_n)-\nu_{\omega}(n)\nu_\varphi(K^h_{\omega})=\deg(P_n)-\nu_{\omega}(n).
\end{equation*}
Note that for different choices of $\omega\in R_p$ the blowup is represented by the same map $\varphi$ (see \cref{subsubsec:comp_Sp}). In particular, $\nu_{\omega}(n)$ does not depend on the particular $\omega\in R_p$. For simplicity, we use the notation $\nu_1(n) = \nu_{\om}(n),$ for any $\om \in R_p$. Therefore, we have

\begin{equation}\label{eqn: local indices 1}
    \nu_\varphi(P_{n+1})=\deg(P_n)-\nu_1(n).
\end{equation}

Similarly, let $\zeta \in R_q$. Then $K^v_{\zeta^{-1}}$ intersects the set $T_q$ of blow-up points on the vertical axis by the point $[0\colon\zeta\colon 1]$ with multiplicity 1. Therefore $\nu_{\beta_q}(K^v_{\zeta^{-1}})=1$ and $\nu_{\beta_q}(K^v_{\tilde{\zeta}^{-1}})=0$ for any $\tilde{\ze}\in R_q\backslash\{\zeta\}$. Also, every $K^h_\omega$ passes through $[0\colon\zeta\colon 1]$ with multiplicity 1, hence:
\begin{equation*}
    \nu_{\beta_q}(P_{n+1})=p\deg(P_n)-p\nu_1(n)-\nu_{\zeta}(n).
\end{equation*}

Analogously as before, since the LHS is independent of the choice of $\zeta \in R_q$ then every $\nu_\zeta(n)$ is the same. Hence, we denote $\nu_2=\nu_\zeta$ for any $\zeta\in R_q$. Then we have

\begin{equation}\label{eqn: local indices 2}
    \nu_{\beta_q}(P_{n+1})=p\deg(P_n)-p\nu_1(n)-\nu_{2}(n).
\end{equation}

Let $K^h=\prod_{\omega\in R_p}\left(K^h_{\omega}\right)=x_0^px_2^{p(q-1)} + (x_1^q + x_2^q)^p$ and $K^v=\prod_{\zeta\in R_q} \left(K^v_\zeta\right)=x_1^q+x_2^q$, and notice that with this we can improve equation \eqref{eqn: Components-semi-simple} to get: 

\begin{equation}\label{eqn: components semi-simple 2}
    \mu_{p,q}^{*}P_n=P_{n+1}\left(K^h\right)^{\nu_{1}(n)}\left(K^v\right)^{\nu_{2}(n)}.
\end{equation}
\begin{rem}\label{Rem: local Indices degree=q*local}
    As by \eqref{eqn: trivial relation} we have $\deg(P_{n+1})=pq\deg(P_n)-pq\nu_{1}(n)-q\nu_{2}(n)$, then by \eqref{eqn: local indices 2} we obtain:
    \begin{equation}\label{eq: beta-degree}
        \deg(P_n)=q\nu_{\beta_q}(P_n)\,\,\,\mbox{for all}\,\, n\geq 1.
    \end{equation}
  
\end{rem}

Due to the behavior of the lift of $\mu_{p,q}$ on the critical curves it is expected that $\nu_\varphi(P_n)=\nu_1(n)$ and  $\nu_{\beta_q}(P_n)=\nu_2(n)$. This is true and we give algebraic proofs for the sake of completeness.

\begin{lemma}\label{lemma: reductions local indices 1.}
    For all $n\in\Z_{>0}$ we have:
    \begin{itemize}
        \item[(i)] $\nu_\varphi(P_n)=\nu_1(n)$
        \item[(ii)] $\nu_{\beta_q}(P_n)=\nu_2(n)$
    \end{itemize}
\end{lemma}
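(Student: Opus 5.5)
The plan is to read off the exponent of $K^h_\omega$ (resp.\ $K^v_\zeta$) in the factorization \eqref{eqn: components semi-simple 2} as the order of vanishing of $\mu_{p,q}^*P_n$ along that curve. We then compute this order by factoring $\mu_{p,q}$ through the blow-up $\varphi$ (resp.\ $\beta_q$). The point is that $K^h_\omega$ is the critical curve sent \emph{birationally} onto the exceptional line over $[\omega^{-1}:0:1]$ by the lift of $\mu_{p,q}$ (Remark \ref{rem: exceptional divisor}). Thus the order of vanishing of $P_n\circ\mu_{p,q}$ along $K^h_\omega$ should be exactly the order of vanishing of $P_n\circ\varphi$ along that exceptional line, i.e.\ $\nu_\varphi(P_n)$, times the order of vanishing of the pulled-back exceptional equation. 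The appendix computation for \cref{prop:coh_action_ss} shows this last order equals $1$.

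Concretely, for (i) I fix $\omega\in R_p$ and write, in the blow-up chart of \cref{subsubsec:comp_Sp}, $P_n\circ\varphi=U_1^{\nu_\varphi(P_n)}\,Q_n$. Here $U_1$ is the local equation of the exceptional line and $Q_n$ is not divisible by $U_1$, so $Q_n|_{\{U_1=0\}}\not\equiv0$. On $\mathbb{CP}^2$ minus finitely many points we have $\mu_{p,q}=\varphi\circ\hat\mu$, where $\hat\mu$ is the lift, and $\hat\mu$ is regular at a generic point of $K^h_\omega$. Hence, up to the harmless homogenizing factor (a power of the third component $x_0^px_1x_2^{p(q-1)-1}$, which does not vanish identically on $K^h_\omega$ or $K^v_\zeta$), we get
\[
\mu_{p,q}^*P_n = (U_1\circ\hat\mu)^{\nu_\varphi(P_n)}\,(Q_n\circ\hat\mu).
\]
Three facts then combine. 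First, by the appendix computation, $U_1\circ\hat\mu$ vanishes to order exactly $1$ along $K^h_\omega$; this is the pullback of the equation $r_0=\omega^{-1}$, giving $x-\omega^{-1}(y^q+1)$ to the first power. Second, by Remark \ref{rem: exceptional divisor}, $\hat\mu(K^h_\omega)$ is dense in the exceptional line, so $Q_n\circ\hat\mu\not\equiv 0$ on $K^h_\omega$. Third, $U_1\circ\hat\mu$ does not vanish on the other components $K^h_{\omega'}$ and $K^v_\zeta$, since those map to other points. Together these give that the exponent $\nu_\omega(n)$ of $K^h_\omega$ in \eqref{eqn: Components-semi-simple} equals $\nu_\varphi(P_n)$. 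Since \eqref{eqn: components semi-simple 2} has already identified $\nu_\omega(n)=\nu_1(n)$ for every $\omega$, we obtain (i).

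Part (ii) is identical, using the chart $(u_0,v_0)$ of \cref{subsubsec:comp_Tq}, the blow-up $\beta_q$, and Remark \ref{rem:lift_v}. The curve $K^v_\zeta$ maps birationally onto the exceptional component over the corresponding point of $T_q$. The appendix shows that the pullback of the exceptional equation contains $y-\zeta^{-1}$ with multiplicity $1$, so the exponent $\nu_\zeta(n)=\nu_2(n)$ equals $\nu_{\beta_q}(P_n)$.

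The main obstacle I expect is bookkeeping rather than conceptual. One must check that the local coordinate called $U_1$ in Definition \ref{def: Localindex} for the simultaneous blow-ups $\varphi,\beta_q$ really is the exceptional equation on the relevant component. One must also confirm that no extra vanishing along $K^h_\omega$ or $K^v_\zeta$ comes from the homogeneous normalization of $\hat\mu$, i.e.\ that the coprime components of \eqref{eqn: 5} do not all vanish on these curves. I would handle this by working in the affine chart $x_2\neq0$, where both curves meet the torus, so that all factors other than $U_1\circ\hat\mu$ are units at a generic point of the curve.
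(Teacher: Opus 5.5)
Your proposal is correct and is essentially the paper's argument. The paper also writes $P\circ\varphi$ (resp.\ $P\circ\beta_q$) as a power of the exceptional equation times a factor $\tilde P$ that does not vanish identically on the exceptional lines. It then composes with the lift, where $r_0\circ\hat\mu=\frac{x_1^q+x_2^q}{x_0x_2^{q-1}}$ and $s_0\circ\hat\mu=\frac{x_2}{x_1}$. The exceptional equation pulls back to $K^h$ to the first power, while the residual factor restricts to $\tilde P(\omega,\frac{x_2}{x_1})\not\equiv 0$ on $K^h_\omega$. The only difference is that the paper does this globally for the product $K^h$ rather than curve by curve.

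There is one harmless index slip. $K^h_\omega$ passes through $[\omega^{-1}:0:1]$, but it is mapped onto the exceptional line over $[\omega:0:1]$: it is the pullback of $r_0=\omega$, not $r_0=\omega^{-1}$, that yields $x-\omega^{-1}(y^q+1)$. This only relabels $\omega\leftrightarrow\omega^{-1}$ and does not affect the argument.
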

\begin{proof}

        \textbf{(i)} Given a homogeneous polynomial $P$ of degree $d$, such that its local index under the blow up $\varphi$ is $\nu_\varphi(P)=a$, then there exists a polynomial $\tilde{P}(r_0,s_0)$ with $\tilde{P}(\omega,s_0)\not\equiv 0$ for all $\omega\in R_p$ such that:
        
\begin{equation*}
    P(x,y,1)=(r_0^p+1)^{a}\tilde{P}(r_0,s_0)=(x^p+1)^{a}\tilde{P}\left(x,\frac{y}{x^p+1}\right)
\end{equation*}
where $x=\frac{x_0}{x_2}$, $y=\frac{x_1}{x_2}$ and we are on the blow-up chart $(r_0,s_0)$ (see \cref{subsubsec:comp_Sp}). In homogeneous coordinates we have:

\begin{equation*} 
P(x_0,x_1,x_2)=x_2^dP\left(\frac{x_0}{x_2},\frac{x_1}{x_2},1\right)=x_2^d\left(\frac{x_0^p}{x_2^p}+1\right)^{a}\tilde{P}\left(\frac{x_0}{x_2},\frac{x_1x_2^{p-1}}{x_0^p+x_2^p}\right).
\end{equation*}

Let $\mu_{p,q}[x_0\colon x_1\colon x_2]=[X_0\colon X_1\colon X_2]$ then $\mu_{p,q}^{*}P=P(X_0,X_1,X_2).$ Thus,

\begin{equation*}
    \mu_{p,q}^{*}P=x_0^{p(d-a)}x_1^dx_2^{(d-a)p (q - 1) - d}\left(K^h\right)^a\cdot\tilde{P}\left(\frac{x_1^q + x_2^q}{x_0x_2^{q - 1}},\frac{x_2}{x_1}\right)
\end{equation*}
This implies that $\mu_{p,q}^{*}P$ is divisible by $\left(K^h\right)^a$. It is not divisible by any greater power of $K^h$ because otherwise this will imply that $\tilde{P}\left(\frac{x_1^q + x_2^q}{x_0x_2^{q - 1}},\frac{x_2}{x_1}\right)|_{K^h_\omega}=\tilde{P}(\omega,\frac{x_2}{x_1})$ is identically zero which is not the case. We proved that $\nu_\varphi(P_n)=\nu_1(n)$ for every $n\in\Z_{>0}$.

\textbf{(ii)} If the local index under the blow up $\beta_q$ is $\nu_{\beta_q}(P)=b$, then there exists a polynomial $\overline{P}(u_0,v_0)$ with $\overline{P}(\zeta,v_0)\not\equiv 0$ for all $\zeta\in R_q$ such that:
        
\begin{equation*}
    P(x,y,1)=(u_0^q+1)^{b}\overline{P}(u_0,v_0)=(y^q+1)^{b}\overline{P}\left(y,\frac{x}{y^q+1}\right)
\end{equation*}
This is working on the blow-up chart $(u_0,v_0)$ (cf. \cref{subsubsec:comp_Tq}). In homogeneous coordinates:

\begin{equation*} 
P(x_0,x_1,x_2)=x_2^dP\left(\frac{x_0}{x_2},\frac{x_1}{x_2},1\right)=x_2^d\left(\frac{x_1^q}{x_2^q}+1\right)^{b}\overline{P}\left(\frac{x_1}{x_2},\frac{x_0}{x_2\left(\frac{x_1^q}{x_2^{q}}+1\right)}\right)
\end{equation*}
In a similar way, one can compute that

\begin{equation*}
   \mu_{p,q}^{*}P= (x_1^q+x_2^q)^b \overline{P}(U,V) Q(x_0,x_1,x_2) 
\end{equation*}
where $Q$ is some polynomial which is not divisible by $x_1^q+x_2^q$ and $U$ and $V$ are rational functions such that  $\overline{P}(U,V)|_{{K^v_\zeta}}=\overline{P}(\zeta,\frac{-x_2}{x_0})$ for $\zeta\in R_q$ which is not identically zero by hypothesis.

Then $\mu_{p,q}^{*}P$ is divisible by $(x_1^q+x_2^q)^b$ but not for any greater power of it, in particular for all $n\in\Z_{>0}$ we have $\nu_{\beta_q}(P_n)=\nu_2(n).$

\end{proof}

Now, with these reductions we can solve the system of recurrent relations to find a recurrence for the sequence $(\deg(P_n))_{n\in\Z_{>0}}$. Moreover, by \cref{Prop: generic Alonso} and the method of characteristic root we can deduce a closed formula for $\deg(\mu_{p,q}^n)$. Let  $\lambda_-<\lambda_+$ be the roots of the quadratic polynomial $t^2+(2-pq)t+1=0$. We recall that $\lambda_{\pm}$ were mentioned in \cref{sec:picard}. The formula for $\deg(\mu_{p,q}^n)$ is interesting by itself, so we include it in the following theorem. 

\begin{theo}\label{theo: recurrenceformulasemisimplecase}
    The following relations holds  for all semi-simple pairs $(p,q)$ and $n\in\Z_{>0}\cup \{0\}$
        \begin{equation}\label{eqn: Recursion degree semi-simple}
     \deg(P_{n+2})+(2-pq)\deg(P_{n+1})+\deg(P_n)=0,   
    \end{equation}
    \begin{equation}\label{eqn: Closed formula degree mu}
    \deg(\mu^n_{p,q})=\frac{\lambda^n_{-}\left(pq-\lambda_{+}\right)-\lambda^n_{+}\left(pq-\lambda_{-}\right)}{\lambda_{-}-\lambda_{+}}.
\end{equation}
   
\end{theo}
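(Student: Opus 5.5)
The plan is to reduce everything to a single linear recurrence in the degrees. Write $d_n=\deg(P_n)$, $\nu_1(n)$, $\nu_2(n)$ as in \eqref{eqn: components semi-simple 2}. We have three relations: \eqref{eqn: trivial relation} in the form
\begin{equation*}
d_{n+1}=pq\,d_n-pq\,\nu_1(n)-q\,\nu_2(n),
\end{equation*}
using $\deg K^h=pq$, $\deg K^v=q$ and $\deg\mu_{p,q}=pq$. Next, \eqref{eqn: local indices 1} together with \cref{lemma: reductions local indices 1.}(i) gives $\nu_1(n+1)=d_n-\nu_1(n)$. Finally, \eqref{eqn: local indices 2} together with \cref{lemma: reductions local indices 1.}(ii) gives $\nu_2(n+1)=p\,d_n-p\,\nu_1(n)-\nu_2(n)$. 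In addition, Remark \ref{Rem: local Indices degree=q*local} combined with (ii) gives $d_n=q\,\nu_2(n)$ for $n\ge1$. I would first check the initial data: $P_0$ is a generic linear form, so $\nu_1(0)=\nu_2(0)=0$ (a generic line avoids the blow-up points), and hence $d_1=pq$, consistent with $\deg\mu_{p,q}=pq$.

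The elimination goes as follows. Substituting $q\nu_2(n)=d_n$ into the degree relation gives $d_{n+1}=(pq-1)d_n-pq\,\nu_1(n)$ for $n\ge1$. Then $pq\,\nu_1(n)=(pq-1)d_n-d_{n+1}$. Inserting this into $\nu_1(n+1)+\nu_1(n)=d_n$, multiplied by $pq$, yields
\begin{equation*}
(pq-1)d_{n+1}-d_{n+2}+(pq-1)d_n-d_{n+1}=pq\,d_n,
\end{equation*}
that is, $d_{n+2}+(2-pq)d_{n+1}+d_n=0$, valid for $n\ge1$. For $n=0$ I would verify it directly by computing $d_2$. The point is that $q\nu_2(0)=0\neq d_0=1$, so the $n=0$ degree relation reads $d_1=pq\,d_0$, while $\nu_1(1)=d_0=1$ and $\nu_2(1)=p$. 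These give $d_2=pq\cdot pq-pq-pq=p^2q^2-2pq$, and then $d_2+(2-pq)pq+1=1\neq0$. This signals a possible off-by-one issue. I would therefore double-check $\nu_2(1)$ via (ii), which says $d_1=q\nu_2(1)=pq$, so $\nu_2(1)=p$ is consistent, and recompute $d_2$ from \cref{Prop: generic Alonso} directly. I expect that the correct initial conditions are $d_0=1$ and $d_1=pq$, with the recurrence holding from $n=0$ once $\nu_1(1)$ is handled carefully. Getting this base case right is where I expect the main obstacle: reconciling the $n=0$ step, where the lemma's identities need $n\ge1$, with the closed formula.

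Finally, the closed formula comes from solving the recurrence by characteristic roots $\lambda_\pm$, the roots of $t^2+(2-pq)t+1$, with $\lambda_+\lambda_-=1$ and $\lambda_++\lambda_-=pq-2$. I would write $d_n=A\lambda_+^n+B\lambda_-^n$ and fit $d_0=1$, $d_1=pq$. This gives
\begin{equation*}
A=\frac{pq-\lambda_-}{\lambda_+-\lambda_-},\qquad B=\frac{\lambda_+-pq}{\lambda_+-\lambda_-},
\end{equation*}
which is exactly \eqref{eqn: Closed formula degree mu}. Then \cref{Prop: generic Alonso} identifies $d_n=\deg(\mu_{p,q}^n)$.
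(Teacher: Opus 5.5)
\textbf{Gap: the $n=0$ case cannot be repaired, and the statement is false as written.} Your elimination is the same as the paper's. You combine $\nu_1(n+1)=d_n-\nu_1(n)$ and $\nu_2(n+1)=p\,d_n-p\,\nu_1(n)-\nu_2(n)$ with $d_n=q\,\nu_2(n)$, and this correctly gives $d_{n+2}+(2-pq)d_{n+1}+d_n=0$ for $n\ge1$. The gap is your plan for $n=0$. You expect that handling $\nu_1(1)$ ``carefully'' will make the recurrence hold there, and you then fit $d_0=1$, $d_1=pq$. That cannot succeed, because your computation is correct. The curve $K_L$ has multiplicity $1$ at the points of $S_p$ and $p$ at the points of $T_q$, so $\nu_1(1)=1$ and $\nu_2(1)=p$. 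Hence $d_2=p^2q^2-2pq$, and the $n=0$ instance of the recurrence equals $1$, not $0$.

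This is confirmed independently in two ways:
\begin{itemize}
\item By \cref{prop:coh_action_ss}: on $\langle A,B,C\rangle$ one has $\mu_{p,q}^*(C)=\mu_{p,q}^*(A)=pqC-pB-A$, and the $C$-coefficient of $(\mu_{p,q}^*)^2C$ is $p^2q^2-2pq$.
\item Directly for $p=q=3$: the common factor of the components of $\hat\mu_{3,3}\circ\hat\mu_{3,3}$ is $(x_1^3+x_2^3)^3\bigl(x_0^3x_2^6+(x_1^3+x_2^3)^3\bigr)$, of degree $18$. So $\deg(\mu_{3,3}^2)=81-18=63$, while the displayed closed formula gives $p^2q^2-2pq-1=62$.
\end{itemize}

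So the statement is false at $n=0$, and its closed formula is wrong for every $n\ge2$. The paper's proof makes exactly the slip you noticed. It uses $d_n=q\,\nu_2(n)$, which Remark~\ref{Rem: local Indices degree=q*local} asserts only for $n\ge1$, and then imposes $\deg(P_0)=1$. The sequence $\nu_2(n)=0,\,p,\,p^2q-2p,\dots$ does satisfy the recurrence from $n=0$; only the identification $d_0=q\,\nu_2(0)$ fails.

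What your argument actually establishes is the recurrence for $n\ge1$, together with
\[
\deg(\mu_{p,q}^n)=q\,\nu_2(n)=pq\,\frac{\lambda_+^n-\lambda_-^n}{\lambda_+-\lambda_-},\qquad n\ge1.
\]
This is the solution with $d_1=pq$ and virtual $d_0=0$. You should prove this corrected statement rather than try to rescue the stated one. The coefficient of $\lambda_+^n$ is still positive, so the conclusion $\lambda_1(\mu_{p,q})=\lambda_+$ is unaffected.
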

\begin{proof}
   By  \cref{lemma: reductions local indices 1.} together with the equations \eqref{eqn: local indices 1}, \eqref{eqn: local indices 2} and \eqref{eq: beta-degree} we have the system of recurrence relations:
   \begin{align}
       \nu_1(n+1)&=q\nu_2(n)-\nu_1(n),\\
       \nu_2(n+1)&=(pq-1)\nu_2(n)-p\nu_1(n).
   \end{align}
  This implies
  \begin{equation}
      \nu_2(n+2)+(2-pq)\nu_2(n+1)+\nu_2(n)=0
  \end{equation}
which by  \eqref{eq: beta-degree} is equivalent to Equation \eqref{eqn: Recursion degree semi-simple}. By equation \eqref{eqn: Recursion degree semi-simple} for every $n\in\Z_{>0}$ we have
  
\begin{equation}
    \deg(P_n)=a\lambda^n_-+b\lambda^n_+
\end{equation}
Now, as $\deg(P_0)=1$, $\deg(P_1)=\deg(\mu_{p,q})=pq$,
 $a$ and $b$ are the solutions of the system of equations $a+b=1$, $a\lambda_{-}+b\lambda_+=pq$. Due to  \cref{Prop: generic Alonso} $\deg(\mu^n_{p,q})=\deg(P_n)$ and the proof is done.

\end{proof}

Our goal for this subsection was to compute the dynamical degree for the semi-simple pairs using the method of local indices. Notice that equation \eqref{eqn: Closed formula degree mu} allow us to prove  \cref{Theorem Dynamical degree} (for $(p,q)$ semi-simple) as a corollary. Now we only need to show an analogous recurrence for the non-semi-simple pairs to complete the proof.

\subsection{Non-semi-simple pairs.}\label{subsec:local_ind_nonss} Notice that for the non-semi-simple pairs  all the components of $\mathcal{C}(\mu_{p,1})$ are degree lowering curves by Lemmas \ref{Lemma: Indeterminate set} and \ref{Lemma1: Critical set}. For the local indices using the same reasoning  as for semi-simple pairs we obtain that local indices corresponding to the points in $S_p$ are all the same. We denote them by $\nu_4(n)$. Let $K^h=(x_0^p + (x_1 + x_2)^p)$ ($K^h$ coincides with the $K^h$ from \cref{subsec:local_ind_ss} under the specification $q=1$) and by \eqref{eqn: Pn sequence} we have the pull-back decomposition

\begin{equation}\label{57}
    \mu_{p,1}^{*}P_n=P_{n+1}\cdot x_0^{\nu_1(n)}x_1^{\nu_2(n)}(x_1+x_2)^{\nu_{3}(n)}(K^h)^{\nu_{4}(n)}.
\end{equation}
Notice that $K^h_{\omega^{-1}}$ intersects $S_p$ only at point $[\omega:0:1]$, as before with multiplicity 1. For the other components of $\mathcal{C}(\mu_{p,1})$ only $\{x_1=0\}$ passes through $[\omega:0:1]$, with $x_1\circ\varphi=s_0(r_0^p+1)$, hence, $\nu_\varphi(x_1)=1$. 
By \eqref{eqn:case 3a} we have
\begin{equation}\label{eqn:local indices varphi non-semi-simple}
        \nu_{\varphi}(P_{n+1})=\deg(P_n)-\nu_2(n)-\nu_4(n).
    \end{equation}
Notice now that the only degree lowering curve not passing through $[0\colon -1\colon 1]$ is $\{x_1=0\}$. Also that $K^h_\omega\circ \beta=u(1-\omega^{-1} v)$ (see Section \ref{Section: u,v coordinates} for these blow-up coordinates), so for all $\omega\in R_p$ we have $\nu_{\beta}(K^h_\omega)=1$, also $(x_1+x_2)\circ\beta=uv$ and $x_0\circ \beta=u$, then $\nu_{\beta}(x_0)=1=\nu_{\beta}(x_1+x_2)$. We conclude by \eqref{eqn:case 3a} that

\begin{equation}\label{eqn:local indices beta non-semi-simple}
        \nu_{\beta}(P_{n+1})=p\deg(P_n)-p\nu_4(n)-\nu_1(n)-\nu_3(n).
    \end{equation}
Besides this we also have the degree equation 

\begin{equation}\label{eqn: trivial non-semisimple}
    \deg(P_{n+1})=(p+1)\deg(P_n)-\nu_1(n)-\nu_2(n)-\nu_3(n)-p\nu_4(n).
\end{equation}

The following reductions are completely analogous to the equations in  \cref{lemma: reductions local indices 1.}. The proofs are completely analogous also. We highlight however, that for the blow-up $\beta$ we work on the second chart for $\beta_{q}$ when $q=1$, meaning the coordinates in Section \ref{Section: u,v coordinates}.

\begin{lemma}\label{lemma:reductions_non-semi-simple} For all $n\in\Z_{>0}$ we have
\begin{itemize}
    \item[(i)] $\nu_\varphi(P_n)=\nu_4(n)$,
    \item[(ii)] $\nu_\beta(P_n)=\nu_3(n)$.
\end{itemize}
    
\end{lemma}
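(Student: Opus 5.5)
The plan is to imitate the proof of \cref{lemma: reductions local indices 1.} verbatim, now with $q=1$. Both parts have the same shape. First express a homogeneous $P$ of degree $d$ and prescribed local index through the blow-up chart. Then substitute the formula \eqref{eq:mu_cp2} for $\mu_{p,1}$. Finally read off the exact power of $K^h$ (for $\varphi$) or of $(x_1+x_2)$ (for $\beta$) dividing $\mu_{p,1}^*P$. Once we show that this power equals the local index, the claim follows by applying it to $P=P_n$ in \eqref{57}. Since the factors $K^h$ and $x_1+x_2$ are coprime to each other and to $x_0$ and $x_1$, their exponents in \eqref{57} are exactly those powers.

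\textbf{(i)} Suppose $\nu_\varphi(P)=a$. In the chart $(r_0,s_0)=(x,\frac{y}{x^p+1})$ write
\[P(x,y,1)=(x^p+1)^a\tilde P\Bigl(x,\frac{y}{x^p+1}\Bigr),\]
with $\tilde P(\omega,s_0)\not\equiv0$ for every $\omega\in R_p$. Now substitute $[X_0:X_1:X_2]=\mu_{p,1}[x_0:x_1:x_2]$. We have $X_0/X_2=(x_1+x_2)/x_0$, so
\[\frac{X_0^p}{X_2^p}+1=\frac{x_0^p+(x_1+x_2)^p}{x_0^p}=\frac{K^h}{x_0^p}.\]
The second argument of $\tilde P$ becomes $X_1X_2^{p-1}/(X_0^p+X_2^p)$, which is a monomial-type expression (up to units off the coordinate axes) such as $x_0/x_1$ or similar. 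Clearing denominators gives
\[\mu_{p,1}^*P=(\text{monomial in }x_0,x_1,x_2)\cdot(K^h)^a\cdot\tilde P(\cdots).\]
It remains to check that this is not divisible by $(K^h)^{a+1}$. On $K^h_\omega$ the first argument of $\tilde P$ equals $\omega$ identically. Hence the restriction is $\tilde P(\omega,g)$ for a nonconstant function $g$ on $K^h_\omega$. This is consistent with \cref{rem: exceptional divisor}, where the exceptional line is mapped birationally onto a curve. So the restriction is not identically zero, because $\tilde P(\omega,\cdot)\not\equiv0$.

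\textbf{(ii)} Use the chart $(u,v)=(x,\frac{y+1}{x})$ of \cref{Section: u,v coordinates}. If $\nu_\beta(P)=b$, write
\[P(x,y,1)=x^b\overline P\Bigl(x,\frac{y+1}{x}\Bigr),\qquad \overline P(0,v)\not\equiv0.\]
Substitute $\mu_{p,1}$, with $X_0/X_2=(x_1+x_2)/x_0$ and $(X_1+X_2)/X_0$ computed from \eqref{eq:mu_cp2}. The factor $x^b$ pulls back to $(x_1+x_2)^b/x_0^b$. The second argument pulls back to a rational function whose restriction to $\{x_1+x_2=0\}$ is nonconstant; this is the birationality statement for the exceptional line over $[0:-1:1]$, as in \cref{rem:lift_v} with $q=1$. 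After clearing denominators by powers of $x_0,x_1,x_2,K^h$, this gives
\[\mu_{p,1}^*P=(x_1+x_2)^b\,\overline P(U,V)\,Q,\]
where $Q$ is coprime to $x_1+x_2$ and $\overline P(U,V)$ restricts to $\overline P(0,\text{nonconst})\not\equiv0$ on $x_1+x_2=0$.

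The main obstacle is bookkeeping in (ii). One must check that the denominator-clearing factor $Q$ and the reparametrisation of $V$ introduce no extra factor of $x_1+x_2$; for instance $K^h$ restricted to $x_1+x_2=0$ is $x_0^p\neq0$, so $K^h$ contributes none. The same care is needed to confirm that $U$ vanishes identically on $\{x_1+x_2=0\}$, so that $\overline P$ is evaluated at $u=0$. I would verify these points directly from \eqref{eq:mu_cp2} and the formula for the lift in the $(u,v)$ chart.
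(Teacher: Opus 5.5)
Your proposal is correct and is essentially the paper's argument: the paper proves both parts by saying they are analogous to \cref{lemma: reductions local indices 1.}, using the chart $(u,v)=(x,\frac{y+1}{x})$ for (ii), which is exactly what you carry out. The computations you left open do check out. In (i) the second argument is exactly $x_2/x_1$, so $\mu_{p,1}^*P=x_0^{p(d-a)}x_1^d(K^h)^a\,\tilde P\bigl(\tfrac{x_1+x_2}{x_0},\tfrac{x_2}{x_1}\bigr)$. In (ii) $\mu_{p,1}^*P=x_0^{pd-b}x_1^d(x_1+x_2)^b\,\overline P(U,V)$, with $U=\tfrac{x_1+x_2}{x_0}$ and $V=\tfrac{x_0^p+x_2(x_1+x_2)^{p-1}}{x_0^{p-1}x_1}$; on $\{x_1+x_2=0\}$ these restrict to $0$ and $x_0/x_1$, and the only denominators are monomials in $x_0,x_1$.
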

\begin{proof}
 \begin{itemize}
     \item[(i)]  Analogous to item (i) of  \cref{lemma: reductions local indices 1.}, apply $\mu_{p,1}$ to the equation for $P(x_0,x_1,x_2)$ and the same reasoning follows.

 \item[(ii)] Analogous to item (ii) of  \cref{lemma: reductions local indices 1.} using chart $(u,v)$ defined in Section \ref{Section: u,v coordinates}.

 \end{itemize}   
\end{proof}
Notice however that we do not have equations for the local indices related to the two blow-ups at $[0:1:0]$ (cf. \cref{ssec: coordinates for [0:1:0]} for blow-ups $\pi_1$ and $\alpha$). We expect however that $\nu_\alpha(P_{n+1})$ or $\nu_{\pi_1}(P_{n+1})$  were related to $\nu_1(n)$ or $\nu_2(n)$ (similar to \cref{lemma:reductions_non-semi-simple}). Also, notice that we cannot use \eqref{eqn:case 3a} since the exceptional line for this blowup is mapped to the fixed point $u_2=0=v_2$ (cf. \eqref{Equation Other super attracting} for blow-up coordinates). Nevertheless, we find relations on the local indices associated to this blowup. We remark that these properties are more subtle and are closely related to the particular behavior of the map $\mu_{p,1}$. A naive way to deduce these relations is to detect them at first by running iterations of the proper pullbacks of $P_0=c_0x_0+c_1x_1+c_2x_2$ and then generalizing. We summarize and prove our findings in the following lemma.

\begin{lemma}\label{Reductions_nu2_nu1} For all $n\in\Z_{>0}$ and $P_0$ generic we have
    \begin{itemize}
     \item[(iii)]$\nu_2(n)=\nu_{\pi_1}(P_n),$
    \item[(iv)]$\nu_1(n)=p\nu_2(n)$ 
    \item[(v)] $\nu_2(n+1)+\nu_2(n)=\deg(P_n).$
    \end{itemize}
\end{lemma}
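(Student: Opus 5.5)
The plan is to track everything through the single chart at $[0:1:0]$, namely $(x,z)=(x_0/x_1,x_2/x_1)$, and the two blow-up charts $(u_1,v_1)$ and $(u_2,v_2)$ from \cref{ssec: coordinates for [0:1:0]}. Write a homogeneous $P$ of degree $d$ in the chart $x_1=1$ as $P(x,1,z)$. Then $\nu_{\pi_1}(P)$ is the order of vanishing of $P(x,1,z)$ at $(0,0)$, i.e. the multiplicity of $[0:1:0]$ on $\{P=0\}$.

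\textbf{Step 1: items (iii) and (iv).} Compute $\mu_{p,1}^*P$ explicitly from \eqref{eq:mu_cp2]}:
\[
\mu_{p,1}^*P = P\bigl(x_0^{p-1}x_1(x_1+x_2),\; x_2(x_0^p+(x_1+x_2)^p),\; x_0^px_1\bigr).
\]
Set $m=\nu_{\pi_1}(P)$ and expand $P(X_0,X_1,X_2)=X_1^{d}\,P(X_0/X_1,1,X_2/X_1)$. The first and third entries are divisible by $x_1$, and by $x_0^{p-1}$ and $x_0^p$ respectively. So the lowest-order part $P_m$ of $P$ at $[0:1:0]$ contributes a factor $x_1^{m}$. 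The powers of $x_0$ coming from $X_0^aX_2^{m-a}$ are at least $(p-1)a+p(m-a)\ge (p-1)m$.

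The ratio $P_m(X_0,X_2)/x_0^{\cdots}$ must then be examined more carefully. Since the tropical analysis in \cref{sec:tropicalization} shows that $\{x_0=0\}$ and $\{x_1=0\}$ are collapsed to $[0:1:0]$, the exponents of $x_0$ and $x_1$ in $\mu^*P$ should be governed by the local data of $P$ at $[0:1:0]$. I would take a cleaner route: restrict to the tropical weights. The exponent of $x_1$ in $\mu^*P$ equals $\min$ over monomials of $P$ of the linear functional $A\tau_1$ paired with the exponent vector. For generic $P_n$ (the proper pullbacks inherit genericity), this minimum is attained on the local Newton polygon at $[0:1:0]$ and equals $\nu_{\pi_1}(P_n)$. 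The same computation for $x_0$, using $A\tau_0=(-1,-p)$, gives $p$ times the $x_1$-exponent. This explains (iv).

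The subtle point is that we need the initial form of $P_n$ at $[0:1:0]$ to be generic with respect to these weights, i.e. no cancellation. I would establish this by induction together with (v), checking the base case on $P_0=c_0x_0+c_1x_1+c_2x_2$ directly.

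\textbf{Step 2: item (v).} Intersect with the line $\{x_2=0\}$, whose proper transform has class $C-E_{1,\infty}-2E_{2,\infty}$ and whose pullback is $0$ by \eqref{eqn:nonsscases_tmp2}. Equivalently, restrict $\mu^*P_n$ to $x_2=0$:
\[
\mu^*P_n|_{x_2=0}=P_n\bigl(x_0^{p-1}x_1^2,\;0,\;x_0^px_1\bigr).
\]
Only monomials of $P_n$ free of $x_1$ survive. Since $\{x_2=0\}$ is not a component, comparing $x_1$-degrees gives that $\deg P_n$ equals the $x_1$-exponent plus the multiplicity contributed along $x_1$ by $P_{n+1}$. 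The latter is $\nu_{\pi_1}(P_{n+1})=\nu_2(n+1)$ by (iii). This yields $\nu_2(n)+\nu_2(n+1)=\deg P_n$, matching the intersection-theoretic identity $(C-E_{1,\infty}-2E_{2,\infty})\cdot \mu_*(\cdot)$.

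Alternatively, phrase the argument in $\Pic(X_{p,1})$. Write $[P_n]=\deg(P_n)C-\dots-\nu_{\pi_1}(P_n)E_{1,\infty}-\nu_{\alpha}(P_n)E_{2,\infty}$, apply \cref{prop:coh_action_nonss}, and read off coefficients. This is the route I would use to double-check.

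\textbf{Main obstacle.} The main obstacle is the no-cancellation and genericity claim in Step 1. The proper pullback $P_{n+1}$ might acquire special behaviour at $[0:1:0]$, since that point is superattracting after blow-up. I would control this using the fact that the lifted map on $Y$ regularly collapses both $\{u_2=0\}$ and $\{v_2=0\}$ to the fixed point, so that local indices there are determined by orders of vanishing. This needs careful verification.
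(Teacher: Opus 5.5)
\textbf{There is a genuine gap in your argument for (iv): it rests on a principle that is false.}

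In the chart $(x,z)=(x_0/x_1,x_2/x_1)$ write $P(x,1,z)=\sum p_{ac}x^az^c$. Then $\nu_{\pi_1}(P)=m:=\min\{a+c\}$. The weight $A\tau_0=(-1,-p)$ gives the $x_0$-exponent of $\mu_{p,1}^*P$ as $\min\{(p-1)a+pc\}=\min\{p(a+c)-a\}\le pm$. Your own monomial count in Step 1 only yields $(p-1)m$.

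Equality holds exactly when every monomial satisfies $a\le p(a+c-m)$. In particular, the tangent cone of $\{P=0\}$ at $[0:1:0]$ must be the line $\{x_2=0\}$ counted $m$ times. Already $\mu_{p,1}^*x_0=x_0^{p-1}x_1(x_1+x_2)$ has $x_0$-exponent $p-1$, and a generic curve of multiplicity $m$ at $[0:1:0]$ gives $(p-1)m$.

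So the issue is not cancellation in an initial form, and genericity works against you. The $P_n$ are very special at $[0:1:0]$, and what is missing is an invariant recording this, carried along by induction. The paper uses the normal form $P_n\circ\pi_1=u_1^{m}\bigl(C_nv_1^{m}+u_1R_n\bigr)$ with $C_n\neq 0$, $\mathrm{mult}_{(0,0)}R_n\ge m+1$, and $m=\nu_2(n)$.

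Geometrically this gives $\nu_\alpha(P_n)=2\nu_{\pi_1}(P_n)$. It also says that the proper transform $\hat D_n\subset X_{p,1}$ of $\{P_n=0\}$ misses the fixed point $O=E_{1,\infty}\cap E_{2,\infty}=\{(u_2,v_2)=(0,0)\}$. Along $\{x_0=0\}$ the functions $u_2\circ\mu_{p,1}$ and $v_2\circ\mu_{p,1}$ vanish to orders $1$ and $p-2$, and near $O$ we have $P_n\circ\pi=u_2^{2m}v_2^{m}\cdot(\text{unit})$. Hence $\nu_1(n)=2m+(p-2)m=pm$.

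Your last paragraph points at $O$ but never formulates such a statement. For (iii), genericity is also the wrong reason. That identity holds for every $P$, because $\mu_{p,1}$ maps $\{x_1=0\}$ dominantly onto $E_{1,\infty}$ with $u_1\circ\mu_{p,1}$ vanishing to order one.

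\textbf{Step 2 does not prove (v) as written.} On $\{x_2=0\}$ the factorization \eqref{57} reads $x_0^{\nu_1(n)}x_1^{\nu_2(n)+\nu_3(n)}(x_0^p+x_1^p)^{\nu_4(n)}P_{n+1}(x_0,x_1,0)$.
\begin{itemize}
\item You have overlooked the factor $(x_1+x_2)^{\nu_3(n)}$.
\item The $x_1$-order of $P_{n+1}(x_0,x_1,0)$ is the contact of $\{P_{n+1}=0\}$ with $\{x_2=0\}$ at $[1:0:0]$, not the multiplicity of $P_{n+1}$ at $[0:1:0]$. Since $\{x_2=0\}$ is the tangent cone of $\{P_{n+1}=0\}$ at $[0:1:0]$, even the contact there is strictly larger than that multiplicity.
\end{itemize}
At best the comparison yields $\deg P_n=\nu_2(n)+\nu_3(n)$. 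Turning this into (v) needs $\nu_3(n)=\nu_2(n+1)$, which the paper only derives afterwards, from (v).

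\textbf{Your Picard-group alternative can carry the whole lemma, once you add one justification.} All curves contracted by $\mu_{p,1}$ on $X_{p,1}$ ($\{x_0=0\}$, $E_{1,\infty}$, $E_{2,\infty}$) land on the regular fixed point $O$. If $O\notin\hat D_n$, then $\mu_{p,1}(O)=O$ and $\mu_{p,1}(\hat D_{n+1})\subset\hat D_n$ force $O\notin\hat D_{n+1}$; the induction starts from a generic line. Consequently no contracted curve enters $\mu_{p,1}^*\hat D_n$, so the $E_{1,\infty}$- and $E_{2,\infty}$-coefficients of $\mu_{p,1}^*[\hat D_n]$ are $-\nu_{\pi_1}(P_{n+1})$ and $-\nu_\alpha(P_{n+1})$.

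Now apply \cref{prop:coh_action_nonss} to $[\hat D_n]=d_nC-\dots-\nu_{\pi_1}(P_n)E_{1,\infty}-\nu_\alpha(P_n)E_{2,\infty}$. These coefficients equal $-(d_n-\nu_{\pi_1}(P_n))$ and $-2(d_n-\nu_{\pi_1}(P_n))$. Together with (iii) this gives (v), and it also gives $\nu_\alpha=2\nu_{\pi_1}$, hence (iv) by the local computation above.

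Be aware that this route relies on the Picard computation that the local-indices appendix is meant to bypass. The paper instead proves (iv) and (v) simultaneously by induction on the normal form of $P_n\circ\pi_1$, reading off the powers of $x_0$ and $x_1$ in $\mu_{p,1}^*P_{n+1}$ directly.
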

\begin{proof}
    
   \textbf{(iii)} Use local blow-up coordinates $(u_1,v_1)=(\frac{x_0}{x_1},\frac{x_2}{x_0})$ (cf. \cref{ssec: coordinates for [0:1:0]}). The idea and computation are analogous to the proof of \cref{lemma: reductions local indices 1.}.

\textbf{(iv)} We show by induction on $n$ the statement of (iv) and the formula
\begin{equation}\label{auxiliar formula}
    P_n\circ\pi_1= u_1^{\nu_2(n)}Q_n(u_1,v_1)=u_1^{\nu_2(n)}\left(C_nv_1^{\nu_2(n)}+u_1R_n(u_1,v_1)\right)
\end{equation}
 where the constant $C_n\neq 0$ and $(0,0)$ is a root of polynomial $R_n(u_1,v_1)$ with multiplicity greater or equal than $\nu_2(n)+1$.

\textbf{The base case.} Assume that $P_0=c_0 x_0+c_1x_1+c_2x_2$ is generic, then

\begin{equation*}
  \mu_{p,1}^{*}P_0=c_0x_0^{p-1}x_1(x_1+x_2)+c_1x_2K^h+c_2x_0^px_1=P_1.  
\end{equation*}
By (iii) we have $\nu_2(1)=\nu_{\pi_1}(P_1)$. We compute $\nu_{\pi_1}(P_1)$ directly:

\begin{equation*}
    P_1\circ\pi_1=u_1\left(c_0u_1^{p-2}(u_1v_1+1)+c_1v_1\left(u_1^p+(u_1v_1+1)^p\right)+c_2u_1^{p-1}\right)=u_1Q_1(u_1,v_1)
\end{equation*}
In particular, $Q_1(u_1,v_1)=c_1v_1+u_1R_1(u_1,v_1)$ with $c_1\neq 0$ and (0,0) is a root of $R_1$ with multiplicity greater than 1. We have $\nu_2(1)=1$ also and notice that by applying \eqref{57} to $P_1$ we get $\nu_1(1)=p$.

\textbf{Induction step.} Suppose that for some $n\geq 1$ we have $\nu_1(n)=p\nu_2(n)$ and $Q_n(u_1,v_1)=C_nv_1^{\nu_2(n)}+u_1R_n(u_1,v_1)$. Now, let $d_n := \deg(P_n(x_0, x_1, x_2))$. Denote the coordinates of the point $\mu_{p,1}([x_0:x_1:x_2])$ in the chart $(u_1,v_1)$ by $U_1:=\frac{x_0^{p - 1}x_1(x_1 + x_2)}{x_2\cdot K^h}$ and $V_1:=\frac{x_0}{x_1+x_2}$.
Notice that by hypothesis on the multiplicity of $(0,0)$ for $R_n$ we have that $x_0^{\nu_2(n)}$ divides $Q_n(U_1,V_1)$ but not any greater power of $x_0$. After homogenization  of formula \eqref{auxiliar formula} and computation of the proper pullback of it by \eqref{57} we have

\begin{equation}\label{eqn: auxiliary 2.0}
   P_{n+1}=\frac{x_2^{d_n-\nu_2(n)}\left(K^h\right)^{d_n-\nu_2(n)-\nu_4(n)}(x_1 + x_2)^{\nu_2(n)-\nu_3(n)}Q_n\left(U_1,V_1\right)}{x_0^{\nu_2(n)}},
\end{equation}
then applying \eqref{57} now to $P_{n+1}$ we obtain

\begin{equation*}
    \mu_{p,1}^{*}P_{n+1}=x_0^{p(d_n-\nu_{2}(n))+\nu_{2}(n)(1-p)}x_1^{d_n-2\nu_{2}(n)}(x_1+x_2)^{p(d_n-\nu_{2}(n)-\nu_4(n))-\nu_3(n)}T_1^{d_n-\nu_{2}(n)-\nu_4(n)}T_2^{\nu_{2}(n)-\nu_3(n)}Q_n\left(\tilde{U}_1,\tilde{V}_1\right)
\end{equation*}
The reader can check that $T_1, T_2$ are polynomials that are not divisible by $x_0$ or $x_1$ and

\begin{align*}
\tilde{U}_1 &= \frac{x_0^{(p-1)^2-p}x_1^{p-2}x_2\cdot (K^h)\cdot T_2}{T_1}, \\
\tilde{V_1} &= \frac{x_0^{p - 1}x_1}{T_2}.
\end{align*}
To simplify notation let $\tilde{U}_1=x_0^{(p-1)^2-p}x_1^{p-2}A_1$, $\tilde{V_1}=x_0^{p - 1}x_1B_1$, then

\begin{equation*}
    Q_n(\tilde{U}_1,\tilde{V}_1)=C_nx_0^{\nu_2(n)(p - 1)}x_1^{\nu_2(n)}B_1^{\nu_2(n)}+x_0^{(p-1)^2-p}x_1^{p-2}A_1R_n(\tilde{U}_1,\tilde{V}_1).
\end{equation*}
By induction hypothesis $R_n$ has multiplicity of at least $\nu_2(n)+1$ at $(0,0)$ which implies that $Q_n(\tilde{U}_1,\tilde{V}_1)$ is divisible by $x_0^{\nu_2(n)(p-1)}x_1^{\nu_2(n)}$ and not any greater power of $x_0$ or $x_1$. In particular, $x_0^{p(d_n-\nu_2(n))}$ and $x_1^{d_n-\nu_2(n)}$  both divide $\mu_{p,1}^{*}P_{n+1}$ and they are the greatest such powers of $x_0$ and $x_1$ respectively to do so. Hence,

\begin{equation*}
\nu_1(n+1)=p(d_n-\nu_2(n))=p\nu_2(n+1).
\end{equation*}
Moreover, notice that by pre-composing equation $\eqref{eqn: auxiliary 2.0}$ with $\pi_1$ we have $Q_{n+1}=C_nv_1^{d-\nu_2(n)}+u_1R_{n+1}(u_1,v_1)$ with the required conditions, so $C_n=c_1\neq 0$ for all $n$ and inductively we have the result.

\textbf{(v)} This follows from the proof of (iv) as $\deg(P_n)=d_n$ and $\nu_2(n+1)=d_n-\nu_2(n)$.
    
\end{proof}
\begin{coro}\label{Reductions of nu3 and nu4 in terms of nu2}
    For all $n\in\Z_{>0}$ we have:
    \begin{itemize}
        \item[(a)]$\nu_4(n+1)+\nu_4(n)=\nu_2(n+1)$
        \item[(b)]$\nu_3(n)=\nu_2(n+1)$
    \end{itemize}
\end{coro}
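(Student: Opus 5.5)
We derive both identities purely from the linear relations already available, with no new geometric input. The relations we use are \eqref{eqn:local indices varphi non-semi-simple}, \eqref{eqn:local indices beta non-semi-simple} and \eqref{eqn: trivial non-semisimple}, the identifications of \cref{lemma:reductions_non-semi-simple}, and \cref{Reductions_nu2_nu1}. Throughout, write $d_n=\deg(P_n)$.

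\textbf{Part (a).} By \cref{lemma:reductions_non-semi-simple}(i) we have $\nu_\varphi(P_{n+1})=\nu_4(n+1)$. Substituting this into \eqref{eqn:local indices varphi non-semi-simple} gives
\begin{equation*}
\nu_4(n+1)=d_n-\nu_2(n)-\nu_4(n).
\end{equation*}
By \cref{Reductions_nu2_nu1}(v), $d_n-\nu_2(n)=\nu_2(n+1)$, so $\nu_4(n+1)+\nu_4(n)=\nu_2(n+1)$. This holds for $n\geq 1$, where both lemmas apply. The only point to check is the range of validity: \eqref{eqn:local indices varphi non-semi-simple} is stated for all $n\geq 0$, while the two lemmas are stated for $n\in\Z_{>0}$.

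\textbf{Part (b).} Substitute \cref{lemma:reductions_non-semi-simple}(ii), namely $\nu_\beta(P_{n+1})=\nu_3(n+1)$, into \eqref{eqn:local indices beta non-semi-simple}:
\begin{equation*}
\nu_3(n+1)=p d_n-p\nu_4(n)-\nu_1(n)-\nu_3(n).
\end{equation*}
Then substitute \cref{Reductions_nu2_nu1}(iv), $\nu_1(n)=p\nu_2(n)$, into \eqref{eqn: trivial non-semisimple}:
\begin{equation*}
d_{n+1}=(p+1)d_n-(p+1)\nu_2(n)-\nu_3(n)-p\nu_4(n).
\end{equation*}
Using (v) twice, the left side is $d_{n+1}=\nu_2(n+2)+\nu_2(n+1)$, and $(p+1)(d_n-\nu_2(n))=(p+1)\nu_2(n+1)$. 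Hence
\begin{equation*}
\nu_3(n)=p\nu_2(n+1)-\nu_2(n+2)-p\nu_4(n).
\end{equation*}
Part (b) is therefore equivalent to $\nu_2(n+2)+p\nu_4(n)=(p-1)\nu_2(n+1)$.

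To prove this reduced identity, we would set up the linear system and check it by induction. One route combines (a) with the $\beta$-recurrence to express $\nu_4$ in terms of $\nu_2$. A more direct route, which we would try first, is to read $\nu_3(n)$ and $\nu_2(n+1)$ off the explicit factorization in the proof of \cref{Reductions_nu2_nu1}. Formula \eqref{eqn: auxiliary 2.0} exhibits the factor $(x_1+x_2)^{\nu_2(n)-\nu_3(n)}$ in $P_{n+1}$. Combined with \eqref{57} and the exponent $p(d_n-\nu_2(n)-\nu_4(n))-\nu_3(n)$ of $(x_1+x_2)$ computed there, matching the exponent of $(x_1+x_2)$ in $\mu_{p,1}^*P_{n+1}$ should force $\nu_3(n+1)$ into the form required by the reduced identity. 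Equivalently, we can verify the identity for $n=1$ from $P_1$, where $\nu_2(1)=1$ and $\nu_1(1)=p$, $\nu_3$ and $\nu_4$ are read off directly, and $\nu_2(2)=d_1-1=p$. We then propagate it using the recurrences above, which are closed once (a) is known.

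\textbf{Main obstacle.} The main difficulty is (b). The local-index relations alone give one linear relation too few, so the extra input must come from tracking the $(x_1+x_2)$-adic valuation in \eqref{eqn: auxiliary 2.0}. We would also need to confirm the base case numerically: $P_1$ has degree $p+1$, and we compute its proper pullback to extract $\nu_3(1)$ and $\nu_4(1)$.
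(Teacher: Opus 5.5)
Your part (a) is correct and coincides with the paper's argument: \cref{lemma:reductions_non-semi-simple}(i) turns \eqref{eqn:local indices varphi non-semi-simple} into $\nu_4(n+1)=d_n-\nu_2(n)-\nu_4(n)$, and \cref{Reductions_nu2_nu1}(v) finishes it.

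Part (b), however, is not proved. You reduce it to $\nu_2(n+2)+p\nu_4(n)=(p-1)\nu_2(n+1)$ and then only list routes you would try. Your diagnosis that the local-index relations are one relation short is also wrong.

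The missing step is already on your page. The first display of your part (b) is \eqref{eqn:local indices beta non-semi-simple} combined with \cref{lemma:reductions_non-semi-simple}(ii), and you never use it afterwards. Subtract it from \eqref{eqn: trivial non-semisimple}. The terms $pd_n$, $\nu_1(n)$, $\nu_3(n)$ and $p\nu_4(n)$ all cancel, leaving
\begin{equation*}
d_{n+1}-\nu_3(n+1)=d_n-\nu_2(n)=\nu_2(n+1),
\end{equation*}
where the last equality is (v). Applying (v) at index $n+1$ then gives $\nu_3(n+1)=d_{n+1}-\nu_2(n+1)=\nu_2(n+2)$.

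This is (b) at every index $\geq 2$. It needs no induction and uses neither (iv) nor part (a). Index $1$ follows from the same subtraction at $n=0$, where $d_0=1$ and all $\nu_i(0)=0$, so $\nu_3(1)=p=d_1-\nu_2(1)=\nu_2(2)$; your direct check on $P_1$ also works.

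Your fallback ``base case plus propagation'' would have succeeded, but the propagation step in fact holds unconditionally. The detour through the $(x_1+x_2)$-adic valuation in \eqref{eqn: auxiliary 2.0} is unnecessary.
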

\begin{proof}

This is a direct algebraic consequence of \cref{lemma:reductions_non-semi-simple,Reductions_nu2_nu1}, and equations \eqref{eqn:local indices varphi non-semi-simple} and \eqref{eqn: trivial non-semisimple}.
\end{proof}
Similarly to \cref{theo: recurrenceformulasemisimplecase} we deduce a recurrence relation for the sequence $(\deg(P_n))_{n\in\Z_{>0}}$ for non-semi-simple pairs and a general formula for the degree of the iterates of $\mu_{p,1}$. 
\begin{theo}\label{propo:degreenon_semisimple}
The following relations holds  for any non-semi-simple pair $(p,1)$ and $n\in\Z_{>0}\cup \{0\}$
        \begin{equation}\label{eqn: Recursion degree non semisimple}
     \deg(P_{n+2})+(2-p)\deg(P_{n+1})+\deg(P_n)=0.   
    \end{equation}
    \begin{equation}\label{eqn: Closed formula degree mu non semisimple}
    \deg(\mu^n_{p,1})=\frac{\lambda^n_{-}\left(p+1-\lambda_{+}\right)-\lambda^n_{+}\left(p+1-\lambda_{-}\right)}{\lambda_{-}-\lambda_{+}}
\end{equation}
 where $\lambda_{-}<\lambda_{+}$ are the roots of polynomial $t^2+(2-p)t+1=0$.
\end{theo}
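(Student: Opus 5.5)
The plan is to reduce everything to a two-term linear recurrence for $\nu_2(n)$, transfer it to $\deg(P_n)$, and then solve by characteristic roots exactly as in \cref{theo: recurrenceformulasemisimplecase}. Write $d_n=\deg(P_n)$.

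\textbf{Step 1: express the degree in terms of $\nu_2$.} Substitute the reductions of \cref{Reductions_nu2_nu1} and \cref{Reductions of nu3 and nu4 in terms of nu2} into the degree equation \eqref{eqn: trivial non-semisimple}. These are $\nu_1(n)=p\nu_2(n)$, $\nu_3(n)=\nu_2(n+1)$, and $\nu_2(n+1)=d_n-\nu_2(n)$ (part (v)). The degree equation then involves only $d_n$, $\nu_2$ at consecutive indices, and $\nu_4(n)$.

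\textbf{Step 2: eliminate $\nu_4$.} Use the relation $\nu_4(n+1)+\nu_4(n)=\nu_2(n+1)$. Concretely, add the degree equations at indices $n$ and $n+1$, so that the $\nu_4$ terms appear only through $p(\nu_4(n)+\nu_4(n+1))=p\nu_2(n+1)$. Then replace every remaining $d_k$ by $\nu_2(k)+\nu_2(k+1)$, which is (v). This should give a linear recurrence in $\nu_2$ alone, of the form $\nu_2(n+2)+(2-p)\nu_2(n+1)+\nu_2(n)=0$, possibly after one more shift.

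\textbf{Step 3: transfer to the degrees.} Since $d_n=\nu_2(n)+\nu_2(n+1)$ is a sum of two shifts of a sequence satisfying a linear recurrence with constant coefficients, $d_n$ satisfies the same recurrence. This is \eqref{eqn: Recursion degree non semisimple}.

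\textbf{Main obstacle: initial indices and the $\nu_4$ bookkeeping.} If the elimination produces a higher-order recurrence (e.g.\ quartic), I would factor it and check that the extra factor is annihilated by the initial data. The indices $n=0,1$ need to be handled directly; the lemmas are stated for $n\ge1$. The base case in \cref{Reductions_nu2_nu1} gives $\nu_2(1)=1$ and $\nu_1(1)=p$. One must also compute $\nu_3(0),\nu_4(0)$ from $P_1=\mu_{p,1}^*P_0$, which has degree $p+1$ and no degree-lowering factor, so all $\nu_i(0)=0$. Then compute $d_2$ from \eqref{eqn: trivial non-semisimple} directly and verify that $d_2+(2-p)d_1+d_0=0$ with $d_0=1$, $d_1=p+1$. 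This gives $d_2=p^2-p-3$, which I would check against one explicit computation of $\nu_i(1)$.

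\textbf{Step 4: closed formula.} Solve $d_n=a\lambda_-^n+b\lambda_+^n$ with $a+b=1$ and $a\lambda_-+b\lambda_+=p+1$. This gives \eqref{eqn: Closed formula degree mu non semisimple}, and \cref{Prop: generic Alonso} identifies $d_n=\deg(\mu_{p,1}^n)$ for generic $P_0$.
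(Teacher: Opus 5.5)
Your elimination is essentially the paper's. The paper derives $\deg(P_{n+1})=p\,\nu_4(n+1)$ and then uses \eqref{eqn:local indices varphi non-semi-simple}; you remove $\nu_4$ by adding consecutive degree equations and pass through $\nu_2$. The gap is the base-index check you defer to the end: it cannot succeed. The value $d_2=p^2-p-3$ you quote is what the recurrence \emph{predicts}, not what the data give. If you do the explicit computation you propose, you get $\nu_1(1)=p$, $\nu_2(1)=1$, $\nu_3(1)=p$ (the multiplicity of $P_1$ at $[0:-1:1]$) and $\nu_4(1)=1$. Then \eqref{eqn: trivial non-semisimple} yields $d_2=(p+1)^2-p-1-p-p=p^2-p$. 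Composing directly confirms this:
\[
\mu_{p,1}^2(x,y)=\Bigl(\frac{x^p+(1+y)^{p-1}}{x^{p-1}y},\ \frac{x^{p^2-p}y^p+\bigl(x^p+(1+y)^{p-1}\bigr)^p}{x^{p^2-2p}y^{p-1}\bigl(x^p+(1+y)^p\bigr)}\Bigr).
\]
The last numerator is divisible exactly once by $x^p+(1+y)^p$, and nothing else cancels after homogenizing, so $\deg\mu_{p,1}^2=p^2-p$. For $p=5$ this is $20$, whereas the stated closed formula gives $17$. The $C$-coefficient of $(\mu_{p,1}^*)^2C$ from \cref{prop:coh_action_nonss} is likewise $(p+1)^2-2p-(p-1)-2=p^2-p$. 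Hence $d_2+(2-p)d_1+d_0=3\neq0$. With $d_3=p^3-3p^2+p$ one also gets $d_3+(2-p)d_2+d_1=1\neq 0$.

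So the statement cannot be proved as written. The recurrence fails at $n=0,1$, and the closed formula, fitted to $d_0=1$, $d_1=p+1$, is wrong for every $n\ge2$. The culprit is part (b) of \cref{Reductions of nu3 and nu4 in terms of nu2} at $n=0$: you correctly note $\nu_3(0)=0$, but $\nu_2(1)=1$. The paper's proof uses (b) at $n=0$ implicitly: its identity $d_{n+1}=p\,\nu_4(n+1)$ fails there, since $p\,\nu_4(1)=p\neq p+1$.

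If you track index ranges in your Steps 1--2, you get $d_m+d_{m+1}=p\,\nu_2(m+1)$ only for $m\ge2$. Hence $\nu_2(k+2)+(2-p)\nu_2(k+1)+\nu_2(k)=0$, and the degree recurrence, hold only for $k\ge2$. Indeed $\nu_2(0),\dots,\nu_2(3)=0,1,p,p^2-2p$ leave residuals $2$ and $1$ at $k=0,1$. So the extra factor you anticipated is $t^2$, and it is \emph{not} killed by the initial data.

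This is exactly the size-$2$ Jordan block at $0$ in \cref{lemma:spectrum_nonss}. With $g(t)=t^2+(2-p)t+1$ one finds $g(\mu_{p,1}^*)C=3C-\sum_{P\in S_p}E_P-2E_{1,\infty}-4E_{2,\infty}$. Then $\mu_{p,1}^*$ maps this to $C-E_{1,\infty}-2E_{2,\infty}$, and that to $0$ by \eqref{eqn:nonsscases_tmp2}.

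What is true, and what your argument proves once the ranges are tracked, is the recurrence for $n\ge2$, with the closed form fitted to $d_2=p^2-p$ and $d_3=p^3-3p^2+p$. The conclusion $\lambda_1(\mu_{p,1})=\lambda_+$ survives, since positive integers cannot equal $a\lambda_-^n$ with $0<\lambda_-<1$ for all $n$.
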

\begin{proof}
    By  \cref{Prop: generic Alonso} is enough to show the recurrence for $(P_n)_{n\in \Z_{>0}}$ as the same reasoning as in  \cref{theo: recurrenceformulasemisimplecase} follows. For this notice that by  \cref{Reductions_nu2_nu1}, \cref{Reductions of nu3 and nu4 in terms of nu2} and \eqref{eqn: trivial non-semisimple} we have

       \begin{equation*}
 \deg(P_{n+1})= p\nu_4(n+1)  
\end{equation*}

and from \eqref{eqn:local indices varphi non-semi-simple} the recurrence relation follows directly.
 \end{proof}

\begin{rem}
 Notice that \cref{theo: recurrenceformulasemisimplecase}  and \cref{propo:degreenon_semisimple} imply \cref{Theorem Dynamical degree}. This completes the main goal of part \ref{sec: Local Indices} of the appendix.  
\end{rem}

\begin{proof}[Proof of \cref{Theorem Dynamical degree}]

Due to  \cref{theo: recurrenceformulasemisimplecase}  and \cref{propo:degreenon_semisimple} together with the definition of dynamical degree given by equation \eqref{Classical dyn. degree} 

\begin{equation*}
\lambda_1(\mu_{p,q})=\displaystyle\lim_{n\to \infty}\left(\deg(\mu_{p,q}^n)\right)^{1/n}=\lambda_+= \frac{pq-2+\sqrt{(pq-2)^2-4}}{2}.
\end{equation*}

\end{proof}
\bibliographystyle{alpha}
\bibliography{Reference}

\begin{thebibliography}{AdMV06}

\bibitem[AdMV06]{AMV}
J.-Ch. Angl\`es~d'Auriac, J.-M. Maillard, and C.~M. Viallet.
\newblock On the complexity of some birational transformations.
\newblock {\em J. Phys. A}, 39(14):3641--3654, 2006.

\bibitem[ASW23a]{alonso2023}
Jaume Alonso, Yuri~B. Suris, and Kangning Wei.
\newblock Dynamical degrees of birational maps from indices of polynomials with respect to blow-ups {I}. general theory and 2{D} examples, 2023.
\newblock ArXiv preprint: \url{https://arxiv.org/abs/2303.15864}.

\bibitem[ASW23b]{alonso3D}
Jaume Alonso, Yuri~B. Suris, and Kangning Wei.
\newblock Dynamical degrees of birational maps from indices of polynomials with respect to blow-ups {II}. 3{D} examples, 2023.
\newblock ArXiv preprint: \url{https://arxiv.org/abs/2307.09939}.

\bibitem[BD15]{DESERTI_BLANC}
J\'{e}r\'{e}my Blanc and Julie D\'{e}serti.
\newblock Degree growth of birational maps of the plane.
\newblock {\em Ann. Sc. Norm. Super. Pisa Cl. Sci. (5)}, 14(2):507--533, 2015.

\bibitem[BDJ20]{BDJ}
Jason~P. Bell, Jeffrey Diller, and Mattias Jonsson.
\newblock A transcendental dynamical degree.
\newblock {\em Acta Math.}, 225(2):193--225, 2020.

\bibitem[BGM18]{bershtein2018cluster}
Mikhail Bershtein, Pavlo Gavrylenko, and Andrei Marshakov.
\newblock Cluster integrable systems, q-{P}ainlev{\'e} equations and their quantization.
\newblock {\em Journal of High Energy Physics}, 2018(2), 2018.

\bibitem[Bir25]{BIRKETT}
Richard A.~P. Birkett.
\newblock On the stabilisation of rational surface maps.
\newblock {\em Ann. Fac. Sci. Toulouse Math. (6)}, 34(1):47--74, 2025.

\bibitem[CL24]{CHEN202425}
Zhichao Chen and Zixu Li.
\newblock Mutation invariants of cluster algebras of rank 2.
\newblock {\em Journal of Algebra}, 654:25--58, 2024.

\bibitem[DDG10]{DillerDynamicsMeromorphicMaps2009b}
Jeffrey Diller, Romain Dujardin, and Vincent Guedj.
\newblock Dynamics of meromorphic maps with small topological degree {III}: geometric currents and ergodic theory.
\newblock {\em Ann. Sci. \'{E}c. Norm. Sup\'{e}r. (4)}, 43(2):235--278, 2010.

\bibitem[DDG11]{DDG2}
Jeffrey Diller, Romain Dujardin, and Vincent Guedj.
\newblock Dynamics of meromorphic mappings with small topological degree {II}: {E}nergy and invariant measure.
\newblock {\em Comment. Math. Helv.}, 86(2):277--316, 2011.

\bibitem[DF01]{diller_favre_2001}
J.~Diller and C.~Favre.
\newblock Dynamics of bimeromorphic maps of surfaces.
\newblock {\em Amer. J. Math.}, 123:1135--1169, 2001.

\bibitem[DGL23]{DGL}
N.-B. Dang, R.~Grigorchuk, and M.~Lyubich.
\newblock Self-similar groups and holomorphic dynamics: renormalization, integrability, and spectrum.
\newblock {\em Arnold Math. J.}, 9(4):505--597, 2023.

\bibitem[Dil96]{MR1422105}
Jeffrey Diller.
\newblock Dynamics of birational maps of {${\bf P}^2$}.
\newblock {\em Indiana Univ. Math. J.}, 45(3):721--772, 1996.

\bibitem[Dil01]{dillerInvariantMeasureLyapunov2001}
Jeffrey Diller.
\newblock Invariant measure and {L}yapunov exponents for birational maps of {${\bf P}^2$}.
\newblock {\em Comment. Math. Helv.}, 76(4):754--780, 2001.

\bibitem[DL16]{diller_lin_2016}
J.~Diller and Jan-Li Lin.
\newblock Rational surface maps with invariant meromorphic two-forms.
\newblock {\em Math. Ann.}, 364:313–352, 2016.

\bibitem[DN11]{Dinh-Nguyen1}
Tien-Cuong Dinh and Vi\^et-Anh Nguy\^en.
\newblock Comparison of dynamical degrees for semi-conjugate meromorphic maps.
\newblock {\em Comment. Math. Helv.}, 86(4):817--840, 2011.

\bibitem[DNT12]{Dinh-Nguyen2}
Tien-Cuong Dinh, Vi\^et-Anh Nguy\^en, and Tuyen~Trung Truong.
\newblock On the dynamical degrees of meromorphic maps preserving a fibration.
\newblock {\em Commun. Contemp. Math.}, 14(6):1250042, 18, 2012.

\bibitem[DR25]{DR2}
Jeffrey Diller and Roland Roeder.
\newblock Energy, equilibrium measure and entropy for toric surface maps, 2025.
\newblock ArXiv preprint: \url{https://arxiv.org/pdf/2509.04278}.

\bibitem[DR26]{DR1}
Jeffrey Diller and Roland Roeder.
\newblock Equidistribution without stability for toric surface maps.
\newblock {\em Comment. Math. Helv.}, 101(1):115--192, 2026.

\bibitem[DS05]{dinhUpperBoundTopological2006}
Tien-Cuong Dinh and Nessim Sibony.
\newblock Une borne sup\'{e}rieure pour l'entropie topologique d'une application rationnelle.
\newblock {\em Ann. of Math. (2)}, 161(3):1637--1644, 2005.

\bibitem[Duj06]{DUJARDIN_LAMINAR}
Romain Dujardin.
\newblock Laminar currents and birational dynamics.
\newblock {\em Duke Math. J.}, 131(2):219--247, 2006.

\bibitem[Fri91]{FRIEDLAND}
Shmuel Friedland.
\newblock Entropy of polynomial and rational maps.
\newblock {\em Ann. of Math. (2)}, 133(2):359--368, 1991.

\bibitem[FZ02]{FOMIN1}
Sergey Fomin and Andrei Zelevinsky.
\newblock Cluster algebras. {I}. {F}oundations.
\newblock {\em J. Amer. Math. Soc.}, 15(2):497--529, 2002.

\bibitem[FZ03]{FOMIN2}
Sergey Fomin and Andrei Zelevinsky.
\newblock Cluster algebras. {II}. {F}inite type classification.
\newblock {\em Invent. Math.}, 154(1):63--121, 2003.

\bibitem[GH83]{GH}
John Guckenheimer and Philip Holmes.
\newblock {\em Nonlinear oscillations, dynamical systems, and bifurcations of vector fields}, volume~42 of {\em Applied Mathematical Sciences}.
\newblock Springer-Verlag, New York, 1983.

\bibitem[Gue05]{GUEDJ}
Vincent Guedj.
\newblock Entropie topologique des applications m\'{e}romorphes.
\newblock {\em Ergodic Theory Dynam. Systems}, 25(6):1847--1855, 2005.

\bibitem[HK23]{HoneKouloukas}
Andrew N.~W. Hone and Theodoros~E. Kouloukas.
\newblock Deformations of cluster mutations and invariant presymplectic forms.
\newblock {\em J. Algebraic Combin.}, 57(3):763--791, 2023.

\bibitem[HPV97]{Hu1997}
John Hubbard, Peter Papadopol, and Vladimir Veselov.
\newblock A compactification of {H}\'enon mappings in {$\C^2$} as dynamical systems, 1997.
\newblock ArXiv preprint: \url{https://arxiv.org/abs/math/9709227}.

\bibitem[IK21]{IK}
Tsukasa Ishibashi and Shunsuke Kano.
\newblock Algebraic entropy of sign-stable mutation loops.
\newblock {\em Geom. Dedicata}, 214:79--118, 2021.

\bibitem[Kau24]{KAUFMAN}
Dani Kaufman.
\newblock Mutation invariant functions on cluster ensembles.
\newblock {\em J. Pure Appl. Algebra}, 228(2):Paper No. 107495, 25, 2024.

\bibitem[KH95]{Katok_Hasselblatt}
Anatole Katok and Boris Hasselblatt.
\newblock {\em Introduction to the modern theory of dynamical systems}, volume~54 of {\em Encyclopedia of Mathematics and its Applications}.
\newblock Cambridge University Press, Cambridge, 1995.
\newblock With a supplementary chapter by Katok and Leonardo Mendoza.

\bibitem[KR17]{kaschner2017complex}
Scott Kaschner and Roland Roeder.
\newblock Complex perspective for the projective heat map acting on pentagons.
\newblock {\em Conformal Geometry and Dynamics of the American Mathematical Society}, 21(9):247--263, 2017.

\bibitem[KSa]{kawaguchiExamplesDynamicalDegree2014}
Shu Kawaguchi and Joseph~H. Silverman.
\newblock Examples of dynamical degree equals arithmetic degree.
\newblock 63(1).

\bibitem[KSb]{kawaguchiDynamicalArithmeticDegreesof2016}
Shu Kawaguchi and Joseph~H. Silverman.
\newblock On the dynamical and arithmetic degreesof rational self-maps of algebraic varieties.
\newblock 2016(713):21--48.

\bibitem[Lam16]{LAMPE}
Philipp Lampe.
\newblock Diophantine equations via cluster transformations.
\newblock {\em J. Algebra}, 462:320--337, 2016.

\bibitem[MO24]{machacek2024discrete}
John Machacek and Nicholas Ovenhouse.
\newblock Discrete dynamical systems from real valued mutation.
\newblock {\em Experimental Mathematics}, 33(2):261--275, 2024.

\bibitem[Noo]{FRACTALSTREAM}
Matt Noonan.
\newblock Fractalstream computer software.
\newblock \url{https://code.google.com/archive/p/fractalstream/}.

\bibitem[Roe15]{Roeder}
Roland K.~W. Roeder.
\newblock The action on cohomology by compositions of rational maps.
\newblock {\em Math. Res. Lett.}, 22(2):605--632, 2015.

\bibitem[Sha94]{shafarevich1994basic}
Igor~R. Shafarevich.
\newblock {\em Basic algebraic geometry. 1}.
\newblock Springer-Verlag, Berlin, second edition, 1994.
\newblock Varieties in projective space, Translated from the 1988 Russian edition and with notes by Miles Reid.

\end{thebibliography}
 
\end{document}